\documentclass[a4paper,english,oneside]{article}
\usepackage[T1]{fontenc}
\usepackage[utf8]{inputenc}
\usepackage{authblk}
\usepackage{indentfirst}
\usepackage{etoolbox}
\usepackage{geometry}
\usepackage{makecell}
\usepackage{booktabs}
\usepackage{amsmath}
\usepackage{amsfonts}
\usepackage{amssymb}
\usepackage{amsthm}
\usepackage[english]{babel}
\usepackage{color}
\usepackage{amsthm, amsfonts,mathrsfs}
\usepackage[T1]{fontenc}
\usepackage{times}
\usepackage{mathrsfs}
\usepackage{lmodern}
\usepackage{pstricks-add}
\usepackage{hyperref}
\hypersetup{hidelinks}
\usepackage{mathscinet}
\usepackage{enumitem}
\usepackage[cal=cm,bb=ams]{mathalfa}
\usepackage{microtype}
\usepackage{mathtools}
\usepackage{esint}

\newtheorem{theorem}{Theorem}[section]
\newtheorem{corollary}{Corollary}[section]

\newtheorem{lemma}{Lemma}[section]
\newtheorem{proposition}{Proposition}[section]
\newtheorem{remark}{Remark}[section]
\newtheorem{example}{Example}[section]
\numberwithin{equation}{section}

\begin{document}
\title
{Global existence for the semilinear regular Euler-Poisson-Darboux-Tricomi equation,  II}

\author[a]{Yuequn Li\thanks{
Email:
\href{mailto:022670205616@ujn.edu.cn}{022670205616@ujn.edu.cn}
}}

\author[b]{Fei Guo\thanks{
Corresponding author;
Email:
\href{mailto:guof@njnu.edu.cn}{guof@njnu.edu.cn}
}}

%\author[a]{Yuequn Li}
%\author[b]{Fei Guo\footnote{
%\href{mailto:guof@njnu.edu.cn}{guof@njnu.edu.cn}}}

\affil[a]{School of Mathematical Sciences, University of Jinan, Jinan 250022, China}
\affil[b]{School of Mathematical Sciences, Ministry of Education Key Laboratory of NSLSCS,
Nanjing Normal University,  Nanjing 210023, China}

\date{}

\maketitle

% --------------------------------------------------------------

\begin{abstract}

In our previous work \cite{LiGuo2025}, we established the global existence of solutions to the semilinear Euler-Poisson-Darboux-Tricomi equation with power nonlinearity \(|u|^p\) in the case where the damping term dominates the mass term. In contrast, this paper studies the reverse case, namely, the existence of global solutions to the equation when the mass term dominates the damping term.

We prove that small-data solutions exist globally if
\[
p>\max\{p_0((m+1)n,\mu), \widetilde{p}_1(n,m,\mu)\}
\]
for \(\delta\doteq(\mu-1)^2-4\nu^2\leq0\), where \(\delta\) measures the relative dominance between the damping and mass terms, \(p_0(n,\mu)\) is the \textit{Strauss-related} exponent defined by \eqref{p0ndef}, and \(\widetilde{p}_1(n,m,\mu)\) is defined by \eqref{eqp1}.
\end{abstract}

% -----------------------------------------

\begin{flushleft}
\textbf{Keywords:} Semilinear Euler-Poisson-Darboux-Tricomi equation; Global solution;   Littlewood-Paley theory; Confluent hypergeometric function.
\end{flushleft}

\begin{flushleft}
\textbf{AMS Classification (2020)} 35B33,   35G25, 35L15, 35L71.
\end{flushleft}

\section{Introduction}\label{s1}
This paper is a continuation of our study \cite{LiGuo2025, LiGuo20252} on the regular semiliniear Euler-Poisson-Darboux-Tricomi (EPDT) equation  with a power nonlinearity
\begin{equation}\label{eq}
  \left\{
\begin{aligned}
&\partial_t^2u-t^{2m}\Delta u+\frac{\mu}{t}\partial_tu+\frac{\nu^2}{t^2}u=|u|^p,&\quad& t>1,\ x\in\mathbb{R}^n, \\
&u(1,x)= u_0(x), \ \partial_tu(1,x)= u_1(x),               &\quad& x\in\mathbb{R}^n,
\end{aligned}
 \right.
  \end{equation}
where $m>-1$ and $n\geq1$ denotes the  spatial dimension. In \cite{LiGuo2025}, the global existence of solutions to (\ref{eq}) was investigated in the case where the damping term $\frac{\mu}{t}\partial_tu$ dominates over the mass term $\frac{\nu^2}{t^2}u$. In this paper, attention is further devoted to the case where the mass term prevails over the damping term.

Let us begin with the decay estimates for solutions to the following three different types of linear wave equations. For
\begin{align}
&\makebox[5.2cm][l]{\text{Free wave equation (W):}}
\partial_t^2u-\Delta u=0,\label{freewave}\\
&\makebox[5.2cm][l]{\text{Damped wave equation (DW):}}
\partial_t^2u-\Delta u+\partial_tu=0,\notag\\
&\makebox[5.2cm][l]{\text{Klein-Gordon equation (KG):}}
\partial_t^2u-\Delta u+u=0\notag
\end{align}
with the same  initial data $(u,\partial_tu)(0,x)=(u_0,u_1)$,
the corresponding solutions decay at different rates:
\begin{align}
&\text{(W):}&\ &  \Vert \big(\partial_tu(t,\cdot),\nabla u(t,\cdot)\big)\Vert_{L^q}\leq C(1+t)^{-\frac{n-1}{2}(\frac{1}{p}-\frac{1}{q})}\Vert(u_1,\nabla u_0)\Vert_{W^{p,N_p}}, &\ & \text{\cite{Str70a,Str70b}},\label{decayrate1}\\
&\text{(DW):}&\ & \Vert\big(\partial_tu(t,\cdot),\nabla u(t,\cdot)\big)\Vert_{L^q}\leq C(1+t)^{-\frac{n}{2}(\frac{1}{p}-\frac{1}{q})-\frac{1}{2}}\Vert(u_1,\nabla u_0)\Vert_{W^{p,N_p}},&\ & \text{\cite{Mat1977}},\notag\\
&\text{(KG):} &\ & \Vert\big( \partial_tu(t,\cdot),\nabla u(t,\cdot)\big)\Vert_{L^q}\leq C(1+t)^{-\frac{n}{2}(\frac{1}{p}-\frac{1}{q})}\Vert(u_1,\nabla u_0)\Vert_{W^{p,N_p}},&\ & \text{\cite{Rac92}},\notag
\end{align}
where $1<p\leq2,\frac{1}{p}+\frac{1}{q}=1$ and $N_p>n(\frac{1}{p}-\frac{1}{q})$.  The distinct long-time behaviors of the solutions to these three types of equations suggest that the corresponding critical exponents for the global existence and blow-up of the semilinear equations with power-type nonlinearity are different. We now turn to the analysis of the corresponding results for the three models with time-dependent coefficients.

The critical  exponent of  classical semilinear wave equation $\partial_t^2u-\Delta u=\vert u\vert^p$ is clear, that is Strauss exponent  $p_S(n)$, which is the
positive root of the quadratic equation
\begin{equation}\label{strauss}
(n-1)p^2-(n+1)p-2=0\quad \text{for}\ n\geq2,
\end{equation}
and $p_S(1)=+\infty$,  one can refer to  \cite{GeLiSo1997, Glassey1981, John1979,  Zh2006, Zhou2007, Zhou2014}.   In other words, the global (in time) solution to (\ref{strauss}) exists if $p>p_S(n)$, otherwise, the blow-up phenomenon occurs if $1<p\leq p_S(n)$.
It is also clear that the Fujita exponent  $p_F(n)\doteq1+\frac{2}{n}$ is the critical exponent for the semilinear damped wave equation $\partial_t^2u-\Delta u+\partial_tu=\vert u\vert^p$, see  \cite{ ToYo2001, Zh2001}.  We point out that  $p_F(n)$  is the critical exponent for the semilinear heat equation $\partial_tu-\Delta u=\vert u\vert^p$, \cite{Fujita1969}.
Accordingly, we commonly refer to a semilinear equation as \textit{wave-like} if its critical exponent is related to the Strauss exponent, and as \textit{heat-like} or \textit{parabolic-like} if its critical exponent is related to the Fujita exponent.

Wirth \cite{Wirth2006, Wirth2007} studied systematically the decay behavior of solutions to the linear equation
\begin{equation}\label{eqdampinglinear}
\partial_t^2u-\Delta u+\frac{\mu}{t^\alpha}\partial_tu=0, \ \ \mu>0,\ \ t\geq1.
\end{equation}
When $\alpha>1$, the  decay rate of solutions to (\ref{eqdampinglinear}) is  close to that of the free wave equation (\ref{freewave}), and hence  the damping term  in this case  is referred to as \textit{noneffective}.
For $-1\leq\alpha<1$, the decay rate of solutions is comparable to that of the heat equation, which indicates that $\partial_tu$  dominates over $\partial_t^2u$ in (\ref{eqdampinglinear}), the damping term is considered as  \textit{effective}. When $\alpha<-1$, the solution
exhibits no decay,  and the damping is classified as \textit{over-damping}.  If $\alpha=1$,  (\ref{eqdampinglinear}) is \textit{scale-invariant} under the transformation
\begin{equation}\label{scaletrans1}
\widetilde{u}(t,x)=u(\lambda t,\lambda x),
\end{equation} and the authors in  \cite{Wirth2004,Yagdjian2021} established the  explicit representation formula and the integral representation formula for the solution.
In addition, the most distinctive feature in this case is that the behavior of $u$  is closely related to the magnitude of $\mu$, but unfortunately, determining the precise threshold of $\mu$ remains an open problem \cite{LaiTaWa2017}.

Since the classification of  the linear equation (\ref{eqdampinglinear}) is well understood, it provides important guidance for the study of the critical exponent of
\begin{equation}\label{eqdampingsemilinear}
\partial_t^2u-\Delta u+\frac{\mu}{t^\alpha}\partial_tu=\vert u\vert^p, \ \ \mu>0,\ \ t\geq1.
\end{equation}
When $\alpha\in(-1,1)$,     the authors in \cite{DaLuRe2013,LinNiZhai2012} proved that the critical exponent for (\ref{eqdampingsemilinear}) is $p_F(n)$, so the damping is also \textit{effective}.  In the \textit{scattering } case  $\alpha>1$, Lai-Takamura \cite{LaiTa2018} proved when $1<p<p_S(n),$  the blow-up phenomenon occurs, thereafter,  Wakasa-Yordanov \cite{ WaYo2019} proved the critical case $p=p_S(n)$ still belongs to the blow-up regime;  about the global existence, Liu-Wang  \cite{LiuWang2018} established the global existence in the supercritical case $p>p_S(n)$ for $n=3,4$.  Based on the above results, it can be roughly stated that in the case $\alpha>1$, the damping term should be regarded as \textit{non-effective}. %because in this regime the critical exponent of (\ref{eqdampingterm}) is identical to the semilinear wave equation,
%which indicates that the presence of $\frac{\mu}{t^\alpha}\partial_tu$ has no effect on the  critical exponent of  the classical semilinear wave equation.
Ikeda-Wakasugi \cite{IkWa2020} proved in the \textit{overdamping} case $\alpha<-1$,  the solution to (\ref{eqdampingsemilinear}) exists globally for any $p>1$,  which means that the  critical exponent disappears, and this is a rather interesting phenomenon.

As for the scale-invariant case $\alpha=1$, (\ref{eqdampingsemilinear}) is known as  the semilinear Euler-Poisson-Darboux (EPD)  eqation. The result obtained in \cite{DaLuRe2015} naturally leads  a  conjecture that the critical exponent is
\begin{equation}\label{conjectureepd}
p_{crit}(n,\mu)\doteq\max\{p_S(n+\mu),p_F(n)\}=\left\{\begin{aligned}
&p_S(n+\mu),&\quad&0<\mu\leq\bar{\mu}(n),\\
&p_F(n),&\quad&\mu\geq\bar{\mu}(n),
\end{aligned}
\right.
\end{equation}
where $\bar{\mu}(n)=\frac{n^2+n+2}{n+2}$.   The blow-up result for subcritical and critical case $1<p\leq p_{crit}(n,\mu)$ has solved in \cite{IkSo2018,LaiTaWa2017,PaRe2019,TuLin2019,Wa2014}. The proof of global existence in the supercritical case $p>p_{crit}(n,\mu)$ presents formidable challenges. M. D$^\prime$Abbicco \cite{Da2021} used the asymptotic properties of Bessel functions together with the Fourier multiplier theorem to establish the  decay estimates for the solution to  $1-$dimensional linear EPD equation, and subsequently applied the contraction mapping principle to derive the global solution for $p>p_{crit}(1,\mu)$.  In the serious papers \cite{HeLiYin2025,LiWangYin2025,LiYin2025},  He-Li-Wang-Yin employed the explicit integral representation of solutions to the $2$-dimensional linear Tricomi equation together with the vector field method to derive weighted Strichartz estimates for the linear EPD equation,  and then the global existence of solutions was  established via the classical iteration argument when $p>p_{crit}(2,\mu)$. For $n\geq3$, the global existence of solutions to the EPD equation remains, to the best of our knowledge, an open problem.

Based on the above analysis, the relevant results for the time-dependent damped wave equation (\ref{eqdampingsemilinear}) are summarized  in Table \ref{resultsdampedwave1}.

\begin{table}[h]\label{resultsdampedwave1}
\centering
\setlength{\tabcolsep}{0.5mm}{
  \setlength{\abovecaptionskip}{0pt} % 设置标题与表格之间的垂直间距
  \begin{tabular}{c|c|c}
    \toprule
    The range of $\alpha$ & Effect of damping & Critical exponent  \\
    \hline
    $\alpha>1$ & Non-effective (Scattering)&$p_S(n) $ \\
    \hline
    $-1<\alpha<1$ & Effective  & $p_F(n)$\\
    \hline
    $\alpha=1$ & Scale-invariant weak dissipation\quad\quad  &\quad $\max\{p_S(n+\mu),p_F(n)\}$ \\
    \hline
    $\alpha<-1$&Over-damping    & Vanishing\\
    \bottomrule
  \end{tabular}}
  \vspace{5pt} % 手动添加一些额外的垂直间距
    \caption{Effect of damping of (\ref{eqdampingsemilinear})}
\end{table}

Consider the linear Klein-Gordon equation
\begin{equation}\label{eqmass1}
\partial_t^2u-\Delta u+\frac{m^2}{t^2}u=0.
\end{equation}
By \cite{BoPHD2011,BoRe2012}, from the energy conservation perspective, when $m^2\geq\frac{1}{4}$, the energy functional is independent of $m$, whereas for $m^2<\frac{1}{4}$, it depends closely on $m$. This indicates that the value of the parameter $m$ has a non-negligible effect on the behavior of solutions to (\ref{eqmass1}).
Combining the  blow-up result  in  \cite{KeTa1999} and the global result in \cite{LiSo1996}, the critical exponent of the classical semilinear Klein-Gordon equation  $\partial_t^2u-\Delta u+u=\vert u\vert^p$ is $p_F(n)$
only for $n=1,2,3$.   As they pointed out, this conclusion fails when $n\geq4$.   As far as we know, for semilinear Klein–Gordon equations with time-dependent coefficients
$\partial_t^2u-\Delta u+m(t)^2u=\vert u\vert^p$, there has been little progress regarding either global existence or blow-up.

The corresponding linear equation for
\begin{equation}\label{waveeq}
\partial_t^2u-\Delta u+\frac{\mu}{t}\partial_tu+\frac{\nu^2}{t^2}u=\vert u\vert^p,\ \mu>0,\ \nu^2\geq0,\ t\geq1
\end{equation}
is still invariant under the   transformation  (\ref{scaletrans1}).  By the analysis about  the case of $\alpha=1$ in  (\ref{eqdampinglinear}) and (\ref{eqmass1}),  the same scale for the damping term and the mass term  causes an interaction such that the  magnitudes of the coefficients $\mu$ and $\nu^2$  influence whether the damping term or the mass term plays the dominant role in (\ref{waveeq}). Here the same scale means both the linear EPD equation ($\alpha=1$ in  (\ref{eqdampinglinear})) and the linear  Klein-Gordon equation (\ref{eqmass1}) are invariant under the same transformation (\ref{scaletrans1}). And hence (\ref{waveeq}) should be regarded as the combination of EPD  equation and Klein-Gordon equation.  Palmieri systematically investigated the critical exponent  for  (\ref{waveeq}) and determined   the quantity that measures the interaction between the damping term and mass term, that is
\begin{equation}\label{deltadefine}
\delta\doteq(\mu-1)^2-4\nu^2.
\end{equation}
Broadly speaking, the larger the value of $\delta$  (that is, the larger $\mu$ compared to $\nu^2$), the more dominant the damping term  becomes over the mass term, and the critical exponent of (\ref{waveeq}) is closely related to the Fujita exponent. In this case, (\ref{waveeq}) exhibits a \textit{parabolic-like} behavior. Conversely, the smaller the value of $\delta$, the critical exponent becomes more closely related to the Strauss exponent or other indices, and (\ref{waveeq}) exhibits a \textit{wave-like} or \textit{Klein–Gordon-like} behavior. The  results for (\ref{waveeq})  are presented in Tables 2 and 3 in a more intuitive manner.
\newpage
\begin{table}[h]\label{blowuptable2}
\renewcommand\arraystretch{1.2}
  \centering
  \setlength{\abovecaptionskip}{0pt} % 设置标题与表格之间的垂直间距
  \resizebox{\textwidth}{15mm}{
  \begin{tabular}{c|c|c}
    \toprule
    The range of $\delta$ & The range of $p$ & References \\
    \hline
    $\delta\geq0$ & $1<p\leq p_F(n+\frac{\mu-1-\sqrt\delta}{2})$ & Thm.2.4 \cite{PaRe2017}, Thm.1.3 \cite{PaRe2018} \\
    \hline
    $0<\delta\leq1$ & \makecell{$1<p<\max\{p_F(n+\frac{\mu-1-\sqrt\delta}{2}),p_S(n+\mu)\},\ n\geq1$\\
                                    $p=p_F(n+\frac{\mu-1-\sqrt\delta}{2}),\ n\geq1$\\
                                    $p=p_S(n+\mu),\ n\geq2$ }   & Coro.1.4 \cite{PaRe2019}\\
    \hline
    $\delta\geq 0$ & $1<p\leq p_S(n+\mu)$  &Thm.1.2 \cite{Tu2019} \\
    \hline
    $0\leq\delta<n^2$ & $p= p_S(n+\mu)$  &Thm.1.3 \cite{Tu2019} \\
    \bottomrule
  \end{tabular}}
  \vspace{4pt} % 手动添加一些额外的垂直间距
    \caption{Blow-up results of (\ref{waveeq})}
\end{table}

%\vspace{-15pt} % 调整两个表格之间的垂直距离
\begin{table}[h]\label{globaltable3}
  \centering
  \begin{tabular}{c|c|c}
    \toprule
    The range of $\delta$ & The range of $p$ & References  \\
    \hline
    $\delta\leq0$ & \makecell{$2\leq p,  n=1,2$ \\ $2\leq p\leq3, n=3$\\ $p=2, n=4$} & Thm.2.1 \cite{PaRe2017} \\
    \hline
    $\delta\geq (n-1)^2$ & $p>p_F(n+\frac{\mu-1-\sqrt\delta}{2})$ & Thm.1.1, Thm.1.2 \cite{PaRe2018} \\
    \hline
    $\delta<0$ &$p>\max\{p_0(n,\mu),p_1(n,\mu)\}$          & Thm.1 \cite{DaPa2021}\\
    \bottomrule
  \end{tabular}
  \vspace{4pt} % 手动添加一些额外的垂直间距
  \caption{Global existence of (\ref{waveeq})}
\end{table}

%\vspace{3pt} % 调整表格与下一行文字之间的垂直空白

\noindent  Here $p_0(n,\mu)>1$ is  positive root of
\begin{equation}\label{p0ndef}
(n+\frac{\mu-1}{2})(p-1)^2+(n+\mu-3)(p-1)-4=0
\end{equation}
and $p_1(n,\mu)>1$  is  positive root of
\begin{equation}\label{p1ndef}
\frac{n+\mu-1}{2}(p-1)^2+(n+\mu-2)(p-1)-4=0.
\end{equation}
It follows from (\ref{strauss}) and (\ref{p0ndef}) that $p_0(n,1)=p_S(n+1)$,  indicating that
$p_0(n,\mu)$ can be regarded as a \textit{Strauss-related} exponent.

Palmieri  \cite{Pa2025} first named (\ref{eq}) the EPDT equation and established blow-up results for $1<p<\max\{p_F((m+1)n+\frac{\mu-1-\sqrt{\delta}}{2}), p_S(n+\frac{\mu}{m+1},m)\}$ as well as for  $p=p_F((m+1)n+\frac{\mu-1-\sqrt{\delta}}{2})$. Here,  $\delta$ is also defined by (\ref{deltadefine}), while $p_S(n,m)$ denotes the modified Strauss exponent, namely,   the positive root of
$$\big((m+1)n-1\big)p^2-\big((m+1)(n-2)+3\big)p-2(m+1)=0.$$
Indeed,  $p_S(n,m)$ is the critical exponent for the classical similinear Tricomi equation $\partial_t^2u-t^{2m}\Delta u=\vert u\vert^p$,  as shown  by He-Witt-Yin \cite{He2017II, He2017I, He2020}. Since $p_S(n,0)=p_S(n)$, this suggests that many properties of the Tricomi equation may be analogous to those of the wave equation.  The corresponding linear equation of (\ref{eq}) is invariant under the scaling
\begin{equation}\label{scaletrans2}
\widetilde{u}(t,x)=u(\lambda^{\frac{1}{m+1}}t,\lambda x),
\end{equation}
this implies that the damping term and the mass term are  of  different scale with respect to  time and spatial variables.  Therefore, investigating the corresponding critical exponent and the quantity characterizing the interaction between the damping term and mass term remains a highly meaningful problem.
In our recent paper \cite{LiGuo2025}, we give the precise answer that the interaction between the damping term and the mass term in (\ref{eq}) is still measured by $\delta$.  Moreover, it is shown that when $\delta$ is appropriately large, the critical exponent of (\ref{eq}) is  $p_F((m+1)n+\frac{\mu-1-\sqrt{\delta}}{2})$, indicating that the equation is \textit{parabolic-like} when the damping term dominates.  Additionally, in \cite{LiGuo20252}, we provided a blow-up result when $p=p_S(n+\frac{\mu}{m+1},m)$ for all $\delta>0$.

Accordingly, if the mass term in (\ref{eq})  dominates the damping term, it is natural to ask: \\
$\bullet$ Does a critical exponent exist for (\ref{eq}) ? \\
$\bullet$ Does the solution to (\ref{eq}) exhibit \textit{wave-like} behavior?

Motivated by these questions, this paper investigates the existence of global solutions to (\ref{eq}) in the regime of small $\delta$.   In \cite{LiGuo2025}, we obtained  the  \textit{damping-type} transformations under which    the corresponding linear equation of (\ref{eq})
is transformed into a wave equation containing only the damping term.  Using the Fourier transform, the linear equation is converted into a Bessel equation in the frequency domain, from which the $(L^1\cap L^2)-L^2$ estimate for the solution are subsequently derived.   The overall strategy remains applicable, but a different method is needed to treat the case of small $\delta$.  In this paper, we discover  a serious of transformations (\ref{sub11}), (\ref{sub21}) and (\ref{sub31}),  which can transfer the linear EPDT equation  (\ref{lineareq}) into a wave equation (\ref{masseq}) involving only the mass term.  This set of transformations is referred to as the \textit{mass-type} transformations.    Inspired by the approach in \cite{BoRe2012} for handling the linear Klein-Gordon equation, we apply the Fourier transform with respect to the spatial variable $x$, which reduces the problem to the confluent hypergeometric equation (\ref{solutionhuaw}) in the frequency domain. Consequently, the solution to (\ref{masseq})  can be reduced to analysing the Fourier integral operators (\ref{desired1})-(\ref{desired4}).  Since the confluent hypergeometric function exhibits different asymptotic behaviors for large and small values of the variable,  it is natural to apply  Littlewood-Paley theory to decompose the frequency domain into high, medium, and low regions, and to establish $L^q-L^{q'}$ estimates    for the solution to  (\ref{masseq}) in each region, where $\frac{1}{q}+\frac{1}{q'}=1$. Note  that the variable transformations (\ref{sub11}), (\ref{sub21}) and (\ref{sub31}) are all invertible, the $L^q-L^{q'}$ estimates for the solution to (\ref{eq}) then follow.  Based on the $L^q-L^{q'}$ estimates for the solution to  the linear EPDT equation, we can construct suitable function spaces, and by applying the contraction mapping principle together with Duhamel's principle to  establish the global existence of solutions to (\ref{waveeq}).

The organization of this paper is as follows.
In Section \ref{mainresults}, three main results and some remarks  are presented:   Theorems \ref{propuwithtau=1}, \ref{estimateu0=0}  concerning  the $L^q-L^{q'}$ estimates for  the solution to  linear EPDT equation (\ref{lineareq}), and   Theorem \ref{mainresult}  establishing the global existence of solutions to (\ref{eq}). Section \ref{linearepdtestimate} gives the proofs of Theorems \ref{propuwithtau=1}, \ref{estimateu0=0}.   Section \ref{sectionproofmainresult} provides the proof of Theorem \ref{mainresult}.  In Section \ref{finialremarks}, we discuss some open problems
related to (\ref{eq}). A brief proof of the local well-posedness for (\ref{eq}) is given in the Appendix.

\section{Main results}\label{mainresults}
For  the corresponding linear equation of (\ref{eq}), i.e.,
\begin{equation}\label{lineareq}
  \left\{
\begin{aligned}
&\partial_t^2u-t^{2m}\Delta u+\frac{\mu}{t}\partial_tu+\frac{\nu^2}{t^2}u=0,\ &\quad &t>\tau\geq1,\ x\in\mathbb{R}^n,\\
&u(\tau,x)= u_0(x), \ \partial_tu(\tau,x)= u_1(x),               &\quad &x\in\mathbb{R}^n,
\end{aligned}
 \right.
 \end{equation}
we establish the following estimates on the conjugate line.

\begin{theorem}\label{propuwithtau=1}
Let us consider $m>-1, \mu, \nu^2\geq0$  such that $\delta\doteq(\mu-1)^2-4\nu^2<\min\{(m+1)^2,1\}$.  Let $u$  be the solution to the linear equation (\ref{lineareq}) with the initial data $(u_0,u_1)\in H^{r+1}_q\times H^{r}_q$.   Then,  $u$ satisfies the estimates
\begin{align*}
&\Vert u(t,\cdot)\Vert_{L^{q^\prime}}\lesssim t^{-\frac{m}{2}}B_0(t)(1+\log t)^{\chi_\delta}\big(\Vert u_0\Vert_{H^{r}_q}+\Vert u_1\Vert_{H^{[r-1]_+}_q}\big),\\
&\Vert \nabla u(t,\cdot)\Vert_{L^{q^\prime}}\lesssim t^{-\frac{m}{2}}B_1(t)(1+\log t)^{\chi_\delta}\big(\Vert u_0\Vert_{H^{r+1}_q}+\Vert u_1\Vert_{H^{r}_q}\big),\\
&\Vert \partial_t u(t,\cdot)\Vert_{L^{q^\prime}}\lesssim t^{\frac{m}{2}}B_1(t)(1+\log t)^{\chi_\delta}\big(\Vert u_0\Vert_{H^{r+1}_q}+\Vert u_1\Vert_{H^{r}_q}\big),
\end{align*}
where  $1<q\leq2$, $\frac{1}{q}+\frac{1}{q^\prime}=1$, $r=n(\frac{1}{q}-\frac{1}{q^\prime})$,  $\chi_\delta=\left\{\begin{aligned}
&1,&\quad&\text{if}\ \ \delta=0,\\
&0,&\quad&\text{if}\ \ \delta\neq0,
\end{aligned}\right.$
and $B_1(t)=t^{-\frac{\mu}{2}-\frac{(m+1)(n-1)}{2}(\frac{1}{q}-\frac{1}{q^\prime})},$
\begin{equation*}
\begin{aligned}
B_0(t)&=\left\{\begin{aligned}
&t^{-\frac{\mu}{2}+\max\{\frac{m+1+\sqrt{\delta}}{2}-(m+1)n(\frac{1}{q}-\frac{1}{q^\prime}),
-\frac{(m+1)(n-1)}{2}(\frac{1}{q}-\frac{1}{q^\prime})\}},&\quad&\text{if}\  \ 0<\delta<\min\{(m+1)^2,1\},\\
&t^{-\frac{\mu}{2}+\max\{\frac{m+1}{2}-(m+1)n(\frac{1}{q}-\frac{1}{q^\prime})+\epsilon,
-\frac{(m+1)(n-1)}{2}(\frac{1}{q}-\frac{1}{q^\prime})\}},&\quad&\text{if}\ \ \delta=0,\\
&t^{-\frac{\mu}{2}+\max\{\frac{m+1}{2}-(m+1)n(\frac{1}{q}-\frac{1}{q^\prime}),
-\frac{(m+1)(n-1)}{2}(\frac{1}{q}-\frac{1}{q^\prime})\}},&\quad&\text{if}\ \ \delta<0
\end{aligned}
\right.
\end{aligned}
\end{equation*}
with $\epsilon>0$ sufficiently small.
\end{theorem}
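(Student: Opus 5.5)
Our plan follows the strategy announced in the introduction: reduce (\ref{lineareq}) by the \emph{mass-type} transformations to a Klein--Gordon type wave equation with a scale-invariant mass, solve it in Fourier space with confluent hypergeometric functions, and then prove $L^q$--$L^{q'}$ bounds for the resulting Fourier integral operators by a Littlewood--Paley decomposition. \textbf{First step: transformations.} Set $u(t,x)=t^{-\mu/2}v(t,x)$. This removes the damping term and leaves $\partial_t^2v-t^{2m}\Delta v+\frac{\nu^2+\frac{\mu}{2}-\frac{\mu^2}{4}}{t^2}v=0$, and the effective mass coefficient equals $\frac{1-\delta}{4}$. Next change time via $s=\frac{t^{m+1}}{m+1}$ and set $v=t^{-m/2}w$. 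A direct computation gives
\[
\partial_s^2w-\Delta w+\frac{M}{s^2}w=0,\qquad M=\frac{(m+1)^2-\delta}{4(m+1)^2},
\]
and $M>0$ exactly when $\delta<(m+1)^2$. This already accounts for the prefactor $t^{-\mu/2}t^{-m/2}$ in all three estimates. The factor $t^{m}=\frac{ds}{dt}$ multiplies $\partial_t u$, which is where the $t^{m/2}$ comes from.

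\textbf{Second step: Fourier representation.} Take the Fourier transform in $x$. The function $\hat w(s,\xi)$ solves a Whittaker/Bessel-type equation in $z=2is|\xi|$. Writing $\hat w=e^{-is|\xi|}(s|\xi|)^{\frac12+\frac{\sqrt{\delta}}{2(m+1)}}\,\phi$, the factor $\phi$ solves a confluent hypergeometric (Kummer) equation. The two independent solutions then behave as follows. For $s|\xi|\lesssim1$ they behave like $(s|\xi|)^{\frac12\pm\frac{\sqrt\delta}{2(m+1)}}$; when $\delta=0$ one of them carries a $\log(s|\xi|)$, and when $\delta<0$ the two are oscillating with bounded modulus $(s|\xi|)^{1/2}$. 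For $s|\xi|\gg1$ they behave like $e^{\pm is|\xi|}$ times bounded symbols. Matching the data at $s_0=\frac{\tau^{m+1}}{m+1}$ via the Wronskian writes $\hat w$ as a sum of multipliers $K_j(s,s_0,|\xi|)$ acting on $\hat u_0,\hat u_1$.

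\textbf{Third step: dyadic decomposition.} Split into the zones $s|\xi|\le 1$ (low), $s_0|\xi|\le1\le s|\xi|$ (medium), and $s_0|\xi|\ge1$ (high).
\begin{itemize}[leftmargin=*]
\item In the oscillatory zones we use the large-argument expansion to write the operator as $e^{\pm is|D|}$ times a symbol. The dyadic pieces $\varphi(2^{-k}D)$ satisfy the standard $L^q$--$L^{q'}$ wave dispersive bound $2^{k(n+1)(\frac1q-\frac1{q'})/2}s^{-\frac{n-1}{2}(\frac1q-\frac1{q'})}$. We interpolate this with the $L^2$ bound and sum over $k$ using the $H^r_q$ regularity, with $r=n(\frac1q-\frac1{q'})$. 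Converting $s\sim t^{m+1}$ gives the term $-\frac{(m+1)(n-1)}{2}(\frac1q-\frac1{q'})$ and produces $B_1$.
\item In the low zone there is no oscillation. The multiplier is bounded by $(s|\xi|)^{\frac12-\frac{\sqrt\delta}{2(m+1)}}$ times the data, and the worst term comes with the growing power $s^{\frac12+\frac{\sqrt\delta}{2(m+1)}}$. We bound the $L^{q'}$ norm by the $L^{q}$ norm of the Fourier transform over the ball $|\xi|\le 1/s$ (Hausdorff--Young). This yields $s^{\frac12+\frac{\sqrt\delta}{2(m+1)}-n(\frac1q-\frac1{q'})}$, i.e. $t^{\frac{m+1+\sqrt\delta}{2}-(m+1)n(\frac1q-\frac1{q'})}$. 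Taking the maximum with the oscillatory term gives $B_0$.
\item For $\nabla u$ and $\partial_tu$ the extra factor $|\xi|$ kills the low-frequency growth, so only the $B_1$ rate survives. The restriction $\delta<1$ is what makes the $u_0$-term and the $s_0$-dependence harmless uniformly in $\tau$.
\item For $\delta=0$ the logarithm in the Kummer solution yields $(1+\log t)$, together with an $\epsilon$ loss at the endpoint of summation.
\item For $\delta<0$ the exponent $\sqrt\delta$ is purely imaginary, so it contributes no growth.
\end{itemize}

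\textbf{Main obstacle.} The hardest part is obtaining uniform symbol estimates for the confluent hypergeometric functions across the transition region $s|\xi|\sim1$. We need bounds on all derivatives in $\xi$ that are uniform in $s_0,s$, so that the multipliers qualify as Mikhlin symbols, or as admissible amplitudes for the dispersive estimate, on each dyadic piece. We would obtain these from the integral representation of Kummer's $U$-function combined with the connection formula to $M$. Particular care is needed with the Wronskian normalization when $s_0|\xi|$ is small but $s|\xi|$ is large.
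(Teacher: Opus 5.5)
Your route is the paper's route: the mass-type transformations (your $w$ is the paper's $\mathcal U$ up to a constant), the Kummer/Tricomi representation, the zones $s|\xi|\lesssim1$, $s_0|\xi|\lesssim1\lesssim s|\xi|$ and $s_0|\xi|\gtrsim1$, Hausdorff--Young on the low zone, and dispersive $L^1$--$L^\infty$ bounds interpolated with $L^2$ bounds on dyadic pieces elsewhere.

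\textbf{The medium zone for $u$ itself fails as written.} There the multiplier is not a bounded symbol times $e^{\pm is|\xi|}$. $\widehat H_0$ carries the non-oscillatory factor $(s_0|\xi|)^{-\rho}$, and $\widehat H_1$ carries $s_0(s_0|\xi|)^{-\rho}$. Since $|\xi|$ goes down to $1/s$, these reach size $(s/s_0)^{\mathrm{Re}\,\rho}$. For instance, for $\delta>0$ and $q=2$ the medium-zone operator has $L^2$ norm $\sim(s/s_0)^{\rho}$, so this zone does not produce the $B_1$ rate.

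Write $\theta_q=\frac1q-\frac1{q'}$. The piece $|\xi|\sim2^k$ with $1/s\lesssim2^k\lesssim1/s_0$ has $L^q$--$L^{q'}$ norm $\lesssim2^{k(\frac{n+1}{2}\theta_q-\mathrm{Re}\,\rho)}s^{-\frac{n-1}{2}\theta_q}s_0^{-\mathrm{Re}\,\rho}$. Depending on the sign of $\frac{n+1}{2}\theta_q-\mathrm{Re}\,\rho$, the worst shell is either $2^k\sim1/s$, giving $s^{\mathrm{Re}\,\rho-n\theta_q}$, or $2^k\sim1/s_0$, giving $s^{-\frac{n-1}{2}\theta_q}$. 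This balance, not merely ``low zone versus oscillatory zones'', is where the maximum in $B_0$ comes from.

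The paper obtains it in Proposition \ref{propmedium}. It splits the symbol into an $s|\xi|$-oscillatory part weighted by $|\xi|^{-\gamma_1-\mathrm{Re}\,\rho}$ and an $\eta|\xi|$-part with $L^1$ kernel, where $\gamma_1=\max\{\chi_\epsilon,\frac{n+1}{2}\theta_q-\mathrm{Re}\,\rho\}$. Your shell-by-shell treatment of the full symbol works equally well once the factor $(s_0|\xi|)^{-\rho}$ is kept. For $\delta=0$ it even gives only a logarithm there: the paper's $\epsilon$ comes from absorbing $\log(\eta|\xi|)$, which forces $\gamma_1>0$, not from a summation endpoint.

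\textbf{Combine dyadic pieces with Brenner's lemma, not by summing norms.} Use uniform bounds on each piece plus Proposition \ref{littlewoodpaleythm}. With exactly $r=n\theta_q$ derivatives, the high-zone pieces are only uniformly bounded, not summable, for $s$ near $s_0$ (the phases include $e^{\pm i(s-s_0)|\xi|}$), and also for $n=1$. In the medium zone at $\frac{n+1}{2}\theta_q=\mathrm{Re}\,\rho$, summing would cost a $\log s$ that the theorem does not allow for $\delta\neq0$.

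\textbf{The $\partial_tu$ estimate misses a term.} $\partial_tu$ is not only $t^{-\frac{\mu+m}{2}}t^m\partial_sw$. Differentiating the weight produces $-\frac{\mu+m}{2}t^{-1}u$, which is controlled only at the $B_0$ rate (with data norm dominated by $H^{r+1}_q\times H^r_q$). You need $t^{-1-\frac m2}B_0(t)\lesssim t^{\frac m2}B_1(t)$, which holds because $\mathrm{Re}\,\rho<1$, i.e.\ $\delta<(m+1)^2$. The same fact is what lets the factor $|\xi|\lesssim1/s$ beat $s^{\mathrm{Re}\,\rho}$ in the low zone for $\nabla u$ and $\partial_tu$. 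So this is where the hypothesis on $\delta$ enters. Your attribution to $\delta<1$ and to uniformity in $\tau$ is off, and the theorem is at $\tau=1$ anyway.

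With these repairs your plan proves the theorem.
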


\begin{theorem}\label{estimateu0=0}
Let us consider $m>-1, \mu, \nu^2\geq0$  such that $\delta\doteq(\mu-1)^2-4\nu^2<\min\{(m+1)^2,1\}$.  Let $u$  be the solution to the linear equation (\ref{lineareq}) with the initial data $u_0=0,u_1\in H^r_q$.
Then $u$ satisfies the estimates
\begin{align*}
&\Vert\nabla u(t,\cdot)\Vert_{L^{q^\prime}}\lesssim t^{-\frac{m}{2}}\tau^{-\frac{m}{2}}B_1(t)B^{-1}_1(\tau)(1+\log\frac{t}{\tau})^{\chi_\delta}
\big(\Vert u_1\Vert_{\dot{H}^{r}_q}+\tau^{-(m+1)r}\Vert u_1\Vert_{L^q}\big),\\
&\Vert\partial_tu(t,\cdot)\Vert_{L^{q^\prime}}\lesssim t^{\frac{m}{2}}\tau^{-\frac{m}{2}}B_1(t)B_1^{-1}(\tau)(1+\log\frac{t}{\tau})^{\chi_\delta}\big(
\Vert u_1\Vert_{\dot{H}^{r}_q}+\tau^{-(m+1)r}\Vert u_1\Vert_{L^q}\big),
\end{align*}
and for $r\geq1$,
\begin{align*}
&\Vert u(t,\cdot)\Vert_{L^{q^\prime}}\lesssim t^{-\frac{m}{2}}\tau^{-\frac{m}{2}}B_0(t)B_0^{-1}(\tau)(1+\log\frac{t}{\tau})^{\chi_\delta}
\big(\Vert u_1\Vert_{\dot{H}^{r-1}_q}+\tau^{(m+1)(1-r)}\Vert u_1\Vert_{L^q}\big),
\end{align*}
for $0\leq r<1$,
\begin{align*}
&\Vert u(t,\cdot)\Vert_{L^{q^\prime}}\lesssim t^{-\frac{m}{2}}\tau^{-\frac{m}{2}}B_0(t)B_0^{-1}(\tau)(1+\log\frac{t}{\tau})^{\chi_\delta}
\tau^{(m+1)(1-r)}\Vert u_1\Vert_{L^q},
\end{align*}
where $q, q^\prime, r, \chi_\delta, B_0(t)$ and $B_1(t)$ are the same as Theorem \ref{propuwithtau=1}.
\end{theorem}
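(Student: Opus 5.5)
We will treat Theorem \ref{estimateu0=0} with the same mass-type machinery that gives Theorem \ref{propuwithtau=1}, now keeping track of the initial time $\tau\ge1$. We first apply the invertible transformations (\ref{sub11}), (\ref{sub21}), (\ref{sub31}). Schematically we set $u(t,x)=t^{-\frac{\mu}{2}}t^{-\frac m2}w(s,x)$ with $s=\frac{t^{m+1}}{m+1}$ (up to the precise powers prescribed there). This reduces (\ref{lineareq}) to the Klein--Gordon type equation (\ref{masseq}), namely $\partial_s^2w-\Delta w+\frac{\kappa}{s^2}w=0$ on $s>s_\tau=\frac{\tau^{m+1}}{m+1}$, where $\kappa$ is determined by $\delta$ and $m$ (essentially $\kappa=\frac14-\frac{\delta}{4(m+1)^2}$). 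The hypothesis $\delta<(m+1)^2$ gives $\kappa>0$. With $u_0=0$, the new data are $w(s_\tau)=0$ and $\partial_sw(s_\tau)=\tau^{\frac{\mu}{2}+\frac m2}\tau^{-m}u_1$. This accounts for the prefactor $\tau^{-\frac m2}$ and, combined with the $t$-factors, for $B_1(t)B_1^{-1}(\tau)$. After Fourier transform, $\hat w(s,\xi)=\tau^{\frac{\mu-m}{2}}\,\Phi(s,s_\tau,|\xi|)\,\hat u_1(\xi)$. Here $\Phi$ is a combination of confluent hypergeometric (Whittaker/Bessel-type) functions of $2is|\xi|$ and $2is_\tau|\xi|$, built from the two solutions of (\ref{solutionhuaw}) with a Wronskian normalisation giving the $(s_\tau|\xi|)$-dependence.

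Next we split frequencies with Littlewood--Paley relative to both scales: $|\xi|s\ge1$, $|\xi|s_\tau\ge 1$ (high), $|\xi|s_\tau\le1\le |\xi|s$ (medium), and $|\xi|s\le1$ (low). In the high region both functions obey their oscillatory large-argument asymptotics. $\Phi$ is then a sum of Fourier multipliers $e^{\pm i(s-s_\tau)|\xi|}|\xi|^{-1}a(s,s_\tau,\xi)$ with symbols of order $0$. The standard $L^q$--$L^{q'}$ estimate for wave propagators on dyadic pieces (stationary phase with $n-1$ curvature directions, summed with the $\dot H^r_q$ norm) yields the decay $(s-s_\tau)^{-\frac{n-1}{2}(\frac1q-\frac1{q'})}$. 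After multiplication by $|\xi|$ for $\nabla u$, this becomes the term $\Vert u_1\Vert_{\dot H^r_q}$.

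In the low and medium regions we use the small-argument behaviour $z^{\frac12\pm\frac{\sqrt{\delta}}{2(m+1)}}$; for $\delta\le0$ the exponents are complex or double, which produces the $\log(t/\tau)$ factor when $\delta=0$. The multiplier is then bounded pointwise, and we estimate via $\Vert\hat f\Vert_{L^{q'}}\le\Vert f\Vert_{L^q}$ together with Hölder on the ball $|\xi|\lesssim s_\tau^{-1}$ or $s^{-1}$. This gives factors $s_\tau^{-r}$ (hence $\tau^{-(m+1)r}\Vert u_1\Vert_{L^q}$) in the gradient and $\partial_t$ estimates, and $\tau^{(m+1)(1-r)}$ in the estimate for $u$ itself. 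For $u$, the extra $|\xi|^{-1}$ is harmless in the high region when $r\ge1$, producing $\dot H^{r-1}_q$. When $0\le r<1$ it cannot be absorbed, so the whole estimate is routed through the $L^q$ norm on the bounded-frequency scale. This explains the two cases. The time $t$-exponents from the low region, $s^{\frac12+\frac{\sqrt\delta}{2(m+1)}}s^{-n(\frac1q-\frac1{q'})}$, reproduce the first entry of the max in $B_0$. Those from the high region reproduce the second entry, and the $\tau$-counterparts give $B_0^{-1}(\tau)$. The $\partial_t$ bound follows from $\partial_t=t^m\partial_s$ plus the lower-order term coming from differentiating the weight, which is dominated by the gradient-type bound.

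The main obstacle is the medium region, where $s_\tau|\xi|\le1\le s|\xi|$. There $\Phi$ mixes the small-argument behaviour at $s_\tau$ with the oscillatory behaviour at $s$. We must show that the resulting multiplier is a wave-type FIO with symbol $\lesssim (s_\tau|\xi|)^{\frac12-\frac{\sqrt\delta}{2(m+1)}}|\xi|^{-1}$ satisfying Mikhlin-type derivative bounds uniformly. Then the dyadic $L^q$--$L^{q'}$ estimate can be summed, and the gain from the small factor must compensate exactly to give $\tau$-weights of the form $B_1^{-1}(\tau)$. We would first handle this by writing $\Phi$ through the Wronskian as $\phi_1(s)\phi_2(s_\tau)-\phi_2(s)\phi_1(s_\tau)$. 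We would then bound the $s_\tau$-factors as smooth symbols in $\xi$ (using that $\delta<1$ keeps the exponents integrable near $\xi=0$). The $s$-factors are treated as in the high region, and the dyadic pieces are summed using $\delta<\min\{(m+1)^2,1\}$ to ensure geometric convergence, with the $\epsilon$-loss at $\delta=0$.
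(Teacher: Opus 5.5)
Your route is the paper's. The mass-type substitutions (\ref{sub11}), (\ref{sub21}), (\ref{sub31}) turn (\ref{lineareq}) into (\ref{masseq}) with $\mathcal{U}_0=0$ and $\mathcal{U}_1\simeq\tau^{\frac{\mu-m}{2}}u_1$. The multiplier is written via confluent hypergeometric functions, and the three zones are handled by Hausdorff--Young plus H\"older (low) and by dyadic stationary phase, Riesz--Thorin and Brenner's lemma (medium, high), before substituting back. However, several of the zone bounds you state would not give the weights in the theorem.

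\emph{High zone.} The dyadic pieces sit at $|\xi|\simeq 2^jK/s_\tau$, where $s_\tau=\eta$ in the paper, and the dispersive factor on the $j$-th piece is $\big(1+2^jK(s-s_\tau)/s_\tau\big)^{-\frac{n-1}{2}}$. The only bound uniform in $j\ge0$ is $(s/s_\tau)^{-\frac{n-1}{2}}$. So the decay is $(s/s_\tau)^{-\frac{n-1}{2}(\frac1q-\frac1{q'})}$, not $(s-s_\tau)^{-\frac{n-1}{2}(\frac1q-\frac1{q'})}$. The latter is incompatible with the invariance of (\ref{masseq}) under $(s,x)\mapsto(\lambda s,\lambda x)$, since with $r=n(\frac1q-\frac1{q'})$ the $\dot H^r_q\to L^{q'}$ bound is scale-free.

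\emph{Medium zone.} H\"older on the ball $|\xi|\lesssim s_\tau^{-1}$ gives $s_\tau^{-r}$ but no decay in $s$, because $|\xi|^{1-\iota}\partial_s^\iota\widehat{H}_1=O\big((s_\tau|\xi|)^{1-\text{Re}\rho}\big)=O(1)$ there. The term $B_1(t)B_1^{-1}(\tau)\tau^{-(m+1)r}\Vert u_1\Vert_{L^q}$ comes from the dispersive bound $s^{-\frac{n-1}{2}(\frac1q-\frac1{q'})}s_\tau^{-\frac{n+1}{2}(\frac1q-\frac1{q'})}$ of Proposition \ref{propmedium}, i.e.\ your last paragraph, not the H\"older step.

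\emph{Low zone.} This gives $s^{\text{Re}\rho-1-r}s_\tau^{1-\text{Re}\rho}$, which is dominated by the medium-zone bound because $\text{Re}\rho<1$ and $s\ge s_\tau$.

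\emph{High zone when $0\le r<1$.} This is not a bounded-frequency region. The factor $\tau^{(m+1)(1-r)}$ comes from $\mathcal{F}^{-1}\big((1-\chi(s_\tau|\xi|))|\xi|^{r-1}\big)\in L^1$ with norm $\simeq s_\tau^{1-r}$ (Proposition \ref{propopo}).

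The genuinely missing step is the passage back to $\partial_t u$, which is what the paper's proof of this theorem actually consists of once Proposition \ref{propositionu0=0} is available. Differentiating the weights $t^{-\frac{\mu}{2}}$ and $s^{-\frac{m}{2(m+1)}}$ produces zeroth-order terms such as $\frac{\mu}{2}t^{-1}u$. These are controlled by the $u$-estimate, with growth $B_0$ and data norm $\Vert u_1\Vert_{\dot H^{r-1}_q}+\tau^{(m+1)(1-r)}\Vert u_1\Vert_{L^q}$, not by the gradient bound. Folding them into the stated $\partial_t u$ estimate needs two facts.

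First, $t^{-(m+1)}B_0(t)B_1^{-1}(t)$ must be nonincreasing. Its exponent is $\max\{(m+1)(\text{Re}\rho-1-\frac{n+1}{2}(\frac1q-\frac1{q'}))+\chi_\epsilon,\,-(m+1)\}$, which is negative because $\text{Re}\rho<1$ (i.e.\ $\delta<(m+1)^2$) and $\epsilon$ is small. This gives $t^{-1-\frac m2}B_0(t)B_0^{-1}(\tau)\tau^{1+\frac m2}\le t^{\frac m2}B_1(t)B_1^{-1}(\tau)\tau^{-\frac m2}$.

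Second, for $r\ge1$ a leftover term $\tau^{-(m+1)}\Vert u_1\Vert_{\dot H^{r-1}_q}$ remains, which does not appear on the right-hand side of the statement. It must be absorbed using $\Vert u_1\Vert_{\dot H^{r-1}_q}\lesssim\Vert u_1\Vert_{L^q}^{1/r}\Vert u_1\Vert_{\dot H^r_q}^{1-1/r}$ (Proposition \ref{interpolationtheorem}) and Young's inequality, which bounds it by $\Vert u_1\Vert_{\dot H^r_q}+\tau^{-(m+1)r}\Vert u_1\Vert_{L^q}$.

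The same two steps are already needed at the intermediate stage of the back-substitution. Without them your sketch produces a $\partial_t u$ bound with an extra time weight and an extra data norm, not the one stated.
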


Based on the above estimates along the conjugate line for the solution to the linear equation (\ref{lineareq}), we can apply the Duhamel principle combined with the contraction mapping principle to
establish  the global existence of solutions to the semilinear problem  (\ref{eq}) in the case where the mass term dominates the damping term.  In order to better describe the global existence of solutions to (\ref{eq}), we first list some indices and notations as follows.

For some $\kappa>1$ let us introduce the space
\begin{equation}\label{inditaldataspace}
\mathcal{A}_\kappa\doteq\big(H^1\cap W^{1,1+\frac{1}{\kappa}}\big)\times\big(L^2\cap L^{1+\frac{1}{\kappa}}\big)
\end{equation}
equipped with the norm
\begin{equation}\label{initialdatanorm}
\Vert(u_0,u_1)\Vert_{\mathcal{A}_\kappa}=\Vert u_0\Vert_{H^1}+\Vert u_0\Vert_{W^{1,1+\frac{1}{\kappa}}}
+\Vert u_1\Vert_{L^2}+\Vert u_1\Vert_{L^{1+\frac{1}{\kappa}}}.
\end{equation}
By the definition (\ref{p0ndef}) of   $p_0(n,\mu)$ , we see that
$p_0\big((m+1)n,\mu\big)>1$ is the positive root of
\begin{equation}\label{p0mnmu}
\big((m+1)n+\frac{\mu-1}{2}\big)(p-1)^2+\big((m+1)n+\mu-3\big)(p-1)-4=0.
\end{equation}
Let $\widetilde{p}_{1}(n,m,\mu)>1$ be the positive root of
\begin{align}
\frac{(m+1)n+\mu-1}{2}(p-1)^2+\big((m+1)n+m+\mu-2\big)(p-1)-4=0.\label{eqp1}
\end{align}
By the definition (\ref{p1ndef}) of  $p_1(n,\mu)$, we have $\widetilde{p}_1(n,0,\mu)=p_1(n,\mu)$.
Set
\begin{align}
\mu^{\star}(n,m)=n+\frac{1}{n+1}-mn(1+\frac{1}{n+1}),\ \ \widetilde{\mu}(n,m)=\frac{(1-m)n^2-(m+2)n+m+1}{n}.\label{mustarmutilde}
\end{align}

Before presenting the global existence of the solution to  (\ref{eq}), it is necessary to  establish the local existence,  that is
\begin{proposition}{\bf{\text{(Local solution).}}}\label{propositionlocalexistence}
Let   $n\geq1,m>-1$, $\mu>0,\nu^2\geq$ such that $\delta\doteq(\mu-1)^2-4\nu^2<\min\{(m+1)^2,1\}$. Assume
\begin{equation}\label{assumptionp}
\left\{\begin{aligned}
&p>1,&\quad&\text{if}\ n=1,\\
&1<p\leq1+\frac{2}{n-1},&\quad&\text{if}\ n\geq2
\end{aligned}\right.
\end{equation}
and the initial data  $(u_0,u_1)\in\mathcal{A}_p$, then
there exists a $T>1$ and a unique solution $$u\in C\big([1,T];H^1\cap L^{p+1}\big)\cap C^1\big([1,T]; L^2\big)$$ to (\ref{eq}).
\end{proposition}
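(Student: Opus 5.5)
The plan is a standard energy-space fixed point argument on a short interval $[1,T]$, using the linear $L^2$ energy theory for (\ref{lineareq}) and the Sobolev embedding $H^1\hookrightarrow L^{p+1}$ that the assumption (\ref{assumptionp}) allows. The time-dependent coefficients $t^{2m}$, $\mu/t$, $\nu^2/t^2$ are smooth and bounded above and below on $[1,T]$, so no singular analysis is needed. The structure of $\delta$, and the condition $\delta<\min\{(m+1)^2,1\}$, play no role beyond guaranteeing that we are in the setting of the paper.

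\textbf{Step 1: linear energy estimate.} For $t\in[1,T]$, multiply (\ref{lineareq}) with right-hand side $F$ by $\partial_t u$ and integrate in $x$. Let $E(t)=\frac12\int(|\partial_tu|^2+t^{2m}|\nabla u|^2+|u|^2)\,dx$. Then
\[
E'(t)\le C_T E(t)+\Vert F(t)\Vert_{L^2}\Vert\partial_tu(t)\Vert_{L^2}.
\]
The extra terms $mt^{2m-1}|\nabla u|^2$, $\frac{\mu}{t}|\partial_tu|^2$, $\frac{\nu^2}{t^2}u\partial_tu$ and $u\partial_tu$ are all bounded by $C_TE$, with constants depending on $m,\mu,\nu,T$. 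Gronwall then gives
\[
\sup_{[1,T]}\big(\Vert u\Vert_{H^1}+\Vert\partial_tu\Vert_{L^2}\big)\le C_T\Big(\Vert u_0\Vert_{H^1}+\Vert u_1\Vert_{L^2}+\int_1^T\Vert F(s)\Vert_{L^2}ds\Big).
\]
Existence and continuity in time for the linear problem follow by Galerkin or Fourier methods, since the coefficients are smooth in $t$ on $[1,T]$.

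\textbf{Step 2: nonlinear estimates.} Let $X_T=C([1,T];H^1)\cap C^1([1,T];L^2)$ and let $B_R$ be its ball of radius $R$. By Gagliardo--Nirenberg, $H^1\hookrightarrow L^{2p}$ holds precisely when $p\le\frac{n}{n-2}$ for $n\ge3$, and for all $p$ when $n=1,2$. Condition (\ref{assumptionp}) gives $p\le\frac{n+1}{n-1}\le\frac{n}{n-2}$, so this embedding is available. Hence
\[
\Vert |u|^p\Vert_{L^2}=\Vert u\Vert_{L^{2p}}^p\lesssim\Vert u\Vert_{H^1}^p,
\]
and
\[
\Vert |u|^p-|v|^p\Vert_{L^2}\lesssim \Vert u-v\Vert_{L^{2p}}\big(\Vert u\Vert_{L^{2p}}^{p-1}+\Vert v\Vert_{L^{2p}}^{p-1}\big),
\]
by the elementary inequality $||a|^p-|b|^p|\le p|a-b|(|a|^{p-1}+|b|^{p-1})$ and Hölder.

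\textbf{Step 3: contraction.} Define $\Phi(v)$ as the solution of (\ref{lineareq}) with $\tau=1$, data $(u_0,u_1)$ and source $|v|^p$. Steps 1 and 2 give
\[
\Vert\Phi(v)\Vert_{X_T}\le C_T\big(\Vert(u_0,u_1)\Vert_{\mathcal{A}_p}+(T-1)R^p\big),\qquad \Vert\Phi(v)-\Phi(w)\Vert_{X_T}\le C_T(T-1)R^{p-1}\Vert v-w\Vert_{X_T}.
\]
Choose $R=2C_2\Vert(u_0,u_1)\Vert_{\mathcal{A}_p}$, with $C_2$ the constant of $C_T$ at $T=2$, and then $T\in(1,2]$ close to $1$. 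This makes $\Phi$ a contraction on $B_R$, and Banach's fixed point theorem gives a unique solution.

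\textbf{Step 4: regularity.} The continuity $u\in C([1,T];L^{p+1})$ follows from $H^1\hookrightarrow L^{p+1}$, valid since $p+1\le\frac{2n}{n-2}$ under (\ref{assumptionp}). Uniqueness in the whole class, not just in $B_R$, follows from the same difference estimate combined with Gronwall. The $W^{1,1+1/p}$ part of $\mathcal{A}_p$ is not needed locally; it is only carried along for the later global argument.

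\textbf{Main obstacle.} The delicate point is only the exponent bookkeeping in Step 2, namely checking that $|u|^p\in L^2$ is controlled by the $H^1$ norm in every dimension under (\ref{assumptionp}). For $n=1,2$ this is immediate. If the $L^{2p}$ route looked tight, I would fall back on estimating $\Vert|u|^p\Vert_{L^{(p+1)/p}}$ through a Strichartz-free duality pairing with $\partial_tu$ in $L^{p+1}$. I do not expect that fallback to be necessary, given the margin shown above.
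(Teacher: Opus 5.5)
Your argument is correct, but it takes a different route from the paper.

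\textbf{What the paper does.} The appendix runs the contraction in the same space and norm as the global argument,
$M[u](t)=\Vert\nabla u\Vert_{L^2}+\Vert\partial_tu\Vert_{L^2}+\Vert u\Vert_{L^{p+1}}+\Vert u\Vert_{L^2}$.
It proves $\Vert\mathcal{N}u\Vert_{X(T)}\le C_{1T}\Vert(u_0,u_1)\Vert_{\mathcal{A}_p}+C_{2T}\Vert u\Vert_{X(T)}^p$, together with the difference bound, by repeating the proof of Proposition \ref{keypropositionmass}, with $C_{2T},C_{3T}\to0$ as $T\to1^+$. In that proof:
\begin{itemize}
\item the linear part and the Duhamel term are bounded by the $L^q$--$L^{q'}$ estimates of Theorems \ref{propuwithtau=1}--\ref{estimateu0=0}, as packaged in Corollaries \ref{energyspacethm1}--\ref{energyspacethm2}; this is where the confluent hypergeometric analysis and the hypothesis on $\delta$ enter;
\item $\Vert|u|^p\Vert_{L^2}$ is controlled by Gagliardo--Nirenberg between $\Vert\nabla u\Vert_{L^2}$ and $\Vert u\Vert_{L^{p+1}}$, with $\theta$ from (\ref{thetaeq});
\item the $L^{p+1}$ component is propagated through the $L^{1+\frac1p}$--$L^{p+1}$ estimate with $r_p\le1$. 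This is why the $W^{1,1+\frac1p}\times L^{1+\frac1p}$ part of $\mathcal{A}_p$ and the bound $p\le1+\frac{2}{n-1}$ are used.
\end{itemize}

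\textbf{What you do instead.} You use the fact that on a compact interval $[1,T]$ all the coefficients are harmless. The elementary energy inequality with Gronwall, plus the embeddings $H^1\hookrightarrow L^{2p}$ and $H^1\hookrightarrow L^{p+1}$, then suffice. Your factor $(T-1)$ plays the role of $C_{2T}$ and $C_{3T}$.

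\textbf{What your route buys.}
\begin{itemize}
\item It is self-contained and never uses $\delta<\min\{(m+1)^2,1\}$ or the $L^{1+\frac1p}$ part of the data.
\item It only needs $p\le\frac{n}{n-2}$ for $n\ge3$.
\item Your Gronwall argument on the difference gives uniqueness in the whole class $C([1,T];H^1\cap L^{p+1})\cap C^1([1,T];L^2)$. The paper's fixed point, as written, only gives uniqueness inside the ball $X(T,M)$.
\end{itemize}

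\textbf{What the paper's route buys.} Consistency with the global framework: the same norm and the same linear estimates. That is convenient for the continuation argument, but it is not needed for the local statement itself.
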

\begin{remark}
It should be noted that since the proof of the local solution to  (\ref{eq}) is similar in method to that of the global solution, we present the detailed proof for the global solution in the main text, while placing the simpler proof framework  of the local solution, i.e., Proposition \ref{propositionlocalexistence},  in the appendix.
\end{remark}
\begin{theorem}{\bf{\text{(Global solution).}}}\label{mainresult}
Let $n\geq1$, $m>-1+\frac{1}{n}$
and $\mu,\nu^2\geq0$ such that $\delta\doteq(\mu-1)^2-4\nu^2\leq0$.
Suppose that  $p$ satisfies (\ref{assumptionp})
and
$\mu>\max\big\{0,\min\{\tilde{\mu}(n,m),\mu^\star(n,m)\}\big\}$. If
\begin{equation}\label{globalindex}
p>\max\big\{p_0\big((m+1)n,\mu\big),\widetilde{p}_1(n,m,\mu)\big\},
\end{equation}
then there exists a $\varepsilon>0$,  such that
for any $(u_0,u_1)\in\mathcal{A}_p$ satisfying  $\Vert(u_0,u_1)\Vert_{\mathcal{A}_p}\leq\varepsilon$, (\ref{eq}) admits a unique global solution
$$u\in C\big([1,\infty);H^1\cap L^{p+1}\big)\cap C^1\big([1,\infty); L^2\big).$$
Here $p_0\big((m+1)n, \mu\big), \widetilde{p}_1(n,m,\mu),\mu^\star(n,m),\tilde{\mu}(n,m)$ are defined by (\ref{p0mnmu}), (\ref{eqp1}) and (\ref{mustarmutilde}), respectively. Moreover, $u$ satisfies the estimates
\begin{align}
&\Vert \nabla u(t,\cdot)\Vert_{L^2}\lesssim t^{-\frac{m+\mu}{2}}(1+\log t)^{\chi_\delta}\Vert (u_0,u_1)\Vert_{\mathcal{A}_p},\label{thmes1}\\
&\Vert\partial_tu(t,\cdot)\Vert_{L^2}\lesssim t^{\frac{m-\mu}{2}}(1+\log t)^{\chi_\delta}\Vert (u_0,u_1)\Vert_{\mathcal{A}_p},\label{thmes2}\\
&\Vert u(t,\cdot)\Vert_{L^2}\lesssim t^{\frac{m-\mu}{2}+1}(1+\log t)^{\chi_\delta}\Vert (u_0,u_1)\Vert_{\mathcal{A}_p},\label{thmes3}\\
&\Vert u(t,\cdot)\Vert_{L^{p+1}}\lesssim t^{-\frac{m+\mu}{2}}A_p(t)(1+\log t)^{\chi_\delta}\Vert (u_0,u_1)\Vert_{\mathcal{A}_p},\label{thmes4}
\end{align}
where $\chi_\delta$ is defined in  Theorem \ref{propuwithtau=1} and $A_p(t)=t^{\max\{\frac{m+1}{2}-(m+1)n(1-\frac{2}{p+1})+\chi_\epsilon,-\frac{(m+1)(n-1)}{2}(1-\frac{2}{p+1})\}}$
with $\chi_\epsilon=\left\{\begin{aligned}
&\epsilon,&\quad&\text{if}\ \ \delta=0,\\
&0,&\quad&\text{if}\ \ \delta\neq0,
\end{aligned}\right.$
for sufficiently small $\epsilon>0$.

\end{theorem}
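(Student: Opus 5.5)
We argue by a contraction mapping in a weighted energy space built from the decay rates (\ref{thmes1})--(\ref{thmes4}). For $T>1$ let $X(T)$ be the completion of $C([1,T];H^1\cap L^{p+1})\cap C^1([1,T];L^2)$ under
\begin{align*}
\Vert u\Vert_{X(T)}=\sup_{1\le t\le T}(1+\log t)^{-\chi_\delta}\Big(&t^{\frac{m+\mu}{2}}\Vert\nabla u(t)\Vert_{L^2}+t^{\frac{\mu-m}{2}}\Vert\partial_t u(t)\Vert_{L^2}\\
&+t^{\frac{\mu-m}{2}-1}\Vert u(t)\Vert_{L^2}+t^{\frac{m+\mu}{2}}A_p(t)^{-1}\Vert u(t)\Vert_{L^{p+1}}\Big).
\end{align*}
Define $Nu=u^{lin}+Fu$. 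Here $u^{lin}$ solves (\ref{lineareq}) with $\tau=1$ and data $(u_0,u_1)$. The Duhamel term is $Fu(t)=\int_1^t E_1(t,s)|u(s)|^p\,ds$, where $E_1(t,s)$ is the propagator of Theorem \ref{estimateu0=0} with $u_0=0$, initial time $\tau=s$ and $u_1=|u(s)|^p$. We will show three things. First, $\Vert u^{lin}\Vert_{X(T)}\lesssim\Vert(u_0,u_1)\Vert_{\mathcal A_p}$. Second, $\Vert Nu\Vert_{X(T)}\lesssim \Vert(u_0,u_1)\Vert_{\mathcal A_p}+\Vert u\Vert_{X(T)}^p$. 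Third, $\Vert Nu-Nv\Vert_{X(T)}\lesssim \Vert u-v\Vert_{X(T)}(\Vert u\Vert_{X(T)}^{p-1}+\Vert v\Vert_{X(T)}^{p-1})$. All constants must be independent of $T$. For small $\varepsilon$ the Banach fixed point theorem then gives a solution on every $[1,T]$. Together with Proposition \ref{propositionlocalexistence} and a standard continuation argument, this yields the global solution with the stated bounds.

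For the linear part we apply Theorem \ref{propuwithtau=1} twice. With $q=q'=2$ we have $r=0$ and $B_1(t)=t^{-\mu/2}$, which gives (\ref{thmes1}) and (\ref{thmes2}). For $\Vert u\Vert_{L^2}$ we take the $L^2$ case of $B_0$, namely $t^{-\mu/2+\max\{(m+1+\sqrt\delta)/2,\,0\}}$, and use $\delta\le0$. This gives the weight $t^{\frac{m-\mu}{2}+1}$, where we absorb the $t^{\epsilon}$ when $\delta=0$; the loose weight $+1$ makes this harmless. For the $L^{p+1}$ norm we take $q'=p+1$, $q=1+\frac1p$. Then $r=n(1-\frac{2}{p+1})\le1$ by (\ref{assumptionp}), so $H^{r}_q\hookleftarrow W^{1,q}$ is controlled by $\mathcal A_p$, and $B_0$ reproduces $t^{-\mu/2}A_p(t)$. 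For the Duhamel part we feed Theorem \ref{estimateu0=0} with $\tau=s$, again at $q=2$ and at $q=1+\frac1p$. The first case needs $\Vert |u|^p\Vert_{L^2}=\Vert u\Vert_{L^{2p}}^p$. The second case needs $\Vert|u|^p\Vert_{L^{1+1/p}}=\Vert u\Vert_{L^{p+1}}^p$, and since $r\le1$ the $0\le r<1$ branch leaves only an $L^q$ norm. We bound $\Vert u(s)\Vert_{L^{2p}}$ by Gagliardo--Nirenberg between $\Vert u\Vert_{L^2}$ and $\Vert\nabla u\Vert_{L^2}$, or between $L^{p+1}$ and $\nabla u$. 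This is admissible since $2p\le 2n/(n-2)$ under (\ref{assumptionp}).

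Each piece then reduces to an integral of the form
$$t^{a}(1+\log t)^{\chi_\delta}\int_1^t s^{b}(1+\log s)^{p\chi_\delta}\,ds\ \Vert u\Vert_{X(T)}^p .$$
Here $a$ is the target exponent, and $b$ collects three contributions: the factor $\tau^{-m/2}B_1^{-1}(\tau)$ or $B_0^{-1}(\tau)$, the factor $\tau^{(m+1)(1-r)}$, and $p$ times the decay exponent of $u$. We need $b<-1$, so that the integral converges and the target rate is recovered. In the borderline case we allow $b=-1$ only with an $\epsilon$ loss absorbed by the factor $t^{\epsilon}$ or by the margin in $\delta\le0$. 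When $b>-1$ the integral grows like $t^{b+1}$, and we instead require $a+b+1$ to be at most the target. Working out these inequalities is the core step and the \textbf{main obstacle}. The $L^2$-type conditions (via $B_1$ and the Gagliardo--Nirenberg interpolation) should reduce to $p>\widetilde p_1(n,m,\mu)$, the root of (\ref{eqp1}). The $L^{p+1}$ condition, which uses $A_p$ and the two branches of the maximum in $B_0$, should reduce to $p>p_0((m+1)n,\mu)$, the root of (\ref{p0mnmu}). The restrictions $m>-1+\frac1n$ and $\mu>\max\{0,\min\{\tilde\mu,\mu^\star\}\}$ should enter exactly when deciding which branch of the maximum in $A_p$ and $B_0$ is active. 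They should also guarantee that the exponents coming from $B_0^{-1}(\tau)$ and $\tau^{(m+1)(1-r)}$ keep $b$ on the correct side. I would check this branch by branch, first for $\delta<0$ and then for $\delta=0$ with the $\epsilon$ and log corrections.

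The contraction estimate follows the same computations. We use $||u|^p-|v|^p|\lesssim|u-v|(|u|^{p-1}+|v|^{p-1})$ together with Hölder's inequality, splitting $L^{1+1/p}$ as $L^{p+1}\cdot L^{(p+1)/(p-1)}$ and $L^2$ as $L^{2p}\cdot L^{2p/(p-1)}$, and then the same Gagliardo--Nirenberg bounds. Uniqueness comes from the contraction in $X(T)$ for every $T$. The estimates (\ref{thmes1})--(\ref{thmes4}) follow from $\Vert u\Vert_{X(\infty)}\lesssim\varepsilon$.
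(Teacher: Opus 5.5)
Your scaffolding is the paper's: the same weighted norm (\ref{normmathcalx}), Theorems \ref{propuwithtau=1}--\ref{estimateu0=0} at $q=2$ and $q=1+\frac1p$ with $r_p=n(1-\frac{2}{p+1})\le1$, Gagliardo--Nirenberg between $\nabla u\in L^2$ and $u\in L^{p+1}$, and the same H\"older splitting for the difference.

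\textbf{The gap.} You stop exactly at the step that carries the theorem: showing that the Duhamel integrals are bounded uniformly in $t$ precisely when $p>\max\{p_0((m+1)n,\mu),\widetilde p_1(n,m,\mu)\}$. The roadmap you give for that step is also wrong, so it cannot be finished as described.
\begin{itemize}
\item The $L^2$-type integrals are not where $\widetilde p_1$ comes from. They are dominated by the $L^{p+1}$ integral, and their own threshold is lower.
\item The hypotheses $m>-1+\frac1n$ and $\mu>\max\{0,\min\{\tilde\mu,\mu^\star\}\}$ are never used in the estimates. They only ensure that $\max\{p_0,\widetilde p_1\}<1+\frac{2}{n-1}$ (Remark \ref{nonempty}), i.e.\ that the admissible range of $p$ is nonempty.
\item Which branch of $A_p$ is active is decided by $\mu$ versus $\mu^\star$ (Remark \ref{remarkprange}), not by those hypotheses.
\item The case $b=-1$ can never be tolerated. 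For $\delta<0$ the norm carries no logarithm, and for $\delta=0$ the integral would grow like $(\log t)^{p+1}$. One must fix $\epsilon$ after $p$ so that the strict inequality survives.
\end{itemize}

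\textbf{The missing idea} is a domination step that reduces everything to one integral. Take $\theta$ from (\ref{thetaeq}). The pieces $\nabla u^n$ and $\partial_tu^n$, and also $\Vert u^n\Vert_{L^2}$ after bounding $\tau^{m+1}A_1(\tau)^{-1}\le t^{\frac{m+1}{2}-\chi_\epsilon}$, all lead (up to logarithms) to
\[
\int_1^t\tau^{-\frac m2(p+1)-\frac\mu2(p-1)}A_p(\tau)^{(1-\theta)p}\,d\tau .
\]
The $L^{p+1}$ piece has the same integrand multiplied by $\tau^{(m+1)(1-r_p)}A_p(\tau)^{-(1-\theta p)}$.

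Under (\ref{assumptionp}) one has $\theta p=\frac{n(p-1)}{n+2-(n-2)p}\le1$; for $n\ge2$ this is equivalent to $p\le1+\frac2{n-1}$. Together with $r_p\le1$ this gives $A_p(\tau)^{1-\theta p}\le\tau^{(m+1)(1-r_p)}$. This is trivial if $A_p\le1$. Otherwise the active exponent is the first one, and it is at most $(m+1)(1-r_p)$. This is (\ref{comparebigsmall}), so the $L^{p+1}$ integral dominates all the others.

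Write $x=p-1$ and $N=(m+1)n$. The exponent of the dominating integrand is the maximum of
\[
\tfrac{1-\mu}{2}x+1-\tfrac{Nx(x+1)}{x+2}+\chi_\epsilon x
\quad\text{and}\quad
1-\tfrac{m+\mu}{2}x-\tfrac{Nx}{x+2}-\tfrac{(N-m-1)x^2}{2(x+2)},
\]
one for each branch of $A_p$. Requiring the first to be $<-1$ (for small $\epsilon$) is exactly $p>p_0((m+1)n,\mu)$ from (\ref{p0mnmu}). Requiring the second to be $<-1$ is exactly $p>\widetilde p_1(n,m,\mu)$ from (\ref{eqp1}). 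So both thresholds come from the single $L^{p+1}$ Duhamel integral, one per branch of the maximum in $A_p$. This reduction and computation is what your proposal still needs to supply.
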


\begin{remark}\label{reasonablenessremark}
In the special case $m=0$,  (\ref{eq}) reduces to the semilinear wave equation with scale-invariant damping and mass (\ref{waveeq}).
Theorem \ref{mainresult} coincides exactly with the corresponding result in \cite{DaPa2021}  for (\ref{waveeq}).  Hence, our result can be interpreted as an extension from the wave operator $\partial_t^2-\Delta$ to the Gellerstedt operator $\partial_t^2-t^{2m}\Delta$, which confirms the validity of Theorem \ref{mainresult}.
\end{remark}

\begin{remark}\label{remarkprange}
From the structural comparison of (\ref{p0mnmu}) and (\ref{eqp1}), one observes that $p_0\big((m+1)n,\mu)=\widetilde{p}_1(n,m,\mu)$ solves $p-1=\frac{2}{n}.$
Moreover, combining this with Remark \ref{nonempty} below, one sees $p_0,p_1$ are injective with respect to $\mu$. Consequently,  we can compute that  $p_0\big((m+1)n,\mu)=\widetilde{p}_1(n,m,\mu)$  if and only if $\mu=\mu^{\star}(n,m)=n+\frac{1}{n+1}-mn(1+\frac{1}{n+1})$.
In addition,  an inspection of  (\ref{p0mnmu}) and (\ref{eqp1}) reveals that when $\mu$ is large,
\begin{align*}
p_0\big((m+1)n,\mu\big)\sim\frac{4}{(m+1)n+\mu-3},\ \ \widetilde{p}_1(n,m,\mu)\sim\frac{4}{(m+1)n+m+\mu-2},
\end{align*}
which yields
\begin{equation*}
\max\big\{p_0\big((m+1)n,\mu\big),\widetilde{p}_1(n,m,\mu)\big\}=
\left\{\begin{aligned}
&p_0\big((m+1)n,\mu\big),&\quad& \text{if}\ \mu\geq\mu^{\star}(n,m), \\
&\widetilde{p}_1(n,m,\mu), &\quad& \text{if}\ \mu<\mu^{\star}(n,m).
\end{aligned}
\right.
\end{equation*}

Based on the above analysis and the proof of Proposition \ref{proofofproposition}, it follows that
\begin{equation*}
A_p(t)=
\left\{\begin{aligned}
&t^{\frac{m+1}{2}-(m+1)n(1-\frac{2}{p+1})},&\quad& \text{if}\ \mu\geq\mu^{\star}(n,m), \\
&t^{-\frac{(m+1)(n-1)}{2}(1-\frac{2}{p+1})}, &\quad& \text{if}\ \mu<\mu^{\star}(n,m),
\end{aligned}
\right.
\end{equation*}
which means $\mu^\star(n,m)$ represents the threshold between the two type of time exponents appearing in
the
$L^{1+\frac{1}{p}}-L^p$ estimate (\ref{thmes4}) for the solution.

\end{remark}

Remark \ref{nonempty} will show that the condition $\mu>\max\big\{0,\min\{\tilde{\mu}(n,m),\mu^\star(n,m)\}\big\}$  in Theorem \ref{mainresult}  is imposed to ensure that the range of  $p$ is nonempty.
Before proceeding with the explanation, we first state a basic algebraic result.
\begin{lemma}\label{monolemma}
(\cite{DaPa2021}) Let $a_0,c,\alpha,\gamma$ be positive numbers and $b_0\in\mathbb{R}$. Consider the equation
\begin{equation}\label{eqquad}
ax^2+2bx-c=0,\ \ a=a_0+\gamma\alpha, \ \ b=b_0+\alpha.
\end{equation}
Then, the positive solution $x=x(\alpha)$ to (\ref{eqquad}) is strictly decreasing with respect to $\alpha$.
\end{lemma}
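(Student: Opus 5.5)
The statement to prove is Lemma \ref{monolemma}: the positive root of $ax^2+2bx-c=0$ is strictly decreasing in $\alpha$. The plan is to write the root explicitly and then differentiate implicitly. Since $a=a_0+\gamma\alpha>0$ and $c>0$ for every admissible $\alpha\geq0$ (or for $\alpha$ in any range where $a>0$), the product of the roots is $-c/a<0$. Hence there is exactly one positive root, namely
\begin{equation*}
x(\alpha)=\frac{-b+\sqrt{b^2+ac}}{a}=\frac{c}{b+\sqrt{b^2+ac}},
\end{equation*}
where the second form comes from rationalizing. The denominator $b+\sqrt{b^2+ac}$ is positive because $\sqrt{b^2+ac}>|b|$. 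So $x(\alpha)$ is a smooth function of $\alpha$.

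The most direct route uses the rationalized form. Set $D(\alpha)=b+\sqrt{b^2+ac}$, so that $x=c/D$ and it suffices to show $D$ is strictly increasing. Differentiating in $\alpha$, with $b'=1$ and $a'=\gamma$, gives
\begin{equation*}
D'(\alpha)=1+\frac{2b+\gamma c}{2\sqrt{b^2+ac}}=\frac{\sqrt{b^2+ac}+b+\tfrac{\gamma c}{2}}{\sqrt{b^2+ac}}.
\end{equation*}
The numerator is $D+\gamma c/2$, and both terms are positive. Hence $D'>0$, so $x'=-cD'/D^2<0$, and strict monotonicity follows.

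As a cross-check, implicit differentiation of $F(x,\alpha)=ax^2+2bx-c$ at the root gives $x'=-\partial_\alpha F/\partial_x F=-(\gamma x^2+2x)/(2ax+2b)$. Here $\gamma x^2+2x>0$ for $x>0$, and $ax+b=\sqrt{b^2+ac}>0$ at the positive root, which gives the same sign.

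There is no real obstacle. The only point needing care is that $b_0$, and hence $b$, may be negative. This is why I would avoid sign arguments on $b$ and use the identity $ax+b=\sqrt{b^2+ac}$ (equivalently $D>0$), which holds regardless of the sign of $b$.
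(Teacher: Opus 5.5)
Your proof is correct. It differs only modestly from the paper's.

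The paper never writes the root down. It differentiates the equation implicitly to get $2x'(ax+b)+\gamma x^2+2x=0$. It then reads off $ax+b>0$ from the equation itself: $2ax^2+2bx=c+ax^2>0$ with $x>0$. That is exactly your ``cross-check''; your identity $ax+b=\sqrt{b^2+ac}$ is the same sign fact, obtained from the closed form instead.

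Your main route is different. You use the rationalized formula $x=c/D$ with $D=b+\sqrt{b^2+ac}$, and show $D'(\alpha)=(D+\gamma c/2)/\sqrt{b^2+ac}>0$.

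Each approach has an advantage:
\begin{itemize}
\item \textbf{Yours:} the explicit route gives existence, uniqueness and smoothness of the positive root at no extra cost. The paper assumes these tacitly when it differentiates; they follow from the implicit function theorem, since $\partial_x(ax^2+2bx-c)=2(ax+b)\neq 0$ at the root.
\item \textbf{The paper's:} the implicit argument avoids the square root entirely. It would therefore carry over more easily if $a$ and $b$ depended on $\alpha$ in a less explicit way.
\end{itemize}
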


\begin{proof}
Taking the derivative of (\ref{eqquad})  with respect to $\alpha$ yields
\begin{align}
2x'(ax+b)+\gamma x^2+2x=0.\label{root}
\end{align}
For the positive root $x$ of (\ref{eqquad}), it follows from the equation that $2ax^2+2bx=c+ax^2>0$, which implies $ax+b>0$. Hence, by (\ref{root}), $x'(\alpha)<0$, which proves that $x(\alpha)$ is decreasing.
\end{proof}

\begin{remark}\label{nonempty}
By Lemma \ref{monolemma}, we have $m>-1+\frac{1}{n}$,  and from the form of (\ref{p0mnmu}) and (\ref{eqp1}), it follows that $p_0\big((m+1)n,\mu\big)$ and $\widetilde{p}_1(n,m,\mu)$ are decreasing with respect to $\mu$. To satisfy condition (2.8), based on  Remark \ref{remarkprange} and the condition $p\leq1+\frac{2}{n-1} (n\geq2)$, we need to ensure that for $n\geq2$,
\begin{equation*}
\max\big\{p_0\big((m+1)n,\mu\big),\ \widetilde{p}_1(n,m,\mu)\big\}<1+\frac{2}{n-1}.
\end{equation*}
Note that $p_0\big((m+1)n,\mu^\star(n,m)\big)=1+\frac{2}{n}$. Hence, by the monotonicity of $p_0\big((m+1)n,\mu\big)$ with respect to $\mu$,  we have
$$p_0\big((m+1)n,\mu\big)\leq p_0\big((m+1)n,\mu^\star(n,m)\big)=1+\frac{2}{n}<1+\frac{2}{n-1}$$
for any $\mu\geq\mu^\star(n,m,\mu)$. That is, $p_0\big((m+1)n,\mu\big)<1+\frac{2}{n-1}$ holds automatically.  Substituting $\widetilde{p}_1(n,m,\mu)=1+\frac{2}{n-1}$ into (\ref{eqp1}) yields $\mu=\frac{(1-m)n^2-(m+2)n+m+1}{n}=\tilde{\mu}(n,m)$. By the  monotonicity of  $\widetilde{p}_1(n,m,\mu)$ with respect to $\mu$, we obtain
$$
\widetilde{p}_1(n,m,\mu)<\widetilde{p}_1(n,m,\tilde{\mu}(n,m))=1+\frac{2}{n-1}, \ \ \text{for any}\ \ \mu>\tilde{\mu}.
$$
\end{remark}

Remarks \ref{remarkprange} and \ref{nonempty} establish the admissible range of
$p$ in Theorem \ref{mainresult} by analyzing the relations among the indices.  Nevertheless, a concrete example better illustrates the conclusion of Theorem \ref{mainresult}.

\begin{example}\label{ex2}
For $n=3$, $m=1>-1+\frac{1}{n}=-\frac{2}{3}$,  it can be  computed  that $\mu^\star(3,1)=-\frac{1}{2},\tilde{\mu}(3,1)=-\frac{7}{3}$, choosing $\mu=1>\max\big\{0,\min\{\mu^\star(3,1),\tilde{\mu}(3,1)\}\big\}=0$, $\nu=2$ such that $\delta=-16<0$ and $p_0=1+\frac{\sqrt{112}-4}{12}\approx 1.549>1.528\approx1+\frac{-6+\sqrt{84}}{6}=\widetilde{p}_1$. Then for
$\max\{p_0,\widetilde{p}_1\}=1+\frac{\sqrt{112}-4}{12}<p\leq 2=1+\frac{2}{n-1}$, (\ref{eq}) has small initial data  global solution.
\end{example}

\begin{remark}\label{deltadayu0case}
Since this paper does not provide blow-up results for (\ref{eq}) in the mass-dominated case, we cannot determine the critical exponent for   (\ref{eq}) in this regime. For the sake of simplicity in both the presentation and the proof, we restrict our detailed analysis to the case $\delta\leq0$. In fact, following the arguments in Section \ref{sectionproofmainresult}  and the calculations in Remarks \ref{remarkprange}, \ref{nonempty}, we can likewise establish the global existence of solutions to (\ref{eq}) in the case  $0<\delta<\min\{\frac{(m+1)^2}{n^2},1\}$.    In this situation, however,  the key  condition (\ref{globalindex}) needs to be modified to
\begin{equation*}
p>\max\big\{p_0\big((m+1)n,\mu-\sqrt{\delta}\big),\widetilde{p}_1(n,m,\mu)\big\}.
\end{equation*}
Moreover,   $\tilde{\mu}(n,m)$ and the admissible range of $m$ are identical to those described in Theorem \ref{mainresult},  whereas $\mu^\star$ becomes
\begin{equation*}
\begin{aligned}
&\mu^\star=\mu^\star(n,m,\delta)\\
&\resizebox{0.97\hsize}{!}{$
\begin{aligned}
&=\frac{2\big((m+1)n-\sqrt{\delta}\big)^2
-\big((m+1)n+m-2\big)(m+1+\sqrt{\delta})\big((m+1)n-\sqrt{\delta}\big)-\big((m+1)n-1\big)(m+1+\sqrt{\delta})^2}{(m+1+\sqrt{\delta})(m+1)(n+1)},
\end{aligned}$}
\end{aligned}
\end{equation*}
which satisfies $\mu^\star(n,m,\mu,0)=\mu^\star(n,m,\mu).$  As in Example \ref{ex2}, one can also consider the more illustrative case, such as $n=3,m=1,\mu=2,\nu=\frac{\sqrt{6}}{6}$.

\end{remark}

\begin{remark}\label{tableanboutblouandglobal}
After establishing the global existence of solutions to (\ref{eq}) in the case where the mass term dominates over the damping term, the study of its critical exponent is further advanced. For a more intuitive understanding of (\ref{eq}), we summarize the related results in Table 4 and Table 5.
\begin{table}[h]\label{eqblowuptable4}
\centering
\setlength{\tabcolsep}{0.5mm}{
  \setlength{\abovecaptionskip}{0pt} % 设置标题与表格之间的垂直间距
  \begin{tabular}{c|c|c}
    \toprule
    The range of $\delta$ & The range of $p$ & References  \\
    \hline
    $\delta\geq0$ & $1<p<\max\{p_S(n+\frac{\mu}{m+1},m),p_F((m+1)n+\frac{\mu-1-\sqrt\delta}{2})\}$ &Thm.1.2 \cite{Pa2025}  \\
    \hline
    $\delta\geq0$ & $p=p_S(n+\frac{\mu}{m+1},m)$ & Thm.1.2 \cite{LaiPa2024}\\
    \hline
    $\delta\geq0$ & $p= p_F((m+1)n+\frac{\mu-1-\sqrt\delta}{2})$ & Thm.1.3 \cite{Pa2025} \\
    \bottomrule
  \end{tabular}}
  \vspace{5pt} % 手动添加一些额外的垂直间距
    \caption{Blow-up results of (\ref{eq})}
\end{table}

\begin{table}[h]\label{eqglobaltable5}
\renewcommand\arraystretch{1.2}
 \centering
\setlength{\tabcolsep}{0.5mm}{
  \setlength{\abovecaptionskip}{0pt} % 设置标题与表格之间的垂直间距
  \begin{tabular}{c|c|c}
    \toprule
    The range of $\delta$ & The range of $p$  & References\\
    \hline
    $\delta\geq(m+1)^2(n-1)^2$ & $p>p_F((m+1)n+\frac{\mu-1-\sqrt\delta}{2})$ &Thm.2.1,2.2 \cite{LiGuo2025}  \\
    \hline
    $\delta\leq0$ & $p>\max\big\{p_0\big((m+1)n,\mu\big),\widetilde{p}_1(n,m,\mu)\big\}$ & This paper \\
    \bottomrule
  \end{tabular}}
  \vspace{5pt} % 手动添加一些额外的垂直间距
    \caption{Global existence of (\ref{eq})}
\end{table}
\vspace{10pt}
\end{remark}

\section{The estimate for solution to the linear EPDT equation on the conjugate line}\label{linearepdtestimate}

\subsection{The expression for the solution to the linear equation}\label{expression}

Denote by $E_0(t,\tau,x)$ and $E_1(t,\tau,x)$ the fundamental solutions to (\ref{lineareq}) with the initial data $(u_0,u_1)=(\delta_0,0)$ and $(u_0,u_1)=(0,\delta_0)$, respectively, where $\delta_0$ is the Dirac function. By Duhamel's principle, the solution to (\ref{eq}) can be expressed as
\begin{align}
u(t,x)=u^l(t,x)+u^n(t,x),\label{expressionultx}
\end{align}
where
\begin{align}
&u^l(t,x)\doteq E_0(t,1,x)\ast_xu_0(x)+E_1(t,1,x)\ast_xu_1(x),\label{ultxdef}\\
&u^n(t,x)\doteq\int_1^tE_1(t,\tau,x)\ast_x\vert u(\tau,x)\vert^pd\tau.\label{untxdef}
\end{align}

Through the variable transform
\begin{equation}\label{sub11}
u(t,x)\doteq t^{-\frac{\mu}{2}}w(t,x),
\end{equation}
(\ref{lineareq}) can be transformed into
\begin{equation}\label{eqw2}
 \left\{
\begin{aligned}
&\partial_t^2w-t^{2m}\Delta w+\frac{1-\delta}{4t^2}w=0,&\quad &t>\tau\geq1,\ x\in\mathbb{R}^n,\\
&w(\tau,x)= w_0(x), \ \partial_tw(\tau,x)= w_1(x),              &\quad &x\in\mathbb{R}^n,
\end{aligned}
 \right.
  \end{equation}
where
\begin{equation*}
\begin{aligned}
w_0(x)\doteq\tau^{\frac{\mu}{2}}u_0(x), \ w_1(x)\doteq\tau^{\frac{\mu}{2}}u_1(x)+\frac{\mu}{2}\tau^{\frac{\mu}{2}-1}u_0(x).
\end{aligned}
\end{equation*}
Let
\begin{equation}\label{sub21}
s\doteq\frac{t^{m+1}}{m+1},\ \eta\doteq\frac{\tau^{m+1}}{m+1}\ \ \text{and} \ \ v(s,x)\doteq w\Bigl(\bigl((m+1)s\bigr)^{\frac{1}{m+1}}, x\Bigr),
\end{equation}
(\ref{eqw2}) is converted into
\begin{equation}\label{eqv2}
 \left\{
\begin{aligned}
&\partial_s^2v-\Delta v+\frac{m}{m+1}\frac{\partial_sv}{s}+\frac{1-\delta}{4(m+1)^2}\frac{v}{s^2}=0,&\quad &s>\eta,\ x\in\mathbb{R}^n, \\
&v(\eta,x)= v_0(x), \ \partial_sv(\eta,x)= v_1(x),               &\quad &x\in\mathbb{R}^n,
\end{aligned}
 \right.
  \end{equation}
where
\begin{equation*}
\begin{aligned}
v_0(x)&\doteq w_0(x)=\bigl((m+1)\eta\bigr)^{\frac{\mu}{2(m+1)}}u_0(x), \\
v_1(x)&\doteq\bigl((m+1)\eta\bigr)^{-\frac{m}{m+1}}w_1(x)=\bigl((m+1)\eta\bigr)^{\frac{1}{m+1}(\frac{\mu}{2}-m)}u_1(x)+\frac{\mu}{2}\big((m+1)\eta\big)^{\frac{1}{m+1}(\frac{\mu}{2}-1-m)}u_0(x)
\end{aligned}
\end{equation*}
and $\delta$ is defined by (\ref{deltadefine}).
Put
\begin{equation}\label{sub31}
\mathcal{U}(s,x)\doteq s^{\frac{m}{2(m+1)}}v(s,x),
\end{equation}
then (\ref{eqv2}) becomes
\begin{equation}\label{masseq}
\left\{
\begin{aligned}
&\partial_s^2\mathcal{U}-\Delta \mathcal{U}+\frac{(m+1)^2-\delta}{4(m+1)^2s^2}\mathcal{U}=0,&\quad &s>\eta, \ x\in\mathbb{R}^n,\\
&\mathcal{U}(\eta,x)= \mathcal{U}_0(x), \ \partial_s\mathcal{U}(\eta,x)= \mathcal{U}_1(x),               &\quad &x\in\mathbb{R}^n,
\end{aligned}
 \right.
\end{equation}
where
\begin{equation*}
\begin{aligned}
\mathcal{U}_0(x)&\doteq\eta^{\frac{m}{2(m+1)}}v_0(x)=\eta^{\frac{m}{2(m+1)}}\big((m+1)\eta\big)^{\frac{\mu}{2(m+1)}}u_0(x),\\
\mathcal{U}_1(x)&\doteq\frac{m}{2(m+1)}\eta^{\frac{m}{2(m+1)}-1}v_0(x)+\eta^{\frac{m}{2(m+1)}}v_1(x)\\
&=\Big(\frac{m}{2(m+1)}\eta^{\frac{m}{2(m+1)}-1}\big((m+1)\eta\big)^{\frac{\mu}{2(m+1)}}+\frac{\mu}{2}\eta^{\frac{m}{2(m+1)}}
\big((m+1)\eta\big)^{\frac{1}{m+1}(\frac{\mu}{2}-1-m)}\Big)u_0(x)\\
&\quad+\eta^{\frac{m}{2(m+1)}}\big((m+1)\eta\big)^{\frac{1}{m+1}(\frac{\mu}{2}-m)}u_1(x).
\end{aligned}
\end{equation*}

After applying the variable transformations (\ref{sub11}),  (\ref{sub21}) and (\ref{sub31}), we convert (\ref{lineareq}) into a mass-type wave equation (\ref{masseq}).  A notable feature of (\ref{masseq}) is that it contains only a mass term without  damping term.  Therefore, we refer to  (\ref{sub11}),  (\ref{sub21}) and (\ref{sub31}) as the \textit{mass-dominated} transformations.

Taking the Fourier transform with respect to the spatial variable $x$ on both sides of (\ref{masseq}) yields
\begin{equation}\label{fouriereq}
\left\{
\begin{aligned}
&\partial_s^2\widehat{\mathcal{U}}+\Big(\vert \xi\vert^2+\frac{(m+1)^2-\delta}{4(m+1)^2s^2}\Big)\widehat{\mathcal{U}}=0,\ &\quad&s>\eta,\ \xi\in\mathbb{R}^n,\\
&\widehat{\mathcal{U}}(\eta,\xi)= \widehat{\mathcal{U}}_0(\xi), \ \partial_s\widehat{\mathcal{U}}(\eta, \xi)= \widehat{\mathcal{U}}_1(\xi),               &\quad&\xi\in\mathbb{R}^n.
\end{aligned}
\right.
\end{equation}
Before establishing the expression for the solution to (\ref{fouriereq}),  we collect some relevant properties of confluent hypergeometric functions, one can see \cite{BoEr1953,OlLo2010}.

The second-order  ordinary differential equation
\begin{equation}\label{confluent}
z\frac{d^2}{dz^2}\mathscr{W}+(c-z)\frac{d}{dz}\mathscr{W}-a\mathscr{W}=0,\ a,c\in\mathbb{C}
\end{equation}
is called the confluent hypergeometric equation. The standard  solution  to (\ref{confluent})  with  a pair of parameters $(a,c)$ are given by

$\bullet$ Kummer's function
\begin{equation}\label{kummer}
M(a,c;z)\doteq\sum\limits_{k=0}\limits^{\infty}\frac{(a)_k}{(c)_kk!}z^k,\ \ z\in\mathbb{C};
\end{equation}

$\bullet$  Tricomi's confluent hypergeometric function
\begin{equation}\label{tricomicon}
U(a,c;z)\sim z^{-a}\sum\limits_{k=0}^{\infty}\frac{(a)_k(a-c+1)_k}{k!}(-z)^{-k}, \ \ \vert argz\vert<\frac{3}{2}\pi,
\end{equation}
where
\begin{equation}\label{ak}
(a)_k\doteq\left\{\begin{aligned}
&a(a+1)\cdots(a+k-1),\ &\quad&k\geq1,\\
&1,\ &\quad& k=0.
\end{aligned}\right.
\end{equation}
If $c\notin\mathbb{Z}$,  a pair of independent solutions to (\ref{confluent}) are
\begin{equation}\label{nointe}
M_1(z)\doteq M(a,c;z), \ \ M_2(z)\doteq z^{1-c}M(a-c+1,2-c;z);
\end{equation}
and if $c\in\mathbb{Z}$, a pair of independent solutions to (\ref{confluent}) are
\begin{equation}\label{cinz}
U_1(z)\doteq U(a,c;z), \ \ U_2(z)\doteq e^zU(c-a,c;-z).
\end{equation}

About $M(a,c;z)$ and $U(a,c;z)$, the following two Propositions are valid, which can be found in Chapter 6 of \cite{BoEr1953} and  Chapter 13 of \cite{OlLo2010}.
\begin{proposition}\label{kummerpro}
\begin{align}
&M(a,c;z) \ \ \text{is entire in}\ \ z\ \text{and}\ \ a; \label{entire}\\
&\text{Kummer's transformation:}\ \ M(a,c;z)=e^zM(c-a,c;-z);\label{Kumtrans1}\\
&\frac{d}{dz}M(a,c;z)=\frac{a}{c}M(a+1,c+1;z);\label{deterive}\\
&(c-a)M(a-1,c;z)+(2a-c+z)M(a,c;z)-aM(a+1,c;z)=0;\label{rela1}\\
&\frac{d}{dz}M(a,c;z)=\frac{a}{z}\big(M(a+1,c;z)-M(a,c;z)\big);\label{rela2}\\
&(c-a)M(a-1,c;z)=(c-a-z)M(a,c;z)+z\frac{d}{dz}M(a,c;z);\label{rela3}\\
&\text{Wronskian determinant:}
\ \ W\big(M_1(z),M_2(z)\big)=(1-c)z^{-c}e^z;\label{Wron1}\\
&M\big(\frac{1}{2}+\nu,1+2\nu;2ix\big)=\Gamma(1+\nu)(\frac{1}{2}x)^{-\nu}e^{ix}J_\nu(x),\label{besselconnection}
\end{align}
where $\Gamma(\cdot)$ denotes the Gamma function, and  $J_\nu(x)$ is the first kind of  Bessel function satisfies  the derivative recurrence relation
\begin{align}
&J_\nu^\prime(z)=J_{\nu-1}(z)-\frac{\nu}{z}J_\nu(z)\label{10.6.2}
\end{align}
and the asymptotic behavior
\begin{align}\label{besselasy}
&J_\nu\big(\frac{z}{2i}\big)\sim\sin \big(\frac{z}{2i}\big)a_\nu\big(\frac{z}{2i}\big)\ \ \text{as}\ \ \vert z\vert\rightarrow\infty,
\ \ \text{and}\ \ \big\vert\frac{d^k}{dz^k}a_\nu\big(\frac{z}{2i}\big)\big\vert\lesssim\vert z\vert^{-\frac{1}{2}-k}\ \ \text{for any}\  k\in\mathbb{N}.
\end{align}
\end{proposition}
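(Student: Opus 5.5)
The statement just above is Proposition \ref{kummerpro}, a list of standard facts about Kummer's function. I would verify each item directly from the series definition (\ref{kummer}), using the Pochhammer identities $(a)_{k+1}=a(a+1)_k$ and $(a)_k(a+k)=a(a+1)_k$, rather than simply cite \cite{BoEr1953,OlLo2010}.

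Entireness (\ref{entire}) follows from two observations. The ratio of consecutive coefficients is $\frac{a+k}{(c+k)(k+1)}\to0$, so the series has infinite radius of convergence. Each coefficient is a polynomial in $a$, and on compact sets in $(a,z)$ the convergence is locally uniform, so $M$ is entire in both variables. This holds for fixed $c\notin\{0,-1,-2,\dots\}$.

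For the derivative formula (\ref{deterive}), I would differentiate termwise and shift the index, using $(a)_{k+1}=a(a+1)_k$ and $(c)_{k+1}=c(c+1)_k$.

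For (\ref{rela2}), I would compare coefficients of $z^k$ in $M(a+1,c;z)-M(a,c;z)$. Since $(a+1)_k-(a)_k=\frac{k}{a}(a)_k$, the difference equals $\frac{z}{a}M'$.

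For (\ref{rela3}), I would compare the coefficients of $z^k$ on both sides and reduce to an identity among Pochhammer symbols. Alternatively, I would derive it from the ODE (\ref{confluent}) together with (\ref{rela2}).

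The three-term relation (\ref{rela1}) then follows by linear algebra. I would solve (\ref{rela2}) for $M(a+1,c;z)$ and (\ref{rela3}) for $M(a-1,c;z)$, substitute both into (\ref{rela1}), and check that the $zM'$ terms cancel and the $M$ terms sum to zero.

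For Kummer's transformation (\ref{Kumtrans1}), I would put $g(z)=e^zM(c-a,c;-z)$ and check that $g$ solves (\ref{confluent}) with parameters $(a,c)$. This is a direct substitution, and $g(0)=1$. When $1-c\notin\mathbb{N}$, the only solution of (\ref{confluent}) analytic at $0$ and equal to $1$ there is $M(a,c;\cdot)$, because the other Frobenius exponent $1-c$ gives a non-analytic or distinct branch. Hence $g=M(a,c;\cdot)$. Where that argument breaks down, I would extend the identity by analyticity in $c$.

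For the Wronskian (\ref{Wron1}), Abel's formula for (\ref{confluent}) gives $W=Kz^{-c}e^z$. I would find $K$ by evaluating the leading behaviour as $z\to0$. There $M_1=1+O(z)$ and $M_2=z^{1-c}(1+O(z))$, so $W=(1-c)z^{-c}+\dots$, which gives $K=1-c$.

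For the Bessel connection (\ref{besselconnection}), I would check that $f(x)=x^{\nu}e^{-ix}M(\tfrac12+\nu,1+2\nu;2ix)$ satisfies Bessel's equation. Equivalently, $M(\tfrac12+\nu,1+2\nu;2ix)$ is the analytic solution of the transformed equation. I would then match the normalisation at $x\to0$ using $J_\nu(x)\sim(x/2)^\nu/\Gamma(1+\nu)$.

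The recurrence (\ref{10.6.2}) and the asymptotic bound (\ref{besselasy}) are classical Bessel facts, and I would take them from Hankel's expansion, where the symbol estimates hold for each derivative.

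I expect (\ref{besselconnection}) to be the main obstacle, since the substitution into the ODE is a tedious calculation. A cleaner route is to compare power series directly using the duplication formula for the Gamma function. Failing that, I would cite \cite[13.6]{OlLo2010}.
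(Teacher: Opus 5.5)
The paper gives no proof of this proposition; it only cites \cite{BoEr1953} (Chapter 6) and \cite{OlLo2010} (Chapter 13). Your self-contained verification is therefore a different route, and for (\ref{entire})--(\ref{besselconnection}) it is sound:
\begin{itemize}
\item the coefficient comparisons for (\ref{deterive}), (\ref{rela2}) and (\ref{rela3}) reduce to correct Pochhammer identities;
\item substituting (\ref{rela2}) and (\ref{rela3}) into (\ref{rela1}) does make the $zM'$ and $M$ terms cancel;
\item $e^zM(c-a,c;-z)$ does solve (\ref{confluent}), and the solution analytic at $0$ is unique up to a constant for every admissible $c$;
\item Abel's formula together with the behaviour at $z=0$ gives the constant $1-c$ in (\ref{Wron1});
\item $y=x^\nu e^{-ix}\mathscr{W}(2ix)$ turns Bessel's equation into (\ref{confluent}) with $(a,c)=(\nu+\frac12,2\nu+1)$.
\end{itemize}
What this buys over the citation is that the tacit hypotheses become visible: $c\notin\{0,-1,-2,\dots\}$ throughout, and $c\notin\mathbb{Z}$ for $M_2$. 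One small correction: (\ref{10.6.2}) comes from termwise differentiation of the power series of $J_\nu$, not from Hankel's expansion.

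The step that fails is (\ref{besselasy}). Take real $x=\frac{z}{2i}\to+\infty$, which is the only case used later since $z=2is\vert\xi\vert$. Hankel's expansion then reads
\[
J_\nu(x)=\sqrt{\tfrac{2}{\pi x}}\bigl(\cos\omega\,P(\nu,x)-\sin\omega\,Q(\nu,x)\bigr),\qquad \omega=x-\tfrac{\nu\pi}{2}-\tfrac{\pi}{4}.
\]
Equivalently, $J_\nu(x)=e^{ix}a^{+}_\nu(x)+e^{-ix}a^{-}_\nu(x)$ with $a^{+}_\nu=\frac12e^{-ix}H^{(1)}_\nu$ and $a^{-}_\nu=\frac12e^{ix}H^{(2)}_\nu$.

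This does not produce a single factor $\sin(x)\,a_\nu(x)$, and no such factorization exists. The same expansion gives $\sqrt{k}\,\vert J_\nu(k\pi)\vert\to\frac{\sqrt2}{\pi}\bigl\vert\cos(\frac{\nu\pi}{2}+\frac{\pi}{4})\bigr\vert$. This limit is nonzero unless $\nu\in\frac12+2\mathbb{Z}$, so it is nonzero for every $\nu=\rho-\frac12$ or $\rho-\frac32$ with $\delta\neq0$ that is used later. Since $\sin(k\pi)=0$, the relation $J_\nu\sim\sin\cdot\,a_\nu$ fails for every choice of $a_\nu$, symbol or not. This item cannot be proved as written.

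What you can prove, and should state instead, is the two-term representation with $\vert\partial_x^k a^{\pm}_\nu(x)\vert\lesssim x^{-\frac12-k}$ for $x\geq1$. The derivative bounds follow because Hankel's expansion of $e^{\mp ix}H^{(1,2)}_\nu$ holds uniformly in a complex sector around the positive axis, so Cauchy's estimates on discs of radius $\frac{x}{2}$ apply.

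This corrected form is also what the later argument can use. In (\ref{bounded3}) and (\ref{matrix1}) the factors $\sin(s\vert\xi\vert)$ and $\sin(\eta\vert\xi\vert)$ become two-phase sums in $e^{\pm is\vert\xi\vert}$ and $e^{\pm i\eta\vert\xi\vert}$ with symbol amplitudes. Each resulting term is then treated by Proposition \ref{propositionpare}, just as in the $\delta=0$ case.
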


\begin{proposition}\label{tricomipro}
\begin{align}
&\text{Kummer's transformation:}\ \ U(a,c;z)=z^{1-c}U(a-c+1,2-c;z);\label{Kumtrans2}\\
&\frac{d}{dz}U(a,c;z)=-aU(a+1,c+1;z);\label{detiveu}\\
&U(a-1,c;z)=(a-c+z)U(a,c;z)-z\frac{d}{dz}U(a,c;z);\label{rela4}\\
&U(a,1;z)=-\frac{1}{\Gamma(a)}\big(\log z+\frac{\Gamma^\prime(a)}{\Gamma(a)}+2\gamma\big)+O( z\log z)\ \ \text{as}\ \ \vert z\vert\rightarrow 0;\label{rela5}\\
&U(a,2;z)=\frac{1}{\Gamma(a)}z^{-1}+O( \log z)\ \ \text{as}\ \ \vert z\vert\rightarrow 0;\label{rela6}\\
&U(a,c;z)=\frac{\Gamma(c-1)}{\Gamma(a)}z^{1-c}+O(z^{c-2})\ \ \text{as}\ \ \vert z\vert\rightarrow 0\ \ \text{for}\ \ \text{Re}c\geq2\ \ \text{and}\ \ c\neq2;\label{rela7}\\
&\vert U(a,c;z)\vert\lesssim\vert z\vert^{-\text{Re}a}\ \ \text{as}\ \ \vert z\vert\rightarrow \infty\ \ \text{for}\ \ \vert \text{arg}z\vert<\frac{3}{2}\pi;\label{rela8}\\
&\text{Wronskian determinant:}
\ \ W\big(U_1(z),U_2(z)\big)=e^{i(c-a)\pi \text{sgn}(\text{Im}z)}z^{-c}e^{z},\label{Wron2}
\end{align}
where $\gamma\doteq\lim\limits_{n\rightarrow\infty}\big(\sum\limits_{k=1}^n\frac{1}{k}-\log n\big)$ is the well-known Euler constant.
\end{proposition}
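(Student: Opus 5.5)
Since every item in Proposition \ref{tricomipro} is a classical fact (Chapter 6 of \cite{BoEr1953}, Chapter 13 of \cite{OlLo2010}), the plan is to derive all of them from one definition of $U$ plus analytic continuation. For $c\notin\mathbb{Z}$ I would take
\[
U(a,c;z)=\frac{\Gamma(1-c)}{\Gamma(a-c+1)}M(a,c;z)+\frac{\Gamma(c-1)}{\Gamma(a)}z^{1-c}M(a-c+1,2-c;z),
\]
which is a solution of (\ref{confluent}) by (\ref{nointe}). For $c\in\mathbb{Z}$ I would define $U$ by the limit in $c$, which exists because the two gamma-prefactors have compensating poles. In parallel, for $\mathrm{Re}\,a>0$ and $\mathrm{Re}\,z>0$, I would use the Laplace representation
\[
U(a,c;z)=\frac{1}{\Gamma(a)}\int_0^\infty e^{-zt}t^{a-1}(1+t)^{c-a-1}\,dt .
\]
I would check that this integral solves (\ref{confluent}) by differentiating under the integral sign and integrating by parts. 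I would identify it with the combination above by comparing behaviour as $z\to0$ and $z\to\infty$. Every identity proved in this restricted range then extends to all $a$ and $|\arg z|<\frac32\pi$ by analyticity in $a$ and by rotating the integration ray.

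With these two representations the algebraic items follow in order.
\begin{itemize}
\item Kummer's transformation (\ref{Kumtrans2}): the two-term formula is visibly invariant under $(a,c)\mapsto(a-c+1,2-c)$ combined with multiplication by $z^{1-c}$.
\item Derivative formula (\ref{detiveu}): differentiating the integral in $z$ brings down a factor $-t$, giving $t^{a}(1+t)^{c-a-1}$, which is exactly the integrand of $-aU(a+1,c+1;z)$ after using $\Gamma(a+1)=a\Gamma(a)$.
\item Contiguous relation (\ref{rela4}): integrate by parts in $\int_0^\infty \frac{d}{dt}\big(e^{-zt}t^{a-1}(1+t)^{c-a}\big)\,dt=0$, then rewrite the resulting terms through (\ref{detiveu}).
\end{itemize}

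For the asymptotics, the large-$z$ bound (\ref{rela8}) and the expansion (\ref{tricomicon}) come from Watson's lemma: expand $(1+t)^{c-a-1}$ in powers of $t$ and integrate term by term. Rotating the ray of integration extends the sector up to $|\arg z|<\frac32\pi$. The small-$z$ statements come from the two-term formula.
\begin{itemize}
\item For $\mathrm{Re}\,c\ge2$ with $c\neq2$, the term $\frac{\Gamma(c-1)}{\Gamma(a)}z^{1-c}$ dominates and the remainder is $O(z^{2-c})$, giving (\ref{rela7}).
\item For $c=1$ and $c=2$, I would let $c\to1$ or $c\to2$ and expand the gamma factors and $z^{1-c}=e^{(1-c)\log z}$ to first order in $c-1$ or $c-2$.
\end{itemize}
This limit is the step I expect to be the main obstacle. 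In it the digamma $\Gamma'(a)/\Gamma(a)$ and the constant $2\gamma$ in (\ref{rela5}) must be tracked carefully, using $\psi(1)=-\gamma$ and $\frac{d}{dc}(c)_k$ for the coefficients of $M$. For (\ref{rela6}) only the leading pole $\frac{1}{\Gamma(a)}z^{-1}$ is needed, and the logarithm is absorbed into the $O(\log z)$ error, so that case is easier.

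Finally, for the Wronskian (\ref{Wron2}), Abel's formula applied to (\ref{confluent}) gives $W=C\exp\!\big(-\int\frac{c-z}{z}\,dz\big)=C\,z^{-c}e^{z}$ for some constant $C$. To find $C$ I would evaluate $W$ as $|z|\to\infty$, inserting the leading terms $U_1\sim z^{-a}$ and $U_2\sim e^{z}(-z)^{a-c}$ together with their derivatives (justified by (\ref{tricomicon}), which may be differentiated termwise). This gives $W\sim z^{-a}e^{z}(-z)^{a-c}$. Writing $-z=e^{-i\pi\,\mathrm{sgn}(\mathrm{Im}z)}z$, so that $(-z)^{a-c}=e^{i(c-a)\pi\,\mathrm{sgn}(\mathrm{Im}z)}z^{a-c}$, produces exactly the phase in (\ref{Wron2}). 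The only delicate point here is choosing the branch of $(-z)^{a-c}$ consistently with the half-plane containing $z$.
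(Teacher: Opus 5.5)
The paper gives no argument for this proposition; it cites Chapter 6 of \cite{BoEr1953} and Chapter 13 of \cite{OlLo2010}. Your route is different: you derive everything from the two-term definition and the Laplace integral, which is the standard self-contained development. Several steps check out as written: Kummer's transformation, (\ref{detiveu}), (\ref{rela4}) (after splitting $(1+t)^{c-a}=(1+t)^{c-a-1}+t(1+t)^{c-a-1}$), the $c\to1$ expansion producing $\psi(a)+2\gamma$, and the Wronskian via Abel's formula together with $(-z)^{a-c}=e^{i(c-a)\pi\,\mathrm{sgn}(\mathrm{Im}z)}z^{a-c}$.

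Two steps need repair, and both concern cases the paper actually uses.

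\emph{Extension of bounds in $a$.} Analytic continuation in $a$ transports identities, not $O$-bounds. Watson's lemma yields (\ref{rela8}) only for $\mathrm{Re}\,a>0$, but (\ref{derive2}) with $k=0$ needs the bound for $U(-\frac12,1;z)$. The fix is to combine (\ref{rela4}) and (\ref{detiveu}) into $U(a-1,c;z)=(a-c+z)U(a,c;z)+azU(a+1,c+1;z)$. This gives $|U(a-1,c;z)|\lesssim|z|^{1-\mathrm{Re}\,a}$, and you then induct downward in $\mathrm{Re}\,a$.

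\emph{Integer $c$ in the small-$z$ estimates.} The two-term formula is undefined at integer $c\ge3$, so your argument for (\ref{rela7}) misses exactly the case $c=1+k$, $k\ge2$, where (\ref{derive1}) invokes it. Also, for $c=n+1\ge2$ the pole of $\Gamma(1-c)$ is cancelled by poles in the coefficients of $M(a-c+1,2-c;z)$, not by $\Gamma(c-1)$. So even the $c\to2$ limit behind (\ref{rela6}) requires checking that the $\epsilon^{-1}$ terms cancel. The cleanest route is:
\begin{itemize}
\item obtain the convergent logarithmic series for $U(a,1;z)$ once, from the $c\to1$ limit, where the two gamma prefactors really do compensate;
\item use $U(a+k,1+k;z)=\frac{(-1)^k}{(a)_k}\frac{d^k}{dz^k}U(a,1;z)$ and differentiate the series termwise.
\end{itemize}
This gives (\ref{rela6}) and the integer cases of (\ref{rela7}) together.

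Finally, the remainder you derive, $O(z^{2-c})$, is the correct one; the statement's $O(z^{c-2})$ is a misprint. For example, $U(1,\tfrac52;z)=\Gamma(\tfrac32)z^{-3/2}e^{z}-\tfrac23M(1,\tfrac52;z)$, whose remainder after the leading term is $\sim\Gamma(\tfrac32)z^{-1/2}$, which is not $O(z^{1/2})$.
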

\begin{remark}\label{uni}
By applying Kummer's transformations (\ref{Kumtrans1}) and (\ref{Kumtrans2}), we see
\begin{equation}\label{kumer}
M_2(z)=z^{1-c}e^zM(1-a,2-c;-z),\  \ U_2(z)=(-1)^{1-c}z^{1-c}e^zU(1-a,2-c;-z).
\end{equation}
Therefore, the two independent  solutions (3.15), (3.16) to (\ref{confluent}) can be expressed in a unified form
\begin{equation}\label{unifiedform}
\mathscr{W}_{01}(z)=\mathscr{W}_0(a,c;z)\ \ \text{and}\ \ \mathscr{W}_{02}(z)=z^{1-c}e^z\mathscr{W}_0(1-a,2-c,-z),
\end{equation}
where
\begin{equation}\label{W0}
\mathscr{W}_0(a,c;z)\doteq\left\{\begin{aligned}&M(a,c,z), &\quad& c\notin\mathbb{Z},\\
&U(a,c;z),&\quad&c\in\mathbb{Z}.\end{aligned}\right.
\end{equation}
 And according to  (\ref{Wron1}) and (\ref{Wron2}), the following Wronskian determinant holds
\begin{equation}\label{Wron23}
 W\big(\mathscr{W}_{01}(z),\mathscr{W}_{02}(z)\big)=\left\{
\begin{aligned}
&(1-c)z^{-c}e^z,&\quad&c\notin\mathbb{Z},\\
&(-1)^{c-1}e^{i(c-a)\pi\text{sgn}(\text{Im}z)}z^{-c}e^z,&\quad&c\in\mathbb{Z}.
\end{aligned}
\right.
\end{equation}
\end{remark}
\begin{lemma}\label{sulutionfou}
Let $\widehat{H}_0(s,\eta,\xi)$ and $\widehat{H}_1(s,\eta,\xi)$ be a pair of independent solutions to (\ref{fouriereq}) satisfying the initial condition
\begin{equation}\label{initial10012}
\begin{pmatrix}
\widehat{H}_0(\eta,\eta,\xi)&\widehat{H}_1(\eta,\eta,\xi)\\
\partial_s\widehat{H}_0(\eta,\eta,\xi)&\partial_s\widehat{H}_1(\eta,\eta,\xi)
\end{pmatrix}
=\begin{pmatrix}
1&0\\
0&1
\end{pmatrix}.
\end{equation}
Then $\widehat{H}_0(s,\eta,\xi)$ and $\widehat{H}_1(s,\eta,\xi)$  can be expressed as
\begin{equation}\label{solutionfunde}
\widehat{H}_0(s,\eta,\xi)=\frac{\left| \begin{array}{cc}
E_1(s,\xi) & E_2(s,\xi) \\
\partial_sE_1(\eta,\xi) & \partial_sE_2(\eta,\xi)
\end{array} \right|}{\left| \begin{array}{cc}
E_1(\eta,\xi) & E_2(\eta,\xi) \\
\partial_sE_1(\eta,\xi) & \partial_sE_2(\eta,\xi)
\end{array} \right|},\ \
\widehat{H}_1(s,\eta,\xi)=\frac{\left| \begin{array}{cc}
E_1(\eta,\xi) & E_2(\eta,\xi) \\
E_1(s,\xi) & E_2(s,\xi)
\end{array} \right|}{\left| \begin{array}{cc}
E_1(\eta,\xi) & E_2(\eta,\xi) \\
\partial_sE_1(\eta,\xi) & \partial_sE_2(\eta,\xi)
\end{array} \right|},
\end{equation}
where
\begin{equation}\label{fundamental}
\begin{aligned}
&E_1(s,\xi)\doteq(2i)^{-\rho}z(s)^\rho e^{-\frac{z(s)}{2}}\mathscr{W}_0(\rho,2\rho;z(s)),\\ &E_2(s,\xi)\doteq(2i)^{-\rho}z(s)^{1-\rho}e^{\frac{z(s)}{2}}\mathscr{W}_0(1-\rho,2-2\rho;-z(s)),
\end{aligned}
\end{equation}
$\mathscr{W}_0$  defined by (\ref{unifiedform}), $z(s)=2is\vert\xi\vert$  and
\begin{equation}\label{rho}
\rho\doteq\left\{
\begin{aligned}
&\frac{1}{2}+\frac{\sqrt{\delta}}{2(m+1)}\in(\frac{1}{2},1), \quad&\quad & 0<\delta<\min\{(m+1)^2,1\},\\
&\frac{1}{2},\quad&\quad & \delta=0,\\
&\frac{1}{2}+i\frac{\sqrt{-\delta}}{2(m+1)},\quad&\quad & \delta<0.
\end{aligned}
\right.
\end{equation}
Furthermore, regarding the time derivatives, we have
\begin{equation}\label{partoiale1e2}
\begin{aligned}
&\partial_sE_1(s,\xi)=(2i)^{1-\rho}\vert\xi\vert z(s)^{\rho-1}e^{-\frac{z(s)}{2}}\mathscr{W}_1(\rho,2\rho;z(s)),\\
&\partial_sE_2(s,\xi)=(2i)^{1-\rho}\vert\xi\vert z(s)^{-\rho}e^{\frac{z(s)}{2}}\mathscr{W}_1(1-\rho,2-2\rho;-z(s)),
\end{aligned}
\end{equation}
where
\begin{equation}\label{W1}
\mathscr{W}_1(a,c;z)\doteq\left\{\begin{aligned}&\frac{z}{2}M(a,c,z)+(c-a)M(a-1,c;z), &\quad& c\notin\mathbb{Z},\\
&\frac{z}{2}U(a,c;z)-U(a-1,c;z),&\quad&c\in\mathbb{Z}.\end{aligned}\right.
\end{equation}
\end{lemma}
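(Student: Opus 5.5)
The plan is to turn the ODE (\ref{fouriereq}) into the confluent hypergeometric equation (\ref{confluent}) by an explicit substitution, identify two independent solutions, and then solve the $2\times2$ initial value problem by Cramer's rule.

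\textbf{Step 1: the substitution.} Fix $\xi\neq0$ and set $z=z(s)=2is\vert\xi\vert$. I look for solutions of the form
\[
\widehat{\mathcal U}(s)=z^{\rho}e^{-z/2}\mathscr{W}(z).
\]
Since $d/ds=2i\vert\xi\vert\,d/dz$, we have $\vert\xi\vert^2=-(2i\vert\xi\vert)^2/4$. Dividing (\ref{fouriereq}) by $(2i\vert\xi\vert)^2$ therefore gives
\[
\frac{d^2\widehat{\mathcal U}}{dz^2}+\Big(-\frac14+\frac{\kappa}{z^2}\Big)\widehat{\mathcal U}=0,\qquad \kappa=\frac{(m+1)^2-\delta}{4(m+1)^2}.
\]
This is Whittaker's equation with first parameter $0$. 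Substituting the ansatz and collecting terms, the $z^{\rho-2}$ coefficient gives the indicial relation $\rho(\rho-1)+\kappa=0$. With $\kappa$ as above this reads $(\rho-\frac12)^2=\frac{\delta}{4(m+1)^2}$, which produces exactly the three cases in (\ref{rho}). The remaining terms give $z\mathscr W''+(2\rho-z)\mathscr W'-\rho\mathscr W=0$, which is (\ref{confluent}) with $(a,c)=(\rho,2\rho)$. The hypothesis $\delta<\min\{(m+1)^2,1\}$ guarantees that $\rho\in(\frac12,1)$ in the positive case, so $\rho$ is well defined.

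\textbf{Step 2: two independent solutions.} By Remark \ref{uni}, a pair of independent solutions of (\ref{confluent}) is $\mathscr{W}_0(\rho,2\rho;z)$ and $z^{1-2\rho}e^{z}\mathscr{W}_0(1-\rho,2-2\rho;-z)$. Multiplying each by $z^{\rho}e^{-z/2}$ gives $E_1$ and $E_2$ in (\ref{fundamental}); the constant factor $(2i)^{-\rho}$ is harmless. Their independence follows from (\ref{Wron23}). The Wronskian of $E_1,E_2$ in $s$ equals $2i\vert\xi\vert$ times their Wronskian in $z$. Because of the conjugation by $z^{\rho}e^{-z/2}$, this becomes $z^{2\rho}e^{-z}$ times (\ref{Wron23}) with $c=2\rho$, which is a nonzero constant multiple of $\vert\xi\vert$.

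When $c=2\rho$ is an integer, which happens only for $\delta=0$ with $c=1$, one uses $U$ instead of $M$ and the second branch of (\ref{Wron23}). The formula $E_2$ already builds in $U_2$ via (\ref{kumer}).

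\textbf{Step 3: the representation formulas.} With a nonvanishing Wronskian, the solutions $\widehat H_0,\widehat H_1$ satisfying (\ref{initial10012}) are the combinations $\alpha_jE_1+\beta_jE_2$. The coefficients are determined by a $2\times2$ linear system with matrix $\begin{pmatrix}E_1&E_2\\ \partial_sE_1&\partial_sE_2\end{pmatrix}(\eta,\xi)$. Solving by Cramer's rule gives (\ref{solutionfunde}) directly, and one checks the initial values by inspecting the determinants at $s=\eta$.

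\textbf{Step 4: the time derivatives.} Differentiating $E_1$ gives
\[
\partial_sE_1=2i\vert\xi\vert(2i)^{-\rho}z^{\rho-1}e^{-z/2}\Big(\rho\mathscr W_0-\frac z2\mathscr W_0+z\mathscr W_0'\Big).
\]
In the $M$ case, (\ref{rela3}) gives $zM'=(c-a)M(a-1,c;z)-(c-a-z)M$. With $c-a=\rho$, the bracket collapses to $\frac z2M+\rho M(\rho-1,2\rho;z)$, which is $\mathscr W_1$. In the $U$ case, (\ref{rela4}) gives $zU'=(a-c+z)U-U(a-1,c;z)$ with $a-c=-\rho$, and the same cancellation yields $\frac z2U-U(\rho-1,2\rho;z)$. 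For $E_2$ the same computation applies with $(a,c)=(1-\rho,2-2\rho)$ and argument $-z$; the sign changes from the chain rule compensate, giving (\ref{partoiale1e2}).

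The main obstacle is keeping the bookkeeping correct in the integer case $\delta=0$, where $\rho=\frac12$ and $c=1$. There one must check that the $U$-based pair and its Wronskian, including the $\text{sgn}(\text{Im}z)$ phase factor, are consistent with (\ref{unifiedform}), and verify the derivative formula sign by sign.
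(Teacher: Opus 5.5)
Your proposal is correct and follows the paper's proof. The paper reaches the same confluent hypergeometric equation through the two-step substitution $\widehat{\mathcal U}=r^\rho V(r)$, $\mathscr W=e^{ir}V$ with $r=s\vert\xi\vert$, $z=2ir$; this is equivalent to your single ansatz $z^\rho e^{-z/2}\mathscr W$ up to the constant $(2i)^{-\rho}$. It then solves the $2\times2$ system by inverting the matrix and derives (\ref{partoiale1e2}) from (\ref{rela3}) and (\ref{rela4}) exactly as you do, the only small difference being that it postpones the Wronskian computation (your independence check) to the proof of the next lemma.
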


\begin{proof}
Let $r=s\vert\xi\vert$ and $\widehat{\mathcal{U}}(s,\xi)=r^\rho V(r)$, where $\rho$  is defined by (\ref{rho}), then  (\ref{fouriereq}) can be reduced to
\begin{equation}\label{eqV2}
rV^{\prime\prime}(r)+2\rho V^{\prime}(r)+rV=0.
\end{equation}
Put $z=2ir$ and $\mathscr{W}(z)=e^{ir}V(r)$, we can transfer (\ref{eqV2}) into
\begin{equation}\label{solutionhuaw}
z\mathscr{W}^{\prime\prime}(z)+(2\rho-z)\mathscr{W}^{\prime}(z)-\rho\mathscr{W}(z)=0,
\end{equation}
which is a confluent  hypergeometric equation with  parameters $(2\rho,\rho)$.  According to Remark \ref{uni},  the two fundamental solutions to (\ref{solutionhuaw}) are
\begin{equation}
\begin{aligned}
\mathscr{W}_{01}(z)=\mathscr{W}_0(\rho,2\rho;z)\ \ \text{and}\ \ \mathscr{W}_{02}(z)=z^{1-2\rho}e^z\mathscr{W}_0(1-\rho,2-2\rho;-z).
\end{aligned}
\end{equation}
Thus, the two independent fundamental  solutions to (\ref{fouriereq})  are
\begin{equation}\label{nextsolution}
\begin{aligned}
E_h(s,\xi)=(s\vert\xi\vert)^\rho e^{-is\vert\xi\vert}\mathscr{W}_{0h}(z(s)), \ \ h=1,2,
\end{aligned}
\end{equation}
which are indeed  (\ref{fundamental}).

We can assume that
\begin{equation}
\begin{aligned}
&\widehat{H}_0(s,\eta,\xi)=D_{01}(\eta,\xi)E_1(s,\xi)+D_{02}(\eta,\xi)E_2(s,\xi),\\
&\widehat{H}_1(s,\eta,\xi)=D_{11}(\eta,\xi)E_1(s,\xi)+D_{12}(\eta,\xi)E_2(s,\xi),
\end{aligned}
\end{equation}
where $D_{01}, D_{02},D_{11}, D_{12}$  can be determined by
\begin{equation}\label{pmatrixcondition}
\begin{pmatrix}
E_1(\eta,\xi)&E_{2}(\eta,\xi)\\
\partial_sE_1(\eta,\xi)&\partial_sE_{2}(\eta,\xi)
\end{pmatrix}
\begin{pmatrix}
D_{01}&D_{11}\\
D_{02}&D_{12}
\end{pmatrix}
=
\begin{pmatrix}
1&0\\
0&1
\end{pmatrix}
\end{equation}
according to the initial conditions (\ref{initial10012}).  A direct computation immediately yields
\begin{equation*}
\begin{pmatrix}
D_{01}&D_{11}\\
D_{02}&D_{12}
\end{pmatrix}
=\frac{\begin{pmatrix}
 \partial_sE_2(\eta,\xi)&-E_2(\eta,\xi)\\
-\partial_sE_1(\eta,\xi)&E_1(\eta,\xi)
\end{pmatrix}}{\left| \begin{array}{cc}
E_1(\eta,\xi) & E_2(\eta,\xi) \\
\partial_sE_1(\eta,\xi) & \partial_sE_2(\eta,\xi)
\end{array} \right|},
\end{equation*}
therefore we get (\ref{solutionfunde}).

About the derivatives,  it can be
straightforward to calculate that if $\delta\neq0$, i.e., $2\rho\neq 1$, then
\begin{align}
\partial_sE_1(s,\xi)=&\rho(s\vert\xi\vert)^{\rho-1}\vert\xi\vert e^{-is\vert\xi\vert}M(\rho,2\rho;z(s))-i\vert\xi\vert(s\vert\xi\vert)^\rho e^{-is\vert\xi\vert}M(\rho,2\rho;z(s))\notag\\
&\quad+2i\vert\xi\vert(s\vert\xi\vert)^\rho e^{-is\vert\xi\vert}z^{-1}\big(\rho M(\rho-1,2\rho;z(s))-(\rho-z)M(\rho,2\rho;z(s))\big)\notag\\
=&\vert\xi\vert(s\vert\xi\vert)^{\rho-1}e^{-is\vert\xi\vert}\Big(\frac{z(s)}{2}M(\rho,2\rho;z(s))+\rho M(\rho-1,2\rho;z(s))\Big),\label{partialse1notinter}
\end{align}
here we used (\ref{rela3}) in the first equality;  similarly,
\begin{align}
&\partial_sE_2(s,\xi)=\vert\xi\vert(s\vert\xi\vert)^{\rho-1}z(s)^{1-2\rho}e^{is\vert\xi\vert}\Big(
-\frac{z(s)}{2}M(1-\rho,2-2\rho;-z(s))+(1-\rho)M(-\rho,2-2\rho;-z(s))\Big).\label{partialse2notinter}
\end{align}
If $\delta=0$, i.e., $2\rho=1,$ using (\ref{rela4}) we have
\begin{align}
&\partial_sE_1(s,\xi)=\rho(s\vert\xi\vert)^{\rho-1}\vert\xi\vert e^{-is\vert\xi\vert}U(\rho,2\rho;z(s))-i\vert\xi\vert(s\vert\xi\vert)^\rho e^{-is\vert\xi\vert}U(\rho,2\rho;z(s))\notag\\
&\quad\quad+2i\vert\xi\vert(s\vert\xi\vert)^\rho e^{-is\vert\xi\vert}\frac{(-\rho+z(s))U(\rho,2\rho;z(s))-U(\rho-1,2\rho;z(s))}{2is\vert\xi\vert}\notag\\
&=\vert\xi\vert(s\vert\xi\vert)^{\rho-1}e^{-is\vert\xi\vert}\big(\frac{z(s)}{2}U(\rho,2\rho;z(s))
-U(\rho-1,2\rho;z(s))\big)\label{partialse1inter}
\end{align}
and

\begin{align}
&\partial_sE_2(s,\xi)=\vert\xi\vert(s\vert\xi\vert)^{\rho-1}e^{is\vert\xi\vert}z(s)^{1-2\rho}\Big(-\frac{z(s)}{2}U(1-\rho,2-2\rho;-z(s))-U(-\rho,2-2\rho;-z(s))\Big).\label{partialse2inter}
\end{align}
Combining (\ref{partialse1notinter})-(\ref{partialse2inter}) yields (\ref{partoiale1e2}) immediately.

\end{proof}

The above lemma provides the explicit expression of the fundamental solutions to  (\ref{fouriereq}). In what follows, we unify the expressions of $\widehat{H}_0(s,\eta,\xi), \widehat{H}_1(s,\eta,\xi)$ and their time derivatives into one form for the convenience of subsequent computations.
\begin{lemma}\label{expressionandderivatives}
The expressions of $\widehat{H}_k(s,\eta,\xi)$, $k=0,1$ and their time derivatives can be written by
\begin{equation}\label{solutionmartix1}
\partial_s^\iota \widehat{H}_k(s,\eta,\xi)=(-1)^kK_\rho(2i\vert\xi\vert)^{\iota-k}\det\big(\Lambda_k^\iota(s,\eta,\xi)\big),
\end{equation}
where
\begin{equation*}
\begin{aligned}
&\Lambda_k^\iota(s,\eta,\xi)\doteq\begin{pmatrix}
z(\eta)^{k-\rho}e^{\frac{z(\eta)}{2}}\mathscr{W}_{1-k}\big(1-\rho,2-2\rho;-z(\eta)\big)
&z(s)^{1-\rho-\iota}e^{\frac{z(s)}{2}}\mathscr{W}_\iota\big(1-\rho,2-2\rho;-z(s)\big)\\
z(\eta)^{\rho-1+k}e^{-\frac{z(\eta)}{2}}\mathscr{W}_{1-k}\big(\rho,2\rho;z(\eta)\big)&
z(s)^{\rho-\iota}e^{-\frac{z(s)}{2}}\mathscr{W}_\iota\big(\rho,2\rho;z(s)\big)
\end{pmatrix}
\end{aligned}
\end{equation*}
and
\begin{equation*}K_\rho\doteq\left\{
\begin{aligned}
&(1-2\rho)^{-1},&\quad& \delta\neq 0,\\
&e^{-\frac{\pi}{2}i},&\quad& \delta=0
\end{aligned}
\right.
\ \ \text{with}\  \rho \ \text{defined by (\ref{rho}).}
\end{equation*}
\end{lemma}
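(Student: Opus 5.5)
The plan is to start from the representation (\ref{solutionfunde}) in Lemma \ref{sulutionfou} and to treat the $s$-dependence and the normalising Wronskian separately. Only the entries evaluated at $s$ depend on $s$. Hence $\partial_s^\iota\widehat{H}_k(s,\eta,\xi)$, for $\iota=0,1$, is obtained by replacing $E_h(s,\xi)$ with $\partial_s^\iota E_h(s,\xi)$ in the numerators of (\ref{solutionfunde}); the denominator is unchanged.

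\textbf{Step 1 (unified form of the fundamental solutions).} Comparing (\ref{fundamental}) with (\ref{partoiale1e2}), and using $\mathscr{W}_0$, $\mathscr{W}_1$ from (\ref{W0}), (\ref{W1}), I will record, for $\iota=0,1$,
\begin{equation*}
\partial_s^\iota E_1(s,\xi)=(2i)^{\iota-\rho}|\xi|^\iota z(s)^{\rho-\iota}e^{-\frac{z(s)}{2}}\mathscr{W}_\iota(\rho,2\rho;z(s)),
\end{equation*}
\begin{equation*}
\partial_s^\iota E_2(s,\xi)=(2i)^{\iota-\rho}|\xi|^\iota z(s)^{1-\rho-\iota}e^{\frac{z(s)}{2}}\mathscr{W}_\iota(1-\rho,2-2\rho;-z(s)).
\end{equation*}
These are exactly the powers of $z$ appearing in $\Lambda_k^\iota$. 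The $\eta$-column of $\Lambda_k^\iota$ uses index $1-k$: it contains the derivatives $\partial_sE_h(\eta)$ when $k=0$ and the values $E_h(\eta)$ when $k=1$. This matches the $\eta$-row in the numerators of (\ref{solutionfunde}).

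\textbf{Step 2 (the denominator).} The Wronskian $\mathcal{W}(\eta)\doteq E_1(\eta)\partial_sE_2(\eta)-E_2(\eta)\partial_sE_1(\eta)$ should be computed from (\ref{Wron23}) by the chain rule, since $E_h=(2i)^{-\rho}z^\rho e^{-z/2}\mathscr{W}_{0h}(z)$ and $dz/ds=2i|\xi|$. This gives
\begin{equation*}
\mathcal{W}=2i|\xi|\,(2i)^{-2\rho}z^{2\rho}e^{-z}\,W(\mathscr{W}_{01},\mathscr{W}_{02}).
\end{equation*}
\begin{itemize}
\item For $c=2\rho\notin\mathbb{Z}$, this equals $(2i)^{1-2\rho}|\xi|(1-2\rho)$.
\item For $\delta=0$, we have $c=1$, $a=\tfrac12$ and $\text{Im}\,z=2s|\xi|>0$. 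This gives $(2i)^{1-2\rho}|\xi|e^{i\pi/2}$.
\end{itemize}
In both cases the Wronskian is independent of $\eta$, and its reciprocal yields exactly $K_\rho(2i)^{2\rho-1}|\xi|^{-1}$.

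\textbf{Step 3 (assembling the formula).} Factoring the prefactors out of the numerator determinant produces $(2i)^{\iota+1-k-2\rho}|\xi|^{\iota+1-k}$. After division by the Wronskian this becomes $K_\rho(2i)^{\iota-k}|\xi|^{\iota-k}$; the factor $|\xi|^{\iota-k}$ combines with the $z$-powers as displayed in (\ref{solutionmartix1}). For the sign, write $A=E_1$, $B=E_2$ and compare with the row order of $\Lambda_k^\iota$ (first row $E_2$-type, second row $E_1$-type; columns $\eta$ then $s$).
\begin{itemize}
\item For $k=0$: $\det\Lambda_0^\iota=B'_\eta A^{(\iota)}_s-B^{(\iota)}_sA'_\eta$, which is the numerator of $\widehat H_0$ as it stands.
\item For $k=1$: $\det\Lambda_1^\iota=B_\eta A^{(\iota)}_s-B^{(\iota)}_sA_\eta$, which is minus the numerator of $\widehat H_1$. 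This accounts for the factor $(-1)^k$.
\end{itemize}

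I expect the main obstacle to be Step 2 in the integer case $\delta=0$. There one must make sure that the branch conventions for $z^{1-c}$ and $(-z)$ in $U_2$ match the factor $(-1)^{1-c}$ of (\ref{kumer}) and the phase $e^{i(c-a)\pi\,\text{sgn}(\text{Im}z)}$ of (\ref{Wron2}), so that the product is exactly $e^{i\pi/2}$ and hence $K_\rho=e^{-\frac{\pi}{2}i}$. I would handle this by fixing the principal branch and $\text{sgn}(\text{Im}\,z)=+1$ throughout, and by checking the resulting constant against the small-$z$ asymptotics (\ref{rela5}).
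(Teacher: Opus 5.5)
Your proposal is correct and follows essentially the paper's own argument: you compute the Wronskian of $E_1,E_2$ from (\ref{Wron23}) via the chain rule, obtaining $(2i)^{1-2\rho}(1-2\rho)|\xi|$, respectively $|\xi|e^{i\pi/2}$ when $\delta=0$, and then substitute (\ref{fundamental}) and (\ref{partoiale1e2}) into (\ref{solutionfunde}). Your explicit tracking of the prefactor $(2i)^{\iota+1-k-2\rho}|\xi|^{\iota+1-k}$ and of the sign $(-1)^k$ (the row order of $\Lambda_1^\iota$ reverses the numerator of $\widehat H_1$) spells out what the paper compresses into ``immediately'', and it checks out.
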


\begin{proof}
According to the expressions of $\widehat{H}_0(s,\eta,\xi)$, $\widehat{H}_1(s,\eta,\xi)$ in Lemma \ref{sulutionfou}, our aim is to get the Wronskian   $W\big(E_1(s,\xi),E_2(s,\xi)\big)$. From (\ref{nextsolution}) we know
\begin{equation}\label{equalexpressions}
E_h(s,\xi)=(2i)^{-\rho}z(s)^\rho e^{-\frac{z(s)}{2}}\mathscr{W}_{0h}(z(s)),\ \ z(s)=2is\vert\xi\vert,\  h=1,2.
\end{equation}
Then for $h=1,2,$
\begin{align}
&\partial_sE_h(s,\xi)=(2i)^{-\rho}\rho z(s)^{\rho-1}2i\vert\xi\vert e^{-\frac{z(s)}{2}}\mathscr{W}_{0h}(z(s))\notag\\
&\ \ +(2i)^{-\rho}z(s)^{\rho}(-i\vert\xi\vert)e^{-\frac{z(s)}{2}}\mathscr{W}_{0h}(z(s))+(2i)^{-\rho}z(s)^{\rho}2i\vert\xi\vert e^{-\frac{z(s)}{2}}\frac{d}{dz}\mathscr{W}_{0h}(z(s))\notag\\
&=(2i)^{1-\rho}z(s)^{\rho-1}\vert\xi\vert e^{-\frac{z(s)}{2}}\Big((\rho-\frac{z(s)}{2})\mathscr{W}_{0h}(z(s))+z(s)\frac{d}{dz}
\mathscr{W}_{0h}(z(s))\Big).\label{partialseh}
\end{align}
Combining (\ref{equalexpressions}), (\ref{partialseh})  and (\ref{Wron23}) we calculate that
\begin{align}
&W\big(E_1(s,\xi),E_2(s,\xi)\big)=E_1(s,\xi)\partial_sE_2(s,\xi)-E_2(s,\xi)\partial_sE_1(s,\xi)\notag\\
&\ \ =(2i)^{1-2\rho}\vert\xi\vert z(s)^{2\rho}e^{-z(s)}W\big(\mathscr{W}_{01}(z(s)),\mathscr{W}_{02}(z(s))\big)\notag\\
&\ \ =\left\{\begin{aligned}
&(2i)^{1-2\rho}(1-2\rho)\vert\xi\vert, &\quad&\text{if}\  \delta\neq0,\ \text{i.e.,}\  2\rho\neq1,\\
&(-1)^{2\rho-1}(2i)^{1-2\rho}\vert\xi\vert e^{i\rho\pi}, &\quad&\text{if}\  \delta=0,\ \text{i.e.,}\ 2\rho=1.
\end{aligned}\right.\label{Wron4}
\end{align}
Substituting (\ref{fundamental}),  (\ref{partoiale1e2}) and (\ref{Wron4})  into (\ref{solutionfunde}) indicates (\ref{solutionmartix1})  immediately.

\end{proof}

$\mathscr{W}_h(\rho,2\rho;z)$, $h=0,1$ satisfies the asymptotic properties  stated in the following Lemma.
\begin{lemma}
For any $k\in\mathbb{N}$, $\mathscr{W}_h(\rho,2\rho;z)$, $h=0,1$ satisfy the following  properties:
\begin{align}
&\text{For small}\ \vert z\vert,\ \mathscr{W}_h\big(\frac{1}{2},1;z\big)=C_h\log z+O(1); \label{propop1}\\
&\text{If} \  2\rho\notin\mathbb{Z}, \ \text{for small}\  \vert z\vert \ \text{we have}\ \ \vert\mathscr{W}_h(\rho,2\rho;z)\vert\leq C;\label{bounded1}\\
&\text{For small}\ \vert z\vert\, \ \text{it holds}\ \big\vert\frac{d^k}{dz^k}\mathscr{W}_h\big(\frac{1}{2},1;z\big) \big\vert\lesssim \vert z\vert^{-k}; \ \label{de1}\\
&\text{For large}\ \vert z\vert\,\ \mathscr{W}_h\big(\frac{1}{2},1;z\big)=c_ha_h(z),  \text{where}\ \big\vert\frac{d^k}{dz^k}a_h(z)\big\vert\lesssim \vert z\vert^{h-\frac{1}{2}-k}; \label{bounded2}\\
&\text{If}\   2\rho\notin\mathbb{Z}, \ \text{for large}\ \vert z\vert, \ \mathscr{W}_h\big(\rho,2\rho;z\big)=C_he^{\frac{z}{2}}\sin\big(\frac{z}{2i}\big)A_h(z),\ \text{where}\ \big\vert\frac{d^k}{dz^k}A_h(z)\big\vert\lesssim \vert z\vert^{h-\text{Re}\rho-k}.\label{bounded3}
\end{align}

\end{lemma}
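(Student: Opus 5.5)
The five properties follow from the explicit definitions of $\mathscr{W}_0$ in (\ref{W0}) and $\mathscr{W}_1$ in (\ref{W1}), combined with the expansions collected in Propositions \ref{kummerpro} and \ref{tricomipro}. I would split according to whether $2\rho\in\mathbb{Z}$ (only $\delta=0$, $\rho=\frac12$, giving Tricomi's $U$) or $2\rho\notin\mathbb{Z}$ (giving Kummer's $M$), and within each case treat small and large $|z|$ separately.

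\emph{Small $|z|$.} For $2\rho\notin\mathbb{Z}$, boundedness (\ref{bounded1}) is immediate. By (\ref{entire}), $M(\rho,2\rho;z)$ and $M(\rho-1,2\rho;z)$ are entire, hence bounded near $0$. Therefore $\mathscr{W}_1=\frac z2M(\rho,2\rho;z)+\rho M(\rho-1,2\rho;z)$ is bounded as well. For $\rho=\frac12$, property (\ref{rela5}) gives $U(\frac12,1;z)=-\frac{1}{\Gamma(1/2)}\log z+O(1)$, with the remainder $O(z\log z)$ bounded. For $\mathscr{W}_1=\frac z2U(\frac12,1;z)-U(-\frac12,1;z)$, I apply (\ref{rela5}) with $a=-\frac12$; note $1/\Gamma(-\frac12)\neq0$, and the $\frac z2\log z$ term is $O(1)$. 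This yields (\ref{propop1}). For the derivative bounds (\ref{de1}), I would write $U(a,1;z)=A(z)\log z+B(z)$ with $A,B$ analytic near $0$; this is the standard logarithmic-case expansion underlying (\ref{rela5}). Differentiating $k$ times, each derivative either hits $\log z$, producing a factor $z^{-j}$, or hits an analytic factor. Hence $|\partial_z^k\mathscr{W}_h|\lesssim|z|^{-k}(1+|\log z|\,|z|^k)\lesssim |z|^{-k}$ for $k\ge1$. Alternatively, one can iterate (\ref{detiveu}) together with (\ref{rela6}) and (\ref{rela7}): $U(a+k,1+k;z)\sim z^{-k}$.

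\emph{Large $|z|$.} Here $z=2ir$ lies on the imaginary axis, so $|\arg z|=\frac\pi2<\frac32\pi$. In the case $\rho=\frac12$, the asymptotic series (\ref{tricomicon}) together with (\ref{rela8}) gives $U(a,1;z)=z^{-a}(1+O(z^{-1}))$. This expansion is differentiable term by term in the sector, so each derivative gains a factor $z^{-1}$. Then $U(\frac12,1;z)$ is of order $|z|^{-1/2}$. For $\mathscr{W}_1$, the dominant term is $\frac z2U(\frac12,1;z)\sim |z|^{1/2}$, while $U(-\frac12,1;z)\sim z^{1/2}$ is of the same order. This yields (\ref{bounded2}) with exponent $h-\frac12-k$. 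In the case $2\rho\notin\mathbb{Z}$, I would use the Bessel connection (\ref{besselconnection}) with $\nu=\rho-\frac12$: $M(\rho,2\rho;z)=\Gamma(\rho+\frac12)(\frac{z}{4i})^{\frac12-\rho}e^{z/2}J_{\rho-1/2}(\frac{z}{2i})$. Then (\ref{besselasy}) gives $e^{z/2}\sin(\frac z{2i})$ times a symbol of order $|z|^{-\mathrm{Re}\rho}$, whose derivatives gain a factor $|z|^{-1}$ each. For $\mathscr{W}_1$, I would use the contiguous relation (\ref{rela3}) to rewrite $\rho M(\rho-1,2\rho;z)$ as $(\rho-z)M+z\frac{d}{dz}M$. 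The derivative is then computed via (\ref{deterive}), or through the Bessel form using (\ref{10.6.2}). This gives one extra factor of $z$, i.e. the exponent $1-\mathrm{Re}\rho$.

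\emph{Main obstacle.} The delicate step is (\ref{bounded3}) for $\mathscr{W}_1$. Writing it with the same oscillatory factor $\sin(\frac z{2i})$ is not automatic, because differentiating $J_\nu$ produces $J_{\nu-1}$, whose leading oscillation is a cosine. I would handle this by allowing $A_h$ to absorb the ratio, or, more safely, by interpreting $\sin$ as a fixed linear combination of $e^{\pm z/2}$ with symbol coefficients. That is the form actually used in the later oscillatory-integral estimates. I would therefore prove it in that form: every term is $e^{\pm z/2}$ times a symbol of order $h-\mathrm{Re}\rho$. Checking that the symbol estimates survive differentiation is the routine part.
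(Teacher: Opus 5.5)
Your proposal is correct and follows essentially the paper's route: analyticity of $M$ near $0$, the small- and large-$|z|$ behaviour (\ref{rela5})--(\ref{rela8}) of $U$, and the Bessel connection (\ref{besselconnection}) combined with the contiguous relation (\ref{rela3}). Your direct use of (\ref{rela5}) at $a=-\frac12$, and your expansion $A(z)\log z+B(z)$ for (\ref{de1}) (correctly restricted to $k\ge1$), are only cosmetic variants of the paper's reduction via (\ref{rela4}) and (\ref{detiveu}) to $U(\frac32,2;z)$ and $U(\frac12+k,1+k;z)$.

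Your reading of (\ref{bounded3}) is more than a convenience. You interpret it as $e^{z/2}$ times a combination of $e^{\pm z/2}$ with symbol coefficients of order $h-\mathrm{Re}\rho$, and this is forced already for $h=0$, not only for $h=1$. The zeros of $J_{\rho-\frac12}(r)$ are shifted away from $\pi\mathbb{Z}$, so no symbol $A_0$ can factor through $\sin(\frac{z}{2i})$. Your form is therefore the only valid reading of the paper's ``$\sim$'' in (\ref{besselasy}), and it is also exactly what the later oscillatory-integral estimates use.
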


\begin{proof}
From the definition (\ref{W1}) of $\mathscr{W}_1$  and using (\ref{detiveu}), (\ref{rela4}), we know
\begin{equation}
\begin{aligned}
\mathscr{W}_1\big(\frac{1}{2},1;z\big)&=\frac{1}{2}U\big(\frac{1}{2},1;z\big)-\frac{z}{2}U\big(\frac{1}{2},1;z\big)+z\frac{d}{dz}U\big(\frac{1}{2},2;z\big)=\frac{1-z}{2}U\big(\frac{1}{2},1;z\big)-\frac{z}{2}U\big(\frac{3}{2},2;z\big),
\end{aligned}
\end{equation}
and note  $\mathscr{W}_0\big(\frac{1}{2},1;z\big)=U\big(\frac{1}{2},1;z\big)$, then the asymptotic behaviors (\ref{rela5}) and (\ref{rela6}) gives (\ref{propop1}). (\ref{bounded1}) can be derived by the analyticity of Kummer's function  and Tricomi's confluent hypergeometric function in $z$. By the definitions (\ref{W0}), (\ref{W1}) of $\mathscr{W}_0,\mathscr{W}_1$ and using (\ref{detiveu}), we get
\begin{align}
&\frac{d^k}{dz^k}\mathscr{W}_0\big(\frac{1}{2},1;z\big)=(-1)^k\big(\frac{1}{2}\big)_kU
\big(\frac{1}{2}+k,1+k;z\big)\label{derive1}\\
&\frac{d^k}{dz^k}\mathscr{W}_1\big(\frac{1}{2},1;z\big)=(-1)^{k-1}\frac{k}{2}(\frac{1}{2})_{k-1}U\big(k-\frac{1}{2},k;z\big)
+(-1)^k\frac{z}{2}(\frac{1}{2})_kU\big(\frac{1}{2}+k,1+k;z\big)\notag\\
&\quad\quad\quad\quad\quad\quad\quad\quad-(-1)^k(-\frac{1}{2})_kU\big(-\frac{1}{2}+k,1+k;z\big),\label{derive2}
\end{align}
where $(a)_k$ is defined by (\ref{ak}). According to the asymptotic behaviors (\ref{rela5})-(\ref{rela7}) of $U\big(a,c;z\big)$ when $\vert z\vert$ is small and (\ref{rela8}) when $\vert z\vert$  is large, we can derive (\ref{de1}) and (\ref{bounded2}), respectively. From (\ref{besselconnection}) follows that
\begin{align}
\mathscr{W}_0(\rho,2\rho;z)&=M(\rho,2\rho;z)=\Gamma\big(\rho+\frac{1}{2}\big)(4i)^{\rho-\frac{1}{2}}z^{-\rho+\frac{1}{2}}e^{\frac{z}{2}}
J_{\rho-\frac{1}{2}}\big(\frac{z}{2i}\big)\notag\\
&\sim e^{\frac{z}{2}}\sin\big(\frac{z}{2i}\big)z^{-\rho+\frac{1}{2}}a_{\rho-\frac{1}{2}}\big(\frac{z}{2i}\big)\doteq e^{\frac{z}{2}}\sin\big(\frac{z}{2i}\big)
A_0(z).\label{w0m}
\end{align}
By (\ref{besselasy}) we get
\begin{equation}\label{dkdzk1}
\begin{aligned}
\big\vert\frac{d^k}{dz^k}A_0(z)\big\vert\sim\big\vert\sum\limits_{j=0}^kz^{-\rho+\frac{1}{2}-j}\frac{d^{k-j}}{dz^{k-j}}
a_{\rho-\frac{1}{2}}\big(\frac{z}{2i}\big)\big\vert\lesssim\vert z\vert^{-\text{Re}\rho-k}.
\end{aligned}
\end{equation}
Furthermore, from~(\ref{W1}),   by~(\ref{10.6.2}), (\ref{w0m}), (\ref{besselasy})  and (\ref{rela3}), we have
\begin{align}
&\mathscr{W}_1(\rho,2\rho;z)=\frac{z}{2}M(\rho,2\rho;z)+(\rho-z)M(\rho,2\rho;z)+ z\frac{d}{dz}M(\rho,2\rho;z)\notag\\
&\sim zM(\rho,2\rho;z)+M(\rho,2\rho;z)+e^{\frac{z}{2}}z^{-\rho+\frac{1}{2}}\Big(J_{\rho-\frac{1}{2}}\big(\frac{z}{2i}\big)
+zJ_{\rho-\frac{1}{2}}\big(\frac{z}{2i}\big)+z\frac{d}{dz}J_{\rho-\frac{1}{2}}\big(\frac{z}{2i}\big)\Big)\notag\\
&\sim e^{\frac{z}{2}}\sin\big(\frac{z}{2i}\big)\big(zA_0(z)+A_0(z)\big)+e^{\frac{z}{2}}z^{-\rho+\frac{1}{2}}\Big(J_{\rho-\frac{1}{2}}\big(\frac{z}{2i}\big)
+zJ_{\rho-\frac{1}{2}}\big(\frac{z}{2i}\big)+zJ_{\rho-\frac{3}{2}}\big(\frac{z}{2i}\big)\Big)\notag\\
&\sim e^{\frac{z}{2}}\sin\big(\frac{z}{2i}\big)\Big(\underbrace{zA_0(z)+A_0(z)+z^{-\rho+\frac{3}{2}}a_{\rho-\frac{3}{2}}\big(\frac{z}{2i}\big)}_{A_1(z)}\Big).\label{w1m}
\end{align}
Proceeding as in (\ref{dkdzk1}),  we have
\begin{equation}\label{dkdzk2}
\begin{aligned}
&\big\vert\frac{d^k}{dz^k}A_1(z)\big\vert\lesssim \vert z\vert^{1-\text{Re}\rho-k}.
\end{aligned}
\end{equation}
Combining (\ref{w0m}) and (\ref{dkdzk2}) yields (\ref{bounded3}).

\end{proof}

\subsection{$L^{q}-L^q{^{\prime}}$ estimates for the solution to  linear equation (\ref{masseq}) on the conjugate line}\label{sec3.1}

In this section, we will make full use of the properties of the Kummer's function and Tricomi's confluent hypergeometric function  to establish the $L^q-L^{q^\prime}$ estimates for the solution $\mathcal{U}(s,x)$ to (3.9) with the parameters $q,\ q'$ lying on the conjugate line, i.e., $\frac{1}{q}+\frac{1}{q^{\prime}}=1$. Then by the corresponding  inverse change of variables (\ref{sub11}), (\ref{sub21}) and (\ref{sub31}), we can establish the $L^q-L^{q^\prime}$ estimates for the solution $u(t,x)$ to the linear EPDT equation (\ref{lineareq}), i.e., Theorem \ref{propuwithtau=1} and Theorem \ref{estimateu0=0}.

From the analysis of Section \ref{expression},
\begin{equation}\label{masseqsolution}
\mathcal{U}(s,x)\doteq\mathcal{F}^{-1}\Big(\widehat{H}_0(s,\eta,\xi)\widehat{\mathcal{U}}_0(\xi)+
\widehat{H}_1(s,\eta,\xi)\widehat{\mathcal{U}}_1(\xi)\Big)
\end{equation}
is the solution to (\ref{masseq}). It is worth noting that $M(a,c;z)$ and $U(a,c;z)$  exhibit different asymptotic behaviors near the origin and at infinity, we need to use Littlewood-Paley theory to decompose the frequency space into high, medium and low frequency parts and discuss separately.

Specifically,  choosing the non-increasing  cutoff function
\begin{equation}\label{cutoff}
\psi\in C_0^\infty\big([0,\infty)\big), \ \ \psi\equiv1\ \ \text{on}\ \ [0,1]\ \ \text{and}\ \ \text{supp}\psi\subset[0,2].
\end{equation}
For small sufficiently small $K>0$,  define
\begin{equation*}
\begin{aligned}
&\psi_1(s,\eta,\xi)\doteq\psi\big(\frac{s\vert\xi\vert}{K}\big)\psi\big(\frac{\eta\vert\xi\vert}{K}\big),\\
&\psi_2(s,\eta,\xi)\doteq\psi\big(\frac{\eta\vert\xi\vert}{K}\big)\Big(1-\psi\big(\frac{s\vert\xi\vert}{K}\big)\Big),\\
&\psi_3(s,\eta,\xi)\doteq1-\psi\big(\frac{\eta\vert\xi\vert}{K}\big),
\end{aligned}
\end{equation*}
where $\psi_1+\psi_2+\psi_3\equiv1$ for any $s\geq\eta\geq\frac{1}{m+1}$  and any $\xi\in\mathbb{R}^n$ in the frequency space.
It is obvious that
\begin{align}
&\text{supp}\psi_1(s,\eta,\cdot)\subset\Omega_1(s)\doteq\big\{\xi\in\mathbb{R}^n: \vert\xi\vert\leq\frac{2K}{s}\big\},\notag\\
&\text{supp}\psi_2(s,\eta,\cdot)\subset\Omega_2(s,\eta)\doteq\big\{
\xi\in\mathbb{R}^n: \frac{K}{s}\leq\vert\xi\vert\leq\frac{2K}{\eta}\big\},\notag\\
&\text{supp}\psi_3(s,\eta,\cdot)\subset\Omega_3(\eta)\doteq\big\{\xi\in\mathbb{R}^n: \vert\xi\vert\geq\frac{K}{\eta}\big\}.\label{3region}
\end{align}
From (\ref{masseqsolution}), we know
\begin{align}
&\nabla \mathcal{U}(s,x)\sim\mathcal{F}^{-1}\Big(\xi\widehat{H}_0(s,\eta,\xi)\widehat{\mathcal{U}}_0(\xi)+\xi
\widehat{H}_1(s,\eta,\xi)\widehat{\mathcal{U}}_1(\xi)\Big),\label{nablau}\\
&\partial_s\mathcal{U}(s,x)=\mathcal{F}^{-1}\Big(\partial_s\widehat{H}_0(s,\eta,\xi)\widehat{\mathcal{U}}_0(\xi)+\partial_s
\widehat{H}_1(s,\eta,\xi)\widehat{\mathcal{U}}_1(\xi)\Big).\label{partialsu}
\end{align}
Hence, in order to obtain the $L^q-L^{q^\prime}$ estimates of $\mathcal{U}$, $\partial_s\mathcal{U}$ and $\nabla\mathcal{U}$,   we need to study the following Fourier multipliers
\begin{equation}\label{anaylis}
\mathcal{F}^{-1}\Big(\psi_j(s,\eta,\xi)b(\xi)\partial_s^\iota\widehat{H}_k(s,\eta,\xi)\hat{f}(\xi)\Big),\ \ j=1,2,3, \ \iota=0,1,
\end{equation}
where the choice of $b(\xi)$ is related to  three regions (\ref{3region}) in our decomposition. In different regions, we utilize different asymptotic properties of the confluent hypergeometric function, which leads to varying regularity requirements on the initial data in each region.

\subsubsection{The required analytical techniques}
The main theoretical tools used in establishing the $L^q-L^{q^\prime}$ estimates of $\mathcal{U}$, $\partial_s\mathcal{U}$ and $\nabla\mathcal{U}$ are Littlewood-Paley theory and interpolation theory for linear operators. For convenience in the subsequent proofs, we list these classical analytical techniques separately in this section.

Similar to (\ref{cutoff}), choosing an non-increasing cutoff function $\varphi(y)\in C_0^\infty(\mathbb{R}^n)$ such that $\text{supp}\varphi\subset\{y\in\mathbb{R}^n:\vert y\vert\leq2\}$ and $\varphi(y)=1$ when $\vert y\vert\leq1$, let
\begin{equation}\label{psi0}
\begin{aligned}
&\varphi_0(y)\doteq\varphi(y)\ \ \text{and}\ \ \varphi_j(y)\doteq\varphi(2^{-j}y)-\varphi(2^{-j+1}y),\ \ \text{for}\ \ 1\leq j\in\mathbb{N}.
\end{aligned}
\end{equation}
Then
\begin{equation*}
\begin{aligned}
\text{supp}\varphi_j\subset\{y\in\mathbb{R}^n:2^{j-1}\leq\vert y\vert\leq2^{j+1}\}\ \ \text{and}\ \ \sum\limits_{k=0}^{\infty}\varphi_k(y)=1, \  \text{for any} \  y\in\mathbb{R}^n.
\end{aligned}
\end{equation*}
The decomposition (\ref{psi0}) also known as dyadic decomposition, its main purpose is to derive global estimates from local ones, that is
\begin{proposition}\label{littlewoodpaleythm} (\cite{Bren1975})
Assume $a\in L^\infty(\mathbb{R}^n)$ and $\Vert\mathcal{F}^{-1}\big(a\varphi_j\hat{f}\big)\Vert_{L^{q^\prime}(\mathbb{R}^n)}\leq C\Vert f\Vert_{L^q}(\mathbb{R}^n)$ uniformly holds for any $j\in\mathbb{N}$, where $\varphi_j$  is defined by (\ref{psi0}) and $1<q\leq 2$, $\frac{1}{q}+\frac{1}{q^\prime}=1$, then
\begin{equation*}
\Vert\mathcal{F}^{-1}\big(a\hat{f}\big)\Vert_{L^{q^\prime}(\mathbb{R}^n)}\leq C\Vert f\Vert_{L^q(\mathbb{R}^n)}.
\end{equation*}
\end{proposition}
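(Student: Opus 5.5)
The statement is the Littlewood--Paley gluing result (Proposition \ref{littlewoodpaleythm}, cited from \cite{Bren1975}): uniform $L^q\to L^{q'}$ bounds for the dyadic pieces $\mathcal{F}^{-1}(a\varphi_j\hat f)$ yield the same bound for $\mathcal{F}^{-1}(a\hat f)$. I would deduce it from the Littlewood--Paley square function characterisation of $L^r$, $1<r<\infty$, combined with the two Minkowski/orthogonality inequalities that hold on the correct sides of $2$.

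\textbf{Step 1: reduce to a fattened frequency localisation.} Set $\tilde\varphi_j=\varphi_{j-1}+\varphi_j+\varphi_{j+1}$ (with $\varphi_{-1}\equiv 0$). Then $\tilde\varphi_j\equiv1$ on $\operatorname{supp}\varphi_j$, so $\varphi_j a\hat f=\varphi_j a\,\tilde\varphi_j\hat f$. Write $P_j$ and $\tilde P_j$ for the Fourier multipliers with symbols $\varphi_j$ and $\tilde\varphi_j$, and $T$ for the operator $f\mapsto\mathcal{F}^{-1}(a\hat f)$, which is well defined on $L^2$ since $a\in L^\infty$. The hypothesis applied to $\tilde P_jf$ gives
\[
\|P_jTf\|_{L^{q'}}=\|\mathcal{F}^{-1}(a\varphi_j\widehat{\tilde P_jf})\|_{L^{q'}}\le C\|\tilde P_jf\|_{L^q}.
\]

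\textbf{Step 2: square functions and the exchange of norms.} Because $q'\ge2$, the Littlewood--Paley inequality followed by Minkowski's inequality in $L^{q'/2}$ gives
\[
\|Tf\|_{L^{q'}}\lesssim\Big\|\big(\textstyle\sum_j|P_jTf|^2\big)^{1/2}\Big\|_{L^{q'}}\le\Big(\textstyle\sum_j\|P_jTf\|_{L^{q'}}^2\Big)^{1/2}.
\]
By Step 1 this is at most $C\big(\sum_j\|\tilde P_jf\|_{L^q}^2\big)^{1/2}$. Because $q\le2$, the reverse Minkowski inequality for $\ell^2(L^q)$ versus $L^q(\ell^2)$ together with the square function estimate for the $\tilde P_j$ gives
\[
\Big(\textstyle\sum_j\|\tilde P_jf\|_{L^q}^2\Big)^{1/2}\le\Big\|\big(\textstyle\sum_j|\tilde P_jf|^2\big)^{1/2}\Big\|_{L^q}\lesssim\|f\|_{L^q}.
\]

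\textbf{Step 3: the endpoint and density.} When $q=2$ the claim is immediate from Plancherel and the bounded overlap of the $\varphi_j$, so Step 2 is only needed for $1<q<2$. One then extends from Schwartz functions to all of $L^q$ by density.

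\textbf{Main obstacle.} The one delicate point is the lower square-function bound $\|g\|_{L^{q'}}\lesssim\|(\sum_j|P_jg|^2)^{1/2}\|_{L^{q'}}$. It follows by duality from the upper bound at exponent $q$, using $g=\sum_jP_j\tilde P_jg$ and Cauchy--Schwarz in $j$. The upper bound itself is Mikhlin--H\"ormander applied to the vector-valued (Rademacher-randomised) multipliers $\sum_j\pm\varphi_j$. I will cite this standard theory rather than reprove it. Note also that the constant $C$ in the conclusion will be the hypothesis constant multiplied by the Littlewood--Paley constants, which is harmless.
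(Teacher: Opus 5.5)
Your argument is correct. It works because $\tilde\varphi_j\varphi_j=\varphi_j$, and because the chain of the Littlewood--Paley lower bound in $L^{q'}$, Minkowski in $L^{q'/2}$, the hypothesis applied to $\tilde P_jf$, Minkowski's integral inequality for $q\le 2$, and the upper square-function bound in $L^q$ closes up. The paper gives no proof of its own and simply cites \cite{Bren1975}; your argument is the standard square-function proof of this lemma, which exploits $q\le 2\le q'$ on the two sides in exactly this way.
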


\begin{proposition}\label{propopo} (\cite{PaPhd2018})
Let $\kappa$ be a positive number and $u$ be a smooth radially function on $\{\xi\in\mathbb{R}^n:\vert\xi\vert\geq1\}$ such that
$$\vert\partial_\xi^\alpha u(\xi)\vert\lesssim\vert\xi\vert^{-\kappa-\vert\alpha\vert}\ \ \text{for any}\ \vert\alpha\vert\leq n+1.$$ Then
$
\mathcal{F}^{-1}\Big(\big(1-\psi(\vert\xi\vert)\big)u(\xi)\Big)\in L^1(\mathbb{R}^n),
$
where $\psi$ is defined by (\ref{cutoff}).

\end{proposition}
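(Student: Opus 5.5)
We plan to prove the statement directly with a dyadic decomposition in frequency and a weighted $L^2$ argument in physical space. Radial symmetry will not be needed. Set $g(\xi)\doteq\big(1-\psi(\vert\xi\vert)\big)u(\xi)$. The factor $1-\psi(\vert\xi\vert)$ vanishes for $\vert\xi\vert\leq1$ and all its derivatives are bounded, so by Leibniz' rule $g$ satisfies the same symbol bounds $\vert\partial_\xi^\alpha g(\xi)\vert\lesssim\vert\xi\vert^{-\kappa-\vert\alpha\vert}$ for $\vert\alpha\vert\leq n+1$, and $g$ is supported in $\{\vert\xi\vert\geq1\}$.

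Using the functions $\varphi_j$ from (\ref{psi0}), write $g=\sum_{j\geq0}g\varphi_j$ and $G_j\doteq\mathcal{F}^{-1}(g\varphi_j)$. It suffices to show that $\Vert G_j\Vert_{L^1}\lesssim2^{-j\kappa}$ uniformly in $j$. The sum then converges in $L^1$ because $\kappa>0$, and its limit is $\mathcal{F}^{-1}(g)$, since the partial sums converge to $g$ in $\mathcal{S}'$. On $\operatorname{supp}\varphi_j$ we have $\vert\xi\vert\sim2^j$, and $\vert\partial^\beta\varphi_j\vert\lesssim2^{-j\vert\beta\vert}$. Hence, for every $\vert\alpha\vert\leq n+1$,
\begin{equation*}
\vert\partial_\xi^\alpha(g\varphi_j)(\xi)\vert\lesssim2^{-j\kappa-j\vert\alpha\vert}\mathbf{1}_{\{\vert\xi\vert\sim2^j\}}(\xi),
\quad\text{so}\quad
\Vert\partial_\xi^\alpha(g\varphi_j)\Vert_{L^2}\lesssim2^{-j\kappa-j\vert\alpha\vert+\frac{jn}{2}}.
\end{equation*}

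Next, fix an integer $N$ with $\frac{n}{2}<N\leq n+1$ (for instance $N=n$ when $n\geq1$) and a radius $R>0$, and split $\Vert G_j\Vert_{L^1}$ into the regions $\vert x\vert\leq R$ and $\vert x\vert\geq R$. On the ball, Cauchy--Schwarz and Plancherel give
\begin{equation*}
\Vert G_j\Vert_{L^1(\vert x\vert\leq R)}\lesssim R^{\frac{n}{2}}\Vert g\varphi_j\Vert_{L^2}\lesssim R^{\frac{n}{2}}2^{-j\kappa+\frac{jn}{2}}.
\end{equation*}
Outside the ball we use $\vert x\vert^{N}\vert G_j\vert\lesssim\sum_{\vert\alpha\vert=N}\vert\mathcal{F}^{-1}(\partial^\alpha_\xi(g\varphi_j))\vert$, together with Cauchy--Schwarz against $\vert x\vert^{-N}\in L^2(\vert x\vert\geq R)$; the latter holds because $2N>n$. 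This yields
\begin{equation*}
\Vert G_j\Vert_{L^1(\vert x\vert\geq R)}\lesssim R^{\frac{n}{2}-N}2^{-j\kappa-jN+\frac{jn}{2}}.
\end{equation*}
Choosing $R=2^{-j}$ makes both bounds equal to $2^{-j\kappa}$, which proves the claim.

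The only delicate point is the final summation. It is exactly here that $\kappa>0$ is used: with $\kappa=0$ the dyadic bounds would be uniform but not summable, which is the borderline Mikhlin situation. A secondary check is that the number of derivatives required, $N=\lfloor n/2\rfloor+1\leq n+1$, is covered by the hypothesis. The bound on the $\xi$-derivatives of $1-\psi(\vert\xi\vert)$ is routine since this factor is smooth away from the origin.
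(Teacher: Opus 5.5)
Your proof is correct. Note that the paper does not prove this proposition at all; it is quoted from Palmieri's thesis, so there is no in-paper argument to compare against step by step.

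Your route is the standard Bernstein-type estimate on dyadic shells. On each piece $G_j$ with $|\xi|\sim 2^j$, you combine two bounds:
\begin{itemize}
\item the $L^2$ bound on the ball $|x|\le R$, via Cauchy--Schwarz and Plancherel;
\item the weighted bound $\Vert |x|^N G_j\Vert_{L^2}$ outside the ball, which needs only $N>\frac{n}{2}$.
\end{itemize}
The choice $R=2^{-j}$ balances both pieces at $2^{-j\kappa}$, and this is summable precisely because $\kappa>0$. Your identification of the $L^1$-limit with $\mathcal{F}^{-1}g$ is also sound, since $g\,\varphi(2^{-J}\cdot)\to g$ boundedly and pointwise, hence in $\mathcal{S}'$. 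The $j=0$ piece is handled correctly, because $\operatorname{supp} g\subset\{|\xi|\ge1\}$.

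Your argument actually proves more than the stated proposition. It uses neither radial symmetry nor the full $n+1$ derivatives, only $\lfloor n/2\rfloor+1$ of them.

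The stated hypotheses are tailored to a pointwise kernel argument instead:
\begin{itemize}
\item For $|x|\ge1$, integrating by parts $n+1$ times gives $|\mathcal{F}^{-1}g(x)|\lesssim|x|^{-n-1}$, because $|\xi|^{-\kappa-n-1}$ is integrable on $\{|\xi|\ge1\}$.
\item Near $x=0$, splitting the frequency integral at $|\xi|\sim|x|^{-1}$ shows the singularity is integrable, of order $|x|^{\kappa-n}$ when $\kappa<n$.
\item Alternatively, for radial symbols one can use the Bessel-function representation of the Fourier transform.
\end{itemize}
That approach gives explicit pointwise information on the kernel, which yours does not. For the mere $L^1$ membership the paper needs (for instance, for $\bigl(1-\chi(|\zeta|)\bigr)|\zeta|^{r-1}$ with $r<1$, after rescaling the cutoff), your $L^2$-based argument is shorter and more general.
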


Moreover, we also need the following result to obtain $L^1-L^\infty$ estimate.

\begin{proposition}\label{propositionpare} (\cite{PaPhd2018, ReYa2000})
Let $\tau\geq\tau_0>0$, where $\tau_0$ is a positive number.  Assume that the amplitude function $v=v(\eta)\in C_0^\infty(\mathbb{R}^n)$  and $\text{supp}v(\eta)\subset[\frac{1}{2},2]$, then the oscillating integral
$\mathcal{F}_{\eta\rightarrow x}^{-1}\big(e^{-i\tau\vert\eta\vert}v(\eta)\big)$ satisfies the following $L^\infty-L^\infty$ estimate
\begin{equation*}
\Vert\mathcal{F}_{\eta\rightarrow x}^{-1}\big(e^{-i\tau\vert\eta\vert}v(\eta)\big)\Vert_{L^\infty(\mathbb{R}^n_x)}\lesssim(1+\tau)^{-\frac{n-1}{2}}
\sum\limits_{\vert\alpha\vert\leq s}\Vert\partial_\eta^\alpha v(\eta)\Vert_{L^\infty(\mathbb{R}^n_\eta)},\ \ s>\frac{n+3}{2}.
\end{equation*}
\end{proposition}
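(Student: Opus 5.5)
The plan is to separate the smoothness of the amplitude $v$ from the oscillation. Let $\chi\in C_0^\infty(\mathbb{R}^n)$ be a fixed radial cutoff with $\chi\equiv1$ on $\{\frac{1}{2}\leq|\eta|\leq2\}$ and $\text{supp}\chi\subset\{\frac{1}{4}\leq|\eta|\leq4\}$. Since $v=\chi v$, we have
\begin{equation*}
\mathcal{F}^{-1}_{\eta\rightarrow x}\big(e^{-i\tau|\eta|}v(\eta)\big)=\mathcal{F}^{-1}\big(e^{-i\tau|\eta|}\chi(\eta)\big)\ast_x\mathcal{F}^{-1}\big(v\big),
\end{equation*}
up to a harmless constant. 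By Young's inequality,
\begin{equation*}
\Vert\mathcal{F}^{-1}(e^{-i\tau|\eta|}v)\Vert_{L^\infty}\lesssim\Vert\mathcal{F}^{-1}(e^{-i\tau|\eta|}\chi)\Vert_{L^\infty}\,\Vert\mathcal{F}^{-1}v\Vert_{L^1}.
\end{equation*}
It therefore suffices to prove two facts:
\begin{equation*}
\text{(a)}\ \ \Vert\mathcal{F}^{-1}(e^{-i\tau|\eta|}\chi)\Vert_{L^\infty}\lesssim(1+\tau)^{-\frac{n-1}{2}},\qquad \text{(b)}\ \ \Vert\mathcal{F}^{-1}v\Vert_{L^1}\lesssim\sum_{|\alpha|\leq s}\Vert\partial^\alpha_\eta v\Vert_{L^\infty}.
\end{equation*}

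For (b), I would use Cauchy--Schwarz with the weight $(1+|x|)^{N}$, where $N$ is an integer with $\frac{n}{2}<N\leq s$; such an $N$ exists because $s>\frac{n+3}{2}$. This gives
\begin{equation*}
\Vert\mathcal{F}^{-1}v\Vert_{L^1}\leq\Vert(1+|x|)^{-N}\Vert_{L^2}\,\Vert(1+|x|)^{N}\mathcal{F}^{-1}v\Vert_{L^2}\lesssim\sum_{|\alpha|\leq N}\Vert\partial^\alpha v\Vert_{L^2}.
\end{equation*}
Because $v$ is supported in the fixed compact set $\{\frac{1}{2}\leq|\eta|\leq2\}$, each $L^2$ norm is controlled by the corresponding $L^\infty$ norm. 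The condition $s>\frac{n+3}{2}$ is thus more than enough for this step.

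For (a), when $\tau\leq\tau_0$ or more generally $\tau\lesssim1$, the bound $\Vert\chi\Vert_{L^1}$ suffices. For large $\tau$, write
\begin{equation*}
I(\tau,x)=\int e^{i(x\cdot\eta-\tau|\eta|)}\chi(\eta)\,d\eta
\end{equation*}
and split into two regions.
\begin{enumerate}
\item In the region $|x|\leq\tau/2$ or $|x|\geq2\tau$, the phase gradient satisfies $|x-\tau\eta/|\eta||\gtrsim\tau+|x|$ on $\text{supp}\chi$. Repeated integration by parts with the operator $L=\frac{(x-\tau\eta/|\eta|)\cdot\nabla_\eta}{i|x-\tau\eta/|\eta||^2}$ then gives $O(\tau^{-M})$ for any $M$. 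This is allowed since $\chi$ is smooth and fixed, and derivatives of $\eta/|\eta|$ are bounded away from the origin.
\item In the region $\tau/2<|x|<2\tau$, pass to polar coordinates $\eta=r\omega$ with $\chi$ radial:
\begin{equation*}
I=\int_{1/4}^{4}r^{n-1}\chi(r)e^{-i\tau r}\,\widehat{d\sigma}(-rx)\,dr.
\end{equation*}
The classical stationary phase bound $|\widehat{d\sigma}(\zeta)|\lesssim(1+|\zeta|)^{-\frac{n-1}{2}}$ applies; it comes from the two nondegenerate critical points $\omega=\pm\zeta/|\zeta|$ on the sphere. Since $r\sim1$ and $|x|\sim\tau$, this yields $|I|\lesssim\tau^{-\frac{n-1}{2}}$ uniformly in $x$.
\end{enumerate}

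I expect the only genuinely delicate point to be the uniformity in the second region of (a). This is exactly the classical decay of the Fourier transform of surface measure on $S^{n-1}$. I would quote it, or prove it via the Bessel representation $\widehat{d\sigma}(\zeta)=c|\zeta|^{-\frac{n-2}{2}}J_{\frac{n-2}{2}}(|\zeta|)$ together with $|J_\nu(r)|\lesssim r^{-1/2}$, rather than redo stationary phase by hand. The factorization through $\chi$ means that no smoothness of $v$ enters the oscillatory part at all. Combining (a) and (b) gives the claimed estimate with $(1+\tau)^{-\frac{n-1}{2}}$ for all $\tau\geq\tau_0$.
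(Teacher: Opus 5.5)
Your proof is correct, but it is not the route the paper relies on. The paper gives no argument and simply quotes the estimate from \cite{ReYa2000, PaPhd2018}. There it is the special case $\omega(\eta)=|\eta|$ (Hessian of rank $n-1$) of Littman's lemma for $\mathcal{F}^{-1}\big(e^{-i\tau\omega(\eta)}v(\eta)\big)$ with a general smooth phase; in that approach the amplitude stays inside the stationary-phase argument, which is why $L^\infty$ derivatives of $v$ are charged.

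Your factorization $v=\chi v$ followed by Young's inequality decouples the two issues. All the decay comes from the single fixed kernel $\mathcal{F}^{-1}(e^{-i\tau|\eta|}\chi)$: non-stationary phase for $|x|\le\tau/2$, decay of $\widehat{d\sigma}$ otherwise. In fact, for $|x|\geq\tau/2$ the bound $|\widehat{d\sigma}(rx)|\lesssim(1+\tau)^{-\frac{n-1}{2}}$ with $r\in[\frac14,4]$ already suffices, so the integration by parts for $|x|\ge2\tau$ is not needed. Meanwhile $v$ enters only through $\Vert\mathcal{F}^{-1}v\Vert_{L^1}$.

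This gives a strictly stronger statement. Any integer $N>\frac n2$ derivatives suffice, even measured in $L^2$. The bound holds uniformly for all $\tau\geq0$, so the restriction $\tau\ge\tau_0$ is unnecessary. And no radial symmetry of $v$ is used.

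What the Littman-lemma formulation buys is generality in the phase function, which this paper does not need. In its applications in Propositions \ref{highfrequencyestimate} and \ref{propmedium}, the amplitudes are only shown to satisfy $\sum_{|\alpha|\le h}\Vert\partial^\alpha_\zeta v\Vert_{L^\infty}\lesssim1$ uniformly in $j$, so your version is fully sufficient there.
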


To facilitate the application of Proposition \ref{propositionpare}, we make appropriate modifications based on  $\varphi(y)$. Let
\begin{equation*}
\begin{aligned}
\phi_0(\eta,\xi)\doteq\sum\limits_{j=-\infty}^0\tilde{\phi}_j\big(\frac{\eta\vert\xi\vert}{K}\big) \ \ \text{and}\ \ \phi_j(\eta,\xi)\doteq\tilde{\phi}_j\big(\frac{\eta\vert\xi\vert}{K}\big),\ \ \text{for}\ \ 1\leq j\in\mathbb{N},
\end{aligned}
\end{equation*}
where
$\tilde{\phi}_j(y)\doteq\varphi(2^{-j}y)-\varphi(2^{-j+1}y), \ \ \text{for any}\ j\in\mathbb{Z}.$ It is easy to see that $\text{supp}\tilde{\phi}_0(y)\subset[\frac{1}{2},2]$ and $\sum\limits_{j=-\infty}^{\infty}\tilde{\phi}_j(y)=1$ holds for any $y\neq0$.

\begin{proposition}\label{L1theorem} (\cite{PaPhd2018})
Let $f$  be a smooth radially symmetric function on $\{\xi:\vert\xi\vert\geq1\}$ such that
\begin{equation*}
\vert\partial_\xi^\alpha f(\xi)\vert\lesssim\vert\xi\vert^{-b-\vert\alpha\vert}, \ \ \text{where}\ \ b>0.
\end{equation*}
Let $\chi$ be a cutoff function such that $\chi=1$ on $[0,1]$ and $\text{supp}\chi\subset[0,2]$. Then
\begin{equation*}
\mathcal{F}^{-1}\big((1-\chi(\xi))f(\xi)\big)\in L^1(\mathbb{R}^n).
\end{equation*}
\end{proposition}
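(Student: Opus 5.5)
The statement is Proposition \ref{L1theorem}: a symbol $f$ that is smooth and radial on $\{|\xi|\ge 1\}$, with $|\partial_\xi^\alpha f(\xi)|\lesssim|\xi|^{-b-|\alpha|}$ for some $b>0$, has $\mathcal{F}^{-1}\big((1-\chi)f\big)\in L^1(\mathbb{R}^n)$. This is essentially the same as Proposition \ref{propopo}, so I would either invoke that result or reprove it by a dyadic decomposition with an integration-by-parts bound on each piece.

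\textbf{Dyadic decomposition.} Set $g=(1-\chi)f$. Using the partition $\varphi_j$ from (\ref{psi0}), write
\begin{equation*}
g=\sum_{j\ge0}g_j,\qquad g_j=\varphi_j g.
\end{equation*}
Each $g_j$ is supported where $|\xi|\sim 2^j$. On $\text{supp}\,(1-\chi)$ we have $|\xi|\ge1$. Leibniz's rule, together with the fact that derivatives of $\varphi_j$ and of $1-\chi$ of order $k$ are $O(2^{-jk})$ on $|\xi|\sim 2^j$, gives
\begin{equation*}
|\partial^\alpha g_j(\xi)|\lesssim 2^{-j(b+|\alpha|)}\quad\text{for all }|\alpha|\le N .
\end{equation*}
Only finitely many $\alpha$ are needed, and the hypothesis supplies all derivative orders.

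\textbf{Kernel bound for each piece.} Let $K_j=\mathcal{F}^{-1}g_j$. Rescaling $\xi=2^j\zeta$ gives $K_j(x)=2^{jn}h_j(2^jx)$, where $h_j=\mathcal{F}^{-1}\big(g_j(2^j\cdot)\big)$. The rescaled symbol $g_j(2^j\cdot)$ is supported in the fixed annulus $\{1/2\le|\zeta|\le2\}$, and every derivative up to order $N$ is bounded by $C2^{-jb}$. Integrating by parts $N=n+1$ times gives
\begin{equation*}
|h_j(y)|\lesssim 2^{-jb}(1+|y|)^{-n-1}.
\end{equation*}
Hence $\|K_j\|_{L^1}=\|h_j\|_{L^1}\lesssim 2^{-jb}$, since the $L^1$ norm is invariant under this scaling.

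\textbf{Summation.} Since $b>0$, $\sum_j\|K_j\|_{L^1}\lesssim\sum_j2^{-jb}<\infty$. The series therefore converges in $L^1$, and its limit agrees with $\mathcal{F}^{-1}g$ in the sense of distributions, so $\mathcal{F}^{-1}g\in L^1$.

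\textbf{Main obstacle.} There is little real difficulty. Radial symmetry is not actually needed, and the only care required is checking that the cutoff derivatives scale correctly, which they do because $|\xi|\ge1$ on the support of $1-\chi$. The decay $b>0$ is essential: it is exactly what makes the dyadic sum converge. At $b=0$ one would face a Calderón--Zygmund singular kernel that is not integrable at the origin.
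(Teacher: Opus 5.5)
Your argument is correct and complete. The paper gives no proof to compare it with: it quotes this proposition from \cite{PaPhd2018}, and it is essentially a restatement of Proposition \ref{propopo}, which requires only $|\alpha|\le n+1$.

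Your route is the standard self-contained one:
\begin{itemize}
\item split $(1-\chi)f$ into Littlewood--Paley pieces $g_j=\varphi_j(1-\chi)f$;
\item rescale each piece to the fixed annulus $\{1/2\le|\zeta|\le2\}$, where all derivatives up to order $n+1$ are $O(2^{-jb})$;
\item integrate by parts $n+1$ times to get $\Vert\mathcal{F}^{-1}g_j\Vert_{L^1}\lesssim 2^{-jb}$;
\item sum the geometric series, which converges because $b>0$.
\end{itemize}
This is the same kind of dyadic-rescaling device the paper uses in the proof of Proposition \ref{highfrequencyestimate}. As a by-product, your proof shows that only $n+1$ derivatives are needed, which recovers Proposition \ref{propopo} exactly as stated. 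It also shows that the radial hypothesis plays no role, as you observe.

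You state one step tersely: identifying the $L^1$-limit of $\sum_j\mathcal{F}^{-1}g_j$ with $\mathcal{F}^{-1}\big((1-\chi)f\big)$. The partial sums telescope to $\sum_{j\le J}g_j=\varphi(2^{-J}\cdot)(1-\chi)f$. These are uniformly bounded, since $|f|\lesssim1$ on $\{|\xi|\ge1\}$, and they converge pointwise to $(1-\chi)f$. By dominated convergence they therefore converge in $\mathcal{S}'$, which closes the argument.
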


\begin{proposition}\label{prop251012} (\cite{PaPhd2018})
Let $f\in C_0^\infty(\mathbb{R}^n)$ be a radially function and $\alpha>0$, then $\mathcal{F}^{-1}\big(\vert\xi\vert^\alpha f(\xi)\big)\in L^1(\mathbb{R}^n)$.
\end{proposition}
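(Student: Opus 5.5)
The plan is to decompose $|\xi|^\alpha f(\xi)$ dyadically near the origin, rescale each piece to unit frequency, and show that the $L^1$ norms of the resulting kernels form a geometric series $\sum_{j\le 0}2^{j\alpha}$, which converges exactly because $\alpha>0$. Radial symmetry of $f$ will not really be needed; only smoothness and compact support matter.

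Since $f\in C_0^\infty(\mathbb{R}^n)$, fix $J\in\mathbb{N}$ with $\text{supp} f\subset\{|\xi|\le 2^{J}\}$. Using the partition $\sum_{j\in\mathbb{Z}}\tilde{\phi}_j(\xi)=1$ for $\xi\neq0$, with $\tilde{\phi}_j(\xi)=\tilde{\phi}_0(2^{-j}\xi)$ and $\text{supp}\,\tilde{\phi}_0\subset\{\frac12\le|\xi|\le2\}$, I write
\begin{equation*}
|\xi|^\alpha f(\xi)=\sum_{j=1}^{J+1}\tilde{\phi}_j(\xi)|\xi|^\alpha f(\xi)+\sum_{j\le0}g_j(\xi),\qquad g_j(\xi)\doteq\tilde{\phi}_j(\xi)|\xi|^\alpha f(\xi).
\end{equation*}
The finitely many terms with $1\le j\le J+1$ are $C_0^\infty$ functions supported away from the origin. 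They are therefore Schwartz functions, and their inverse Fourier transforms lie in $L^1$.

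For $j\le0$ I rescale: $g_j(\xi)=2^{j\alpha}h_j(2^{-j}\xi)$ with $h_j(\zeta)\doteq|\zeta|^\alpha\tilde{\phi}_0(\zeta)f(2^{j}\zeta)$. Then
\begin{equation*}
\mathcal{F}^{-1}g_j(x)=2^{j(n+\alpha)}\big(\mathcal{F}^{-1}h_j\big)(2^jx),\qquad\text{so}\qquad \Vert\mathcal{F}^{-1}g_j\Vert_{L^1}=2^{j\alpha}\Vert\mathcal{F}^{-1}h_j\Vert_{L^1}.
\end{equation*}
The key step, which is the only real work, is a bound on $\Vert\mathcal{F}^{-1}h_j\Vert_{L^1}$ uniform in $j\le0$. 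Each $h_j$ is supported in the fixed annulus $\{\frac12\le|\zeta|\le2\}$, where $|\zeta|^\alpha$ is smooth. Moreover $\partial_\zeta^\beta\big(f(2^j\zeta)\big)=2^{j|\beta|}(\partial^\beta f)(2^j\zeta)$ is bounded by $\Vert\partial^\beta f\Vert_{L^\infty}$ because $2^j\le1$. By Leibniz, $\sup_{j\le0}\Vert\partial^\beta h_j\Vert_{L^\infty}\le C_\beta$ for every $|\beta|\le n+1$. Integrating by parts $n+1$ times over a set of bounded measure then gives
\begin{equation*}
|\mathcal{F}^{-1}h_j(y)|\lesssim(1+|y|)^{-n-1}
\end{equation*}
uniformly in $j$, and hence $\Vert\mathcal{F}^{-1}h_j\Vert_{L^1}\le C$ independently of $j$.

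Summing, $\sum_{j\le0}\Vert\mathcal{F}^{-1}g_j\Vert_{L^1}\lesssim\sum_{j\le0}2^{j\alpha}<\infty$, so the series $\sum_{j\le 0}\mathcal{F}^{-1}g_j$ converges in $L^1$. It remains to identify the limit with the inverse Fourier transform of the remaining part. The partial sums $\sum_{j\ge -N}g_j$ converge to $|\xi|^\alpha f-\sum_{j=1}^{J+1}\tilde\phi_j|\xi|^\alpha f$ in $L^1(\mathbb{R}^n_\xi)$ by dominated convergence, with dominating function $|\xi|^\alpha|f|$. Therefore their inverse transforms converge uniformly, and the $L^1$ limit coincides with $\mathcal{F}^{-1}\big(|\xi|^\alpha f-\sum_{j=1}^{J+1}\tilde\phi_j|\xi|^\alpha f\big)$. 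This gives $\mathcal{F}^{-1}(|\xi|^\alpha f)\in L^1(\mathbb{R}^n)$.

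The only delicate point is the uniformity in $j$ of the derivative bounds on $h_j$. It works because the rescaling pushes the singularity of $|\xi|^\alpha$ away from the support, and at the same time only improves the smoothness of $f(2^j\cdot)$.
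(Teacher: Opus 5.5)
Your proof is correct and complete. The scaling identity $\Vert\mathcal{F}^{-1}g_j\Vert_{L^1}=2^{j\alpha}\Vert\mathcal{F}^{-1}h_j\Vert_{L^1}$ holds. So do the uniform-in-$j$ derivative bounds on the fixed annulus, which use only $2^j\le1$. The identification of the $L^1$ limit also works: the partial sums $\sum_{j=-N}^{0}\tilde\phi_j$ are uniformly bounded, so dominated convergence applies. The paper gives no proof of this proposition and simply quotes it from \cite{PaPhd2018}. There is therefore no in-paper argument to match, and yours is a valid self-contained replacement.

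Your argument uses the same device the paper employs in Propositions \ref{highfrequencyestimate} and \ref{propmedium}: dyadic pieces $\tilde\phi_j$, rescaling to the unit annulus, and uniform symbol bounds. There is one difference. In those propositions the rescaled pieces are only uniformly bounded, so Proposition \ref{littlewoodpaleythm} is needed to glue them together. Here the factor $2^{j\alpha}$ makes the pieces absolutely summable in $L^1$, so the gluing is trivial.

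What your route buys is transparency. Radial symmetry plays no role, and $\alpha>0$ enters only through $\sum_{j\le0}2^{j\alpha}<\infty$. The radial hypothesis in the statement points to the standard alternative: the Bessel (Hankel-transform) representation of Fourier transforms of radial functions, with integrations by parts in $|\xi|$, which gives the pointwise kernel decay $|x|^{-n-\alpha}$. That route genuinely needs radiality and Bessel identities. Your method recovers the same decay for $|x|\ge1$ if you sum the pointwise bounds $2^{j(n+\alpha)}(1+2^j|x|)^{-N}$ with $N>n+\alpha$ integrations by parts, instead of summing the $L^1$ norms.
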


\begin{proposition}\label{interpolationtheorem}(\cite{BeLo1976})
Given $0<\theta<1,$  $\ s_0\neq s_1$ and  $ 1<p_0,p_1<\infty$, put
\begin{align*}
s_\star&\doteq(1-\theta)s_0+\theta s_1,\\
\frac{1}{p_\star}&\doteq\frac{1-\theta}{p_0}+\frac{\theta}{p_1},
\end{align*}
the following interpolation space result is valid:
\begin{align*}
[\dot{H}^{s_0}_{p_0},\dot{H}^{s_1}_{p_1}]=\dot{H}^{s_\star}_{p_\star}, \ \ s_0\neq s_1, \ 1<p_0,p_1<\infty.
\end{align*}
In other words, if $f\in\dot{H}^{s_0}_{p_0}\cap\dot{H}^{s_1}_{p_1}$, then $f\in\dot{H}^{s_\star}$  and
$\Vert f\Vert_{\dot{H}^{s_\star}_{p_\star}}\lesssim \Vert f\Vert_{\dot{H}^{s_0}_{p_0}}^{1-\theta}\Vert f\Vert_{\dot{H}^{s_1}_{p_1}}^{\theta}.$
\end{proposition}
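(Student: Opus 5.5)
The plan is to reduce the statement to the complex interpolation of Lebesgue spaces. The Riesz potentials $I^{\sigma}\doteq(-\Delta)^{\sigma/2}$, defined through the Fourier multiplier $\vert\xi\vert^{\sigma}$, identify each $\dot{H}^{s}_{p}$ isometrically with $L^{p}$: $\Vert f\Vert_{\dot{H}^{s}_{p}}=\Vert I^{s}f\Vert_{L^{p}}$. I would first fix the usual realization of homogeneous spaces, namely tempered distributions modulo polynomials, or equivalently the completion of $\{f\in\mathcal{S}:\hat f$ vanishes near $0\}$. On this class all the $I^{\sigma}$ act bijectively, and $\dot{H}^{s_0}_{p_0}\cap\dot{H}^{s_1}_{p_1}$ is dense in both spaces. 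The one external ingredient is the Mikhlin multiplier theorem, applied to the imaginary powers $\vert\xi\vert^{iy}$. These satisfy $\vert\partial^\alpha_\xi\vert\xi\vert^{iy}\vert\lesssim(1+\vert y\vert)^{\vert\alpha\vert}\vert\xi\vert^{-\vert\alpha\vert}$, so $\Vert I^{iy}\Vert_{L^p\to L^p}\lesssim(1+\vert y\vert)^{n}$ for every $1<p<\infty$.

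\textbf{Inclusion $\dot{H}^{s_\star}_{p_\star}\subset[\dot{H}^{s_0}_{p_0},\dot{H}^{s_1}_{p_1}]_\theta$.} Take $f$ with $\Vert f\Vert_{\dot{H}^{s_\star}_{p_\star}}=1$ and put $g=I^{s_\star}f$, so $\Vert g\Vert_{L^{p_\star}}=1$. For $0\le\mathrm{Re}\,z\le1$ set $s(z)=(1-z)s_0+zs_1$ and $\frac1{p(z)}=\frac{1-z}{p_0}+\frac{z}{p_1}$. Define
\begin{equation*}
F(z)\doteq e^{\varepsilon(z^2-\theta^2)}\,I^{-s(z)}\Big(\vert g\vert^{\frac{p_\star}{p(z)}}\,\mathrm{sgn}\,g\Big),
\end{equation*}
first for simple $g$ with Fourier support away from the origin, and then by density. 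Then $F(\theta)=f$. On the line $\mathrm{Re}\,z=j$ we have $I^{s_j}F(j+iy)=e^{\varepsilon((j+iy)^2-\theta^2)}I^{-iy(s_1-s_0)}G_y$, where $\Vert G_y\Vert_{L^{p_j}}=1$. The Mikhlin bound grows only polynomially in $y$, and the Gaussian factor $e^{-\varepsilon y^2}$ absorbs it. Hence $\sup_y\Vert F(j+iy)\Vert_{\dot{H}^{s_j}_{p_j}}\le C_\varepsilon$, which gives $\Vert f\Vert_{[\cdot,\cdot]_\theta}\lesssim\Vert f\Vert_{\dot{H}^{s_\star}_{p_\star}}$. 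Analyticity of $F$ and its boundedness on the strip follow from working with the dense class.

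\textbf{Reverse inclusion.} I would argue by duality. Since $1<p_j<\infty$, the spaces are reflexive, $(\dot{H}^{s}_{p})^*=\dot{H}^{-s}_{p'}$, and the duality theorem for complex interpolation gives $([X_0,X_1]_\theta)^*=[X_0^*,X_1^*]^\theta$. Applying the construction above to the dual couple $(\dot{H}^{-s_0}_{p_0'},\dot{H}^{-s_1}_{p_1'})$ produces the opposite norm inequality. A more direct alternative: for $F$ admissible with $F(\theta)=f$, test $I^{s_\star}f$ against $h\in L^{p_\star'}$ and build the analytic family $\langle I^{s(z)}F(z),H(z)\rangle$ with $H(z)=e^{\varepsilon(z^2-\theta^2)}I^{s(z)-s_\star}\big(\vert h\vert^{p_\star'/p'(z)}\mathrm{sgn}\,h\big)$, then apply the three lines lemma.

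The multiplicative inequality $\Vert f\Vert_{\dot{H}^{s_\star}_{p_\star}}\lesssim\Vert f\Vert_{\dot{H}^{s_0}_{p_0}}^{1-\theta}\Vert f\Vert_{\dot{H}^{s_1}_{p_1}}^{\theta}$ for $f$ in the intersection is the general property $\Vert f\Vert_{[X_0,X_1]_\theta}\le\Vert f\Vert_{X_0}^{1-\theta}\Vert f\Vert_{X_1}^\theta$. It follows from the constant function $F(z)=\lambda^{z-\theta}f$ with $\lambda$ optimized, combined with the norm equivalence just proved.

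The main obstacle I expect is technical rather than conceptual. It lies in the rigorous handling of the homogeneous spaces: the choice of realization modulo polynomials, the density of the intersection, and checking that $F(z)$ takes values in $\dot{H}^{s_0}_{p_0}+\dot{H}^{s_1}_{p_1}$ continuously on the closed strip. The hypotheses $s_0\neq s_1$ and $1<p_0,p_1<\infty$ are where the statement follows \cite{BeLo1976}. The restriction $1<p_j<\infty$ is exactly what the Mikhlin bound on $I^{iy}$ requires. An alternative route that avoids these issues is to use the Littlewood--Paley decomposition (\ref{psi0}) to realize $\dot{H}^{s}_{p}$ as a retract of $L^{p}(\ell^2_s)$, and then invoke Calderón's theorem $[L^{p_0}(\ell^2_{s_0}),L^{p_1}(\ell^2_{s_1})]_\theta=L^{p_\star}(\ell^2_{s_\star})$.
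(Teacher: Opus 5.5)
The paper gives no argument for this proposition; it only quotes \cite{BeLo1976}. Your proof is therefore a genuine alternative, and its architecture is sound.

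\textbf{Your route.} The imaginary powers $I^{iy}$ have Mikhlin constants of size $(1+\vert y\vert)^{[n/2]+1}$, and the factor $e^{\varepsilon(z^2-\theta^2)}$ absorbs this growth. Stein's analytic family then gives $\dot{H}^{s_\star}_{p_\star}\subset[\dot{H}^{s_0}_{p_0},\dot{H}^{s_1}_{p_1}]_\theta$. The duality theorem gives the reverse inclusion, which is the half the multiplicative inequality actually uses. It applies because $1<p_j<\infty$ gives reflexivity, and Schwartz functions with Fourier support compact in $\mathbb{R}^n\setminus\{0\}$ are dense in every $\dot{H}^{s}_{p}$.

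\textbf{Comparison with the retract route.} The route you list last is the other standard proof: realize $\dot{H}^{s}_{p}$ as a retract of $L^{p}(\ell^2_s)$ via (\ref{psi0}) and apply Calder\'on's theorem. It costs the Hilbert-space-valued multiplier theorem. In exchange it gives both inclusions at once, needs no duality, and moves the realization issues into a sequence space. Yours needs only the scalar Mikhlin theorem, at the price of the duality theorem and the bookkeeping on homogeneous spaces.

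\textbf{What the paper actually needs.} The paper only uses the case $p_0=p_1=q$, $s_0=0$, $s_1=r$ (proof of Theorem \ref{estimateu0=0}), and there no interpolation is needed. For $r>1$ split $\vert\xi\vert^{r-1}=\psi(\vert\xi\vert/\lambda)\vert\xi\vert^{r-1}+\big(1-\psi(\vert\xi\vert/\lambda)\big)\vert\xi\vert^{-1}\vert\xi\vert^{r}$. By Propositions \ref{prop251012} and \ref{propopo} and scaling, the two multipliers have $L^1$ kernels of norm $\lesssim\lambda^{r-1}$ and $\lesssim\lambda^{-1}$. Hence $\Vert f\Vert_{\dot{H}^{r-1}_q}\lesssim\lambda^{r-1}\Vert f\Vert_{L^q}+\lambda^{-1}\Vert f\Vert_{\dot{H}^{r}_q}$. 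Taking $\lambda=\eta^{-1}$ gives directly the bound the paper derives from interpolation plus Young's inequality.

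\textbf{Repairs needed.} Three details in your write-up need fixing.

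First, the dense class ``simple $g$ with Fourier support away from the origin'' does not work. For a compactly supported simple $g$, $\hat g$ is real-analytic and cannot vanish near $0$ unless $g\equiv0$. In any case, $g\mapsto\vert g\vert^{p_\star/p(z)}\mathrm{sgn}\,g$ would not preserve a spectral gap. Instead, take $g$ simple and verify $F(z)\in\dot{H}^{s_0}_{p_0}+\dot{H}^{s_1}_{p_1}$ by splitting $\widehat{G(z)}$ with $\psi(\vert\xi\vert)$ and $1-\psi(\vert\xi\vert)$. Suppose $s_0<s_1$.
The high-frequency part of $I^{-s(z)}G(z)$ lies in $\dot{H}^{s_0}_{p_0}$, since $\big(1-\psi(\vert\xi\vert)\big)\vert\xi\vert^{-z(s_1-s_0)}$ is a Mikhlin multiplier for $\mathrm{Re}\,z\geq0$.
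The low-frequency part lies in $\dot{H}^{s_1}_{p_1}$, since $\psi(\vert\xi\vert)\vert\xi\vert^{(1-z)(s_1-s_0)}$ is one for $\mathrm{Re}\,z\leq1$.
Boundedness, continuity and analyticity on the strip then follow from the analytic dependence of these symbols on $z$.

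Second, in your ``more direct alternative'' the factor $I^{s(z)-s_\star}$ in $H(z)$ is spurious. Riesz potentials are symmetric for the pairing, so you are really computing $\langle I^{2s(z)-s_\star}F(z),\cdot\rangle$. On $\mathrm{Re}\,z=j$ this operator sends $\dot{H}^{s_j}_{p_j}$ to $\dot{H}^{s_\star-s_j}_{p_j}$, not to $L^{p_j}$, so the three lines bound fails. Without that factor the boundary bounds are correct. However, the pairing inside the strip is then meaningful only after restricting $F$ to a dense class of nice admissible functions, which is exactly what the duality theorem handles for you.

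Third, $\lambda^{z-\theta}f$ does not decay on the boundary lines. If your admissible class requires decay, insert $e^{\varepsilon(z-\theta)^2}$ and let $\varepsilon\to0$.
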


\subsubsection{Estimates in the low-frequency region}\label{propositionlow}
\begin{proposition}\label{lowfre}
Let $\delta<\min\{(m+1)^2,1\}$  and $\mathcal{U}(s,x)$ be the solution to (\ref{masseq}),  the initial data $\mathcal{U}_0,\mathcal{U}_1\in L^q$.   Then we have
\begin{align*}
&\Vert\mathcal{F}^{-1}\Big(\psi_1(s,\eta,\xi)\widehat{D\mathcal{U}}(s,\xi)\Big)\Vert_{L^{q^\prime}}\lesssim s^{\text{Re}\rho-1-n(\frac{1}{q}-\frac{1}{q^\prime})}\eta^{-\text{Re}\rho}(1+\log\frac{s}{\eta})^{\chi_\delta}\Big(\Vert\mathcal{U}_0\Vert_{L^q}
+\eta\Vert\mathcal{U}_1\Vert_{L^q}\Big)
\end{align*}
and
\begin{align*}
&\Vert\mathcal{F}^{-1}\Big(\psi_1(s,\eta,\xi)\widehat{\mathcal{U}}(s,\xi)\Big)\Vert_{L^{q^\prime}}\lesssim s^{\text{Re}\rho-n(\frac{1}{q}-\frac{1}{q^\prime})}\eta^{-\text{Re}\rho}(1+\log\frac{s}{\eta})^{\chi_\delta}\Big(\Vert\mathcal{U}_0\Vert_{L^q}
+\eta\Vert\mathcal{U}_1\Vert_{L^q}\Big),
\end{align*}
where $1< q\leq 2$, $\frac{1}{q}+\frac{1}{q^\prime}=1$ and   $\chi_\delta$, $\rho$  are defined by  Theorem \ref{propuwithtau=1} and Lemma \ref{sulutionfou}, respectively.
\end{proposition}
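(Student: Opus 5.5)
In the low-frequency region both $s|\xi|\le 2K$ and $\eta|\xi|\le 2K$, so every confluent hypergeometric factor in $\Lambda_k^\iota$ is evaluated at small arguments. The plan is to bound the multipliers $\psi_1\,\partial_s^\iota\widehat H_k$ (and $\psi_1|\xi|\widehat H_k$ for $\nabla$) pointwise in $L^\infty_\xi$. We then obtain the $L^q$--$L^{q'}$ bound by interpolating an $L^1$--$L^\infty$ estimate with an $L^2$--$L^2$ estimate. Since $\psi_1$ is supported in the ball $\Omega_1(s)$ of radius $2K/s$, this interpolation is what produces the factor $s^{-n(\frac1q-\frac1{q'})}$.

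\emph{Step 1: pointwise bounds from Lemma \ref{expressionandderivatives}.} Expand $\det\Lambda_k^\iota$ into its two products.
\begin{itemize}
\item For $\delta\neq0$, the functions $\mathscr W_h$ are bounded near $0$ by (\ref{bounded1}), and the exponentials are harmless, so
\[
|\partial_s^\iota\widehat H_k|\lesssim |\xi|^{\iota-k}\big(|z(\eta)|^{k-\mathrm{Re}\rho}|z(s)|^{\mathrm{Re}\rho-\iota}+|z(\eta)|^{\mathrm{Re}\rho-1+k}|z(s)|^{1-\mathrm{Re}\rho-\iota}\big).
\]
\item Since $s\ge\eta$ and $\mathrm{Re}\rho\ge\frac12$, the first product dominates. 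This gives $|\widehat H_0|\lesssim (s/\eta)^{\mathrm{Re}\rho}$ and $|\widehat H_1|\lesssim \eta\,(s/\eta)^{\mathrm{Re}\rho}$.
\item Each time derivative, and each factor $|\xi|$, costs at most a factor $s^{-1}$, because $|\xi|\lesssim s^{-1}$ on the support. This yields the extra $s^{-1}$ in the $D\mathcal U$ estimate.
\item For $\delta<0$ we have $|z^{\pm i\gamma}|=1$ for real $\gamma$, up to a harmless $e^{\pm\pi\gamma/2}$ factor, so only $\mathrm{Re}\rho=\frac12$ matters.
\item For $\delta=0$ we use (\ref{propop1}) and (\ref{de1}). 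The logarithms $\log z(s)$ and $\log z(\eta)$ do not appear in isolation: the determinant structure makes them combine into $\log(s/\eta)$. This gives the factor $(1+\log\frac s\eta)^{\chi_\delta}$.
\end{itemize}

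\emph{Step 2: the $L^2$--$L^2$ and $L^1$--$L^\infty$ bounds.} Plancherel turns the supremum bound of Step 1 into the $L^2$ estimate. For $L^1\to L^\infty$, we estimate $\|\mathcal F^{-1}(\psi_1 m)\|_{L^\infty}\le\|\psi_1 m\|_{L^1_\xi}\lesssim s^{-n}\sup|m|$, using the volume of $\Omega_1(s)$.

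Riesz--Thorin interpolation with $\theta=\frac1q-\frac1{q'}$ then gives, for instance,
\[
\|\mathcal F^{-1}(\psi_1\widehat{\mathcal U})\|_{L^{q'}}\lesssim s^{-n(\frac1q-\frac1{q'})}(s/\eta)^{\mathrm{Re}\rho}(1+\log\tfrac s\eta)^{\chi_\delta}\big(\|\mathcal U_0\|_{L^q}+\eta\|\mathcal U_1\|_{L^q}\big).
\]
This is exactly the claimed power $s^{\mathrm{Re}\rho-n(\frac1q-\frac1{q'})}\eta^{-\mathrm{Re}\rho}$. The $D\mathcal U$ estimate carries one additional factor $s^{-1}$.

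\emph{Main obstacle.} The delicate part is the case $\delta=0$. There the individual terms in $\det\Lambda^\iota_k$ carry $\log|\xi|$ singularities, and one must check that after cancellation only $\log(s/\eta)$ survives, uniformly as $|\xi|\to0$. I would first write $\mathscr W_h(\frac12,1;z)=C_h\log z+R_h(z)$ with $R_h$ bounded. Expanding the determinant, the $C_0C_1\log z(\eta)\log z(s)$ cross terms cancel by antisymmetry, leaving combinations such as $C\big(\log z(s)-\log z(\eta)\big)=C\log(s/\eta)$ plus bounded terms. Derivative terms also need care, since $\mathscr W_1$ includes the $\frac z2$ factor; they are handled by (\ref{de1}).
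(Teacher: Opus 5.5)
Your outline is the paper's proof. You derive a pointwise bound $|\partial_s^\iota\widehat H_k(s,\eta,\xi)|\lesssim s^{\mathrm{Re}\rho-\iota}\eta^{k-\mathrm{Re}\rho}(1+\log\frac{s}{\eta})^{\chi_\delta}$ on $\Omega_1(s)$ from Lemma \ref{expressionandderivatives}, then use the volume of $\Omega_1(s)$. Interpolating your $L^1$--$L^\infty$ and Plancherel bounds is equivalent to the paper's Hausdorff--Young plus H\"older step with $\Vert\psi_1(s,\eta,\cdot)\Vert_{L^{qq'/(q'-q)}}\lesssim s^{-n(\frac1q-\frac1{q'})}$. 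Your $\delta\neq0$ case is fine.

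The $\delta=0$ step does not close as you wrote it, and it fails exactly at the uniformity as $\xi\to0$ that you flag.

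\emph{The remainder terms.} With $\mathscr W_h(\frac12,1;z)=C_h\log z+R_h(z)$ and $R_h$ merely bounded, the mixed products $C_{1-k}\log(\mp z(\eta))R_\iota(\pm z(s))$ have size $|\log(2\eta|\xi|)|$. This is unbounded as $\xi\to0$ for fixed $s,\eta$, and boundedness of $R_\iota$ gives no reason for the two products to cancel. So they are not ``bounded terms''. What you need is the sharper expansion $\mathscr W_h(\frac12,1;z)=C_h\log z+c_h+O(|z|\,|\log|z||)$, with $c_h$ the same on both rays $\arg z=\pm\frac{\pi}{2}$. This follows from (\ref{rela5})--(\ref{rela6}); for $h=1$ use $\mathscr W_1(\frac12,1;z)=\frac{1-z}{2}U(\frac12,1;z)-\frac{z}{2}U(\frac32,2;z)$. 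Then $c_h$ joins $C_h\log z$ in the antisymmetric combination, which produces a multiple of $\log\frac{s}{\eta}$ plus a constant. Every term containing an $O(|z|\,|\log|z||)$ factor is then individually $\lesssim1+\log\frac{s}{\eta}$.

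\emph{The log--log terms.} Because of the factors $e^{\mp i(s-\eta)|\xi|}$, the log--log terms do not cancel exactly. They leave $\sin((s-\eta)|\xi|)$ times products of $\log(\pm z(\eta))$ and $\log(\pm z(s))$. These are controlled by $1+\log\frac{s}{\eta}$ on $s|\xi|\le 2K$ only through $|\sin((s-\eta)|\xi|)|\le s|\xi|$ together with $|\log(2\eta|\xi|)|\le|\log(2s|\xi|)|+\log\frac{s}{\eta}$.

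For comparison, the paper's own step (\ref{casedelta=0}) is looser still. It discards the $O(1)$ parts of (\ref{propop1}) and asserts that $\log(2s|\xi|)\log(2\eta|\xi|)$ is bounded on $\Omega_1(s)$, which fails as $\xi\to0$. Your cancellation route, tightened as above, is therefore the sound version. Also, (\ref{de1}) plays no role here, since Lemma \ref{expressionandderivatives} already expresses $\partial_s\widehat H_k$ through $\mathscr W_1$.
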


\begin{proof}
By the expressions (\ref{nablau}), (\ref{partialsu}) of $\nabla\mathcal{U}$ and $\partial_s\mathcal{U}$ we should consider $b(\xi)=\xi^{1-\iota}$ in (\ref{anaylis}).
Note that $\eta\vert\xi\vert\leq s\vert\xi\vert\leq 2K$ in the zone $\Omega_{1}(s)$,  then by Lemma \ref{expressionandderivatives} and (\ref{bounded1}), in the case of $\delta\neq0$ we have
\begin{align}
&\vert\partial_s^\iota\widehat{H}_k(s,\eta,\xi)\vert\sim\vert\xi\vert^{\iota-k}\vert
\det\Lambda_k^\iota(s,\eta,\xi)\vert\notag\\
&\lesssim \vert\xi\vert^{\iota-k}\big\vert z(\eta)^{k-\rho}z(s)^{\rho-\iota}e^{\frac{z(\eta)-z(s)}{2}}\mathscr{W}_{1-k}(1-\rho,2-2\rho;-z(\eta))
\mathscr{W}_\iota(\rho,2\rho;z(s))\big\vert\notag\\
&\quad+\vert\xi\vert^{\iota-k}\big\vert z(s)^{1-\rho-\iota}z(\eta)^{\rho-1+k}e^{\frac{z(s)-z(\eta)}{2}}
\mathscr{W}_\iota(1-\rho,2-2\rho;-z(s))\mathscr{W}_{1-k}(\rho,2\rho;z(\eta))\big\vert\notag\\
&\lesssim \vert\xi\vert^{\iota-k}(\eta\vert\xi\vert)^{k-\text{Re}\rho}(s\vert\xi\vert)^{\text{Re}\rho-\iota}+
\vert\xi\vert^{\iota-k}(s\vert\xi\vert)^{1-\text{Re}\rho-\iota}(\eta\vert\xi\vert)^{\text{Re}\rho-1+k}\notag\\
&\lesssim s^{\text{Re}\rho-\iota}\eta^{k-\text{Re}\rho},\label{casedeltaneq0}
\end{align}
where  we used the fact that $\eta^{\text{Re}\rho-1+k}s^{1-\text{Re}\rho-\iota}\leq\eta^{k-\text{Re}\rho}s^{\text{Re}\rho-\iota}$ if $\text{Re}\rho\geq\frac{1}{2}$ in the last inequality.
In the case of $\delta=0$,  by (\ref{propop1}) we deduce that
\begin{align}
&\vert\partial_s^\iota\widehat{H}_k(s,\eta,\xi)\vert\notag\\
&\sim s^{\frac{1}{2}-\iota}\eta^{k-\frac{1}{2}}\big\vert e^{\frac{z(s)-z(\eta)}{2}}\mathscr{W}_\iota(\frac{1}{2},1;-z(s))\mathscr{W}_{1-k}(\frac{1}{2},1;z(\eta))-
e^{-\frac{z(s)-z(\eta)}{2}}\mathscr{W}_\iota(\frac{1}{2},1;z(s))\mathscr{W}_{1-k}(\frac{1}{2},1;-z(\eta))\big\vert\notag\\
&\lesssim s^{\frac{1}{2}-\iota}\eta^{k-\frac{1}{2}}\big\vert e^{\frac{z(s)-z(\eta)}{2}}\log(-z(s))\log(z(\eta))-e^{-\frac{z(s)-z(\eta)}{2}}\log(z(s))\log(-z(\eta))\big\vert\notag\\
&= s^{\frac{1}{2}-\iota}\eta^{k-\frac{1}{2}}\Big\vert
\Big(\cos\big((s-\eta)\vert\xi\vert\big)+i\sin\big((s-\eta)\vert\xi\vert\big)\Big)\big(\log(2s\vert\xi\vert)-i\frac{\pi}{2}\big)
\big(\log(2\eta\vert\xi\vert)+i\frac{\pi}{2}\big)\notag\\
&\quad\quad\quad-\Big(\cos\big((s-\eta)\vert\xi\vert\big)-i\sin\big((s-\eta)\vert\xi\vert\big)\Big)
\big(\log(2s\vert\xi\vert)+i\frac{\pi}{2}\big)\big(\log(2\eta\vert\xi\vert)-i\frac{\pi}{2}\big)
\Big\vert\notag\\
&= s^{\frac{1}{2}-\iota}\eta^{k-\frac{1}{2}}\Big\vert  2i\sin\big((s-\eta)\vert\xi\vert\big)\big(\log(2s\vert\xi\vert)\log(2\eta\vert\xi\vert)+\frac{\pi^2}{4}\big)+i\pi
\cos\big((s-\eta)\vert\xi\vert\big)\log\frac{s}{\eta}\Big\vert\notag\\
&\lesssim s^{\frac{1}{2}-\iota}\eta^{k-\frac{1}{2}}(1+\log\frac{s}{\eta})=s^{\text{Re}\rho-\iota}\eta^{k-\text{Re}\rho}(1+\log\frac{s}{\eta}),
\label{casedelta=0}
\end{align}
where we use the  boundedness of $\big\vert\log(2s\vert\xi\vert)\log(2\eta\vert\xi\vert)\big\vert$ from above in $\Omega_1(s)$ in the last inequality. Combining (\ref{casedeltaneq0}) and (\ref{casedelta=0}) we get
\begin{equation}\label{partialshksetaxi}
\vert\partial_s^\iota\widehat{H}_k(s,\eta,\xi)\vert\lesssim s^{\text{Re}\rho-\iota}\eta^{k-\text{Re}\rho}(1+\log \frac{s}{\eta})^{\chi_\delta}.
\end{equation}
%where $\chi_\delta$ is defined by (\ref{chidelta}).
Since $\vert\xi\vert\leq 2Ks^{-1}$ in $\Omega_1(s)$, it follows from  (\ref{partialshksetaxi}) that
\begin{equation}\label{linfty}
\begin{aligned}
\vert\xi\vert^{1-\iota}\vert\partial_s^\iota\widehat{H}_k(s,\eta,\xi)\vert\lesssim s^{\text{Re}\rho-1}\eta^{k-\text{Re}\rho}(1+\log \frac{s}{\eta})^{\chi_\delta}.
\end{aligned}
\end{equation}
Hence by Hausdorff-Young inequality, % i.e., Proposition \ref{HausdorffYoung},
H$\ddot{\text{o}}$lder's inequality and (\ref{linfty}) we have
\begin{align}
\Vert\mathcal{F}^{-1}\Big(\vert\xi\vert^{1-\iota}&\psi_1(s,\eta,\xi)\partial_s^\iota\widehat{H}_k(s,\eta,\xi)\hat{f}(\xi)\Big)\Vert_{L^{q^\prime}}
\leq\Vert \vert\xi\vert^{1-\iota}\psi_1(s,\eta,\xi)\partial_s^\iota\widehat{H}_k(s,\eta,\xi)\hat{f}(\xi)\Vert_{L^q}\notag\\
&\leq \Vert \vert\xi\vert^{1-\iota}\partial_s^\iota\widehat{H}_k(s,\eta,\xi)\Vert_{L^\infty_\xi(\Omega_1(s))}\Vert \psi_1(s,\eta,\cdot)\hat{f}(\cdot)\Vert_{L^q}\notag\\
&\leq\Vert \vert\xi\vert^{1-\iota}\partial_s^\iota\widehat{H}_k(s,\eta,\xi)\Vert_{L^\infty_\xi(\Omega_1(s))}\Vert \psi_1(s,\eta,\cdot)\Vert_{L^{\frac{qq^\prime}{q^\prime-q}}}\Vert \hat{f}\Vert_{L^{q^\prime}}\notag\\
&\lesssim s^{\text{Re}\rho-1-n(\frac{1}{q}-\frac{1}{q^\prime})}\eta^{k-\text{Re}\rho}(1+\log \frac{s}{\eta})^{\chi_\delta}\Vert f\Vert_{L^q},\label{lowestimate1}
\end{align}
where we calculate that
\begin{equation*}
\begin{aligned}
&\Vert\psi_1(s,\eta,\cdot)\Vert_{L^{\frac{qq^\prime}{q^\prime-q}}}=\Big(\int_{\{\xi\in\mathbb{R}^n:\vert\xi\vert\leq\frac{2K}{s}\}}
\psi_1(s,\eta,\xi)^{\frac{qq^\prime}{q^\prime-q}}d\xi\Big)^{\frac{q^\prime-q}{qq^\prime}}\leq\Big(\int_{\{\xi\in\mathbb{R}^n:
\vert\xi\vert\leq\frac{2K}{s}\}}
d\xi\Big)^{\frac{q^\prime-q}{qq^\prime}}\lesssim s^{-n(\frac{1}{q}-\frac{1}{q^\prime})}.
\end{aligned}
\end{equation*}
Hence from  (\ref{nablau}), (\ref{partialsu})  and  (\ref{lowestimate1}) we obtain
\begin{align}
\Vert\mathcal{F}^{-1}\Big(\psi_1(s,\eta,\xi)\widehat{\nabla\mathcal{U}}(s,\xi)\Big)\Vert_{L^{q^\prime}}&\leq
\Vert \mathcal{F}^{-1}\Big(\psi_1(s,\eta,\xi)\xi\widehat{H}_0(s,\eta,\xi)\mathcal{U}_0(\xi)+\psi_1(s,\eta,\xi)
\widehat{H}_1(s,\eta,\xi)\xi\mathcal{U}_1(\xi)\Big)\Vert_{L^{q^\prime}}\notag\\
&\lesssim s^{\text{Re}\rho-1-n(\frac{1}{q}-\frac{1}{q^\prime})}\eta^{-\text{Re}\rho}(1+\log\frac{s}{\eta})^{\chi_\delta}
\Big(\Vert\mathcal{U}_0\Vert_{L^q}+\eta\Vert\mathcal{U}_1\Vert_{L^q}\Big),\label{lowestimatenablau}
\end{align}
and  similarly
\begin{equation}\label{lowestimatepartialsu}
\begin{aligned}
&\Vert\mathcal{F}^{-1}\Big(\psi_1(s,\eta,\xi)\partial_s\widehat{\mathcal{U}}(s,\xi)\Big)\Vert_{L^{q^\prime}}\lesssim s^{\text{Re}\rho-1-n(\frac{1}{q}-\frac{1}{q^\prime})}\eta^{-\text{Re}\rho}(1+\log\frac{s}{\eta})^{\chi_\delta}
\Big(\Vert\mathcal{U}_0\Vert_{L^q}+\eta\Vert\mathcal{U}_1\Vert_{L^q}\Big).
\end{aligned}
\end{equation}

Regarding the estimate for $\mathcal{F}^{-1}\Big(\psi_1(s,\eta,\xi)\widehat{\mathcal{U}}(s,\eta,\xi)\Big)$, it can be inferred from (\ref{partialshksetaxi}), that is
\begin{align}
&\Vert\mathcal{F}^{-1}\Big(\psi_1(s,\eta,\xi)\widehat{\mathcal{U}}(s,\eta,\xi)\Big)\Vert_{L^{q^\prime}}=\Vert\mathcal{F}^{-1}\Big(\psi_1(s,\eta,\xi)\widehat{H}_0(s,\eta,\xi)\widehat{\mathcal{U}}_0(\xi)
+\psi_1(s,\eta,\xi)\widehat{H}_1(s,\eta,\xi)\widehat{\mathcal{U}}_1(\xi)\Big)\Vert_{L^{q^\prime}}\notag\\
&\lesssim \Vert\psi_1(s,\eta,\xi)\Vert_{L^{\frac{qq^\prime}{q^\prime-q}}}\Big(\Vert\widehat{H}_0(s,\eta,\xi)\Vert_{L^\infty(\Omega_1(s))}\Vert
\widehat{\mathcal{U}}_0(\xi)\Vert_{L^{q^\prime}}+\Vert\widehat{H}_1(s,\eta,\xi)\Vert_{L^\infty(\Omega_1(s))}\Vert
\widehat{\mathcal{U}}_1(\xi)\Vert_{L^{q^\prime}}  \Big)\notag\\
&\lesssim s^{\text{Re}\rho-n(\frac{1}{q}-\frac{1}{q^\prime})}\eta^{-\text{Re}\rho}(1+\log\frac{s}{\eta})^{\chi_\delta}\big(
\Vert\mathcal{U}_0\Vert_{L^q}+\eta\Vert\mathcal{U}_1\Vert_{L^q}\big).\label{lowestimateu}
\end{align}

Therefore by (\ref{lowestimatenablau})- (\ref{lowestimateu}) we complete the proof.

\end{proof}

\subsubsection{Estimates in the high-frequency region}\label{propositionhigh}

In the region $\Omega_3(\eta)$,  the inequality $\vert\xi\vert\geq\frac{K}{\eta}$ holds, which implies that $\vert\xi\vert$  is unbounded from above. Consequently, the argument employed for $\Omega_1(s)$ in Proposition \ref{lowfre} cannot be applied here. Our approach is to further perform a dyadic decomposition of $\Omega_3(\eta)$ so that $\vert\xi\vert$ remains bounded in each dyadic shell. This enables us to derive uniform local estimates within each shell. By subsequently invoking Proposition \ref{littlewoodpaleythm},
which provides the fundamental link between local and global estimates,
we ultimately obtain the $L^1-L^\infty$ estimate and $L^2-L^2$ estimate  of  $\mathcal{F}^{-1}\Big(\psi_3(s,\eta,\xi)\vert\xi\vert^{k-\iota-r}
\partial_s^\iota\widehat{H}_k(s,\eta,\xi)\hat{f}(\xi)\Big)$.  Then, by applying the Riesz-Thorin interpolation theorem, we can get  the desired $L^q-L^{q^\prime}$ estimate.

\begin{proposition}\label{highfrequencyestimate}
Let $\delta<\min\{(m+1)^2,1\}$  and $\mathcal{U}(s,x)$ be the solution to (\ref{masseq}),  the initial data  $(\mathcal{U}_0,\mathcal{U}_1)\in H^{r+1}_q(\mathbb{R}^n)\times H^r_q(\mathbb{R}^n)$. Then we have
\begin{equation*}
\begin{aligned}
&\Vert\mathcal{F}^{-1}\big(\psi_3(s,\eta,\xi)\widehat{D\mathcal{U}}(s,\xi)\big)\Vert_{L^{q^\prime}(\mathbb{R}^n)}
\lesssim (\frac{s}{\eta})^{-\frac{n-1}{2}(\frac{1}{q}-\frac{1}{q^\prime})}\big(\Vert\mathcal{U}_0\Vert_{\dot{H}^{r+1}_q(\mathbb{R}^n)}+\Vert\mathcal{U}_1
\Vert_{\dot{H}^{r}_q(\mathbb{R}^n)}\big),\\
&\Vert\mathcal{F}^{-1}\big(\psi_3(s,\eta,\xi)\widehat{\mathcal{U}}(s,\xi)\big)\Vert_{L^{q^\prime}(\mathbb{R}^n)}
\lesssim (\frac{s}{\eta})^{-\frac{n-1}{2}(\frac{1}{q}-\frac{1}{q^\prime})}\big(\Vert\mathcal{U}_0\Vert_{\dot{H}^{r}_q(\mathbb{R}^n)}+\Vert\mathcal{U}_1
\Vert_{\dot{H}^{r-1}_q(\mathbb{R}^n)}\big),
\end{aligned}
\end{equation*}
where $q\in(1,2]$, $\frac{1}{q}+\frac{1}{q^\prime}=1$ and $r=n(\frac{1}{q}-\frac{1}{q^\prime})$. Furthermore, if $r<1$, we have
\begin{equation*}
\Vert\mathcal{F}^{-1}\big(\psi_3(s,\eta,\xi)\widehat{\mathcal{U}}(s,\xi)\big)\Vert_{L^{q^\prime}(\mathbb{R}^n)}
\lesssim (\frac{s}{\eta})^{-\frac{n-1}{2}(\frac{1}{q}-\frac{1}{q^\prime})}\big(\Vert\mathcal{U}_0\Vert_{\dot{H}^{r}_q(\mathbb{R}^n)}+\eta^{1-r}\Vert\mathcal{U}_1
\Vert_{L^q(\mathbb{R}^n)}\big).
\end{equation*}
\end{proposition}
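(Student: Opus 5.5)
We follow the strategy announced before the statement: decompose $\Omega_3(\eta)$ dyadically, prove an $L^1$--$L^\infty$ and an $L^2$--$L^2$ estimate on each shell uniformly, glue the shells with Proposition \ref{littlewoodpaleythm}, and interpolate by Riesz--Thorin. By (\ref{nablau}), (\ref{partialsu}) and Lemma \ref{expressionandderivatives}, it suffices to bound, for $\iota,k\in\{0,1\}$ and $b(\xi)=|\xi|^{1-\iota}$ (or $b\equiv1$ for $\mathcal{U}$ itself), the operator
\[
T_{k}^{\iota}f\doteq\mathcal{F}^{-1}\big(\psi_3(s,\eta,\xi)\,b(\xi)\,|\xi|^{k-1-r}\,\partial_s^\iota\widehat{H}_k(s,\eta,\xi)\,\hat f(\xi)\big),
\]
where $r=n(\frac1q-\frac1{q'})$. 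We aim to show $\Vert T_k^\iota f\Vert_{L^{q'}}\lesssim (s/\eta)^{-\frac{n-1}{2}(\frac1q-\frac1{q'})}\Vert f\Vert_{L^q}$. Applied to $f=|D|^{r+1-k}\mathcal{U}_k$ (with one derivative fewer for $\mathcal{U}$), this gives the statement.

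\textbf{Symbol asymptotics.} On $\Omega_3$ we have $\eta|\xi|\ge K$ and hence $s|\xi|\ge K$. We insert the large-$|z|$ asymptotics (\ref{bounded2}), (\ref{bounded3}) into $\det\Lambda_k^\iota$. The factors $e^{\pm z/2}$ and $\sin(z/2i)$ combine into a finite sum of terms $e^{\pm i(s-\eta)|\xi|}$ and $e^{\pm i(s+\eta)|\xi|}$ (the terms with $s+\eta$ appear only through the $\sin$ factors). Their amplitudes have the form $(s|\xi|)^{\cdot}(\eta|\xi|)^{\cdot}A(z(s))A(z(\eta))$. Collecting powers using $|d^kA_h|\lesssim|z|^{h-\mathrm{Re}\rho-k}$, we expect
\[
|\xi|^{k-\iota}\partial_s^\iota\widehat{H}_k=\sum_\pm e^{\pm i(s\mp'\eta)|\xi|}\,a_\pm(s,\eta,\xi),\qquad |\partial_\xi^\alpha a_\pm|\lesssim|\xi|^{-|\alpha|},
\]
uniformly for $s\ge\eta$, $\eta|\xi|\ge K$. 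Here the powers $\mathrm{Re}\rho$ and $1-\mathrm{Re}\rho$ cancel exactly, since $\partial_s^\iota\widehat{H}_k$ behaves like $\cos$ or $|\xi|^{-1}\sin$ of $(s-\eta)|\xi|$. Each $\xi$-derivative falling on $(s-\eta)|\xi|$ inside the phase is kept in the phase rather than in the amplitude. The $L^2$--$L^2$ bound then follows from Plancherel with exponent $0$. The $\delta=0$ case uses (\ref{bounded2}) and no logarithms appear, because logarithms arise only from small $|z|$.

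\textbf{$L^1$--$L^\infty$ on dyadic shells.} We use the partition $\phi_j(\eta,\xi)$, $j\ge0$, so that $\eta|\xi|\sim K2^j$. In the shell $j$ we rescale $\xi=2^jK\zeta/\eta$. The kernel becomes
\[
(2^jK/\eta)^{n}(2^jK/\eta)^{-1-r}\,\mathcal{F}^{-1}_\zeta\big(e^{\pm i\frac{(s\mp\eta)2^jK}{\eta}|\zeta|}v_j(\zeta)\big)\big(2^jKx/\eta\big),
\]
where $v_j$ is supported in $[\frac12,2]$ and has derivatives bounded uniformly in $j$, $s$ and $\eta$. Proposition \ref{propositionpare} with $\tau=(s\mp\eta)2^jK/\eta$ yields $\big(1+2^j(s-\eta)/\eta\big)^{-\frac{n-1}{2}}$. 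For the endpoint $q=1$ we have $r=n$, and the prefactor becomes $(2^j/\eta)^{-1}$ times a harmless power. The oscillatory decay is bounded by $\lesssim 2^{-j\frac{n-1}{2}}(s/\eta)^{-\frac{n-1}{2}}$ when $s\ge2\eta$, and by $\lesssim1$ when $s<2\eta$. So the operator norm on the shell is $\lesssim (s/\eta)^{-\frac{n-1}{2}}$ uniformly in $j$. The weight $|\xi|^{-r}$ was chosen precisely so that the $(2^j/\eta)^n$ Jacobian is absorbed. The $\phi_0$ piece near $\eta|\xi|=K$ is handled in the same way; the $\psi_3$ cutoff is smooth in the rescaled variable.

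\textbf{Interpolation and gluing.} For each shell we interpolate between the $L^2$--$L^2$ bound (weight $|\xi|^{0}$) and the $L^1$--$L^\infty$ bound (weight $|\xi|^{-n}$) with analytic families in the exponent of $|\xi|$ (Stein's interpolation). This gives the $L^q$--$L^{q'}$ shell bound $(s/\eta)^{-\frac{n-1}{2}(\frac1q-\frac1{q'})}$ with weight $|\xi|^{-r}$. Proposition \ref{littlewoodpaleythm} then sums the shells, since the multiplier is bounded (it is $L^\infty$ on $\Omega_3$). For the last claim with $r<1$, the $\mathcal{U}_1$ term carries the weight $|\xi|^{-1}$. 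We write $|\xi|^{-1}=|\xi|^{-r}\cdot|\xi|^{r-1}$ and use $|\xi|^{r-1}\le (K/\eta)^{r-1}\sim\eta^{1-r}$ on $\Omega_3$, absorbing this factor into the shell amplitude (derivative bounds are preserved). This gives $\eta^{1-r}\Vert\mathcal{U}_1\Vert_{L^q}$.

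\textbf{Main obstacle.} We expect the hardest step to be the uniform symbol bound in the first step. One must verify that the products of the asymptotic expansions at $z(s)$ and $z(\eta)$ yield amplitudes satisfying Mikhlin-type estimates uniformly in $s/\eta$. In particular, the $e^{\pm i(s+\eta)|\xi|}$ terms must be shown to obey the same bounds, which holds because $s+\eta\ge s-\eta$, and the $\mathrm{Re}\rho$ powers must cancel in the complex case $\delta<0$. To check this we would first compare the amplitudes against the exact Bessel form coming from (\ref{besselconnection}).
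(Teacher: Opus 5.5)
Your proof follows the paper's route: dyadic decomposition in $\eta|\xi|$, rescaling $\xi=2^jK\zeta/\eta$, Proposition~\ref{propositionpare} for the $L^1$--$L^\infty$ bound, Plancherel for $L^2$--$L^2$, interpolation on each shell, and Proposition~\ref{littlewoodpaleythm} to glue the shells. The variations are minor.

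\textbf{Interpolation.} The paper keeps the same weight $|\xi|^{-r}$ at both endpoints and uses plain Riesz--Thorin. Its shell bounds $2^{j(n-r)}s^{-\frac{n-1}{2}}\eta^{r-\frac{n+1}{2}}$ and $2^{-jr}\eta^{r}$ interpolate to $2^{j(n(\frac1q-\frac1{q'})-r)}\eta^{r-\frac{n+1}{2}(\frac1q-\frac1{q'})}s^{-\frac{n-1}{2}(\frac1q-\frac1{q'})}$. The choice $r=n(\frac1q-\frac1{q'})$ then removes the $2^j$-dependence. You instead use Stein interpolation in the weight exponent, from $|\xi|^{0}$ to $|\xi|^{-n}$. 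This is legitimate, since the $L^1$--$L^\infty$ bound grows only polynomially in $\mathrm{Im}\,z$. It is not needed, though: on a shell, $|\xi|^{-r}$ is just the constant $(2^jK/\eta)^{-r}$ times a smooth bounded function of $\zeta$.

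\textbf{The case $r<1$.} You absorb $|\xi|^{r-1}\le (K/\eta)^{r-1}$ directly into the shell amplitude. This is slightly simpler than the paper's route, which uses the $L^1$ kernel of $(1-\chi(\frac{\eta|\xi|}{K}))(\eta|\xi|)^{r-1}$ (Proposition~\ref{propopo}) together with Young's inequality.

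\textbf{A bookkeeping slip to fix.} Since $|\xi|^{k-\iota}\partial_s^\iota\widehat H_k$ is of order zero, your multiplier with $b=|\xi|^{1-\iota}$ is $|\xi|^{-r}$ times an order-zero symbol. The rescaled prefactor is therefore $(2^jK/\eta)^{n-r}$, which equals $1$ at $r=n$. The factor $(2^jK/\eta)^{-1}$ you wrote is not harmless: it grows like $\eta$ and would destroy uniformity in $\eta$. It comes from taking $b\equiv 1$ for $\mathcal U$. With data measured in $\dot H^{r-k}_q$, the correct multiplier for $\mathcal U$ is $|\xi|^{k-r}\widehat H_k$. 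This is exactly the same operator as for $\nabla\mathcal U$ with data in $\dot H^{r+1-k}_q$, so no separate computation is needed.
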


\begin{proof}
By Lemma \ref{expressionandderivatives},  (\ref{bounded2}) and  (\ref{bounded3}), we have
\begin{equation}\label{matrix1}
\Lambda_k^\iota(s,\eta,\xi)\sim
\left\{
\begin{aligned}
&
\begin{pmatrix}
\sin (-\eta\vert\xi\vert)b_{11}(\eta\vert\xi\vert)
&\sin (-s\vert\xi\vert)b_{12}(s\vert\xi\vert)\\
\sin (\eta\vert\xi\vert)b_{21}(\eta\vert\xi\vert)&
\sin (s\vert\xi\vert)b_{22}(s\vert\xi\vert)
\end{pmatrix},&\quad&\text{if}\ \  \delta\neq0,\\
&
\begin{pmatrix}
e^{i\eta\vert\xi\vert}\widetilde{b}_{11}(\eta\vert\xi\vert)
&e^{is\vert\xi\vert}\widetilde{b}_{12}(s\vert\xi\vert)\\
e^{-i\eta\vert\xi\vert}\widetilde{b}_{21}(\eta\vert\xi\vert)&
e^{-is\vert\xi\vert}\widetilde{b}_{22}(s\vert\xi\vert)
\end{pmatrix},&\quad&\text{if}\ \  \delta=0,
\end{aligned}
\right.
\end{equation}
where
\begin{align*}
&b_{11}(z)\doteq(2iz)^{k-\rho}A_{1-k}(-2iz), &\ & \widetilde{b}_{11}(z)\doteq(2iz)^{k-\frac{1}{2}}a_{1-k}(-2iz),\\
&b_{12}(z)=(2iz)^{1-\rho-\iota}A_{\iota}(-2iz), &\ & \widetilde{b}_{12}(z)\doteq(2iz)^{\frac{1}{2}-\iota}a_{\iota}(-2iz),\\
&b_{21}(z)\doteq(2iz)^{\rho-1+k}A_{1-k}(2iz), &\ & \widetilde{b}_{21}(z)\doteq(2iz)^{-\frac{1}{2}+k}a_{1-k}(2iz),\\
&b_{22}(z)\doteq(2iz)^{\rho-\iota}A_{\iota}(2iz), &\ & \widetilde{b}_{22}(z)\doteq(2iz)^{\frac{1}{2}-\iota}a_{\iota}(2iz)
\end{align*}
and by Leibniz's rule we can get
\begin{align}
\vert\frac{d^n}{dz^n}b_{jl}(z)\big\vert\lesssim \vert z\vert^{-n},\ \ \vert\frac{d^n}{dz^n}\widetilde{b}_{jl}(z)\big\vert\lesssim \vert z\vert^{-n}, \ \ j,l=1,2,\ \ \forall n\in\mathbb{N}.\label{bounded4}
\end{align}
Then
\small{
\begin{align}
&\partial_s^\iota\widehat{H}_k(s,\eta,\xi)=(-1)^kK_\rho(2i\vert\xi\vert)^{\iota-k}\det\big(\Lambda_k^\iota(s,\eta,\xi)\big)
\sim\vert\xi\vert^{\iota-k}\notag\\
&\times
\left\{
\begin{aligned}
&\sin(-\eta\vert\xi\vert)\sin(s\vert\xi\vert)b_{11}(\eta\vert\xi\vert)b_{22}(s\vert\xi\vert)-
\sin(\eta\vert\xi\vert)\sin(-s\vert\xi\vert)b_{12}(s\vert\xi\vert)b_{21}(\eta\vert\xi\vert), &\ & \text{if}\ \delta\neq0,\\
&e^{-i(s-\eta)\vert\xi\vert}\widetilde{b}_{11}(\eta\vert\xi\vert)\widetilde{b}_{22}(s\vert\xi\vert)-
e^{i(s-\eta)\vert\xi\vert}\widetilde{b}_{12}(s\vert\xi\vert)\widetilde{b}_{21}(\eta\vert\xi\vert),&\ &  \text{if}\ \delta=0.
\end{aligned}
\right.\label{express111}
\end{align}
}

Our aim now is to estimate
$$\mathcal{F}^{-1}\Big(\psi_3(s,\eta,\xi)\vert\xi\vert^{k-\iota-r}\partial_s^\iota\widehat{H}_k(s,\eta,\xi)\hat{f}(\xi)\Big),$$
where $r>0$ is determined later. Note that
$\psi_3(s,\eta,\xi)\tilde{\phi}_j\big(\frac{\eta\vert\xi\vert}{K}\big)=0$ holds for $j<0$   and $\sum\limits_{j=0}^\infty\phi_j(\eta,\xi)=1$,
it is sufficient to derive the  estimates of the norm for the following  expressions  that hold uniformly with respect to $j\in\mathbb{N}$:
\begin{align}
&\mathcal{F}^{-1}\Big(\psi_3(s,\eta,\xi)\phi_j(\eta,\xi)\sin(-\eta\vert\xi\vert)\sin(s\vert\xi\vert)
b_{11}(\eta\vert\xi\vert)b_{22}(s\vert\xi\vert)\vert\xi\vert^{-r}\hat{f}(\xi)\Big),\label{desired1}\\
&\mathcal{F}^{-1}\Big(\psi_3(s,\eta,\xi)\phi_j(\eta,\xi)\sin(\eta\vert\xi\vert)\sin(-s\vert\xi\vert)
b_{12}(s\vert\xi\vert)b_{21}(\eta\vert\xi\vert)\vert\xi\vert^{-r}\hat{f}(\xi)\Big),\label{desired2}\\
&\mathcal{F}^{-1}\Big(\psi_3(s,\eta,\xi)\phi_j(\eta,\xi)e^{-i(s-\eta)\vert\xi\vert}
\widetilde{b}_{11}(\eta\vert\xi\vert)\widetilde{b}_{22}(s\vert\xi\vert)\vert\xi\vert^{-r}\hat{f}(\xi)\Big),\label{desired3}\\
&\mathcal{F}^{-1}\Big(\psi_3(s,\eta,\xi)\phi_j(\eta,\xi)e^{i(s-\eta)\vert\xi\vert}
\widetilde{b}_{12}(s\vert\xi\vert)\widetilde{b}_{21}(\eta\vert\xi\vert)\vert\xi\vert^{-r}\hat{f}(\xi)\Big).\label{desired4}
\end{align}
Here we should point out the purpose of introducing the additional factor $\vert\xi\vert^{-r}$. In fact, the estimates for the above four expressions inevitably depend on $j$, and we precisely use the boundedness of $\vert\xi\vert^{-r}$ from above to eliminate the dependence on $j$, as shown in (\ref{lplq}).

We now present the $L^1-L^\infty$ estimate and $L^2-L^2$ estimate for
$$\mathcal{F}^{-1}\Big(\psi_3(s,\eta,\xi)\phi_j(\eta,\xi)e^{-i(s-\eta)\vert\xi\vert}
\widetilde{b}_{11}(\eta\vert\xi\vert)\widetilde{b}_{22}(s\vert\xi\vert)\vert\xi\vert^{-r}\hat{f}(\xi)\Big),$$
then the $L^q-L^{q^\prime}$ estimates for  (\ref{desired3}) can derived by applying the interpolation theorem. The arguments and the results  for the remaining three terms (\ref{desired1}), (\ref{desired2}) and  (\ref{desired4}) are entirely analogous and will be omitted.

By performing the change of variables $\xi=\frac{2^jK}{\eta}\zeta$, and then applying Proposition \ref{propositionpare},  we get
\begin{equation*}
\begin{aligned}
&\Vert\mathcal{F}^{-1}\Big(\psi_3(s,\eta,\xi)\phi_j(\eta,\xi)e^{-i(s-\eta)\vert\xi\vert}
\widetilde{b}_{11}(\eta\vert\xi\vert)\widetilde{b}_{22}(s\vert\xi\vert)\vert\xi\vert^{-r}\Big)\Vert_{L^\infty}\\
&=\big(\frac{2^jK}{\eta}\big)^{n-r}\Vert\mathcal{F}^{-1}\Big(e^{-i(s-\eta)\frac{2^jK\vert\zeta\vert}{\eta}}
\big(1-\psi(2^j\vert\zeta\vert)\big)\widetilde{\phi}_0(\vert\zeta\vert)\widetilde{b}_{11}(2^jK\vert\zeta\vert)
\widetilde{b}_{22}(2^jK\frac{s}{\eta}\vert\zeta\vert)\vert\zeta\vert^{-r}\Big)(\frac{2^jK}{\eta}y)\Vert_{L^\infty_y}\\
&\lesssim \big(\frac{2^jK}{\eta}\big)^{n-r}\big(1+\frac{2^jK(s-\eta)}{\eta}\big)^{-\frac{n-1}{2}}\sum\limits_{\vert\alpha\vert\leq h}\Vert\partial_\zeta^\alpha\Big(\widetilde{\phi}_0(\vert\zeta\vert)\big(1-\psi(2^j\vert\zeta\vert)\big)\widetilde{b}_{11}
(2^jK\vert\zeta\vert)\widetilde{b}_{22}\big(2^jK\frac{s}{\eta}\vert\zeta\vert\big)\vert\zeta\vert^{-r}\Big)\Vert_{L^\infty_\zeta},
\end{aligned}
\end{equation*}
where $h>\frac{n+3}{2}$.
According to (\ref{bounded4}), we know
\begin{align*}
&\Vert\partial_\zeta^\alpha\Big(\widetilde{\phi}_0(\vert\zeta\vert)\big(1-\psi(2^j\vert\zeta\vert)\big)\widetilde{b}_{11}
(2^jK\vert\zeta\vert)\widetilde{b}_{22}\big(2^jK\frac{s}{\eta}\vert\zeta\vert\big)\vert\zeta\vert^{-r}\Big)\Vert_{L^\infty}\\
&=
\sup\limits_{\frac{1}{2}\leq\vert\zeta\vert\leq 2}\sum\limits_{\vert\alpha_1\vert+\vert\alpha_2\vert+\vert\alpha_3\vert+\vert\alpha_4\vert=\vert\alpha\vert\leq h}\Big\vert\partial_\zeta^{\alpha_1}\big(\widetilde{\phi}_0(\vert\zeta\vert)\big(1-\psi(2^jK\vert\zeta\vert)\big)\big)\big
\vert\\
&\quad\quad\times\big\vert\partial_\zeta^{\alpha_2}\big(\widetilde{b}_{11}(2^jK\vert\zeta\vert)\big)\big\vert\big\vert\partial_\zeta^{\alpha_3}\big(\widetilde{b}_{22}
\big(2^jK\frac{s}{\eta}\vert\zeta\vert\big)\big)\big\vert\big\vert\partial_\zeta^{\alpha_4}\big(\vert\zeta\vert^{-r}\big)\big\vert\\
&\lesssim\sup\limits_{\frac{1}{2}\leq\vert\zeta\vert\leq 2}\sum\limits_{\vert\alpha_1\vert+\vert\alpha_2\vert+\vert\alpha_3\vert+\vert\alpha_4\vert=\vert\alpha\vert\leq h}\vert\zeta\vert^{-r-\vert\alpha_2\vert-\vert\alpha_3\vert-\vert\alpha_4\vert}\lesssim 1,
\end{align*}
so
\begin{equation*}
\begin{aligned}
&\Vert\mathcal{F}^{-1}\Big(\psi_3(s,\eta,\xi)\phi_j(\eta,\xi)e^{-i(s-\eta)\vert\xi\vert}
\widetilde{b}_{11}(\eta\vert\xi\vert)\widetilde{b}_{22}(s\vert\xi\vert)\vert\xi\vert^{-r}\Big)
\Vert_{L^\infty}\lesssim %2^{j(n-r)}s^{-\frac{n-1}{2}}\eta^{r-n-\frac{n-1}{2}}=
2^{j(n-r)}s^{-\frac{n-1}{2}}\eta^{r-\frac{n+1}{2}}.
\end{aligned}
\end{equation*}
Hence by Young's inequality we have
\begin{equation*}
\begin{aligned}
&\Vert\mathcal{F}^{-1}\Big(\psi_3(s,\eta,\xi)\phi_j(\eta,\xi)e^{-i(s-\eta)\vert\xi\vert}
\widetilde{b}_{11}(\eta\vert\xi\vert)\widetilde{b}_{22}(s\vert\xi\vert)\vert\xi\vert^{-r}\hat{f}(\xi)\Big)\Vert_{L^\infty}\\
&=\Vert\mathcal{F}^{-1}\Big(\psi_3(s,\eta,\xi)\phi_j(\eta,\xi)e^{-i(s-\eta)\vert\xi\vert}
\widetilde{b}_{11}(\eta\vert\xi\vert)\widetilde{b}_{22}(s\vert\xi\vert)\vert\xi\vert^{-r}\Big)\ast f(x)\Vert_{L^\infty_x}\\
&\lesssim \Vert\mathcal{F}^{-1}\Big(\psi_3(s,\eta,\xi)\phi_j(\eta,\xi)e^{-i(s-\eta)\vert\xi\vert}
\widetilde{b}_{11}(\eta\vert\xi\vert)\widetilde{b}_{22}(s\vert\xi\vert)\vert\xi\vert^{-r}\Big)\Vert_{L^\infty}\Vert f\Vert_{L^1}\\
&\lesssim 2^{j(n-r)}s^{-\frac{n-1}{2}}\eta^{r-\frac{n+1}{2}}\Vert f\Vert_{L^1},
\end{aligned}
\end{equation*}
which gives
\begin{equation}\label{l1linfty}
\begin{aligned}
&\Vert\mathcal{F}^{-1}\Big(\psi_3(s,\eta,\xi)\phi_j(\eta,\xi)\vert\xi\vert^{k-\iota-r}\partial_s^\iota
\widehat{H}_k(s,\eta,\xi)\hat{f}(\xi)\Big)\Vert_{L^\infty}\lesssim 2^{j(n-r)}s^{-\frac{n-1}{2}}\eta^{r-\frac{n+1}{2}}\Vert f\Vert_{L^1}.
\end{aligned}
\end{equation}

Applying Parseval's identity  and the boundness  of $\widetilde{b}_{11}, \widetilde{b}_{22}$ we have
\begin{equation*}
\begin{aligned}
&\Vert\mathcal{F}^{-1}\Big(\psi_3(s,\eta,\xi)\phi_j(\eta,\xi)e^{-i(s-\eta)\vert\xi\vert}
\widetilde{b}_{11}(\eta\vert\xi\vert)\widetilde{b}_{22}(s\vert\xi\vert)\vert\xi\vert^{-r}\hat{f}(\xi)\Big)\Vert_{L^2}\\
&= \Vert\psi_3(s,\eta,\xi)\phi_j(\eta,\xi)e^{-i(s-\eta)\vert\xi\vert}
\widetilde{b}_{11}(\eta\vert\xi\vert)\widetilde{b}_{22}(s\vert\xi\vert)\vert\xi\vert^{-r}\hat{f}(\xi)\Vert_{L^2}\\
&\lesssim  \Vert\psi_3(s,\eta,\xi)\phi_j(\eta,\xi)e^{-i(s-\eta)\vert\xi\vert}
\widetilde{b}_{11}(\eta\vert\xi\vert)\widetilde{b}_{22}(s\vert\xi\vert)\vert\xi\vert^{-r}\Vert_{L^\infty}\Vert f\Vert_{L^2}\\
&=2^{-jr}\eta^{r}\sup\limits_{\frac{1}{2}\leq\vert\zeta\vert\leq2}\Big(\big(1-\psi(2^jK\vert\zeta\vert)\big)
\tilde{\phi}_0(\vert\zeta\vert)\vert\zeta\vert^{-r}\Big)\Vert f\Vert_{L^2}\lesssim 2^{-jr}\eta^{r}\Vert f\Vert_{L^2},
\end{aligned}
\end{equation*}
which implies
\begin{equation}\label{l2l2}
\begin{aligned}
&\Vert\mathcal{F}^{-1}\Big(\psi_3(s,\eta,\xi)\phi_j(\eta,\xi)\vert\xi\vert^{k-\iota-r}\partial_s^\iota
\widehat{H}_k(s,\eta,\xi)\hat{f}(\xi)\Big)\Vert_{L^2}\lesssim 2^{-jr}\eta^{r}\Vert f\Vert_{L^2}.
\end{aligned}
\end{equation}

From (\ref{l1linfty})-(\ref{l2l2}), and using Riesz-Thorin interpolation theorem, we have
\begin{equation}\label{lplq}
\begin{aligned}
&\Vert\mathcal{F}^{-1}\Big(\psi_3(s,\eta,\xi)\phi_j(\eta,\xi)\vert\xi\vert^{k-\iota-r}\partial_s^\iota
\widehat{H}_k(s,\eta,\xi)\hat{f}(\xi)\Big)\Vert_{L^{q^\prime}}\lesssim 2^{j\big(n(\frac{1}{q}-\frac{1}{q^\prime})-r\big)}\eta^{r-\frac{n+1}{2}(\frac{1}{q}-\frac{1}{q^\prime})}s^{-\frac{n-1}{2}(\frac{1}{q}-\frac{1}{q^\prime})}\Vert f\Vert_{L^q},
\end{aligned}
\end{equation}
where $\frac{1}{q}+\frac{1}{q^\prime}=1$ and $1<q\leq2$.  From (\ref{lplq}), it can be seen that only $r\geq n(\frac{1}{q}-\frac{1}{q^\prime})$  can help us to obtain the uniform-in-$j$ estimate
\begin{equation*}\label{lplq1}
\begin{aligned}
&\Vert\mathcal{F}^{-1}\Big(\psi_3(s,\eta,\xi)\phi_j(\eta,\xi)\vert\xi\vert^{k-\iota-r}\partial_s^\iota
\widehat{H}_k(s,\eta,\xi)\hat{f}(\xi)\Big)\Vert_{L^{q^\prime}}\lesssim \eta^{r-\frac{n+1}{2}(\frac{1}{q}-\frac{1}{q^\prime})}s^{-\frac{n-1}{2}(\frac{1}{q}-\frac{1}{q^\prime})}\Vert f\Vert_{L^q}.
\end{aligned}
\end{equation*}
It is better to require the lowest regularity of the initial data, so we choose  $r\doteq n(\frac{1}{q}-\frac{1}{q^\prime})$.  Based on  the properties of $\phi_j$ and by Proposition \ref{littlewoodpaleythm}, for $1<q\leq2$ and $\frac{1}{q}+\frac{1}{q^\prime}=1$ we have
\begin{equation}\label{lplq2}
\begin{aligned}
&\Vert\mathcal{F}^{-1}\Big(\psi_3(s,\eta,\xi)\vert\xi\vert^{k-\iota-r}\partial_s^\iota
\widehat{H}_k(s,\eta,\xi)\hat{f}(\xi)\Big)\Vert_{L^{q^\prime}}\lesssim (\frac{s}{\eta})^{-\frac{n-1}{2}(\frac{1}{q}-\frac{1}{q^\prime})}\Vert f\Vert_{L^q}.
\end{aligned}
\end{equation}
Therefore, if follows from (\ref{lplq2}) that
\begin{align}
&\Vert\mathcal{F}^{-1}\big(\psi_3(s,\eta,\xi)\widehat{\mathcal{U}}(s,\xi)\big)\Vert_{L^{q^\prime}}=\Vert\mathcal{F}^{-1}\Big(\psi_3(s,\eta,\xi)\widehat{H}_0(s,\eta,\xi)\widehat{\mathcal{U}}_0(\xi)
+\psi_3(s,\eta,\xi)\widehat{H}_1(s,\eta,\xi)\widehat{\mathcal{U}}_1(\xi)\Big)\Vert_{L^{q^\prime}}\notag\\
&\lesssim (\frac{s}{\eta})^{-\frac{n-1}{2}(\frac{1}{q}-\frac{1}{q^\prime})}\Big(
\Vert \mathcal{F}^{-1}\big(\vert\xi\vert^{r}\widehat{\mathcal{U}}_0(\xi)\big)\Vert_{L^q}+
\Vert \mathcal{F}^{-1}\big(\vert\xi\vert^{r-1}\widehat{\mathcal{U}}_1(\xi)\big)\Vert_{L^q}\Big)\notag\\
&=(\frac{s}{\eta})^{-\frac{n-1}{2}(\frac{1}{q}-\frac{1}{q^\prime})}\big(
\Vert\mathcal{U}_0\Vert_{\dot{H}^r_q}+\Vert\mathcal{U}_1\Vert_{\dot{H}^{r-1}_q}\big),\label{lplq3}
\end{align}
and similarly,
\begin{equation} \label{lplq4}
\begin{aligned}
\Vert\mathcal{F}^{-1}\Big(\psi_3(s,\eta,\xi)\widehat{D\mathcal{U}}(s,\xi)\Big)\Vert_{L^{q^\prime}}\lesssim (\frac{s}{\eta})^{-\frac{n-1}{2}(\frac{1}{q}-\frac{1}{q^\prime})}\big(
\Vert\mathcal{U}_0\Vert_{\dot{H}^{r+1}_q}+\Vert\mathcal{U}_1\Vert_{\dot{H}^{r}_q}\big).
\end{aligned}
\end{equation}

If $r=n(\frac{1}{q}-\frac{1}{q^\prime})<1$, we need to modify the estimate $ \Vert\mathcal{F}^{-1}\Big( \psi_3(s,\eta,\xi)\widehat{H}_1(s,\eta,\xi)\widehat{\mathcal{U}}_1(\xi)\Big)\Vert_{L^{q^\prime}}$ in (\ref{lplq3}).  Choosing $\chi\in C_0^\infty\big([ 0,\infty)\big)$ and $\text{supp}\chi\subset[0,\epsilon)$, where $0<\epsilon\ll1$. Then we have $(1-\chi)(1-\psi)=1-\psi$.  By (\ref{lplq2}) and Young's inequality we get
\begin{equation*}
\begin{aligned}
&\Vert\mathcal{F}^{-1}\Big( \psi_3(s,\eta,\xi)\widehat{H}_1(s,\eta,\xi)\widehat{\mathcal{U}}_1(\xi)\Big)\Vert_{L^{q^\prime}}=\Vert\mathcal{F}^{-1}\Big(\big(1-\psi(\frac{\eta\vert\xi\vert}{K})\big)\vert\xi\vert^{1-r}\widehat{H}_1(s,\eta,\xi)
\big(1-\chi(\frac{\eta\vert\xi\vert}{K}))\big)\vert\xi\vert^{r-1}\widehat{\mathcal{U}}_1(\xi)\Big)\Vert_{L^{q^\prime}}\\
&\lesssim (\frac{s}{\eta})^{-\frac{n-1}{2}(\frac{1}{q}-\frac{1}{q^\prime})}\Vert\mathcal{F}^{-1}\Big(\big(1-\chi(\frac{\eta\vert\xi\vert}{K})\big)
(\eta\vert\xi\vert)^{r-1}\eta^{1-r}\widehat{\mathcal{U}}_1(\xi)\Big)\Vert_{L^q}\\
&\lesssim (\frac{s}{\eta})^{-\frac{n-1}{2}(\frac{1}{q}-\frac{1}{q^\prime})}\eta^{1-r}\Vert\mathcal{F}^{-1}\Big(\big(
1-\chi(\frac{\eta\vert\xi\vert}{K})\big)(\eta\vert\xi\vert)^{r-1}\Big)\Vert_{L^1}\Vert\mathcal{U}_1\Vert_{L^q}.
\end{aligned}
\end{equation*}
Note that $\frac{d^\alpha}{d\zeta^\alpha}(\vert\zeta\vert^{r-1})\lesssim \vert\zeta\vert^{r-1-\vert\alpha\vert}$, by Proposition \ref{propopo}, we have $\mathcal{F}^{-1}\Big(\big(1-\chi(\vert\zeta\vert)\big)\vert\zeta\vert^{r-1}\Big)\in L^1$. Thus
\begin{equation*}
\Vert\mathcal{F}^{-1}\Big(\big(
1-\chi(\frac{\eta\vert\xi\vert}{K})\big)(\eta\vert\xi\vert)^{r-1})\Big)\Vert_{L^1}
=K^{r-1}\Vert\mathcal{F}^{-1}\Big(\big(
1-\chi(\vert\zeta\vert)\big)\vert\zeta\vert^{r-1}\Big)\Vert_{L^1}< \infty.
\end{equation*}
Then
\begin{equation}\label{lplq5}
\begin{aligned}
&\Vert\mathcal{F}^{-1}\Big( \psi_3(s,\eta,\xi)\widehat{H}_1(s,\eta,\xi)\widehat{\mathcal{U}}_1(\xi)\Big)\Vert_{L^{q^\prime}}\lesssim (\frac{s}{\eta})^{-\frac{n-1}{2}(\frac{1}{q}-\frac{1}{q^\prime})}\eta^{1-r}\Vert\mathcal{U}_1\Vert_{L^q}.
\end{aligned}
\end{equation}
Hence in the case of $r=n(\frac{1}{q}-\frac{1}{q^\prime})<1$,  from (\ref{lplq3}) and (\ref{lplq5}) we have
\begin{equation}\label{lplq6}
\begin{aligned}
&\Vert\mathcal{F}^{-1}\big(\psi_3(s,\eta,\xi)\widehat{\mathcal{U}}(s,\xi)\big)\Vert_{L^{q^\prime}}
\lesssim(\frac{s}{\eta})^{-\frac{n-1}{2}(\frac{1}{q}-\frac{1}{q^\prime})}\big(
\Vert\mathcal{U}_0\Vert_{\dot{H}^r_q}+\eta^{1-r}\Vert\mathcal{U}_1\Vert_{L^q}\big).
\end{aligned}
\end{equation}

As a result, we complete the proof by combining (\ref{lplq3}), (\ref{lplq4}), and (\ref{lplq6}).

\end{proof}

\subsubsection{Estimates in the  medium- frequency  region}\label{propositionmedium}

\begin{proposition}\label{propmedium}
Let $\delta<\min\{(m+1)^2,1\}$  and $\mathcal{U}(s,x)$ be the solution to (\ref{masseq}), the initial data  $\mathcal{U}_0,\mathcal{U}_1\in L^q$, $1<q\leq2$, $\frac{1}{q}+\frac{1}{q^\prime}=1$. Then we have
\begin{equation*}
\begin{aligned}
&\Vert\mathcal{F}^{-1}\Big(\psi_2(s,\eta,\xi)\widehat{D\mathcal{U}}(s,\eta,\xi)\Big)\Vert_{L^{q^\prime}}\lesssim s^{-\frac{n-1}{2}(\frac{1}{q}-\frac{1}{q^\prime})}\eta^{-1-\frac{n+1}{2}(\frac{1}{q}-\frac{1}{q^\prime})}\Big(\Vert\mathcal{U}_0\Vert_{L^q}+
\eta\Vert\mathcal{U}_1\Vert_{L^q}\Big)
\end{aligned}
\end{equation*}
and
\begin{equation*}
\begin{aligned}
&\Vert\mathcal{F}^{-1}\Big(\psi_2(s,\eta,\xi)\widehat{\mathcal{U}}(s,\eta,\xi)\Big)\Vert_{L^{q^\prime}}\\
&\lesssim s^{\max\{\text{Re}\rho-n(\frac{1}{q}-\frac{1}{q^\prime})+\chi_\epsilon,-\frac{n-1}{2}(\frac{1}{q}-\frac{1}{q^\prime})\}}\eta^{\min\{ -\text{Re}\rho-\chi_\epsilon,-\frac{n+1}{2}(\frac{1}{q}-\frac{1}{q^\prime})\}}\Big(\Vert\mathcal{U}_0\Vert_{L^q}+
\eta\Vert\mathcal{U}_1\Vert_{L^q}\Big),
\end{aligned}
\end{equation*}
where $\chi_\epsilon$ is defined by Theorem \ref{mainresult} and $\rho$ is defined by (\ref{rho}).

\end{proposition}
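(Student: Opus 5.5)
In the medium-frequency zone $\Omega_2(s,\eta)$ we have $\eta|\xi|\le 2K$ and $s|\xi|\ge K$. The plan is therefore a \emph{hybrid} of the two previous arguments. The $\eta$-dependent entries (first column of $\Lambda_k^\iota$) will be treated with the small-argument bounds (\ref{bounded1}), (\ref{propop1}). The $s$-dependent entries (second column) will be treated with the large-argument asymptotics (\ref{bounded2}), (\ref{bounded3}).

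\textbf{Step 1: structure of the multiplier.} Expanding $\det\Lambda_k^\iota$ as in (\ref{express111}), we get
\begin{equation*}
\partial_s^\iota\widehat{H}_k\sim|\xi|^{\iota-k}\sum_{\pm} e^{\pm is|\xi|}\,c_\pm(\eta|\xi|)\,d_\pm(s|\xi|),
\end{equation*}
with $d_\pm$ symbols of order zero as in (\ref{bounded4}). The factor $c_\pm(\eta|\xi|)$ is bounded by $(\eta|\xi|)^{k-\mathrm{Re}\rho}$, with an additional $|\log(\eta|\xi|)|$ when $\delta=0$. Its $\xi$-derivatives obey $|\partial^\alpha c_\pm|\lesssim(\eta|\xi|)^{k-\mathrm{Re}\rho}|\xi|^{-|\alpha|}$, up to logarithms; this follows from (\ref{de1}) and the analyticity of $M$.

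\textbf{Step 2: dyadic decomposition and endpoint estimates.} We decompose $\Omega_2$ dyadically in $s|\xi|$, with $s|\xi|\sim 2^{j}K$ for $0\le j\lesssim\log_2(s/\eta)$.
\begin{itemize}
\item[(a)] On each shell, the $L^1$–$L^\infty$ bound comes from rescaling $\xi=2^jK\zeta/s$ and applying Proposition \ref{propositionpare}. This gives
\begin{equation*}
(2^j/s)^{n}\,(2^j)^{-\frac{n-1}{2}}\,\sup|\text{amplitude}|.
\end{equation*}
\item[(b)] The $L^2$–$L^2$ bound is the sup of the multiplier.
\end{itemize}
We then interpolate by Riesz–Thorin, with $\theta=\frac1q-\frac1{q'}$, exactly as in (\ref{lplq}).

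\textbf{Step 3: estimates for $D\mathcal{U}$.} Here $b(\xi)=|\xi|^{1-\iota}$, so the multiplier is $|\xi|^{1-k}\cdot\big((\eta|\xi|)^{k-\mathrm{Re}\rho}\ \text{or}\ (\eta|\xi|)^{\mathrm{Re}\rho-1+k}\big)$. Using $\mathrm{Re}\rho<1$, both are bounded by $\eta^{k-1}$ times a negative power of $\eta|\xi|$, which is controlled on the shell. Summing the interpolated shell bounds over $j$ should produce $s^{-\frac{n-1}{2}\theta}\eta^{-\frac{n+1}{2}\theta}\eta^{k-1}$. The sum is dominated by the top shell $|\xi|\sim\eta^{-1}$, because the $L^1$–$L^\infty$ side has the growing factor $2^{j\frac{n+1}{2}}$. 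The factor $\eta^{k}$ accounts for the weight $\eta\Vert\mathcal{U}_1\Vert_{L^q}$.

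\textbf{Step 4: estimates for $\mathcal{U}$ itself.} Without the factor $|\xi|$, the multiplier carries $|\xi|^{-k}(\eta|\xi|)^{k-\mathrm{Re}\rho}$, which grows like $(s|\xi|)^{-\mathrm{Re}\rho}$ toward low frequency. The interpolated shell bound is then of the form
\begin{equation*}
2^{j(\frac{n+1}{2}\theta-\mathrm{Re}\rho)}\ \text{or similar},
\end{equation*}
and the geometric sum is dominated either by the lowest shell or by the top shell, depending on sign. The lowest shell $s|\xi|\sim K$ gives $s^{\mathrm{Re}\rho-n\theta}\eta^{-\mathrm{Re}\rho}$, matching the low-frequency Proposition \ref{lowfre}. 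The top shell gives $s^{-\frac{n-1}{2}\theta}\eta^{-\frac{n+1}{2}\theta}$. This produces the stated max/min exponents. When $\delta=0$, the logarithm $\log(\eta|\xi|)$ and the borderline equal-exponent sum each cost a factor absorbed by $s^{\epsilon}\eta^{-\epsilon}$, hence the $\chi_\epsilon$.

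\textbf{Main obstacle.} The delicate point is Step 1/2: verifying that the amplitude in Proposition \ref{propositionpare} has uniformly bounded $\zeta$-derivatives after rescaling. The derivatives of $c_\pm(\eta|\xi|)$ at small argument are only controlled through (\ref{de1}) and the analyticity of $M$. I would check this carefully, and if the uniformity is hard to obtain directly, I would fall back on Proposition \ref{prop251012} to put the non-oscillating factor in $L^1$. I also expect the summation over shells in the $\delta=0$ case to need care because of the logarithm.
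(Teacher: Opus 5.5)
Your shell-by-shell analysis is sound, but the last step of Step 4 does not give the stated estimate in one configuration. There you sum the interpolated shell bounds over $0\le j\lesssim\log_2(s/\eta)$ using the triangle inequality. The shell bound is $2^{j(\frac{n+1}{2}(\frac1q-\frac1{q'})-\mathrm{Re}\,\rho)}s^{\mathrm{Re}\,\rho-n(\frac1q-\frac1{q'})}\eta^{k-\mathrm{Re}\,\rho}$. When $\frac{n+1}{2}(\frac1q-\frac1{q'})=\mathrm{Re}\,\rho$, all shells contribute equally, so the sum picks up a factor $\log(s/\eta)$.

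This borderline is a condition on $q$, not on $\delta$. For $\delta<0$ it occurs at $\frac1q-\frac1{q'}=\frac1{n+1}$; for $q=1+\frac1p$ this is $p=1+\frac2n$, where the two branches of $A_p$ meet. There $\chi_\epsilon=0$, so the statement leaves no room for a logarithm. You attribute the equal-exponent loss to the case $\delta=0$, which is not where it lives.

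The missing tool is Proposition~\ref{littlewoodpaleythm}. For $1<q\le2$ it turns a bound on the dyadic pieces that is \emph{uniform} in $j$ into the same bound for the whole multiplier, with no loss: it uses the Littlewood--Paley square function and Minkowski's inequality in $L^{q'}$, $q'\ge2$. It applies to your partition in $s|\xi|/K$ after a dilation. So take the sup over $j$ instead of the sum.
\begin{itemize}
\item The sup of your geometric sequence is the top shell, the bottom shell, or either, according to the sign of the exponent. This is exactly the $\max/\min$ in the statement.
\item For $\delta=0$, the extra factor $1+\log\frac{s}{2^j\eta}$ coming from $\tilde{c}_{11}$ is harmless at the top shell. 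Elsewhere it costs at most $1+\log\frac{s}{\eta}\lesssim(s/\eta)^{\epsilon}$, which is what $\chi_\epsilon$ absorbs.
\end{itemize}
This is how the paper concludes. In Step 3 the exponent $\frac{n+1}{2}(\frac1q-\frac1{q'})+1-\mathrm{Re}\,\rho$ is strictly positive, so your summation is harmless there. Also, the powers of $\eta|\xi|$ in Step 3 are positive, not negative as you wrote: $\eta^{k-1}(\eta|\xi|)^{1-\mathrm{Re}\,\rho}$ and $\eta^{k-1}(\eta|\xi|)^{\mathrm{Re}\,\rho}$.

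Apart from this, your per-shell estimate takes a genuinely different route from the paper's. The paper never puts $\tilde{c}_{11}(\eta|\xi|)$ into the amplitude of Proposition~\ref{propositionpare}. Instead it uses Young's inequality to split the shell multiplier into two factors:
\begin{itemize}
\item the oscillatory factor $e^{-is|\xi|}\phi_j\tilde{b}_{22}(s|\xi|)|\xi|^{1-\mathrm{Re}\,\rho-\gamma}$, treated by Proposition~\ref{propositionpare};
\item the non-oscillatory factor $|\xi|^{\gamma-k+\mathrm{Re}\,\rho}\tilde{c}_{11}(\eta|\xi|)\psi(\eta|\xi|/K)$, whose kernel is put in $L^1$ by Proposition~\ref{prop251012} at cost $\eta^{k-\mathrm{Re}\,\rho-\gamma}$.
\end{itemize}
The free parameter $\gamma$ (resp.\ $\gamma_1$) is chosen to make the power of $2^j$ nonpositive. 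The shell bounds are then uniform by construction, and Brenner's lemma applies at once.

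Your worry in the ``main obstacle'' is unnecessary. After rescaling $\xi=2^jK\zeta/s$, the factor becomes $c(\lambda|\zeta|)$ with $\lambda=2^jK\eta/s$, $\lambda|\zeta|\le2K$ and $|\zeta|\sim1$. Combining (\ref{de1}) with the fact that $\mathscr{W}_h(\rho,2\rho;\cdot)$ and $\mathscr{W}_h(1-\rho,2-2\rho;\cdot)$ are entire when $2\rho\notin\mathbb{Z}$ gives $|\partial_\zeta^\alpha c(\lambda|\zeta|)|\lesssim\lambda^{k-\mathrm{Re}\,\rho}(1+|\log\lambda|)^{\chi_\delta}$, uniformly in $j$. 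So the whole amplitude is admissible and no fallback is needed.

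What your route buys is that it never needs an $L^1$ bound for the kernel of a non-oscillatory low-frequency factor. That is the delicate part of the paper's argument: the logarithm when $\delta=0$, and the factor $|\zeta|^{-i\,\mathrm{Im}\,\rho}$ when $\delta<0$ and $\gamma_1=0$, which Proposition~\ref{prop251012} does not cover. What the paper's route buys is uniformity in $j$ for free, which is exactly what your summation step was missing.
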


\begin{proof}
Note that $s\vert\xi\vert$ is bounded from below and $\eta\vert\xi\vert$ is bounded in $\Omega_2(s,\eta)$ so   by Lemma \ref{expressionandderivatives},  (\ref{propop1}), (\ref{bounded1}), (\ref{bounded2}) and  (\ref{bounded3}), we have
\begin{equation}\label{matrix2}
\Lambda_k^\iota(s,\eta,\xi)\sim
\left\{
\begin{aligned}
&
\begin{pmatrix}
c_{11}(\eta\vert\xi\vert)
&\sin (-s\vert\xi\vert)b_{12}(s\vert\xi\vert)\\
c_{21}(\eta\vert\xi\vert)&
\sin (s\vert\xi\vert)b_{22}(s\vert\xi\vert)
\end{pmatrix},&\quad&\text{if}\ \  \delta\neq0,\\
&
\begin{pmatrix}
\widetilde{c}_{11}(\eta\vert\xi\vert)
&e^{is\vert\xi\vert}\widetilde{b}_{12}(s\vert\xi\vert)\\
\widetilde{c}_{21}(\eta\vert\xi\vert)&
e^{-is\vert\xi\vert}\widetilde{b}_{22}(s\vert\xi\vert)
\end{pmatrix},&\quad&\text{if}\ \  \delta=0,
\end{aligned}
\right.
\end{equation}
where
\begin{align*}
&c_{11}(z)=(2iz)^{k-\rho}e^{iz}, &\ & \widetilde{c}_{11}(z)=(2iz)^{k-\frac{1}{2}}e^{iz}\log (-2iz),\\
&c_{21}(z)=(2iz)^{\rho-1+k}e^{-iz}, &\ & \widetilde{c}_{21}(z)=(2iz)^{k-\frac{1}{2}}e^{-iz}\log (2iz)
\end{align*}
and $b_{12}(z),b_{22}(z),\tilde{b}_{12}(z),\tilde{b}_{22}(z)$ are same as (\ref{matrix1}). Moreover, we can  verify that
\begin{equation}\label{prviousestimate}
\begin{aligned}
&\vert c_{11}(z)\vert\lesssim\vert z\vert^{k-\text{Re}\rho}, &\ &\vert c_{21}(z)\vert\lesssim \vert z\vert^{\text{Re}\rho-1+k}\big(\lesssim\vert z\vert^{k-\text{Re}\rho}\ \text{for small}\ \vert z\vert\big),\\
&\vert \widetilde{c}_{11}(z)\vert\lesssim \vert z\vert^{k-\frac{1}{2}}\vert\log z\vert=\vert z\vert^{k-\text{Re}\rho}\vert\log z\vert,&\ &\vert\widetilde{c}_{21}(z)\vert\lesssim \vert z\vert^{k-\frac{1}{2}}\vert\log z\vert=\vert z\vert^{k-\text{Re}\rho}\vert\log z\vert.
\end{aligned}
\end{equation}
By (\ref{bounded4}) we have
\begin{align}
&\vert\frac{d^n}{dz^n}b_{12}(z)\vert, \vert\frac{d^n}{dz^n}b_{22}(z)\vert, \vert\frac{d^n}{dz^n}\tilde{b}_{12}(z)\vert, \vert\frac{d^n}{dz^n}\tilde{b}_{22}(z)\vert\lesssim\vert z\vert^{-n},\ \text{for any}\ n\in\mathbb{N}.\label{estimate2df}
\end{align}
Then
\begin{equation}\label{express112}
\begin{aligned}
&\partial_s^\iota\widehat{H}_k(s,\eta,\xi)\sim\vert\xi\vert^{\iota-k}\det\big(\Lambda_k^\iota(s,\eta,\xi)\big)
\\ &\sim\vert\xi\vert^{\iota-k}
\left\{
\begin{aligned}
&\sin(s\vert\xi\vert)c_{11}(\eta\vert\xi\vert)b_{22}(s\vert\xi\vert)-
\sin(-s\vert\xi\vert)c_{21}(\eta\vert\xi\vert)b_{12}(s\vert\xi\vert), &\ & \text{if}\ \delta\neq0,\\
&e^{-is\vert\xi\vert}\widetilde{c}_{11}(\eta\vert\xi\vert)\widetilde{b}_{22}(s\vert\xi\vert)-
e^{is\vert\xi\vert}\widetilde{c}_{21}(\eta\vert\xi\vert)\widetilde{b}_{12}(s\vert\xi\vert),&\ &  \text{if}\ \delta=0.
\end{aligned}
\right.
\end{aligned}
\end{equation}
Since $s\vert\xi\vert\geq K$ in $\Omega_2(s,\eta,\xi)$, the dyadic decomposition applied in Proposition \ref{highfrequencyestimate} remains necessary. Note that $\psi_2(s,\eta,\xi)\tilde{\phi}_j(\frac{s\vert\xi\vert}{K})=0$  holds for $j<0$ and  $\sum\limits_{j=0}^\infty\phi_j\big(\frac{s\vert\xi\vert}{K}\big)=1$, combining the expressions of (\ref{nablau})-(\ref{partialsu}) and (\ref{express112}), our goal is to derive the uniform-in-$j$ estimates of the norm for the following  operators with $j\in\mathbb{N}$, i.e.,
\begin{align}
&\mathcal{F}^{-1}\Big(\sin(s\vert\xi\vert)c_{11}(\eta\vert\xi\vert)b_{22}(s\vert\xi\vert)\psi_2(s,\eta,\xi)
\phi_j(s,\xi)\vert\xi\vert^{1-k}\hat{f}(\xi)\Big),\label{case3op1}\\
&\mathcal{F}^{-1}\Big(\sin(s\vert\xi\vert)c_{21}(\eta\vert\xi\vert)b_{12}(s\vert\xi\vert)\psi_2(s,\eta,\xi)
\phi_j(s,\xi)\vert\xi\vert^{1-k}\hat{f}(\xi)\Big)\label{case3op2},\\
&\mathcal{F}^{-1}\Big(e^{-is\vert\xi\vert}\widetilde{c}_{11}(\eta\vert\xi\vert)\widetilde{b}_{22}(s\vert\xi\vert)  \psi_2(s,\eta,\xi)
\phi_j(s,\xi)\vert\xi\vert^{1-k}\hat{f}(\xi)\Big),\label{case3op3}\\
&\mathcal{F}^{-1}\Big(e^{is\vert\xi\vert}\widetilde{c}_{21}(\eta\vert\xi\vert)\widetilde{b}_{12}(s\vert\xi\vert) \psi_2(s,\eta,\xi)
\phi_j(s,\xi)\vert\xi\vert^{1-k}\hat{f}(\xi)\Big).\label{case3op4}
\end{align}

The $L^1-L^\infty$ estimate and $L^2-L^2$ estimate for (\ref{case3op3}) are derived first, while the remaining three terms (\ref{case3op1})-(\ref{case3op2}) and (\ref{case3op4}) can be handled analogously, yielding the same estimates.

By Young's inequality we have
\begin{align}
&\Vert\mathcal{F}^{-1}\Big(e^{-is\vert\xi\vert}\widetilde{c}_{11}(\eta\vert\xi\vert)\widetilde{b}_{22}(s\vert\xi\vert)  \psi_2(s,\eta,\xi)
\phi_j(s,\xi)\vert\xi\vert^{1-k}\hat{f}(\xi)\Big)\Vert_{L^\infty}\notag\\
&\leq \Vert\mathcal{F}^{-1}\Big(e^{-is\vert\xi\vert}\widetilde{c}_{11}(\eta\vert\xi\vert)\widetilde{b}_{22}(s\vert\xi\vert)  \psi(\frac{\eta\vert\xi\vert}{K})(1-\psi(\frac{s\vert\xi\vert}{K}))
\phi_j(s,\xi)\vert\xi\vert^{1-k}\Big)\Vert_{L^\infty}\Vert f\Vert_{L^1}\notag\\
&\leq\Vert\mathcal{F}^{-1}\Big(e^{-is\vert\xi\vert}\phi_j(s,\xi)(1-\psi(\frac{s\vert\xi\vert}{K}))\widetilde{b}_{22}(s\vert\xi\vert)
\vert\xi\vert^{1-\text{Re}\rho-\gamma}\Big)\Vert_{L^\infty}\notag\\
&\quad\quad\quad\quad\quad\quad\quad\quad\times\Vert\mathcal{F}^{-1}\Big(\vert\xi\vert^{\gamma-k+\text{Re}\rho}
\widetilde{c}_{11}(\eta\vert\xi\vert)\psi(\frac{\eta\vert\xi\vert}{K})\Big)\Vert_{L^1}\Vert f\Vert_{L^1},\label{bian1}
\end{align}
where $\gamma>0$ is determined later, which is introduced to eliminate the influence of the factor $\vert\log z\vert$. Using the change of variables ~$\zeta\doteq\eta\vert\xi\vert\leq2K$, $y\doteq\frac{x}{\eta}$ and applying~(\ref{prviousestimate}), we obtain from Proposition \ref{prop251012} that
\begin{align}
&\Vert\mathcal{F}^{-1}\Big(\vert\xi\vert^{\gamma-k+\text{Re}\rho}
\widetilde{c}_{11}(\eta\vert\xi\vert)\psi(\frac{\eta\vert\xi\vert}{K})\Big)\Vert_{L^1}
=\eta^{-\text{Re}\rho+k-\gamma}\Vert\mathcal{F}^{-1}\Big(\vert\zeta\vert^{\gamma-k+\text{Re}\rho}
\psi(\frac{\vert\zeta\vert}{K})\widetilde{c}_{11}(\vert\zeta\vert)\Big)\Vert_{L^1}\notag\\
&\lesssim \eta^{-\text{Re}\rho+k-\gamma} \Vert\mathcal{F}^{-1}\Big(\vert\zeta\vert^{\gamma}\psi(\frac{\vert\zeta\vert}{K})\big\vert\log\vert\zeta\vert\big\vert
\Big)\Vert_{L^1}\lesssim  \eta^{-\text{Re}\rho+k-\gamma} \Vert\mathcal{F}^{-1}\Big(\vert\zeta\vert^{\frac{\gamma}{2}}\psi(\frac{\vert\zeta\vert}{K})\Big)\Vert_{L^1}\lesssim  \eta^{-\text{Re}\rho+k-\gamma}.\label{bian2}
\end{align}
Moreover, through the  scaling~$\zeta\doteq2^{-j}\frac{\xi}{K}$, it follows from    Proposition \ref{propositionpare} that
\begin{align}
&\Vert\mathcal{F}^{-1}\Big(e^{-is\vert\xi\vert}\phi_j(s,\xi)(1-\psi(\frac{s\vert\xi\vert}{K}))\widetilde{b}_{22}(s\vert\xi\vert)
\vert\xi\vert^{1-\text{Re}\rho-\gamma}\Big)\Vert_{L^\infty}\notag\\
&=2^{j(n+1-\text{Re}\rho-\gamma)}s^{-n-1+\text{Re}\rho+\gamma}K^{1-\text{Re}\rho-\gamma+n}\notag\\
&\quad\quad\quad\quad\quad\times\Vert\mathcal{F}^{-1}\Big(e^{-i2^{j}K\vert\zeta\vert}
\widetilde{\phi}_0(\vert\zeta\vert)(1-\psi(2^j\vert\zeta\vert))\widetilde{b}_{22}(2^jK\vert\zeta\vert)
\vert\zeta\vert^{1-\text{Re}\rho-\gamma}\Big)(\frac{2^jK}{s}x)\Vert_{L^\infty_x}\notag\\
&\lesssim 2^{j(n+1-\text{Re}\rho-\gamma)}s^{-n-1+\text{Re}\rho+\gamma}(1+2^{j}K)^{-\frac{n-1}{2}}\notag\\
&\quad\quad\quad\quad\quad\times
\mathop{\sum}\limits_{\vert\alpha\vert\leq h}\Vert \partial_\zeta^\alpha\Big( \widetilde{\phi}_0(\vert\zeta\vert)(1-\psi(2^j\vert\zeta\vert))\widetilde{b}_{22}(2^jK\vert\zeta\vert)
\vert\zeta\vert^{1-\text{Re}\rho-\gamma}\Big)\Vert_{L^\infty},\label{bian4}
\end{align}
where $h>\frac{n+3}{2}$.  From  (\ref{estimate2df}) we have
\begin{align}
&\mathop{\sum}\limits_{\vert\alpha\vert\leq h}\Vert \partial_\zeta^\alpha\Big( \tilde{\phi}_0(\vert\zeta\vert)(1-\psi(2^jK\vert\zeta\vert))\widetilde{b}_{22}(2^jK\vert\zeta\vert)
\vert\zeta\vert^{1-\text{Re}\rho-\gamma}\Big)\Vert_{L^\infty}\notag\\
&=\sup\limits_{\frac{1}{2}\leq\vert\zeta\vert\leq2}\mathop{\sum}
\limits_{\vert\alpha_1\vert+\vert\alpha_2\vert+\vert\alpha_3\vert=\vert\alpha\vert\leq h}\big\vert\partial_\zeta^{\alpha_1}
\Big(\widetilde{\phi}_0(\vert\zeta\vert)(1-\psi(2^j\vert\zeta\vert))\Big)\big\vert\big\vert\partial_\zeta^{\alpha_2}
\vert\zeta\vert^{1-\text{Re}\rho-\gamma}\big\vert\big\vert\partial_\zeta^{\alpha_3}\widetilde{b}_{22}(2^jK
\vert\zeta\vert)\big\vert\notag\\
&\lesssim \sup\limits_{\frac{1}{2}\leq\vert\zeta\vert\leq2}\mathop{\sum}
\limits_{\vert\alpha_1\vert+\vert\alpha_2\vert+\vert\alpha_3\vert=\vert\alpha\vert\leq h}
\vert\zeta\vert^{1-\text{Re}\rho-\gamma-\vert\alpha_2\vert}(2^jK\vert\zeta\vert)^{-\vert\alpha_3\vert}\lesssim 1.
\end{align}
Hence, by combining (\ref{bian1})–(\ref{bian4}), we  arrive at
\begin{align}
&\Vert\mathcal{F}^{-1}\Big(e^{-is\vert\xi\vert}\widetilde{c}_{11}(\eta\vert\xi\vert)\widetilde{b}_{22}(s\vert\xi\vert)  \psi_2(s,\eta,\xi)
\phi_j(s,\xi)\vert\xi\vert^{1-k}\hat{f}(\xi)\Big)\Vert_{L^\infty}\notag\\
&\lesssim
2^{j(\frac{n+3}{2}-\text{Re}\rho-\gamma)}s^{-n-1+\text{Re}\rho+\gamma}\eta^{-\text{Re}\rho+k-\gamma}\Vert f\Vert_{L^1}.\label{l1linftycase3}
\end{align}

It follows from  Parseval's identity, (\ref{prviousestimate})-(\ref{estimate2df}) and the change of variables~$\zeta\doteq\frac{s\xi}{2^jK}$ that
\begin{align}
&\Vert\mathcal{F}^{-1}\Big(e^{-is\vert\xi\vert}\widetilde{c}_{11}(\eta\vert\xi\vert)\widetilde{b}_{22}(s\vert\xi\vert)  \psi_2(s,\eta,\xi)
\phi_j(s,\xi)\vert\xi\vert^{1-k}\hat{f}(\xi)\Big)\Vert_{L^2}\notag\\
&=\Vert e^{-is\vert\xi\vert}\widetilde{c}_{11}(\eta\vert\xi\vert)\widetilde{b}_{22}(s\vert\xi\vert)  \psi_2(s,\eta,\xi)
\phi_j(s,\xi)\vert\xi\vert^{1-k}\hat{f}(\xi) \Vert_{L^2}\notag\\
&\leq \Vert\big(\varphi(2^{-j}\frac{s\vert\xi\vert}{K})-\varphi(2^{-j+1}\frac{s\vert\xi\vert}{K})\big)(\eta\vert\xi\vert)^{-\text{Re}\rho+k-\gamma}
\vert\xi\vert^{1-k}(1-\psi(\frac{s\vert\xi\vert}{K}))\widetilde{b}_{22}(s\vert\xi\vert)\Vert_{L^\infty}\notag\\
&\quad\quad\quad\quad\quad\times
\Vert \psi(\frac{\eta\vert\xi\vert}{K})\widetilde{c}_{11}(\eta\vert\xi)(\eta\vert\xi\vert)^{\text{Re}\rho-k+\gamma}\Vert_{L^\infty}
\Vert f\Vert_{L^2}\notag\\
&=\eta^{-\text{Re}\rho+k-\gamma}\sup\limits_{\frac{1}{2}\leq\vert\zeta\vert\leq2}\big\vert\tilde{\phi}_0(\vert
\zeta\vert)(2^jK\vert\xi\vert s^{-1})^{1-\text{Re}\rho-\gamma}\big(1-\psi(2^j\vert\zeta\vert)\widetilde{b}_{22}(2^jK\vert\zeta\vert)\big)\big\vert\notag\\
&\quad\quad\quad\quad\quad\quad\quad\quad\quad\quad\times\Vert \psi(\frac{\eta\vert\xi\vert}{K})(\eta\vert\xi\vert)^{\gamma}\big\vert\log\eta\vert\xi\vert\big\vert\Vert_{L^\infty}\Vert f\Vert_{L^2}\notag\\
&\lesssim 2^{j(1-\text{Re}\rho-\gamma)}s^{\text{Re}\rho+\gamma-1}\eta^{-\text{Re}\rho+k-\gamma}\Vert f\Vert_{L^2},\label{l2l2linftycase3}
\end{align}
where in the last inequality we used  the boundedness  of $(\eta\vert\xi\vert)^{\gamma}\big\vert\log\eta\vert\xi\vert\big\vert$ as $\eta\vert\xi\vert\leq 2K$.

According to Riesz-Thorin interpolation theorem, from (\ref{l1linftycase3})-(\ref{l2l2linftycase3}) we get  \begin{equation*}
\begin{aligned}
&\Vert\mathcal{F}^{-1}\Big(e^{-is\vert\xi\vert}\widetilde{c}_{11}(\eta\vert\xi\vert)\widetilde{b}_{22}(s\vert\xi\vert)  \psi_2(s,\eta,\xi)
\phi_j(s,\xi)\vert\xi\vert^{1-k}\hat{f}(\xi)\Big)\Vert_{L^{q^\prime}} \\
&\lesssim 2^{j(\frac{n+1}{2}(\frac{1}{q}-\frac{1}{q^\prime})+1-\text{Re}\rho-\gamma)}s^{-n(\frac{1}{q}-\frac{1}{q^\prime})+\gamma+\text{Re}\rho-1}
\eta^{-\text{Re}\rho+k-\gamma}\Vert f\Vert_{L^q}.
\end{aligned}
\end{equation*}
Note that $\text{Re}\rho\in[\frac{1}{2},1)$,  so let $\gamma\doteq\frac{n+1}{2}(\frac{1}{q}-\frac{1}{q^\prime})+1-\text{Re}\rho>0$, then from Proposition \ref{littlewoodpaleythm},
\small{\begin{equation}\label{es20251012}
\begin{aligned}
&\Vert\mathcal{F}^{-1}\Big(e^{-is\vert\xi\vert}\widetilde{c}_{11}(\eta\vert\xi\vert)\widetilde{b}_{22}(s\vert\xi\vert)  \psi_2(s,\eta,\xi)
\vert\xi\vert^{1-k}\hat{f}(\xi)\Big)\Vert_{L^{q^\prime}} \lesssim s^{-\frac{n-1}{2}(\frac{1}{q}-\frac{1}{q^\prime})}
\eta^{k-1-\frac{n+1}{2}(\frac{1}{q}-\frac{1}{q^\prime})}\Vert f\Vert_{L^q}.
\end{aligned}
\end{equation}
}
Combining the definition (\ref{express112}) of $\partial_s^\iota\widehat{H}_k(s,\eta,\xi)$ and (\ref{es20251012}), we have
\begin{align}
&\Vert\mathcal{F}^{-1}\Big(\psi_2(s,\eta,\xi)
\vert\xi\vert^{1-\iota}\partial_s^\iota\widehat{H}_k(s,\eta,\xi)\hat{f}(\xi)\Big)\Vert_{L^{q^\prime}}\lesssim s^{-\frac{n-1}{2}(\frac{1}{q}-\frac{1}{q^\prime})}
\eta^{k-1-\frac{n+1}{2}(\frac{1}{q}-\frac{1}{q^\prime})}\Vert f\Vert_{L^q}.\label{lplqescase3}
\end{align}

Consequently, we conclude from  (\ref{lplqescase3}) that
\begin{align*}
&\Vert\mathcal{F}^{-1}\Big(\psi_2(s,\eta,\xi)\widehat{\nabla\mathcal{U}}(s,\xi)\Big)\Vert_{L^{q^\prime}}=
\Vert\mathcal{F}^{-1}\Big(\psi_2(s,\eta,\xi)\xi\big( \widehat{H}_0(s,\eta,\xi)\widehat{\mathcal{U}}_0(\xi)+\widehat{H}_1(s,\eta,\xi)
\widehat{\mathcal{U}}_1(\xi)\big)\Vert_{L^{q^\prime}}\\
&\lesssim s^{-\frac{n-1}{2}(\frac{1}{q}-\frac{1}{q^\prime})}\eta^{-1-\frac{n+1}{2}(\frac{1}{q}-\frac{1}{q^\prime})}\Big(\Vert\mathcal{U}_0\Vert_{L^q}+
\eta\Vert\mathcal{U}_1\Vert_{L^q}\Big)
\end{align*}
and
\begin{equation*}
\begin{aligned}
&\Vert\mathcal{F}^{-1}\Big(\psi_2(s,\eta,\xi)\partial_s\widehat{\mathcal{U}}(s,\xi)\Big)\Vert_{L^{q^\prime}}=
\Vert\mathcal{F}^{-1}\Big(\psi_2(s,\eta,\xi)\big( \partial_s\widehat{H}_0(s,\eta,\xi)\widehat{\mathcal{U}}_0(\xi)+\partial_s\widehat{H}_1(s,\eta,\xi)
\widehat{\mathcal{U}}_1(\xi)\big)\Vert_{L^{q^\prime}}\\
&\lesssim s^{-\frac{n-1}{2}(\frac{1}{q}-\frac{1}{q^\prime})}\eta^{-1-\frac{n+1}{2}(\frac{1}{q}-\frac{1}{q^\prime})}\Big(\Vert\mathcal{U}_0\Vert_{L^q}+
\eta\Vert\mathcal{U}_1\Vert_{L^q}\Big)
\end{aligned}
\end{equation*}

According to the representation (\ref{masseqsolution}) of $\mathcal{U}(s,\xi)$, the factor $\vert\xi\vert^{1-k}$ appearing in the operators (\ref{case3op1})-(\ref{case3op4}) is no longer applicable and thus needs to be modified. Consequently, in what follows, the estimate will again be illustrated using
$$\mathcal{F}^{-1}\Big(e^{-is\vert\xi\vert}\widetilde{c}_{11}(\eta\vert\xi\vert)\widetilde{b}_{22}(s\vert\xi\vert)  \psi_2(s,\eta,\xi)
\phi_j(s,\xi)\vert\xi\vert^{-k}\hat{f}(\xi)\Big).$$
Let $\gamma_1>0$ be a nonnegative number to be determined. Repeating the proof of (\ref{l1linftycase3}), it follows that
\begin{align}
&\Vert\mathcal{F}^{-1}\Big(e^{-is\vert\xi\vert}\widetilde{c}_{11}(\eta\vert\xi\vert)\widetilde{b}_{22}(s\vert\xi\vert)  \psi_2(s,\eta,\xi)
\phi_j(s,\xi)\vert\xi\vert^{-k}\hat{f}(\xi)\Big)\Vert_{L^\infty}\notag\\
&\lesssim \Vert\mathcal{F}^{-1}\Big( e^{-is\vert\xi\vert}\phi_j(s,\xi)(1-\psi(\frac{s\vert\xi\vert}{K}))\vert\xi\vert^{-\gamma_1-\text{Re}\rho}
\tilde{b}_{22}(s\vert\xi\vert)\Big)\Vert_{L^\infty} \Vert\mathcal{F}^{-1}\Big(\vert\xi\vert^{\gamma_1-k+\text{Re}\rho}\tilde{c}_{11}(\eta\vert\xi\vert)
\psi(\frac{\eta\vert\xi\vert}{K})\Big)\Vert_{L^1}\Vert f\Vert_{L^1}\notag\\
&\lesssim 2^{j(n-\gamma_1-\text{Re}\rho)}s^{-n+\gamma_1+\text{Re}\rho}(1+2^{j}K)^{-\frac{n-1}{2}}\eta^{k-\text{Re}\rho-\gamma_1}\Vert f\Vert_{L^1}\notag\\
&\lesssim 2^{j(\frac{n+1}{2}-\gamma_1-\text{Re}\rho)}s^{-n+\gamma_1+\text{Re}\rho}\eta^{k-\text{Re}\rho-\gamma_1}\Vert f\Vert_{L^1}.\label{riese1}
\end{align}
Repeating the argument of (\ref{l2l2linftycase3}) yields
\begin{align}
&\Vert\mathcal{F}^{-1}\Big(e^{-is\vert\xi\vert}\widetilde{c}_{11}(\eta\vert\xi\vert)\widetilde{b}_{22}(s\vert\xi\vert)  \psi_2(s,\eta,\xi)
\phi_j(s,\xi)\vert\xi\vert^{-k}\hat{f}(\xi)\Big)\Vert_{L^2}%\notag\\
%&\lesssim \Vert\big(\varphi(2^{-j}\frac{s\vert\xi\vert}{K})-\varphi(2^{-j+1}\frac{s\vert\xi\vert}{K})\big)(\eta\vert\xi\vert)^{-\text{Re}\rho+k-\gamma_1}
%\vert\xi\vert^{-k}(1-\psi(\frac{s\vert\xi\vert}{K}))\tilde{b}_{22}(s\vert\xi\vert)\Vert_{L^\infty}\notag\\
%&\quad\quad\quad\quad\quad\quad\times\Vert(\eta\vert\xi\vert)^{\text{Re}\rho-k+\gamma_1}\psi(\frac{\eta\vert\xi\vert}{K})
%\tilde{c}_{11}(\eta\vert\xi\vert)\Vert_{L^\infty}\Vert f\Vert_{L^2}\notag\\
\lesssim 2^{j(-\text{Re}\rho-\gamma_1)}s^{\text{Re}\rho+\gamma_1}\eta^{-\text{Re}\rho+k-\gamma_1}\Vert f\Vert_{L^2}.\label{riese2}
\end{align}

By applying the Riesz-Thorin interpolation theorem to (\ref{riese1}) and (\ref{riese2}), we  deduce that
\begin{equation*}
\begin{aligned}
&\Vert\mathcal{F}^{-1}\Big(e^{-is\vert\xi\vert}\widetilde{c}_{11}(\eta\vert\xi\vert)\widetilde{b}_{22}(s\vert\xi\vert)  \psi_2(s,\eta,\xi)
\phi_j(s,\xi)\vert\xi\vert^{-k}\hat{f}(\xi)\Big)\Vert_{L^{q^\prime}}\lesssim  2^{j(\frac{n+1}{2}(\frac{1}{q}-\frac{1}{q^\prime})-\text{Re}\rho-\gamma_1)}s^{\gamma_1+\text{Re}\rho-n(\frac{1}{q}
-\frac{1}{q^\prime})}\eta^{k-\text{Re}\rho-\gamma_1}.
\end{aligned}
\end{equation*}
Since $\frac{1}{q}-\frac{1}{q^\prime}\in[0,1)$  for $q\in(1,2]$ and $\text{Re}\rho\in[\frac{1}{2},1)$, which prevents us from ensuring the nonnegativity of $\frac{n+1}{2}(\frac{1}{q}-\frac{1}{q^\prime})-\text{Re}\rho$.
For a sufficiently small $\epsilon>0$, we accordingly select
$
\gamma_1\doteq\max\{\chi_\epsilon,\frac{n+1}{2}(\frac{1}{q}-\frac{1}{q^\prime})-\text{Re}\rho\},
$
where $\chi_\epsilon$ is defined by Theorem \ref{mainresult}. Hence we have
\begin{equation*}
\begin{aligned}
&\Vert\mathcal{F}^{-1}\Big(e^{-is\vert\xi\vert}\widetilde{c}_{11}(\eta\vert\xi\vert)\widetilde{b}_{22}(s\vert\xi\vert)  \psi_2(s,\eta,\xi)
\phi_j(s,\xi)\vert\xi\vert^{-k}\hat{f}(\xi)\Big)\Vert_{L^{q^\prime}}\\
&\lesssim  s^{\max\{\text{Re}\rho-n(\frac{1}{q}-\frac{1}{q^\prime})+\chi_\epsilon,-\frac{n-1}{2}(\frac{1}{q}-\frac{1}{q^\prime})\}}\eta^{\min\{ k-\text{Re}\rho-\chi_\epsilon,k-\frac{n+1}{2}(\frac{1}{q}-\frac{1}{q^\prime})\}}\Vert f\Vert_{L^q}.
\end{aligned}
\end{equation*}
Then follows from Proposition \ref{littlewoodpaleythm} that
\begin{equation*}
\begin{aligned}
&\Vert\mathcal{F}^{-1}\Big(e^{-is\vert\xi\vert}\widetilde{c}_{11}(\eta\vert\xi\vert)\widetilde{b}_{22}(s\vert\xi\vert)  \psi_2(s,\eta,\xi)
\vert\xi\vert^{-k}\hat{f}(\xi)\Big)\Vert_{L^{q^\prime}}\\
&\lesssim  s^{\max\{\text{Re}\rho-n(\frac{1}{q}-\frac{1}{q^\prime})+\chi_\epsilon,-\frac{n-1}{2}(\frac{1}{q}-\frac{1}{q^\prime})\}}\eta^{\min\{ k-\text{Re}\rho-\chi_\epsilon,k-\frac{n+1}{2}(\frac{1}{q}-\frac{1}{q^\prime})\}}\Vert f\Vert_{L^q},
\end{aligned}
\end{equation*}
which gives
\begin{equation*}
\begin{aligned}
&\Vert\mathcal{F}^{-1}\Big(\vert\xi\vert^{-\iota}\partial_s^\iota\widehat{H}_k(s,\eta,\xi)\hat{f}(\xi)\Big)\Vert_{L^{q^\prime}}\lesssim  s^{\max\{\text{Re}\rho-n(\frac{1}{q}-\frac{1}{q^\prime})+\chi_\epsilon,-\frac{n-1}{2}(\frac{1}{q}-\frac{1}{q^\prime})\}}\eta^{\min\{ k-\text{Re}\rho-\chi_\epsilon,k-\frac{n+1}{2}(\frac{1}{q}-\frac{1}{q^\prime})\}}\Vert f\Vert_{L^q}.
\end{aligned}
\end{equation*}
Consequently,
\begin{equation*}
\begin{aligned}
&\Vert\mathcal{F}^{-1}\Big(\psi_2(s,\eta,\xi)\widehat{\mathcal{U}}(s,\xi)\Big)\Vert_{L^{q^\prime}}=
\Vert\mathcal{F}^{-1}\Big(\psi_2(s,\eta,\xi)\big(\widehat{H}_0(s,\eta,\xi)\widehat{\mathcal{U}}_0(\xi)+
\widehat{H}_1(s,\eta,\xi)\widehat{\mathcal{U}}_1(\xi)\big)\Big)\Vert_{L^{q^\prime}}\\
&\lesssim s^{\max\{\text{Re}\rho-n(\frac{1}{q}-\frac{1}{q^\prime})+\chi_\epsilon,-\frac{n-1}{2}(\frac{1}{q}-\frac{1}{q^\prime})\}}\eta^{\min\{ -\text{Re}\rho-\chi_\epsilon,-\frac{n+1}{2}(\frac{1}{q}-\frac{1}{q^\prime})\}}\big(\Vert\mathcal{U}_0\Vert_{L^q}+
\eta\Vert\mathcal{U}_1\Vert_{L^q}\big).
\end{aligned}
\end{equation*}

\end{proof}

\subsection{The proof of Theorems \ref{propuwithtau=1}-\ref{estimateu0=0}} \label{section3.4}
To control the linear part of the solution to (\ref{eq}), it is necessary to derive estimates for the solution to (\ref{lineareq}) at the initial time $\tau=1$.  In order to apply the Duhamel's principle to control the nonlinear part of the solution to (\ref{eq}),  the  estimates for the solution to  (\ref{lineareq}) with
$u_0=0$ are also required.  Correspondingly, based on the estimates for the solution $\mathcal{U}(s,x)$ to (\ref{masseq}) established in Sections \ref{propositionlow}-\ref{propositionmedium}, we first derive the  estimates for the solution to (\ref{masseq}) at the initial time $\eta=\frac{1}{m+1}$.
\begin{proposition}\label{propmathcalu}
Let  $\delta<\min\{(m+1)^2,1\}$  and  $\mathcal{U}(s,x)$ be the solution to (\ref{masseq}) with $\eta=\frac{1}{m+1}$,  and the initial data $(\mathcal{U}_0,\mathcal{U}_1)\in H^{r+1}_q\times H^r_q$. Then for any $s\geq\frac{1}{m+1}$ we have
\begin{align*}
&\Vert\mathcal{U}(s,\cdot)\Vert_{L^{q^\prime}}\lesssim a_0(s)(1+\log(m+1)s)^{\chi_{\delta}}\Big(\Vert\mathcal{U}_0\Vert_{H^r_q}+\Vert\mathcal{U}_1\Vert_{H^{[r-1]_+}_q}\Big),\\
&\Vert D\mathcal{U}(s,\cdot)\Vert_{L^{q^\prime}}\lesssim a_1(s)(1+\log(m+1)s)^{\chi_{\delta}}\Big(\Vert\mathcal{U}_0\Vert_{H^{r+1}_q}+\Vert\mathcal{U}_1\Vert_{H^{r}_q}\Big),
\end{align*}
where $1<q\leq2$ with $\frac{1}{q}+\frac{1}{q^\prime}=1$,  $r=n(\frac{1}{q}-\frac{1}{q^\prime})\geq0$, and for sufficiently small $\epsilon>0$,
\begin{equation*}
\begin{aligned}
a_0(s)&=s^{\max\{\text{Re}\rho-n(\frac{1}{q}-\frac{1}{q^\prime})+\chi_\epsilon,-\frac{n-1}{2}(\frac{1}{q}-\frac{1}{q^\prime})\}},\\
a_1(s)&=s^{-\frac{n-1}{2}(\frac{1}{q}-\frac{1}{q^\prime})}.
\end{aligned}
\end{equation*}
Here $\rho, \chi_\delta,\chi_\epsilon$ are defined by Lemma \ref{sulutionfou}, Theorem \ref{propuwithtau=1} and Theorem \ref{mainresult},  respectively.
\end{proposition}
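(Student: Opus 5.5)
The plan is to specialize the three regional estimates (Propositions \ref{lowfre}, \ref{highfrequencyestimate}, \ref{propmedium}) to $\eta=\frac{1}{m+1}$ and add them, using $\psi_1+\psi_2+\psi_3\equiv1$. With $\eta$ fixed, every power of $\eta$ and the factors $\eta\Vert\mathcal{U}_1\Vert_{L^q}$, $\eta^{1-r}\Vert\mathcal{U}_1\Vert_{L^q}$ become harmless constants, and $1+\log\frac{s}{\eta}=1+\log(m+1)s$. It then remains to compare the $s$-exponents of the three regions and show that each is dominated by $a_0(s)$ or $a_1(s)$ for $s\ge\frac{1}{m+1}$.

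\emph{Derivative estimate.} Write $\sigma=\frac{1}{q}-\frac{1}{q^\prime}\in[0,1)$, so $r=n\sigma$.

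1. The low-frequency part contributes $s^{\text{Re}\rho-1-n\sigma}(1+\log(m+1)s)^{\chi_\delta}\big(\Vert\mathcal{U}_0\Vert_{L^q}+\Vert\mathcal{U}_1\Vert_{L^q}\big)$.
2. The medium-frequency part contributes $s^{-\frac{n-1}{2}\sigma}\big(\Vert\mathcal{U}_0\Vert_{L^q}+\Vert\mathcal{U}_1\Vert_{L^q}\big)$.
3. The high-frequency part contributes $s^{-\frac{n-1}{2}\sigma}\big(\Vert\mathcal{U}_0\Vert_{\dot H^{r+1}_q}+\Vert\mathcal{U}_1\Vert_{\dot H^{r}_q}\big)$.

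Since $\text{Re}\rho<1$ and $n\sigma\ge\frac{n-1}{2}\sigma$, we have $\text{Re}\rho-1-n\sigma\le-\frac{n-1}{2}\sigma$, so the low-frequency exponent is dominated by $a_1(s)$. The homogeneous norms are controlled by the inhomogeneous ones: $\Vert f\Vert_{L^q}+\Vert f\Vert_{\dot H^{r}_q}\lesssim\Vert f\Vert_{H^r_q}$, since $|\xi|^r\langle\xi\rangle^{-r}$ is a Mikhlin multiplier for $1<q\le2$. Adding the three parts gives the $D\mathcal{U}$ bound.

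\emph{Estimate for $\mathcal{U}$.}

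1. The low-frequency exponent is $\text{Re}\rho-n\sigma$.
2. The medium-frequency exponent is $\max\{\text{Re}\rho-n\sigma+\chi_\epsilon,-\frac{n-1}{2}\sigma\}$, which is exactly the exponent of $a_0$.
3. The high-frequency exponent is $-\frac{n-1}{2}\sigma$.

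All three are $\le$ the exponent of $a_0$. The data norms depend on $r$:
\begin{itemize}
\item For $r\ge1$, the high-frequency bound involves $\Vert\mathcal{U}_1\Vert_{\dot H^{r-1}_q}\lesssim\Vert\mathcal{U}_1\Vert_{H^{r-1}_q}$.
\item For $r<1$, use the last bound of Proposition \ref{highfrequencyestimate}, which involves $\eta^{1-r}\Vert\mathcal{U}_1\Vert_{L^q}$; this matches $H^{[r-1]_+}_q=L^q$.
\end{itemize}
The $\mathcal{U}_0$ term is handled by $\Vert\mathcal{U}_0\Vert_{\dot H^r_q}\lesssim\Vert\mathcal{U}_0\Vert_{H^r_q}$.

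\emph{Main obstacle.} The delicate point is the log factor. In the low-frequency region it appears as $(1+\log(m+1)s)^{\chi_\delta}$ with exponent $\text{Re}\rho-n\sigma$. When $\delta=0$ this is absorbed by $a_0$: since $\chi_\epsilon=\epsilon$, $a_0$ has exponent at least $\text{Re}\rho-n\sigma+\epsilon$, and $\log s\lesssim s^\epsilon$. The factor is in any case retained in the statement, so it is harmless.

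One must also confirm that the medium- and high-frequency estimates are valid uniformly at $\eta=\frac{1}{m+1}$, which they are since those propositions hold for all $s\ge\eta$. No new analysis is needed; the proof is bookkeeping of exponents.
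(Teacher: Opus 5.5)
Your proposal is correct and takes the same approach as the paper. You specialize the low-, medium- and high-frequency estimates (Propositions \ref{lowfre}, \ref{propmedium}, \ref{highfrequencyestimate}) to $\eta=\frac{1}{m+1}$ and sum them via $\psi_1+\psi_2+\psi_3\equiv1$, dominating the low-frequency exponent $\text{Re}\rho-1-n(\frac{1}{q}-\frac{1}{q^\prime})$ by $-\frac{n-1}{2}(\frac{1}{q}-\frac{1}{q^\prime})$ using $\text{Re}\rho<1$ and $s\geq\eta$; you also write out the $\mathcal{U}$ estimate and the homogeneous-to-inhomogeneous norm comparison, which the paper only calls ``analogous.''
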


%\begin{equation*}
%a_0(s)=\left\{
%\begin{aligned}
%&s^{\max\{\frac{1}{2}+\frac{\sqrt{\delta}}{2(m+1)}-n(\frac{1}{p}-\frac{1}{q}),-\frac{n-1}{2}(\frac{1}{p}-\frac{1}{q})\}},&\quad&0<\delta<\max\{(m+1)^2,1\}\\
%&s^{\max\{\frac{1}{2}+\epsilon-n(\frac{1}{p}-\frac{1}{q}),-\frac{n-1}{2}(\frac{1}{p}-\frac{1}{q})\}},&\quad&\delta=0,\\
%&s^{\max\{\frac{1}{2}-n(\frac{1}{p}-\frac{1}{q}),-\frac{n-1}{2}(\frac{1}{p}-\frac{1}{q})\}},&\quad&\delta<0.
%\end{aligned}
%\right.
%\end{equation*}

\begin{proof}
The estimate for $\Vert D\mathcal{U}(s,\cdot)\Vert_{L^{q^\prime}}$  is  presented as an illustration here,  and the estimate for $\mathcal{U}(s,\cdot)$ can be derived analogously.  Note $\text{Re}\rho\in[\frac{1}{2},1)$,  we have
\begin{equation*}
\begin{aligned}
&s^{\text{Re}\rho-1-n(\frac{1}{q}-\frac{1}{q^\prime})}\eta^{-\text{Re}\rho}=s^{-\frac{n-1}{2}
(\frac{1}{q}-\frac{1}{q^\prime})}s^{\text{Re}\rho-1-\frac{n+1}{2}(\frac{1}{q}-\frac{1}{q^\prime})}\eta^{-\text{Re}\rho}\\
&\leq s^{-\frac{n-1}{2}(\frac{1}{q}-\frac{1}{q^\prime})}\eta^{-1-\frac{n+1}{2}(\frac{1}{q}-\frac{1}{q^\prime})}=
(m+1)^{1+\frac{n+1}{2}(\frac{1}{q}-\frac{1}{q^\prime})}s^{-\frac{n-1}{2}(\frac{1}{q}-\frac{1}{q^\prime})}
\end{aligned}
\end{equation*}
and $
s^{\text{Re}\rho-1-n(\frac{1}{p}-\frac{1}{q})}\eta^{1-\text{Re}\rho}\leq
(m+1)^{\frac{n+1}{2}(\frac{1}{p}-\frac{1}{q})}s^{-\frac{n-1}{2}(\frac{1}{p}-\frac{1}{q})}.
$
Then from Propositions \ref{lowfre}-\ref{propmedium},
\begin{equation*}
\begin{aligned}
&\Vert D\mathcal{U}(s,\cdot)\Vert_{L^{q^\prime}}\\
&\leq \Vert\mathcal{F}^{-1}\Big(\psi_1(s,\eta,\xi)\widehat{D\mathcal{U}}(s,\xi)\Big)\Vert_{L^{q^\prime}}
+\Vert\mathcal{F}^{-1}\Big(\psi_2(s,\eta,\xi)\widehat{D\mathcal{U}}(s,\xi)\Big)\Vert_{L^{q^\prime}}
+\Vert\mathcal{F}^{-1}\Big(\psi_3(s,\eta,\xi)\widehat{D\mathcal{U}}(s,\xi)\Big)\Vert_{L^{q^\prime}}\\
&\lesssim s^{\text{Re}\rho-1-n(\frac{1}{q}-\frac{1}{q'})}\eta^{-\text{Re}\rho}(1+\log(m+1)s)^{\chi_{\delta}}\Big(\Vert\mathcal{U}_0\Vert_{L^q}
+\eta\Vert\mathcal{U}_1\Vert_{L^q}\Big)\\
&\ +s^{-\frac{n-1}{2}(\frac{1}{q}-\frac{1}{q^\prime})}\eta^{\frac{n-1}{2}(\frac{1}{q}-\frac{1}{q^\prime})}
\Big(\Vert\mathcal{U}_0\Vert_{\dot{H}^{r+1}_q}
+\eta\Vert\mathcal{U}_1\Vert_{\dot{H}^{r}_q}\Big)+ s^{-\frac{n-1}{2}(\frac{1}{q}-\frac{1}{q^\prime})}\eta^{-1-\frac{n+1}{2}(\frac{1}{q}-\frac{1}{q^\prime})}
\Big(\Vert\mathcal{U}_0\Vert_{L^q}
+\eta\Vert\mathcal{U}_1\Vert_{L^q}\Big)\\
&\lesssim s^{-\frac{n-1}{2}(\frac{1}{q}-\frac{1}{q^\prime})} (1+\log(m+1)s)^{\chi_{\delta}} \Big(\Vert\mathcal{U}_0\Vert_{H^{r+1}_q}+
\Vert\mathcal{U}_1\Vert_{H^{r}_q}\Big).
\end{aligned}
\end{equation*}
\end{proof}

\begin{proposition}\label{propositionu0=0}
Let  $\delta<\min\{(m+1)^2,1\}$  and  $\mathcal{U}(s,x)$ be the solution to (\ref{masseq}),   the initial data $\mathcal{U}_0=0$ and $\mathcal{U}_1\in H^{r}_q$, where $r=n(\frac{1}{q}-\frac{1}{q^\prime})$ with $1<q\leq 2$ and $\frac{1}{q}+\frac{1}{q^\prime}=1$. Then for any $s\geq\eta\geq\frac{1}{m+1}$ we have
\begin{align*}
&\Vert D\mathcal{U}(s,\cdot)\Vert_{L^{q^\prime}}\lesssim a_1(s)a_1^{-1}(\eta)(1+\log\frac{s}{\eta})^{\chi_\delta}\Big(\Vert\mathcal{U}_1\Vert_{\dot{H}^r_q}+
\eta^{-r}\Vert\mathcal{U}_1\Vert_{L^q}\Big)
\end{align*}
and
\begin{equation*}
\Vert\mathcal{U}(s,\cdot)\Vert_{L^{q^\prime}}\lesssim\left\{
\begin{aligned}
&a_0(s)a_0^{-1}(\eta)(1+\log\frac{s}{\eta})^{\chi_\delta}\Big(\Vert\mathcal{U}_1\Vert_{\dot{H}^{r-1}_q}+
\eta^{1-r}\Vert\mathcal{U}_1\Vert_{L^q}\Big), &\quad&\text{if}\ r\geq1,\\
&a_0(s)a_0^{-1}(\eta)(1+\log\frac{s}{\eta})^{\chi_\delta}
\eta^{1-r}\Vert\mathcal{U}_1\Vert_{L^q},&\quad&\text{if}\ 0\leq r\leq1,
\end{aligned}
\right.
\end{equation*}
where $a_0(s),a_1(s),\chi_\delta$ are the same as Proposition \ref{propmathcalu}.
\end{proposition}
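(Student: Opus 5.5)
Set $\mathcal{U}_0=0$ in the representation (\ref{masseqsolution}), so that $\mathcal{U}=\mathcal{F}^{-1}(\widehat{H}_1\widehat{\mathcal{U}}_1)$. Split with $\psi_1+\psi_2+\psi_3=1$ and add the three estimates of Propositions \ref{lowfre}, \ref{highfrequencyestimate}, \ref{propmedium} with $\mathcal{U}_0=0$, now keeping $\eta\geq\frac{1}{m+1}$ arbitrary. The work is then to show that each of the three resulting $(s,\eta)$-weights is bounded by $a_j(s)a_j^{-1}(\eta)$, with the correct power of $\eta$ attached to the data norm.

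\emph{Gradient/time derivative.} Write $\theta=\frac{1}{q}-\frac{1}{q^\prime}$, so $r=n\theta$.
\begin{itemize}
\item The high-frequency part gives $(s/\eta)^{-\frac{n-1}{2}\theta}\Vert\mathcal{U}_1\Vert_{\dot H^r_q}$, which is exactly $a_1(s)a_1^{-1}(\eta)\Vert\mathcal{U}_1\Vert_{\dot H^r_q}$.
\item The medium part gives $s^{-\frac{n-1}{2}\theta}\eta^{-\frac{n+1}{2}\theta}\Vert\mathcal{U}_1\Vert_{L^q}$. This equals $a_1(s)a_1^{-1}(\eta)\,\eta^{-n\theta}\Vert\mathcal{U}_1\Vert_{L^q}$, i.e.\ the $\eta^{-r}$ term.
\item The low part gives $s^{\mathrm{Re}\rho-1-n\theta}\eta^{1-\mathrm{Re}\rho}(1+\log\frac{s}{\eta})^{\chi_\delta}$. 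Writing it as $(s/\eta)^{-\frac{n-1}{2}\theta}\eta^{-n\theta}\cdot(s/\eta)^{\mathrm{Re}\rho-1-\frac{n+1}{2}\theta}$, the last factor is $\leq1$ because $s\geq\eta$ and $\mathrm{Re}\rho<1$. So it is absorbed in the same term.
\end{itemize}

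\emph{The solution itself, case $r\geq1$.}
\begin{itemize}
\item The high part is $(s/\eta)^{-\frac{n-1}{2}\theta}\Vert\mathcal{U}_1\Vert_{\dot H^{r-1}_q}$. Since $a_0(s)/a_0(\eta)\geq(s/\eta)^{-\frac{n-1}{2}\theta}$ (the exponent of $a_0$ is a max containing $-\frac{n-1}{2}\theta$), this is controlled.
\item The medium part is $s^{e_0}\eta^{\min\{-\mathrm{Re}\rho-\chi_\epsilon,\,-\frac{n+1}{2}\theta\}}\cdot\eta\Vert\mathcal{U}_1\Vert_{L^q}$, where $e_0$ is the exponent of $a_0$. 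It is $\leq a_0(s)a_0^{-1}(\eta)\eta^{1-r}\Vert\mathcal{U}_1\Vert_{L^q}$ once we check $-e_0-n\theta\geq\min\{-\mathrm{Re}\rho-\chi_\epsilon,-\frac{n+1}{2}\theta\}$.
 \begin{itemize}
 \item If the max defining $e_0$ is attained at $\mathrm{Re}\rho-n\theta+\chi_\epsilon$, then $-e_0-n\theta=-\mathrm{Re}\rho-\chi_\epsilon$.
 \item Otherwise $-e_0-n\theta=-\frac{n+1}{2}\theta$.
 \end{itemize}
 Either way the minimum is below it, and equality in the exponents holds case by case.
\item The low part is $s^{\mathrm{Re}\rho-n\theta}\eta^{1-\mathrm{Re}\rho}\log$. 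This is $\leq a_0(s)a_0^{-1}(\eta)\eta^{1-r}$ provided $(s/\eta)^{\mathrm{Re}\rho-n\theta}\leq(s/\eta)^{e_0}$, which holds since $e_0\geq\mathrm{Re}\rho-n\theta$.
\end{itemize}

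\emph{Case $0\leq r<1$.} Use instead the last estimate of Proposition \ref{highfrequencyestimate}, which gives $(s/\eta)^{-\frac{n-1}{2}\theta}\eta^{1-r}\Vert\mathcal{U}_1\Vert_{L^q}$ directly. Together with the medium and low parts, all terms carry only the $\eta^{1-r}\Vert\mathcal{U}_1\Vert_{L^q}$ norm.

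The main obstacle is bookkeeping, not analysis: one must ensure the low- and medium-frequency weights, which have asymmetric $s$ and $\eta$ exponents, factor as a ratio $a_j(s)/a_j(\eta)$ times a pure power of $\eta$. This works only because $s\geq\eta$ allows discarding factors $(s/\eta)^{\text{negative}}$. I would verify the two branches of the max in $a_0$ separately, and treat the $\delta=0$ $\epsilon$-loss as included in $\chi_\epsilon$. The logarithmic factor only arises in the low part and passes through unchanged.
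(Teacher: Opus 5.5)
Your proposal is correct and follows the paper's proof. With $\mathcal{U}_0=0$ you sum the low-, medium- and high-frequency bounds of Propositions \ref{lowfre}, \ref{propmedium} and \ref{highfrequencyestimate}, using the $\eta^{1-r}$ variant of the high-frequency estimate when $r<1$, and absorb each weight into $a_j(s)a_j^{-1}(\eta)$ times a power of $\eta$ via $s\geq\eta$; the paper does exactly this for $0<r<1$ and leaves the remaining cases to the reader. In the medium-frequency term, note that it is the exact equality of the $\eta$-exponents that you need, not merely the inequality with the minimum, because $\eta\geq\frac{1}{m+1}$ may be smaller than $1$.
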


\begin{proof}
We restrict attention to the case $0<r<1$, for which the estimate of $\Vert\mathcal{U}(s,\cdot)\Vert_{L^{q^\prime}}$ is derived. The remaining cases can be obtained in a similar manner. Note $r=n(\frac{1}{q}-\frac{1}{q^\prime})$, we have
\begin{equation*}
\begin{aligned}
s^{\text{Re}\rho-n(\frac{1}{q}-\frac{1}{q^\prime})}\eta^{1-\text{Re}\rho}\leq a_0(s)a_0^{-1}(\eta)\eta^{1-r}%\\
%\iff& s^{\text{Re}\rho-n(\frac{1}{q}-\frac{1}{q^\prime})}\eta^{1-\text{Re}\rho} \leq s^{\max\{\text{Re}\rho-n(\frac{1}{q}-\frac{1}{q^\prime}),-\frac{n-1}{2}(\frac{1}{q}-\frac{1}{q^\prime})\}}\eta^{1-r+\min\{-\text{Re}\rho+
%n(\frac{1}{q}-\frac{1}{q^\prime}),\frac{n-1}{2}(\frac{1}{q}-\frac{1}{q^\prime})\}}\\
\iff \eta^{\max\{\chi_\epsilon,-\text{Re}\rho+\frac{n+1}{2}(\frac{1}{q}-\frac{1}{q^\prime})\}}\leq s^{\max\{\chi_\epsilon,-\text{Re}\rho+\frac{n+1}{2}(\frac{1}{q}-\frac{1}{q^\prime})\}},
\end{aligned}
\end{equation*}
\begin{equation*}
\begin{aligned}
s^{-\frac{n-1}{2}(\frac{1}{q}-\frac{1}{q^\prime})}\eta^{\frac{n-1}{2}(\frac{1}{q}-\frac{1}{q^\prime})}\leq a_0(s)a_0^{-1}(\eta)
\iff \eta^{\max\{0,\text{Re}\rho-\frac{n+1}{2}(\frac{1}{q}-\frac{1}{q^\prime})+\chi_\epsilon\}}\leq
s^{\max\{0,\text{Re}\rho-\frac{n+1}{2}(\frac{1}{q}-\frac{1}{q^\prime})+\chi_\epsilon\}}
\end{aligned}
\end{equation*}
and
\begin{equation*}
\begin{aligned}
&s^{\max\{\text{Re}\rho-n(\frac{1}{q}-\frac{1}{q^\prime})+\chi_\epsilon,-\frac{n-1}{2}(\frac{1}{q}-\frac{1}{q^\prime})\}}
\eta^{1+\min\{-\text{Re}\rho-\chi_\epsilon,-\frac{n+1}{2}(\frac{1}{q}-\frac{1}{q^\prime})\}}
=a_0(s)a_0^{-1}(\eta)\eta^{1-r}.
\end{aligned}
\end{equation*}
Since $s\geq\eta$, the above inequality follows immediately. Then  by Propositions \ref{lowfre}-\ref{propmedium} we have
\begin{equation*}
\begin{aligned}
&\Vert\mathcal{U}(s,\cdot)\Vert_{L^{q^\prime}}\\
&\leq \Vert\mathcal{F}^{-1}\Big(\psi_1(s,\eta,\xi)\widehat{\mathcal{U}}(s,\xi)\Big)\Vert_{L^{q^\prime}}+
\Vert\mathcal{F}^{-1}\Big(\psi_2(s,\eta,\xi)\widehat{\mathcal{U}}(s,\xi)\Big)\Vert_{L^{q^\prime}}+
\Vert\mathcal{F}^{-1}\Big(\psi_3(s,\eta,\xi)\widehat{\mathcal{U}}(s,\xi)\Big)\Vert_{L^{q^\prime}}\\
&\lesssim s^{\text{Re}\rho-n(\frac{1}{q}-\frac{1}{q^\prime})}\eta^{1-\text{Re}\rho}(1+\log \frac{s}{\eta})^{\chi_\delta}\Vert\mathcal{U}_1\Vert_{L^q}+
s^{-\frac{n-1}{2}(\frac{1}{q}-\frac{1}{q^\prime})}\eta^{\frac{n-1}{2}(\frac{1}{q}-\frac{1}{q^\prime})+1-r}
\Vert\mathcal{U}_1\Vert_{L^q}\\
&\quad\quad+s^{\max\{\text{Re}\rho-n(\frac{1}{q}-\frac{1}{q^\prime})+\chi_\epsilon,-\frac{n-1}{2}(\frac{1}{q}-\frac{1}{q^\prime})\}}
\eta^{1+\min\{-\text{Re}\rho-\chi_\epsilon,-\frac{n+1}{2}(\frac{1}{q}-\frac{1}{q^\prime})\}}
\Vert\mathcal{U}_1\Vert_{L^q}\\
&\lesssim a_0(s)a_0^{-1}(\eta)(1+\log\frac{s}{\eta})\eta^{1-r}\Vert\mathcal{U}_1\Vert_{L^q}.
\end{aligned}
\end{equation*}

\end{proof}

As the closing part of this section, and relying on Propositions  \ref{propmathcalu} and \ref{propositionu0=0}, Theorems \ref{propuwithtau=1} and \ref{estimateu0=0} are derived through successive applications of the variable transformations  (\ref{sub11}), (\ref{sub21}), and (\ref{sub31}).

\begin{proof}[Proof of Theorem \ref{propuwithtau=1}]
Substituting
\begin{align*}
&v(s,x)=s^{-\frac{m}{2(m+1)}}\mathcal{U}(s,x),\ \ \nabla v(s,x)=s^{-\frac{m}{2(m+1)}}\nabla\mathcal{U}(s,x),\\
&\partial_sv(s,x)=-\frac{m}{2(m+1)}s^{-\frac{m}{2(m+1)}-1}\mathcal{U}(s,x)+s^{-\frac{m}{2(m+1)}}\partial_s\mathcal{U}(s,x)
\end{align*}
and
\begin{align*}
\mathcal{U}_0(x)&=\eta^{\frac{m}{2(m+1)}}v_0(x)=(m+1)^{-\frac{m}{2(m+1)}}v_0(x),\\
\mathcal{U}_1(x)&=\frac{m}{2(m+1)}
\eta^{\frac{m}{2(m+1)}-1}v_0(x)+\eta^{\frac{m}{2(m+1)}}v_1(x)\\
&=\frac{m}{2(m+1)}(m+1)^{1-\frac{m}{2(m+1)}}v_0(x)+(m+1)^{-\frac{m}{2(m+1)}}v_1(x)
\end{align*}
into Proposition \ref{propmathcalu}, we conclude that
\begin{align*}
\Vert v(s,\cdot)\Vert_{L^{q^\prime}}&\lesssim s^{-\frac{m}{2(m+1)}}a_0(s)(1+\log(m+1)s)^{\chi_\delta}\Big(\Vert v_0\Vert_{H^{r}_q}+\Vert v_1\Vert_{H^{[r-1]_+}_q}\Big),\\
\Vert \nabla v(s,\cdot)\Vert_{L^{q^\prime}}&\lesssim s^{-\frac{m}{2(m+1)}}a_1(s)(1+\log(m+1)s)^{\chi_\delta}\Big(\Vert v_0\Vert_{H^{r+1}_q}+\Vert v_1\Vert_{H^{r}_q}\Big),\\
\Vert \partial_sv(s,\cdot)\Vert_{L^{q^\prime}}&\lesssim s^{-\frac{m}{2(m+1)}-1}a_0(s)(1+\log(m+1)s)^{\chi_\delta}\Big(\Vert v_0\Vert_{H^{r}_q}+\Vert v_1\Vert_{H^{[r-1]_+}_q}\Big)\\
&\quad+ s^{-\frac{m}{2(m+1)}}a_1(s)(1+\log(m+1)s)^{\chi_\delta}\Big(\Vert v_0\Vert_{H^{r+1}_q}+\Vert v_1\Vert_{H^{r}_q}\Big).
\end{align*}
Note $s\geq\eta=\frac{1}{m+1}$ and $\text{Re}\rho<1$,  we can choose a  sufficiently small $\epsilon>0$ such that $-1+\text{Re}\rho-\frac{n+1}{2}(\frac{1}{q}-\frac{1}{q'})+\chi_\epsilon>0$, then   it follows that
\begin{align*}
s^{-1}a_0(s)&=s^{\max\{-1+\text{Re}\rho-n(\frac{1}{q}-\frac{1}{q^\prime})+\chi_\epsilon,
-1-\frac{n-1}{2}(\frac{1}{q}-\frac{1}{q^\prime})\}}=s^{-\frac{n-1}{2}(\frac{1}{q}-\frac{1}{q^\prime})}s^{\max\{-1+\text{Re}\rho-\frac{n+1}{2}(\frac{1}{q}-\frac{1}{q^\prime})+
\chi_\epsilon,-1\}}\\
&\leq (m+1)^{\min\{1-\text{Re}\rho+\frac{n+1}{2}(\frac{1}{q}-\frac{1}{q^\prime})-
\chi_\epsilon,1\}}s^{-\frac{n-1}{2}(\frac{1}{q}-\frac{1}{q^\prime})}.
\end{align*}
Hence,
\begin{equation}\label{transvw}
\begin{aligned}
\Vert v(s,\cdot)\Vert_{L^{q^\prime}}&\lesssim b_0(s)(1+\log(m+1)s)^{\chi_\delta}\Big(\Vert v_0\Vert_{H^{r}_q}+\Vert v_1\Vert_{H^{[r-1]_+}_q}\Big),\\
\Vert D v(s,\cdot)\Vert_{L^{q^\prime}}&\lesssim  b_1(s)(1+\log(m+1)s)^{\chi_\delta}\Big(\Vert v_0\Vert_{H^{r+1}_q}+\Vert v_1\Vert_{H^{r}_q}\Big),
\end{aligned}
\end{equation}
where $b_0(s)=s^{-\frac{m}{2(m+1)}}a_0(s), b_1(s)=s^{-\frac{m}{2(m+1)}}a_1(s)$.

When $\eta=\frac{1}{m+1}$, we have  $v_0(x)=w_0(x)$, $v_1(x)=w_1(x)$ and
\begin{align*}
&\partial_sv(s,x)=\big((m+1)s\big)^{-\frac{m}{m+1}}\partial_sw\big(((m+1)s)^{\frac{1}{m+1}},x\big),\ \ \nabla v(s,x)=\nabla w\big(((m+1)s)^{\frac{1}{m+1}},x\big).
\end{align*}
From (\ref{transvw}),  we get
\begin{equation*}
\begin{aligned}
&\Vert w\big(((m+1)s)^{\frac{1}{m+1}},\cdot\big)\Vert_{L^{q^\prime}}\lesssim b_0(s)(1+\log(m+1)s)^{\chi_\delta}\big(\Vert w_0\Vert_{H^r_q}+\Vert w_1\Vert_{H^{[r-1]_+}_q}\big),\\
&\Vert \nabla w\big(((m+1)s)^{\frac{1}{m+1}},\cdot\big)\Vert_{L^{q^\prime}}\lesssim b_1(s)(1+\log(m+1)s)^{\chi_\delta}\big(\Vert w_0\Vert_{H^{r+1}_q}+\Vert w_1\Vert_{H^r_q}\big),\\
&\Vert \partial_s w\big(((m+1)s)^{\frac{1}{m+1}},\cdot\big)\Vert_{L^{q^\prime}}\lesssim s^{\frac{m}{m+1}}b_1(s)(1+\log(m+1)s)^{\chi_\delta}\big(\Vert w_0\Vert_{H^{r+1}_q}+\Vert w_1\Vert_{H^r_q}\big).
\end{aligned}
\end{equation*}
Since  $t=((m+1)s)^{\frac{1}{m+1}}$,  it follows that
\begin{align}
&\Vert w(t,\cdot)\Vert_{L^{q^\prime}}\lesssim t^{-\frac{m}{2}}\widetilde{b}_0(t)(1+\log t )^{\chi_\delta}\big(\Vert w_0\Vert_{H^r_q}+\Vert w_1\Vert_{H^{[r-1]_+}_q}\big),\label{transforwu1}\\
&\Vert \nabla w(t,\cdot)\Vert_{L^{q^\prime}}\lesssim t^{-\frac{m}{2}}\widetilde{b}_1(t)(1+\log t)^{\chi_\delta}\big(\Vert w_0\Vert_{H^{r+1}_q}+\Vert w_1\Vert_{H^r_q}\big),\label{transforwu2}\\
&\Vert \partial_tw(t,\cdot)\Vert_{L^{q^\prime}}\lesssim t^{\frac{m}{2}}\widetilde{b}_1(t)(1+\log t)^{\chi_\delta}\big(\Vert w_0\Vert_{H^{r+1}_q}+\Vert w_1\Vert_{H^r_q}\big),\label{transforwu3}
\end{align}
where
\begin{equation}\label{transforwu}
\begin{aligned}
&\widetilde{b}_0(t)=t^{\max\{(m+1)\text{Re}\rho-(m+1)n(\frac{1}{q}-\frac{1}{q^\prime})+\chi_\epsilon,
-\frac{(m+1)(n-1)}{2}(\frac{1}{q}-\frac{1}{q^\prime})\}},\\
&\widetilde{b}_1(t)=t^{-\frac{(m+1)(n-1)}{2}(\frac{1}{q}-\frac{1}{q^\prime})}.
\end{aligned}
\end{equation}

To conclude, from
\begin{align*}
&u(t,x)=t^{-\frac{\mu}{2}}w(t,x), \ \ \nabla u(t,x)=t^{-\frac{\mu}{2}}w(t,x),\\
&\partial_tu(t,x)=-\frac{\mu}{2}t^{-\frac{\mu}{2}-1}w(t,x)+t^{-\frac{\mu}{2}}\partial_tw(t,x),\\
&w_0(x)=u_0(x), \ \ w_1(x)=u_1(x)+\frac{\mu}{2}u_0(x)
\end{align*}
and (\ref{transforwu1})-(\ref{transforwu3}), we have
\begin{align*}
&\Vert u(t,\cdot)\Vert_{L^{q^\prime}}\lesssim t^{-\frac{\mu+m}{2}}\widetilde{b}_0(t)(1+\log t)^{\chi_\delta}\big(\Vert u_0\Vert_{H^{r}_q}+\Vert u_1\Vert_{H^{[r-1]_+}_q}\big)\\
&\Vert \nabla u(t,\cdot)\Vert_{L^{q^\prime}}\lesssim t^{-\frac{\mu+m}{2}}\widetilde{b}_1(t)(1+\log t)^{\chi_\delta}\big(\Vert u_0\Vert_{H^{r+1}_q}+\Vert u_1\Vert_{H^{r}_q}\big),\\
&\Vert \partial_t u(t,\cdot)\Vert_{L^{q^\prime}}\lesssim t^{-\frac{\mu+m}{2}-1}\widetilde{b}_0(t)(1+\log t)^{\chi_\delta}\big(\Vert u_0\Vert_{H^{r}_q}+\Vert u_1\Vert_{H^{[r-1]_+}_q}\big)\\
&\quad\quad\quad\quad\quad\quad\quad+t^{-\frac{\mu-m}{2}}\widetilde{b}_1(t)(1+\log t)^{\chi_\delta}\big(\Vert u_0\Vert_{H^{r+1}_q}+\Vert u_1\Vert_{H^{r}_q}\big)\\
&\quad\ \quad\quad\quad\quad\lesssim t^{-\frac{\mu-m}{2}}\widetilde{b}_1(t)(1+\log t)^{\chi_\delta}\big(\Vert u_0\Vert_{H^{r+1}_q}+\Vert u_1\Vert_{H^{r}_q}\big),
\end{align*}
where in the last inequality we use the fact that $t^{-1-\frac{m}{2}}\widetilde{b}_0(t)\leq t^{\frac{m}{2}}\widetilde{b}_1(t)$. In fact, in view of $\text{Re}\rho<1$, $\max\{(m+1)(\text{Re}\rho-1)-\frac{(m+1)(n+1)}{2}(\frac{1}{q}-\frac{1}{q'})+\chi_\epsilon,-(m+1)\}<0$ holds for any $m>-1$ and sufficiently small $\epsilon$,  then
\begin{align*}
&t^{\max\{(m+1)(\text{Re}\rho-1)-\frac{(m+1)(n+1)}{2}(\frac{1}{q}-\frac{1}{q'})+\chi_\epsilon,-(m+1)\}}\leq1\\
\implies&t^{\max\{(m+1)\text{Re}\rho-(m+1)n(\frac{1}{p}-\frac{1}{q})+\chi_\epsilon,
-\frac{(m+1)(n-1)}{2}(\frac{1}{q}-\frac{1}{q'})\}}\leq t^{m+1-\frac{(m+1)(n-1)}{2}(\frac{1}{q}-\frac{1}{q'})}\\
\implies&t^{-1-\frac{m}{2}}\tilde{b}_0(t)\leq t^{\frac{m}{2}}\tilde{b}_1(t).
\end{align*}

Therefore, the proof of Theorem   \ref{propuwithtau=1} is concluded.
\end{proof}

\begin{proof}[Proof of Theorem \ref{estimateu0=0}]
With the notations introduced in the proof of Theorem \ref{propuwithtau=1}, we begin by stating  a fact: $b_0(s)b_0^{-1}(\eta)s^{-1}\eta\leq b_1(s)b_1^{-1}(\eta)$. In fact,
\begin{align*}
&b_0(s)b_0^{-1}(\eta)s^{-1}\eta\leq b_1(s)b_1^{-1}(\eta)\iff a_0(s)a_1^{-1}(s)s^{-1}\leq a_0(\eta)a_1^{-1}(\eta)s\eta^{-1}\\
\iff&s^{\max\{\text{Re}\rho-1-\frac{n+1}{2}(\frac{1}{q}-\frac{1}{q'})+\chi_\epsilon,-1\}}\leq \eta^{\max\{\text{Re}\rho-1-\frac{n+1}{2}(\frac{1}{q}-\frac{1}{q'})+\chi_\epsilon,-1\}},
\end{align*}
the last inequality holds as $ \max\{\text{Re}\rho-1-\frac{n+1}{2}(\frac{1}{p}-\frac{1}{q})+\chi_\epsilon,-1\}<0$ under the condition $\text{Re}\rho<1$ and for sufficiently small $\epsilon$. When $r\geq1$, by  substituting
\begin{align*}
&v(s,x)=s^{-\frac{m}{2(m+1)}}\mathcal{U}(s,x),\\
&\nabla v(s,x)=s^{-\frac{m}{2(m+1)}}\nabla\mathcal{U}(s,x),\\
&\partial_sv(s,x)=-\frac{m}{2(m+1)}s^{-\frac{m}{2(m+1)}-1}\mathcal{U}(s,x)+s^{-\frac{m}{2(m+1)}}\partial_s\mathcal{U}(s,x)
\end{align*}
and
$\mathcal{U}_1(x)=\eta^{\frac{m}{2(m+1)}}v_1(x)$
into Proposition \ref{propositionu0=0},  we have
\begin{align*}
\Vert\partial_sv(s,\cdot)\Vert_{L^{q^\prime}}&\lesssim b_0(s)b_0^{-1}(\eta)s^{-1}\eta(1+\log\frac{s}{\eta})^{\chi_\delta}\big(\eta^{-1}\Vert v_1\Vert_{\dot{H}^{r-1}_q}+\eta^{-r}\Vert v_1\Vert_{L^q}\big)\\
&\quad\quad+ b_1(s)b_1^{-1}(\eta)(1+\log\frac{s}{\eta})^{\chi_\delta}\big(\Vert v_1\Vert_{\dot{H}^{r}_q}+\eta^{-r}\Vert v_1\Vert_{L^q}\big)\\
&\lesssim b_1(s)b_1^{-1}(\eta)(1+\log\frac{s}{\eta})^{\chi_\delta}\big(\Vert v_1\Vert_{\dot{H}^{r}_q}+\eta^{-1}\Vert v_1\Vert_{\dot{H}^{r-1}_q}+\eta^{-r}\Vert v_1\Vert_{L^q}\big)
\end{align*}
and similarly,
\begin{align*}
\Vert\partial_sv(s,\cdot)\Vert_{L^{q^\prime}}&\lesssim b_1(s)b_1^{-1}(\eta)(1+\log\frac{s}{\eta})^{\chi_\delta}\big(
\Vert v_1\Vert_{\dot{H}^{r}_q}+\eta^{-r}\Vert v_1\Vert_{L^q}\big),&\quad&\text{if}\ 0\leq r<1,\\
\Vert v(s,\cdot)\Vert_{L^{q^\prime}}&\lesssim b_0(s)b^{-1}_0(\eta)(1+\log\frac{s}{\eta})^{\chi_\delta}\big(\Vert v_1\Vert_{\dot{H}^{r-1}_q}+\eta^{1-r}\Vert v_1\Vert_{L^q}\big),&\quad& \text{if}\ r\geq1,\\
\Vert v(s,\cdot)\Vert_{L^{q^\prime}}&\lesssim b_0(s)b^{-1}_0(\eta)(1+\log\frac{s}{\eta})^{\chi_\delta}\eta^{1-r}\Vert v_1\Vert_{L^q},&\quad& \text{if}\ 0\leq r<1,\\
\Vert\nabla v(s,\cdot)\Vert_{L^{q^\prime}}&\lesssim b_1(s)b_1^{-1}(\eta)(1+\log\frac{s}{\eta})^{\chi_\delta}\big(
\Vert v_1\Vert_{\dot{H}^{r}_q}+\eta^{-r}\Vert v_1\Vert_{L^q}\big),&\quad& \text{for any}\ r\geq1.
\end{align*}
In addition, by interpolation theorem (Proposition \ref{interpolationtheorem}) and Young's inequality, for $r\geq1$ we have
\begin{align*}
\eta^{-1}\Vert v_1\Vert_{\dot{H}^{r-1}_q}&\lesssim\eta^{-1}\Vert v_1\Vert_{L^q}^{\frac{1}{r}}\Vert v_1\Vert_{\dot{H}^r_q}^{1-\frac{1}{r}}=\big(\eta^{-r}\Vert v_1\Vert_{L^q}\big)^{\frac{1}{r}}\Vert v_1\Vert_{\dot{H}^r_q}^{1-\frac{1}{r}}\\
&\leq \frac{1}{r}\big(\eta^{-r}\Vert v_1\Vert_{L^q}\big)^{\frac{1}{r}r}+\frac{r-1}{r}\big(\Vert v_1\Vert_{\dot{H}^r_q}^{1-\frac{1}{r}}\big)^{\frac{r}{r-1}}\lesssim \Vert v_1\Vert_{\dot{H}^r_q}+\eta^{-r}\Vert v_1\Vert_{L^q},
\end{align*}
which implies that
\begin{align*}
\Vert\partial_sv(s,\cdot)\Vert_{L^{q^\prime}}\lesssim b_1(s)b_1^{-1}(\eta)(1+\log\frac{s}{\eta})^{\chi_\delta}\big(\Vert v_1\Vert_{\dot{H}^{r}_q}+\eta^{-r}\Vert v_1\Vert_{L^q}\big)
\end{align*}
holds for any $r\geq0.$  By performing the scaling transformation   (\ref{sub21}) and $v_1(x)=((m+1)\eta)^{-\frac{m}{m+1}}w_1(x)$, we arrive at
\begin{align*}
&\Vert\partial_tw(t,\cdot)\Vert_{L^{q^\prime}}\lesssim t^{\frac{m}{2}}\widetilde{b}_1(t)\widetilde{b}_1^{-1}(\tau)\tau^{-\frac{m}{2}}(1+\log\frac{t}{\tau})^{\chi_\delta}
\big(\Vert w_1\Vert_{\dot{H}^r_q}+\tau^{-(m+1)r}\Vert w_1\Vert_{L^q}\big),\  \text{for any}\ r\geq0,\\
&\Vert\nabla w(t,\cdot)\Vert_{L^{q^\prime}}\lesssim t^{-\frac{m}{2}}\widetilde{b}_1(t)\widetilde{b}_1^{-1}(\tau)\tau^{-\frac{m}{2}}(1+\log\frac{t}{\tau})^{\chi_\delta}
\big(\Vert w_1\Vert_{\dot{H}^r_q}+\tau^{-(m+1)r}\Vert w_1\Vert_{L^q}\big),\  \text{for any}\ r\geq0,\\
&\Vert w(t,\cdot)\Vert_{L^{q^\prime}}\lesssim t^{-\frac{m}{2}}\widetilde{b}_0(t)\widetilde{b}_0^{-1}(\tau)(1+\log\frac{t}{\tau})^{\chi_\delta}\tau^{-\frac{m}{2}}\big(\Vert w_1\Vert_{\dot{H}^{r-1}_q}+\tau^{(m+1)(1-r)}\Vert w_1\Vert_{L^q}\big),\ \text{if}\ r\geq1,\\
&\Vert w(t,\cdot)\Vert_{L^{q^\prime}}\lesssim t^{-\frac{m}{2}}\widetilde{b}_0(t)\widetilde{b}_0^{-1}(\tau)
(1+\log\frac{t}{\tau})^{\chi_\delta}\tau^{-\frac{m}{2}+(m+1)(1-r)}\Vert w_1\Vert_{L^q},\ \text{if}\ 0\leq r<1,
\end{align*}
where $\widetilde{b}_0(t), \widetilde{b}_1(t)$ are defined by  (\ref{transforwu}).

Under the condition $\text{Re}\rho<1$ and for sufficiently small $\epsilon>0$, we have
\begin{align*}
&t^{\max\{(m+1)(\text{Re}\rho-1)-\frac{(m+1)(n-1)}{2}(\frac{1}{q}-\frac{1}{q'})+\chi_\epsilon,-(m+1)\}}\leq \tau^{\max\{(m+1)(\text{Re}\rho-1)-\frac{(m+1)(n+1)}{2}(\frac{1}{q}-\frac{1}{q'})+\chi_\epsilon,-(m+1)\}}\\
&\iff t^{-(m+1)}\widetilde{b}_0(t)\widetilde{b}_1^{-1}(t)\leq\tau^{-(m+1)}\widetilde{b}_0(\tau)\widetilde{b}_1^{-1}(\tau)
\iff t^{-\frac{m}{2}-1}\widetilde{b}_0(t)\widetilde{b}^{-1}_0(\tau)\tau^{1+\frac{m}{2}}\leq t^{\frac{m}{2}}\widetilde{b}_1(t)\widetilde{b}^{-1}_1(\tau)\tau^{-\frac{m}{2}}.
\end{align*}
It follows from
$u(t,x)=t^{-\frac{\mu}{2}}w(t,x)$, $\partial_tu(t,x)=-\frac{\mu}{2}t^{-\frac{\mu}{2}-1}w(t,x)+t^{-\frac{\mu}{2}}\partial_tw(t,x)$,
$w_1(x)=\tau^{\frac{\mu}{2}}u_1(x)$ and $r\geq1$ that
\begin{align*}
&\Vert\partial_tu(t,\cdot)\Vert_{L^{q^\prime}}\lesssim t^{-\frac{\mu}{2}-1}\Vert w(t,\cdot)\Vert_{L^{q^\prime}}+t^{-\frac{\mu}{2}}\Vert\partial_tw(t,\cdot)\Vert_{L^{q^\prime}}\\
&\lesssim t^{-1-\frac{\mu+m}{2}}\widetilde{b}_0(t)\widetilde{b}^{-1}_0(\tau)\tau^{\frac{\mu-m}{2}}(1+\log\frac{t}{\tau})^{\chi_\delta}
\big(\Vert u_1\Vert_{\dot{H}^{r-1}_q}+\tau^{(m+1)(1-r)}\Vert u_1\Vert_{L^q}\big)\\
&\quad+t^{\frac{m-\mu}{2}}\widetilde{b}_1(t)\widetilde{b}^{-1}_1(\tau)\tau^{\frac{\mu-m}{2}}(1+\log\frac{t}{\tau})^{\chi_\delta}
\big(\Vert u_1\Vert_{\dot{H}^r_q}+\tau^{-(m+1)r}\Vert u_1\Vert_{L^q}\big)\\
&\lesssim t^{\frac{m-\mu}{2}}\widetilde{b}_1(t)\widetilde{b}_1^{-1}(\tau)\tau^{\frac{\mu-m}{2}}(1+\log\frac{t}{\tau})^{\chi_\delta}\big(
\Vert u_1\Vert_{\dot{H}^{r}_q}+\tau^{-(m+1)}\Vert u_1\Vert_{\dot{H}^{r-1}_q}+\tau^{-(m+1)r}\Vert u_1\Vert_{L^q}\big).
\end{align*}
Similarly,
\begin{align*}
&\Vert\partial_tu(t,\cdot)\Vert_{L^{q^\prime}}\lesssim t^{\frac{m-\mu}{2}}\widetilde{b}_1(t)\widetilde{b}_1^{-1}(\tau)\tau^{\frac{\mu-m}{2}}(1+\log\frac{t}{\tau})^{\chi_\delta}\big(
\Vert u_1\Vert_{\dot{H}^{r}_q}+\tau^{-(m+1)r}\Vert u_1\Vert_{L^q}\big),&\ &\text{if}\ 0\leq r<1, \\
&\Vert\nabla u(t,\cdot)\Vert_{L^{q^\prime}}\lesssim t^{-\frac{\mu+m}{2}}\widetilde{b}_1(t)\widetilde{b}^{-1}_1(\tau)\tau^{\frac{\mu-m}{2}}(1+\log\frac{t}{\tau})^{\chi_\delta}
\big(\Vert u_1\Vert_{\dot{H}^{r}_q}+\tau^{-(m+1)r}\Vert u_1\Vert_{L^q}\big),&\ &\text{for any}\ r\geq0,\\
&\Vert u(t,\cdot)\Vert_{L^{q^\prime}}\lesssim t^{-\frac{\mu+m}{2}}\widetilde{b}_0(t)\widetilde{b}_0^{-1}(\tau)\tau^{\frac{\mu-m}{2}}(1+\log\frac{t}{\tau})^{\chi_\delta}
\big(\Vert u_1\Vert_{\dot{H}^{r-1}_q}+\tau^{(m+1)(1-r)}\Vert u_1\Vert_{L^q}\big)&\ &\text{if}\ r\geq1,\\
&\Vert u(t,\cdot)\Vert_{L^{q^\prime}}\lesssim t^{-\frac{\mu+m}{2}}\widetilde{b}_0(t)\widetilde{b}_0^{-1}(\tau)\tau^{\frac{\mu-m}{2}}(1+\log\frac{t}{\tau})^{\chi_\delta}
\tau^{(m+1)(1-r)}\Vert u_1\Vert_{L^q},&\ &\text{if}\ 0\leq r<1.
\end{align*}
A further application of the interpolation theorem and Young's inequality yields
\begin{align*}
\tau^{-(m+1)}\Vert u_1\Vert_{\dot{H}^{r-1}_q}&\lesssim \tau^{-(m+1)}\Vert u_1\Vert_{L^q}^{\frac{1}{r}}
\Vert u_1\Vert_{\dot{H}^r_q}^{1-\frac{1}{r}}=\Vert u_1\Vert_{\dot{H}^r_q}^{1-\frac{1}{r}}\big(\tau^{-(m+1)r}\Vert u_1\Vert_{L^q}\big)^{\frac{1}{r}}\\
&\lesssim \frac{r-1}{r}\Vert u_1\Vert_{\dot{H}^r_q}+\frac{1}{r}\tau^{-(m+1)r}\Vert u_1\Vert_{L^q},
\end{align*}
which gives
\begin{align*}
&\Vert\partial_tu(t,\cdot)\Vert_{L^{q^\prime}}\lesssim t^{\frac{m-\mu}{2}}\widetilde{b}_1(t)\widetilde{b}_1^{-1}(\tau)\tau^{\frac{\mu-m}{2}}(1+\log\frac{t}{\tau})^{\chi_\delta}\big(
\Vert u_1\Vert_{\dot{H}^{r}_q}+\tau^{-(m+1)r}\Vert u_1\Vert_{L^q}\big)
\end{align*}
holds for any $r\geq0$.

Thus the proof of Theorem \ref{estimateu0=0} is now complete.
\end{proof}

\section{Global existence of solutions to semilinear  EPDT equation}\label{sectionproofmainresult}

Relying on the two estimates for the linear EPDT  equation, namely, Theorems \ref{propuwithtau=1}, \ref{estimateu0=0}, the global existence of solutions to (\ref{eq}) is established through the use of the Duhamel's principle and the contraction mapping principle. In view of the fact that Theorem \ref{mainresult} addresses the case
$\delta\leq0$, we make use of the following two corollaries, which are immediate consequences of Theorems \ref{propuwithtau=1}, \ref{estimateu0=0}.

\begin{corollary}\label{energyspacethm1}
Let us consider $\mu, \nu^2\geq0$  such that $\delta=(\mu-1)^2-4\nu^2\leq0$.  Let $u$  be the solution to the linear equation (\ref{lineareq}) with $\tau=1$,  the initial data $(u_0,u_1)\in H^{r_\kappa+1}_{1+\frac{1}{\kappa}}\times H^{r_\kappa}_{1+\frac{1}{\kappa}}$ for some $\kappa\geq1$,  and $r_\kappa=n(1-\frac{2}{\kappa+1})$. Then $u$ satisfies
\begin{align*}
&\Vert \nabla u(t,\cdot)\Vert_{L^2}\lesssim t^{-\frac{m+\mu}{2}}(1+\log t)^{\chi_\delta}\big(\Vert u_0\Vert_{H^1}+\Vert u_1\Vert_{L^2}\big),\\
&\Vert\partial_tu(t,\cdot)\Vert_{L^2}\lesssim t^{\frac{m-\mu}{2}}(1+\log t)^{\chi_\delta}\big(\Vert u_0\Vert_{H^1}+\Vert u_1\Vert_{L^2}\big)\\
&\Vert u(t,\cdot)\Vert_{L^{\kappa+1}}\lesssim t^{-\frac{m+\mu}{2}}A_\kappa(t)(1+\log t)^{\chi_\delta}
\big(\Vert u_0\Vert_{H^{r_\kappa}_{1+\frac{1}{\kappa}}}+\Vert u_1\Vert_{H^{[r_\kappa-1]_+}_{1+\frac{1}{\kappa}}}\big),
\end{align*}
where $A_\kappa(t)$ is defined by  Theorem \ref{mainresult}.

\end{corollary}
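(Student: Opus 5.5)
The plan is to specialize Theorem \ref{propuwithtau=1} to two choices of $q$, using $\delta\le 0$ so that $\mathrm{Re}\rho=\frac12$ and $(m+1)\mathrm{Re}\rho=\frac{m+1}{2}$.

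\textbf{Energy estimates.} Take $q=q'=2$. Then $\frac1q-\frac1{q'}=0$, hence $r=0$ and $B_1(t)=t^{-\frac{\mu}{2}}$. Inserting this into the gradient and time-derivative estimates of Theorem \ref{propuwithtau=1} gives
\[
\Vert\nabla u(t,\cdot)\Vert_{L^2}\lesssim t^{-\frac{m+\mu}{2}}(1+\log t)^{\chi_\delta}\big(\Vert u_0\Vert_{H^1}+\Vert u_1\Vert_{L^2}\big),
\]
together with the analogous bound for $\partial_tu$ carrying the factor $t^{\frac{m-\mu}{2}}$. Here we identify $H^1_2=H^1$ and $H^0_2=L^2$. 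The case $q=2$ lies at the endpoint $1<q\le 2$ that the theorem allows, so no extra argument is needed.

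\textbf{$L^{\kappa+1}$ estimate.} Take $q=1+\frac1\kappa=\frac{\kappa+1}{\kappa}$, so that $q'=\kappa+1$. The conjugate relation then gives
\[
\frac1q-\frac1{q'}=\frac{\kappa-1}{\kappa+1}=1-\frac{2}{\kappa+1},
\]
and therefore $r=r_\kappa$. For $\kappa=1$ this reduces to the $q=2$ case; for $\kappa>1$ we have $1<q<2$, which is admissible.

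The first estimate of Theorem \ref{propuwithtau=1} reads
\[
\Vert u(t,\cdot)\Vert_{L^{\kappa+1}}\lesssim t^{-\frac m2}B_0(t)(1+\log t)^{\chi_\delta}\big(\Vert u_0\Vert_{H^{r_\kappa}_{1+\frac1\kappa}}+\Vert u_1\Vert_{H^{[r_\kappa-1]_+}_{1+\frac1\kappa}}\big).
\]
We then write out $B_0$ in its two branches:
\begin{itemize}
\item for $\delta<0$, $B_0(t)=t^{-\frac\mu2}\,t^{\max\{\frac{m+1}{2}-(m+1)n(1-\frac2{\kappa+1}),\,-\frac{(m+1)(n-1)}{2}(1-\frac2{\kappa+1})\}}$;
\item for $\delta=0$, the same expression with $+\epsilon$ added to the first entry of the maximum.
\end{itemize}
In both cases this is exactly $t^{-\frac\mu2}A_\kappa(t)$, where $A_\kappa$ is $A_p$ from Theorem \ref{mainresult} with $p$ replaced by $\kappa$, and $\chi_\epsilon$ matches $\chi_\delta$. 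Consequently $t^{-\frac m2}B_0(t)=t^{-\frac{m+\mu}{2}}A_\kappa(t)$, which is the claimed estimate.

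\textbf{Main obstacle.} The only step requiring care is the bookkeeping in the $L^{\kappa+1}$ estimate. Specifically, one must check that:
\begin{itemize}
\item the $\delta=0$ branch of $B_0$, including its $\epsilon$ term, lines up with $\chi_\epsilon$;
\item the hypothesis $\delta<\min\{(m+1)^2,1\}$ of Theorem \ref{propuwithtau=1} is implied by $\delta\le0$;
\item the identification $H^0_q=L^q$ is valid when $r_\kappa<1$, since then $[r_\kappa-1]_+=0$.
\end{itemize}
These are direct verifications, and no new analysis is needed.
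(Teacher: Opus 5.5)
Your proposal is correct and follows the paper's route. The paper also treats this corollary as an immediate specialization of Theorem \ref{propuwithtau=1}: it takes $q=2$ (so $r=0$ and $B_1(t)=t^{-\frac{\mu}{2}}$) for the two energy bounds, and $q=1+\frac{1}{\kappa}$, $q'=\kappa+1$ (so $r=r_\kappa$, and $t^{-\frac{m}{2}}B_0(t)=t^{-\frac{m+\mu}{2}}A_\kappa(t)$ because $\delta\le 0$) for the $L^{\kappa+1}$ bound.
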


\begin{corollary}\label{energyspacethm2}
Let us consider $\mu, \nu^2\geq0$  such that $\delta=(\mu-1)^2-4\nu^2\leq0$.  Let $u$  be the solution to the linear equation (\ref{lineareq}), the initial data $u_0=0$, $u_1\in L^{1+\frac{1}{\kappa}}$ for some $\kappa\geq1$.  Assume $r_k=n(1-\frac{2}{\kappa+1})\leq1$, then
$u$ satisfies
\begin{align*}
&\Vert \nabla u(t,\cdot)\Vert_{L^2}\lesssim t^{-\frac{m+\mu}{2}}\tau^{\frac{\mu-m}{2}}(1+\log\frac{t}{\tau})^{\chi_\delta}\Vert u_1\Vert_{L^2},\\
&\Vert\partial_tu(t,\cdot)\Vert_{L^2}\lesssim t^{\frac{m-\mu}{2}}\tau^{\frac{\mu-m}{2}}(1+\log\frac{t}{\tau})^{\chi_\delta}\Vert u_1\Vert_{L^2}\\
&\Vert u(t,\cdot)\Vert_{L^{\kappa+1}}\lesssim t^{-\frac{m+\mu}{2}}\tau^{\frac{\mu-m}{2}}A_\kappa(t)A_\kappa^{-1}(\tau)
\tau^{(m+1)(1-r_\kappa)}(1+\log\frac{t}{\tau})^{\chi_\delta}\Vert u_1\Vert_{L^{1+\frac{1}{\kappa}}},
\end{align*}
where $A_\kappa(t)$ is defined by  Theorem \ref{mainresult}.
\end{corollary}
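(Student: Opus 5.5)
We obtain Corollary \ref{energyspacethm2} by specializing Theorem \ref{estimateu0=0} to two choices of $q$, much as Corollary \ref{energyspacethm1} follows from Theorem \ref{propuwithtau=1}. Since $\delta\le 0$, Lemma \ref{sulutionfou} gives $\text{Re}\rho=\frac12$, so the $\delta$-dependent branches of $B_0$ collapse to the exponents that appear in $A_\kappa$, with $\chi_\epsilon$ accounting for the $\epsilon$-loss when $\delta=0$.

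\emph{Energy estimates.} Take $q=q'=2$, so $r=0$ and $\frac1q-\frac1{q'}=0$, which gives $B_1(t)=t^{-\mu/2}$. The $\nabla u$ and $\partial_tu$ bounds of Theorem \ref{estimateu0=0} then read
\[
t^{\mp\frac m2}\tau^{-\frac m2}\,t^{-\frac\mu2}\tau^{\frac\mu2}\,(1+\log\tfrac t\tau)^{\chi_\delta}\bigl(\Vert u_1\Vert_{\dot H^0_2}+\Vert u_1\Vert_{L^2}\bigr).
\]
This is exactly $t^{-\frac{m+\mu}{2}}\tau^{\frac{\mu-m}{2}}$, respectively $t^{\frac{m-\mu}{2}}\tau^{\frac{\mu-m}{2}}$, times $\Vert u_1\Vert_{L^2}$, as claimed. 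The hypothesis $u_1\in L^{1+\frac1\kappa}$ should be read together with $u_1\in L^2$ here, as in the space $\mathcal A_\kappa$.

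\emph{$L^{\kappa+1}$ estimate.} Take $q=1+\frac1\kappa$, so $q'=\kappa+1$ and $\frac1q-\frac1{q'}=1-\frac{2}{\kappa+1}$. Then $r=r_\kappa$, and by assumption $r_\kappa\le 1$. If $r_\kappa<1$ we use the ``$0\le r<1$'' branch of Theorem \ref{estimateu0=0}. If $r_\kappa=1$, the $r\ge1$ branch has $\dot H^{0}_q=L^q$ and $\tau^{(m+1)(1-r)}=1$, so it yields the same bound up to a constant. In both cases
\[
\Vert u(t)\Vert_{L^{\kappa+1}}\lesssim t^{-\frac m2}\tau^{-\frac m2}B_0(t)B_0^{-1}(\tau)(1+\log\tfrac t\tau)^{\chi_\delta}\tau^{(m+1)(1-r_\kappa)}\Vert u_1\Vert_{L^q}.
\]
With $\text{Re}\rho=\frac12$ we have $B_0(t)=t^{-\mu/2}A_\kappa(t)$, where $A_\kappa$ is $A_p$ from Theorem \ref{mainresult} with $p$ replaced by $\kappa$. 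Substituting gives $t^{-\frac{m+\mu}{2}}\tau^{\frac{\mu-m}{2}}A_\kappa(t)A_\kappa^{-1}(\tau)\tau^{(m+1)(1-r_\kappa)}$.

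The only step needing real care is matching the exponent of $B_0$ with that of $A_\kappa$. For $\delta<0$ the inner maximum in $B_0$ is $\frac{m+1}{2}-(m+1)n(1-\frac2{\kappa+1})$ against $-\frac{(m+1)(n-1)}{2}(1-\frac2{\kappa+1})$, which is exactly $A_\kappa$ with $\chi_\epsilon=0$. For $\delta=0$ the extra $\epsilon$ in $B_0$ corresponds to the term $+\chi_\epsilon$ in $A_\kappa$ (coming from $(m+1)\chi_\epsilon$ in $\widetilde b_0$, after renaming $\epsilon$). Once this identification is checked, the proof is complete.
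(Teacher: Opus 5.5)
Your proposal is correct and takes the route the paper intends. The paper gives no separate proof; it calls the corollary an immediate consequence of Theorem \ref{estimateu0=0}. Your specialization supplies exactly that argument: $q=2$ (so $r=0$ and $B_1(t)=t^{-\mu/2}$) for the energy bounds, and $q=1+\frac{1}{\kappa}$ (so $r=r_\kappa\le 1$ and $B_0(t)=t^{-\mu/2}A_\kappa(t)$) for the $L^{\kappa+1}$ bound. Your treatment of the endpoint $r_\kappa=1$ is sound, and so is your remark that the energy estimates tacitly require $u_1\in L^2$.
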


For any $T>1$, we introduce the function
\begin{align*}
\mathcal{X}(T):=C\big([1,T];H^1\cap L^{p+1}\big)\cap C^1\big([1,T]; L^2\big)
\end{align*}
equipped  with the norm
\begin{equation}\label{normmathcalx}
\begin{aligned}
&\Vert u(t,\cdot)\Vert_{\mathcal{X}(T)}=\sup\limits_{t\in[1,T]}\Big\{t^{\frac{\mu}{2}}(1+\log t)^{-\chi_\delta}\Big(
t^{\frac{m}{2}}\Vert\nabla u(t,\cdot)\Vert_{L^2}+t^{-\frac{m}{2}}\Vert\partial_tu(t,\cdot)\Vert_{L^2}\\
&\quad\quad\quad\quad\quad\quad\quad+t^{\frac{m}{2}}A_p(t)^{-1}\Vert u(t,\cdot)\Vert_{L^{p+1}}+t^{-\frac{m}{2}-1}\Vert u(t,\cdot)\Vert_{L^2}\Big)\Big\},
\end{aligned}
\end{equation}
where $A_p(t)$ is defined by Theorem \ref{mainresult}.
Based on the integral representation of the solution to (\ref{eq}) given by  (\ref{expressionultx}), we define the operator $\mathcal{N}$ as
\begin{equation}\label{mathcalmudef}
\mathcal{N}u(t,x)=u^l(t,x)+u^n(t,x),
\end{equation}
where $u^l(t,x)$ and $u^n(t,x)$ are defined by (\ref{ultxdef}) and (\ref{untxdef}), respectively.
Consider the subset of $\mathcal{X}(T)$
\begin{equation*}
X(T,M)=\big\{u\in\mathcal{X}(T): \Vert u\Vert_{\mathcal{X}(T)}\leq M\big\},
\end{equation*}
then we have

\begin{proposition}\label{keypropositionmass}
Under the conditions of Theorem \ref{mainresult}, there exists a $C>0$ such that for any $T>1$ and any $u,\tilde{u}\in\mathcal{X}(T)$,
\begin{align}
&\Vert \mathcal{N}u\Vert_{\mathcal{X}(T)}\leq C\big(\Vert (u_0,u_1)\Vert_{\mathcal{A}_p}+\Vert u\Vert_{\mathcal{X}(T)}^p\big),\label{prokey1}\\
&\Vert \mathcal{N}u-\mathcal{N}\tilde{u}\Vert_{\mathcal{X}(T)}\leq C\Vert u-\tilde{u}\Vert_{\mathcal{X}(T)}\big(\Vert u\Vert_{\mathcal{X}(T)}^{p-1}+\Vert \tilde{u}\Vert_{\mathcal{X}(T)}^{p-1}\big),\label{prokey2}
\end{align}
where $\mathcal{A}_p$ is defined by (\ref{inditaldataspace}).
\end{proposition}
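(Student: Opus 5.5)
The plan is the standard Duhamel/contraction argument, with Corollaries \ref{energyspacethm1} and \ref{energyspacethm2} as the linear input.

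\textbf{Linear part.} Corollary \ref{energyspacethm1} with $\kappa=p$ controls $u^l$. The exponents $\kappa=p$ and $r_p=n(1-\frac{2}{p+1})$ are admissible because (\ref{assumptionp}) gives $r_p\le 1$. Hence the data norms $\Vert u_0\Vert_{H^{r_p}_{1+1/p}}+\Vert u_1\Vert_{L^{1+1/p}}$ are dominated by $\Vert(u_0,u_1)\Vert_{\mathcal{A}_p}$, using $W^{1,1+1/p}\hookrightarrow H^{r_p}_{1+1/p}$. The terms $\nabla u$, $\partial_t u$ and the $L^{p+1}$ norm give exactly the weights in (\ref{normmathcalx}). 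The weighted $L^2$ term $t^{\frac{\mu-m}{2}-1}\Vert u^l\Vert_{L^2}$ is treated separately. I would write $u(t)=u(1)+\int_1^t\partial_tu$ and bound it by $1+\int_1^t s^{\frac{m-\mu}{2}}(1+\log s)^{\chi_\delta}ds\lesssim t^{\frac{m-\mu}{2}+1}(1+\log t)^{\chi_\delta}$ when $\frac{m-\mu}{2}>-1$; otherwise the constant term dominates. In either case this is consistent with (\ref{thmes3}).

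\textbf{Nonlinear part.} Apply Corollary \ref{energyspacethm2} to $E_1(t,\tau)\ast|u(\tau)|^p$, integrate in $\tau\in[1,t]$, and use Minkowski. The needed data norms are
\[
\Vert |u|^p\Vert_{L^2}=\Vert u\Vert_{L^{2p}}^p,\qquad \Vert |u|^p\Vert_{L^{1+1/p}}=\Vert u\Vert_{L^{p+1}}^p .
\]
The second is controlled directly by $\tau^{-\frac{(m+\mu)p}{2}}A_p(\tau)^p(1+\log\tau)^{p\chi_\delta}\Vert u\Vert_{\mathcal{X}}^p$. For the first, I use Gagliardo--Nirenberg,
\[
\Vert u\Vert_{L^{2p}}\lesssim \Vert u\Vert_{L^2}^{1-\theta}\Vert\nabla u\Vert_{L^2}^{\theta},\qquad \theta=n\Big(\frac12-\frac1{2p}\Big)\le 1,
\]
where $\theta\le1$ again follows from (\ref{assumptionp}). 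Inserting the $\mathcal{X}$-weights gives $\Vert u(\tau)\Vert_{L^{2p}}^p\lesssim \tau^{\beta}(1+\log\tau)^{p\chi_\delta}\Vert u\Vert_{\mathcal{X}}^p$ with an explicit $\beta$. Each of the four components of $\Vert u^n\Vert_{\mathcal{X}(T)}$ then reduces to $\sup_t$ of an integral of the form
\[
t^{-\gamma_1}\int_1^t \tau^{\gamma_2}(1+\log\tfrac t\tau)^{\chi_\delta}(1+\log\tau)^{p\chi_\delta}\,d\tau\cdot(\text{weights at } t).
\]
This is uniformly bounded in $t$ provided $\gamma_2<-1$. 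When $\gamma_2<-1$ the integral converges, and the prefactor at $t$ cancels exactly against the norm weights, since Corollary \ref{energyspacethm2} produces the same $t$-dependence as Corollary \ref{energyspacethm1}. A logarithmic loss $(1+\log t)^{\chi_\delta}$ is absorbed by the $(1+\log t)^{-\chi_\delta}$ in the norm, and the remaining $\log\tau$ powers are harmless under strict inequality.

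\textbf{Main obstacle: the exponent bookkeeping.} This is the step I expect to be hardest. One must verify $\gamma_2<-1$ for each term in each of the two regimes of $A_p$, which Remark \ref{remarkprange} splits at $\mu^\star(n,m)$.

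For the $L^{p+1}$-data terms (the $u$ and $L^{p+1}$ components), the resulting condition reads, after multiplying out, $p>p_0((m+1)n,\mu)$ in the regime $A_p(t)=t^{\frac{m+1}{2}-(m+1)n(1-\frac2{p+1})}$ and $p>\widetilde p_1(n,m,\mu)$ in the other regime. I would check this by reducing the inequality to the quadratics (\ref{p0mnmu}) and (\ref{eqp1}) in the variable $p-1$.

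For the $L^2$-data terms (the energy components), the condition coming from $\beta$ should be weaker than (\ref{globalindex}), given $m>-1+\frac1n$ and $\mu>\max\{0,\min\{\tilde\mu,\mu^\star\}\}$. If the direct computation fails there, I would instead interpolate $L^{2p}$ between $L^{p+1}$ and $\nabla u\in L^2$, which uses the better weight $A_p$.

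\textbf{Contraction estimate.} Estimate (\ref{prokey2}) follows the same computations. Replace $|u|^p$ by $|u|^p-|\tilde u|^p$ and use the pointwise bound
\[
\big||u|^p-|\tilde u|^p\big|\lesssim |u-\tilde u|\,\big(|u|^{p-1}+|\tilde u|^{p-1}\big).
\]
Then apply H\"older in $L^{1+1/p}$ and $L^2$, splitting the exponents as $\frac1{p+1}+\frac{p-1}{p+1}$, respectively $\frac1{2p}+\frac{p-1}{2p}$. This reproduces the same $\tau$-weights, so the integrals are the ones already shown to be bounded.
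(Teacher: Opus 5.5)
Your architecture (Corollaries \ref{energyspacethm1} and \ref{energyspacethm2} with $\kappa=p$, Duhamel, Gagliardo--Nirenberg, H\"older for the difference) is the paper's. Your reduction of the $L^{p+1}$ component to $p>p_0((m+1)n,\mu)$, resp.\ $p>\widetilde{p}_1(n,m,\mu)$, is also correct.

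The first gap is the $L^2$ component of the norm (\ref{normmathcalx}), which requires $\Vert u(t,\cdot)\Vert_{L^2}\lesssim t^{\frac{m-\mu}{2}+1}(1+\log t)^{\chi_\delta}$. Writing $u^l(t)=u_0+\int_1^t\partial_tu^l\,ds$ gives only $\Vert u^l(t,\cdot)\Vert_{L^2}\lesssim 1$ when $\mu>m+2$, whereas the required bound then decays. So the constant term does not ``dominate''; it is too large, and at $\mu=m+2$ you lose a logarithm. This regime is allowed by the hypotheses, e.g.\ $n=3$, $m=0$, $\mu=5$, $\nu=2$, $\delta=0$. For $u^n$ you list this component among the ``$L^{p+1}$-data terms'', but on the conjugate line $L^{1+\frac1p}$ data produce only $L^{p+1}$ output. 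The paper uses the case $\kappa=1$ instead, with $r_1=0$ and $A_1(t)=t^{\frac{m+1}{2}+\chi_\epsilon}$. For $u^l$ this gives $t^{-\frac{m+\mu}{2}}A_1(t)\le t^{\frac{m-\mu}{2}+1}$. For $u^n$, Corollary \ref{energyspacethm2} with $L^2$ data together with $\tau^{m+1}A_1(\tau)^{-1}=\tau^{\frac{m+1}{2}-\chi_\epsilon}\le t^{\frac{m+1}{2}-\chi_\epsilon}$ reduces this component to the same $\tau$-integral as the energy components.

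The second gap is your interpolation $\Vert u\Vert_{L^{2p}}\lesssim\Vert u\Vert_{L^2}^{1-\theta}\Vert\nabla u\Vert_{L^2}^{\theta}$ with $\theta=\frac{n(p-1)}{2p}$. With the $\mathcal{X}$-weights the energy integrand has exponent $\frac{(m-\mu)(p-1)}{2}+p-\frac{(m+1)n(p-1)}{2}$. This is $<-1$ iff $(p-1)\big((m+1)n+\mu-m-2\big)>4$, which is not implied by (\ref{globalindex}). In Example \ref{ex2} ($n=3$, $m=\mu=1$) it says $p>2$, contradicting $p\le1+\frac{2}{n-1}=2$. Your fallback, interpolating between $\dot H^1$ and $L^{p+1}$ with $\theta$ from (\ref{thetaeq}), is the paper's choice. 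The argument then rests on (\ref{comparebigsmall}), $A_p(\tau)^{(1-\theta)p}\le\tau^{(m+1)(1-r_p)}A_p(\tau)^{p-1}$, which bounds every energy-type integrand by the $L^{p+1}$ integrand, and you do not prove it. It needs a case split. If the exponent of $A_p$ is nonnegative, then $A_p(\tau)^{-\theta p}\le1\le\tau^{\frac{m+1}{2}-\chi_\epsilon}=\tau^{(m+1)(1-r_p)}A_p(\tau)^{-1}$. Otherwise $A_p\le1$, and one needs $r_p\le1$ and $\theta p=\frac{n(p-1)}{n+2-(n-2)p}\le1$; for $n\ge2$ the latter is equivalent to $p\le1+\frac{2}{n-1}$. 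Your contraction estimate inherits the same repair.
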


The proof of Proposition \ref{keypropositionmass} is based on Corollaries
\ref{energyspacethm1}, \ref{energyspacethm2}, and will be presented in detail in Section \ref{proofofproposition}.  For now, we employ Proposition \ref{keypropositionmass}  to establish the existence of global solutions.

\subsection{The proof of Theorem \ref{mainresult}}

Choose $M=3C\Vert(u_0,u_1)\Vert_{\mathcal{A}_p}$ in $X(T,M)$, where  $C$ is the constant mentioned in Proposition \ref{keypropositionmass}.   Then for any $u\in \mathcal{X}(T,M)$,  as long as $\Vert(u_0,u_1)\Vert_{\mathcal{A}_p}\leq\bigl(\frac{2}{3^pC^p}\bigr)^{\frac{1}{p-1}}$, it holds that
\begin{equation*}
\begin{aligned}
\Vert \mathcal{N}u\Vert_{\mathcal{X}(T)}&\leq C\Vert(u_0,u_1)\Vert_{\mathcal{A}_p}+C\Vert u\Vert_{\mathcal{X}(T)}^p
\leq\bigl(C+3^pC^{p+1}\Vert(u_0,u_1)\Vert_{\mathcal{A}_p}^{p-1}\bigr)\Vert(u_0,u_1)\Vert_{\mathcal{A}_p}
\leq M.
\end{aligned}
\end{equation*}
Moreover, provided $\Vert(u_0,u_1)\Vert_{D^\sigma}\leq\bigl(\frac{1}{4C^p3^{p-1}}\bigr)^{\frac{1}{p-1}}$, we have
\begin{equation*}
\begin{aligned}
\Vert \mathcal{N}u-\mathcal{N}\tilde{u}\Vert_{\mathcal{X}(T)}&\leq C \Vert u-\tilde{u}\Vert_{\mathcal{X}(T)}\bigl(\Vert u\Vert_{\mathcal{X}(T)}^{p-1}+\Vert \tilde{u}\Vert_{\mathcal{X}(T)}^{p-1}\bigr)\\
                                        &\leq 2C\Vert u-\tilde{u}\Vert_{\mathcal{X}(T)}\bigl(3C\Vert(u_0,u_1)\Vert_{\mathcal{A}_p}\bigr)^{p-1}\leq \frac{1}{2}\Vert u-\tilde{u}\Vert_{\mathcal{X}(T)}.
\end{aligned}
\end{equation*}
Hence, if $\Vert(u_0,u_1)\Vert_{\mathcal{A}_p}\leq\varepsilon\doteq \bigl(\frac{1}{4C^p3^{p-1}}\bigr)^{\frac{1}{p-1}}$, we can claim that $\mathcal{N}$ is a contraction mapping from
$X(T,M)$ into itself.
According to the  contraction mapping principle, there exists a unique $u\in X(T,M)$ such that $\mathcal{N}u=u$,  which means  $u$ is the unique solution to (\ref{eq}).  Furthermore,  $u$ is  actually  the global solution since the constant $C$ don't depend on the choice of $T$. And $\Vert u\Vert_{\mathcal{X}(T)}\leq 3C\Vert (u_0,u_1)\Vert_{\mathcal{A}_p}$, which gives the estimates (\ref{thmes1})-(\ref{thmes4}).

\subsection{The proof of Proposition \ref{keypropositionmass}}\label{proofofproposition}

\begin{proof}[The proof of (\ref{prokey1})]

In view of the expression (\ref{mathcalmudef}) of $\mathcal{N}u(t,x)$, the first aim is to estimate $\Vert u^l\Vert_{\mathcal{X}(T)}$.
Since $\epsilon$ is sufficiently small and $A_1(t)=t^{\frac{m+1}{2}+\chi_\epsilon}$, it holds that $t^{-\frac{m+\mu}{2}}A_1(t)\leq t^{\frac{m-\mu}{2}+1}$. The condition (\ref{assumptionp}) ensures that $r_p\leq1$, by Corollary \ref{energyspacethm1}, the following estimates are valid
\begin{align*}
&\Vert\nabla u^l(t,\cdot)\Vert_{L^2}\lesssim t^{-\frac{m+\mu}{2}}(1+\log t)^{\chi_\delta}\big(\Vert u_0\Vert_{H^1}+\Vert u_1\Vert_{L^2}\big),\\
&\Vert\partial_tu^l(t,\cdot)\Vert_{L^2}\lesssim t^{\frac{m-\mu}{2}}(1+\log t)^{\chi_\delta}\big(\Vert u_0\Vert_{H^1}+\Vert u_1\Vert_{L^2}\big),\\
&\Vert u^l(t,\cdot)\Vert_{L^2}\lesssim t^{-\frac{m+\mu}{2}}A_1(t)(1+\log t)^{\chi_\delta}\big(\Vert u_0\Vert_{L^2}+\Vert u_1\Vert_{L^2}\big)\lesssim t^{\frac{m-\mu}{2}+1}(1+\log t)^{\chi_\delta}\big(\Vert u_0\Vert_{L^2}+\Vert u_1\Vert_{L^2}\big),\\
&\Vert u^l(t,\cdot)\Vert_{L^{p+1}}\lesssim t^{-\frac{m+\mu}{2}}A_p(t)(1+\log t)^{\chi_\delta}\big(\Vert u_0\Vert_{H^{r_p}_{1+\frac{1}{p}}}+\Vert u_1\Vert_{L^{1+\frac{1}{p}}}\big)\\
&\quad\quad\quad\quad\quad\ \lesssim  t^{-\frac{m+\mu}{2}}A_p(t)(1+\log t)^{\chi_\delta}\big(\Vert u_0\Vert_{W^{1,1+\frac{1}{p}}}+\Vert u_1\Vert_{L^{1+\frac{1}{p}}}\big).
\end{align*}
Then by the definition (\ref{normmathcalx}) of  the norm of  $\mathcal{X}(T)$, we have
\begin{equation}\label{ulmathcalX}
\begin{aligned}
&\Vert u^l\Vert_{\mathcal{X}(T)}\lesssim \Vert(u_0,u_1)\Vert_{\mathcal{A}_p}.
\end{aligned}
\end{equation}

It follows from  Gagliardo-Nirenberg inequality and the definition (\ref{normmathcalx}) of the norm of $\mathcal{X}(T)$ that
\begin{align}
&\Vert\vert u(\tau,\cdot)\vert^p\Vert_{L^2}=\Vert u(\tau,\cdot)\Vert_{L^{2p}}^p\lesssim \Vert \nabla u(\tau,\cdot)\Vert_{L^2}^{\theta p}\Vert u(\tau,\cdot)\Vert_{L^{p+1}}^{(1-\theta)p}\notag\\
&\quad\quad\quad\quad\quad\quad\quad\quad\quad\quad\quad\quad\lesssim \tau^{-\frac{m+\mu}{2}p}A_p(\tau)^{(1-\theta)p}(1+\log\tau)^{\chi_\delta p}\Vert u\Vert_{\mathcal{X}(T)}^p,\label{gnineq}\\
&\Vert\vert u(\tau,\cdot)\vert^p\Vert_{L^{1+\frac{1}{p}}}=\Vert u(\tau,\cdot)\Vert_{L^{p+1}}^p\lesssim \tau^{-\frac{m+\mu}{2}p}A_p(\tau)^p(1+\log\tau)^{\chi_\delta p}\Vert u\Vert_{\mathcal{X}(T)}^p,\label{gnineq1}
\end{align}
where $\theta\in[0,1]$  solves
\begin{align}
\frac{1}{2p}=(\frac{1}{2}-\frac{1}{n})\theta+\frac{1-\theta}{p+1}\label{thetaeq}
\end{align}
and $A_p(\tau)$ is given by Corollary \ref{energyspacethm1}. It should be noted that, under condition (\ref{assumptionp}), such a $\theta\in[0,1]$ exists.  According to  Duhamel's principle, by using   Corollary \ref{energyspacethm2} and  (\ref{gnineq}), we have
\begin{equation}\label{nablaunl2estimate}
\begin{aligned}
&\Vert \nabla u^n(t,\cdot)\Vert_{L^2}\lesssim t^{-\frac{m+\mu}{2}}\int_1^t\tau^{\frac{\mu-m}{2}}(1+\log\frac{t}{\tau})^{\chi_\delta}\Vert\vert u(\tau,\cdot)\vert^p\Vert_{L^2}d\tau\\
&\lesssim t^{-\frac{m+\mu}{2}}(1+\log t)^{\chi_\delta}\Vert u\Vert_{\mathcal{X}(T)}^p\int_1^t\tau^{-\frac{m}{2}(p+1)-\frac{\mu}{2}(p-1)}A_p(\tau)^{(1-\theta)p}
(1+\log\tau)^{\chi_\delta p}d\tau.
\end{aligned}
\end{equation}
Similarly,
\begin{align}
&\Vert\partial_tu^n(t,\cdot)\Vert_{L^2}\lesssim t^{\frac{m-\mu}{2}}(1+\log t)^{\chi_\delta}\Vert u\Vert_{\mathcal{X}(T)}^p\int_1^t\tau^{-\frac{m}{2}(p+1)-\frac{\mu}{2}(p-1)}A_p(\tau)^{(1-\theta)p}(1+\log\tau)^{\chi_\delta p}d\tau,\label{partialtunl2estimate}\\
&\Vert u^n(t,\cdot)\Vert_{L^{p+1}}\lesssim t^{-\frac{m+\mu}{2}}A_p(t)(1+\log t)^{\chi_\delta}\Vert
u\Vert_{\mathcal{X}(T)}^p\notag\\
&\quad\quad\quad\quad\quad\quad\quad\quad\quad\quad\times\int_1^t\tau^{-\frac{m}{2}(p+1)-\frac{\mu}{2}(p-1)+(m+1)\big(1-n(1-\frac{2}{p+1})\big)}
A_p(\tau)^{p-1}(1+\log\tau)^{\chi_\delta p}d\tau.\label{unlp+1estimate}
\end{align}
Note $A_1(t)=t^{\frac{m+1}{2}+\chi_\epsilon}$ and $\epsilon$ is sufficiently small, which yields
$\tau^{m+1}A_1(\tau)^{-1}=\tau^{\frac{m+1}{2}-\chi_\epsilon}\leq t^{\frac{m+1}{2}-\chi_\epsilon}.$
Hence
\begin{align}
&\Vert u^n(t,\cdot)\Vert_{L^2}\lesssim t^{-\frac{m+\mu}{2}}A_1(t)(1+\log t)^{\chi_\delta}\Vert u\Vert_{\mathcal{X}(T)}^p\notag\\
&\quad\quad\quad\quad\quad\quad\times\int_1^t\tau^{-\frac{m}{2}(p+1)-\frac{\mu}{2}(p-1)+m+1}A_1(\tau)^{-1}
A_p(\tau)^{(1-\theta)p}(1+\log\tau)^{\chi_\delta p}
d\tau\notag\\
&\lesssim t^{\frac{m-\mu}{2}+1}(1+\log t)^{\chi_\delta}\Vert u\Vert_{\mathcal{X}(T)}^p\int_1^t\tau^{-\frac{m}{2}(p+1)-\frac{\mu}{2}(p-1)}A_p(\tau)^{(1-\theta)p}(1+\log\tau)^{\chi_\delta p}
d\tau.\label{unl2estimate}
\end{align}
Since the integral expressions in (\ref{nablaunl2estimate})-(\ref{unl2estimate}) involve different powers of $\tau$, we need to compare the magnitudes of the corresponding exponents.

The following conclusion can be drawn
\begin{equation}\label{comparebigsmall}
A_p(\tau)^{(1-\theta)p}\leq\tau^{(m+1)\big(1-n(1-\frac{2}{p+1})\big)}A_p(\tau)^{p-1}.
\end{equation}
In fact, note that the definition of  $A_p(\tau)$
and $-\frac{(m+1)(n-1)}{2}(1-\frac{2}{p+1})<0$ for $n\geq2$.  Then, for sufficiently small $\epsilon>0$, if  $r_p\leq\frac{1}{2}+\frac{\chi_\epsilon}{m+1}$, we have $\frac{m+1}{2}-(m+1)r_p+\chi_\epsilon\geq0$,  i.e.,  $A_p(t)$ is increasing with respect to $t$, so
\begin{align*}
&A_p(\tau)^{-\theta p}\leq1\leq\tau^{\frac{m+1}{2}-\chi_\epsilon}\Longrightarrow A_p(\tau)^{-\theta p}\leq\tau^{(m+1)(1-r_p)}A_p(\tau)^{-1}\Longrightarrow (\ref{comparebigsmall}).
\end{align*}
While if $r_p>\frac{1}{2}+\frac{\chi_\epsilon}{m+1}$, we get $A_p(\tau)\leq1$.
From (\ref{thetaeq}) and the condition $p\leq 1+\frac{2}{n-1}(n\geq2)$,  it follows that $f(p)\doteq\theta p=\frac{n-np}{(n-2)p-n-2}$ is increasing in $p$ and  satisfies  $f(1+\frac{2}{n-1})=1$, which implies that $\theta p\leq1$  holds for any $1<p\leq1+\frac{2}{n-1}$.  Therefore,
\begin{align*}
A_p(\tau)^{-\theta p}\leq A_p(\tau)^{-1}\leq\tau^{(m+1)(1-r_p)} A_p(\tau)^{-1}\implies(\ref{comparebigsmall}).
\end{align*}
Moreover, (\ref{comparebigsmall}) remains valid for $n=1$ by a similar analysis.

The goal is to guarantee the integrability of the terms on the right-hand sides of  (\ref{nablaunl2estimate})-(\ref{unl2estimate}).  By (\ref{comparebigsmall}),  it suffices to ensure the convergence of the improper integral
\begin{align}
\int_1^\infty\tau^{-\frac{m}{2}(p+1)-\frac{\mu}{2}(p-1)+(m+1)\big(1-n(1-\frac{2}{p+1})\big)} A_p(\tau)^{p-1}(1+\log\tau)^{\chi_\delta p}d\tau.\label{inte1111}
\end{align}
Note the factor $(1+\log\tau)^{\chi_\delta p}$ does not affect the integrability of (\ref{inte1111}). Thus, in the case of  $A_p(\tau)=\tau^{\frac{m+1}{2}-(m+1)n(1-\frac{2}{p+1})+\chi_\epsilon}$, for sufficiently small $\epsilon>0$,  the integrability of (\ref{inte1111}) is equivalent to
\begin{equation*}\label{intergrity1}
\begin{aligned}
&-\frac{m}{2}(p+1)-\frac{\mu}{2}(p-1)+(m+1)\big(1-n(1-\frac{2}{p+1})\big)+\frac{m+1}{2}(p-1)-(m+1)n(p-1)
(1-\frac{2}{p+1})<-1,
\end{aligned}
\end{equation*}
which is equal to $p>p_0\big((m+1)n,\mu\big)$.  In the case of $A_p(\tau)=\tau^{-\frac{(m+1)(n-1)}{2}(1-\frac{2}{p+1})}$, we need to guarantee
\begin{align*}
&-\frac{m}{2}(p+1)-\frac{\mu}{2}(p-1)+(m+1)\big(1-n(1-\frac{2}{p+1})\big)-\frac{(m+1)(n-1)}{2}(p-1)(1-\frac{2}{p+1})<-1,
\end{align*}
which is equal to $p>\widetilde{p}_1(n,m,\mu)$. Consequently, from (\ref{nablaunl2estimate})-(\ref{unl2estimate}), we get
\begin{align*}
&\Vert \nabla u^n(t,\cdot)\Vert_{L^2}\lesssim t^{-\frac{m+\mu}{2}}(1+\log t)^{\chi_\delta}\Vert u\Vert_{\mathcal{X}(T)}^p,&\ & \Vert\partial_tu^n(t,\cdot)\Vert_{L^2}\lesssim t^{\frac{m-\mu}{2}}(1+\log t)^{\chi_\delta}\Vert u\Vert_{\mathcal{X}(T)}^p\\
&\Vert u^n(t,\cdot)\Vert_{L^{p+1}}\lesssim t^{-\frac{m+\mu}{2}}A_p(t)(1+\log t)^{\chi_\delta}\Vert u\Vert_{\mathcal{X}(T)}^p,&\ &\Vert u^n(t,\cdot)\Vert_{L^2}\lesssim t^{\frac{m-\mu}{2}+1}(1+\log t)^{\chi_\delta}\Vert u\Vert_{\mathcal{X}(T)}^p.
\end{align*}
Then it follows from the definition (\ref{normmathcalx}) of the norm of $\mathcal{X}(T)$ that
\begin{equation}\label{unmathcalX}
\Vert u^n\Vert_{\mathcal{X}(T)}\lesssim \Vert u\Vert_{\mathcal{X}(T)}^p.
\end{equation}

By combining (\ref{ulmathcalX}) and (\ref{unmathcalX}), we get the validness of  (\ref{prokey1}).

\end{proof}

\begin{proof}[The proof of (\ref{prokey2})]
By H$\ddot{\text{o}}$lder's inequality, Gagliardo-Nirenberg inequality and the definition (\ref{normmathcalx}) of the norm of $\mathcal{X}$, we get
\begin{align}
&\Vert\vert u(\tau,\cdot)\vert^{p}-\vert\tilde{u}(\tau,\cdot)\vert^p\Vert_{L^2}\lesssim
\Vert \vert u(\tau,\cdot)-\vert\tilde{u}(\tau,\cdot)\vert\big(\vert u(\tau,\cdot)\vert^{p-1}+\vert\tilde{u}(\tau,\cdot)\vert^{p-1}\big)\Vert_{L^{2}}\notag\\
&\leq \Vert u(\tau,\cdot)-\tilde{u}(\tau,\cdot)\Vert_{L^{2p}}\big(\Vert\vert u(\tau,\cdot)\vert^{p-1}\Vert_{L^{\frac{2p}{p-1}}}+\Vert\vert \tilde{u}(\tau,\cdot)\vert^{p-1}\Vert_{L^{\frac{2p}{p-1}}}\big)\notag\\
&=\Vert u(\tau,\cdot)-\tilde{u}(\tau,\cdot)\Vert_{L^{2p}}\big(\Vert u(\tau,\cdot)\Vert_{L^{2p}}^{p-1}+\Vert \tilde{u}(\tau,\cdot)\Vert_{L^{2p}}^{p-1}\big)\notag\\
&\lesssim \Vert \nabla\big(u(\tau,\cdot)-\tilde{u}(\tau,\cdot)\big)\Vert_{L^2}^\theta\Vert u(\tau,\cdot)-\tilde{u}(\tau,\cdot)\Vert_{L^{p+1}}^{1-\theta}\notag\\
&\quad\quad\quad\times\Big(\Vert\nabla u(\tau,\cdot)\Vert_{L^2}^{\theta(p-1)}\Vert u(\tau,\cdot)\Vert_{L^{p+1}}^{(1-\theta)(p-1)}+\Vert\nabla \tilde{u}(\tau,\cdot)\Vert_{L^2}^{\theta(p-1)}\Vert \tilde{u}(\tau,\cdot)\Vert_{L^{p+1}}^{(1-\theta)(p-1)}\Big)\notag\\
&\lesssim \tau^{-\frac{m+\mu}{2}p}A_p(\tau)^{(1-\theta)p}(1+\log\tau)^{\chi_\delta p}\Vert u-\tilde{u}\Vert_{\mathcal{X}(T)}\big(\Vert u\Vert_{\mathcal{X}(T)}^{p-1}+\Vert \tilde{u}\Vert_{\mathcal{X}(T)}^{p-1}\big)\label{gninequality221}
\end{align}
and
\begin{align}
&\Vert\vert u(\tau,\cdot)\vert^{p}-\vert\tilde{u}(\tau,\cdot)\vert^p\Vert_{L^{1+\frac{1}{p}}}\lesssim \Vert \vert u(\tau,\cdot)-\vert\tilde{u}(\tau,\cdot)\vert\big(\vert u(\tau,\cdot)\vert^{p-1}+\vert\tilde{u}(\tau,\cdot)\vert^{p-1}\big)\Vert_{L^{1+\frac{1}{p}}}\notag\\
&\lesssim \Vert u(\tau,\cdot)-\tilde{u}(\tau,\cdot)\Vert_{L^{p+1}}\big(\Vert\vert u(\tau,\cdot)\vert^{p-1}\Vert_{L^{\frac{p+1}{p-1}}}+\Vert\vert \tilde{u}(\tau,\cdot)\vert^{p-1}\Vert_{L^{\frac{p+1}{p-1}}}\big)\notag\\
&=\Vert u(\tau,\cdot)-\tilde{u}(\tau,\cdot)\Vert_{L^{p+1}}\big(\Vert u(\tau,\cdot)\Vert_{L^{p+1}}^{p-1}+\Vert \tilde{u}(\tau,\cdot)\Vert_{L^{p+1}}^{p-1}\big)\notag\\
&\lesssim \tau^{-\frac{m+\mu}{2}p}A_p(\tau)^p(1+\log\tau)^{\chi_\delta p}\Vert u-\tilde{u}\Vert_{\mathcal{X}(T)}\big(\Vert u\Vert_{\mathcal{X}(T)}^{p-1}+\Vert \tilde{u}\Vert_{\mathcal{X}(T)}^{p-1}\big),\label{gninequality222}
\end{align}
where $\theta$ is defined by (\ref{thetaeq}).

Then by Corollary \ref{energyspacethm2},
\begin{align}
&\Vert\nabla\big(\mathcal{N}u(t,\cdot)-\mathcal{N}\tilde{u}(t,\cdot)\big)\Vert_{L^2}\lesssim t^{-\frac{m+\mu}{2}}\int_1^t\tau^{\frac{\mu-m}{2}}(1+\log\frac{t}{\tau})^{\chi_\delta}\Vert\vert u(\tau,\cdot)\vert^{p}-\vert\tilde{u}(\tau,\cdot)\vert^p\Vert_{L^2}d\tau\notag\\
&\lesssim t^{-\frac{m+\mu}{2}}(1+\log t)^{\chi_\delta}\Vert u-\tilde{u}\Vert_{\mathcal{X}(T)}\big(\Vert u\Vert_{\mathcal{X}(T)}^{p-1}+\Vert \tilde{u}\Vert_{\mathcal{X}(T)}^{p-1}\big)\int_1^t\tau^{-\frac{m}{2}(p+1)-
\frac{\mu}{2}(p-1)}A_p(\tau)^{(1-\theta)p}(1+\log\tau)^{\chi_\delta p}d\tau.\label{key2key11}
\end{align}
Similarly,
\begin{align}
&\Vert\partial_t\big(\mathcal{N}u(t,\cdot)-\mathcal{N}\tilde{u}(t,\cdot)\big)\Vert_{L^2}\lesssim t^{\frac{m-\mu}{2}}(1+\log t)^{\chi_\delta}\Vert u-\tilde{u}\Vert_{\mathcal{X}(T)}\big(\Vert u\Vert_{\mathcal{X}(T)}^{p-1}+\Vert \tilde{u}\Vert_{\mathcal{X}(T)}^{p-1}\big)\notag\\
&\quad\quad\quad\quad\quad\times\int_1^t\tau^{-\frac{m}{2}(p+1)-
\frac{\mu}{2}(p-1)}A_p(\tau)^{(1-\theta)p}(1+\log\tau)^{\chi_\delta p}d\tau\label{key2key12},\\
&\Vert\mathcal{N}u(t,\cdot)-\mathcal{N}\tilde{u}(t,\cdot)\Vert_{L^{p+1}}\lesssim t^{-\frac{m+\mu}{2}}A_p(t)(1+\log t)^{\chi_\delta}\Vert u-\tilde{u}\Vert_{\mathcal{X}(T)}\big(\Vert u\Vert_{\mathcal{X}(T)}^{p-1}+\Vert \tilde{u}\Vert_{\mathcal{X}(T)}^{p-1}\big)\notag\\
&\quad\quad\times\int_1^t\tau^{-\frac{m}{2}(p+1)-\frac{\mu}{2}(p-1)+(m+1)(1-r_p)}
A_p(\tau)^{p-1}(1+\log\tau)^{\chi_\delta p}d\tau.\label{key2key13}
\end{align}
It is again observed that $\tau^{m+1}A_1(\tau)^{-1}\leq t^{\frac{m+1}{2}-\chi_\epsilon},$ we estimate
\begin{align}
&\Vert\mathcal{N}u(t,\cdot)-\mathcal{N}\tilde{u}(t,\cdot)\Vert_{L^2}
\lesssim t^{\frac{m-\mu}{2}+1}(1+\log t)^{\chi_\delta}\Vert u-\tilde{u}\Vert_{\mathcal{X}(T)}\big(\Vert u\Vert_{\mathcal{X}(T)}^{p-1}+\Vert \tilde{u}\Vert_{\mathcal{X}(T)}^{p-1}\big)\notag\\
&\quad\quad\quad\quad\quad\times\int_1^t\tau^{-\frac{m}{2}(p+1)-\frac{\mu}{2}(p-1)}A_p(\tau)^{(1-\theta)p}(1+\log \tau)^{\chi_\delta p}d\tau.\label{key2key14}
\end{align}
Note that the integrals appearing in (\ref{key2key11})-(\ref{key2key14}) coincide with those in  (\ref{nablaunl2estimate})-(\ref{unl2estimate}). Hence, under the condition $p>\max\big\{p_0\big((m+1)n,\mu\big),\widetilde{p}_1(n,m,\mu)\big\}$, we have
\begin{equation*}
\begin{aligned}
&\Vert\nabla\big(\mathcal{N}u(t,\cdot)-\mathcal{N}\tilde{u}(t,\cdot)\big)\Vert_{L^2}\lesssim t^{-\frac{m+\mu}{2}}(1+\log t)^{\chi_\delta}\Vert u-\tilde{u}\Vert_{\mathcal{X}(T)}\big(\Vert u\Vert_{\mathcal{X}(T)}^{p-1}+\Vert \tilde{u}\Vert_{\mathcal{X}(T)}^{p-1}\big),\\
&\Vert\partial_t\big(\mathcal{N}u(t,\cdot)-\mathcal{N}\tilde{u}(t,\cdot)\big)\Vert_{L^2}\lesssim t^{\frac{m-\mu}{2}}(1+\log t)^{\chi_\delta}\Vert u-\tilde{u}\Vert_{\mathcal{X}(T)}\big(\Vert u\Vert_{\mathcal{X}(T)}^{p-1}+\Vert \tilde{u}\Vert_{\mathcal{X}(T)}^{p-1}\big),\\
&\Vert\mathcal{N}u(t,\cdot)-\mathcal{N}\tilde{u}(t,\cdot)\Vert_{L^{p+1}}\lesssim t^{-\frac{m+\mu}{2}}A_p(t)(1+\log t)^{\chi_\delta}\Vert u-\tilde{u}\Vert_{\mathcal{X}(T)}\big(\Vert u\Vert_{\mathcal{X}(T)}^{p-1}+\Vert \tilde{u}\Vert_{\mathcal{X}(T)}^{p-1}\big),\\
&\Vert\mathcal{N}u(t,\cdot)-\mathcal{N}\tilde{u}(t,\cdot)\Vert_{L^2}\lesssim t^{\frac{m-\mu}{2}+1}(1+\log t)^{\chi_\delta}\Vert u-\tilde{u}\Vert_{\mathcal{X}(T)}\big(\Vert u\Vert_{\mathcal{X}(T)}^{p-1}+\Vert \tilde{u}\Vert_{\mathcal{X}(T)}^{p-1}\big),
\end{aligned}
\end{equation*}
which verify the validity of (\ref{prokey2}).
\end{proof}

\section{Final remarks and open problem}\label{finialremarks}

To determine whether the index $\max\{p_0\big((m+1)n,\mu\big),\widetilde{p}_1(n,m,\mu)\}$ established in this paper is the critical exponent for (\ref{eq}) when $\delta<0$, the blow-up behavior of solutions must also be investigated.

In the iterative argument employed by Palmieri \cite{Pa2025} to study the blow-up problem for (\ref{eq}), the iteration framework is constructed using the two real roots of the quadratic equation $r^2-(\mu-1)r+\nu^2=0$, which requires the discriminant to satisfy $\delta\doteq(\mu-1)^2-4\nu^2\geq0$.

Moreover, the test function method employed in our study \cite{LiGuo2025ii} of the critical curve for the weakly coupled system remains applicable to the single equation. Multiplying both sides of (\ref{eq}) by $h(t)$, we have
\begin{equation}\label{sandueq}
\partial_t^2(hu)-\Delta(t^{2m}hu)+\partial_t\big(\frac{\mu}{t}hu-2h'u\big)
+u\big(h''-\frac{\mu}{t}h'+\frac{\mu+\nu^2}{t^2}h\big)=h\vert u\vert^p.
\end{equation}
To continue constructing the test function in a separated form with respect to the time and spatial variables, we need to write  (\ref{sandueq}) in divergence form.  This requires solving the ordinary differential equation
\begin{equation}\label{ode}
h''(t)-\frac{\mu}{t}h'(t)+\frac{\mu+\nu^2}{t^2}h(t)=0,
\end{equation}
which is a typical Euler-type ordinary differential equation. Setting $h(t)=t^\lambda$, (\ref{ode}) reduces to $\big(\lambda^2-(\mu+1)\lambda+\mu+\nu^2\big)t^{\lambda-2}=0$. Notice that the discriminant of the quadratic equation $\lambda^2-(\mu+1)\lambda+\mu+\nu^2=0$ is still $\delta$.    In \cite{LiGuo2025ii}, we considered a solution $h(t)=t^{\frac{\mu+1-\sqrt{\delta}}{2}}$ to (\ref{ode}) for $\delta\geq0$ as  a component of the test function.  Now, for $\delta<0$, the  two linearly independent solutions to (\ref{ode}) are
\begin{equation*}
t^{\frac{\mu+1}{2}}\cos\left(\frac{\sqrt{-\delta}}{2}\log t\right)
\quad\text{and}\quad
t^{\frac{\mu+1}{2}}\sin\left(\frac{\sqrt{-\delta}}{2}\log t\right).
\end{equation*}
Unfortunately, regardless of which one is chosen as a component of the test function, the argument in \cite{LiGuo2025ii}  cannot be carried out further.

Based on the above analysis, regardless of whether one considers the iterative method employed by Palmieri or the test function method used in our work, although the quadratic equations arising from the two approaches are different, their discriminants are indeed the same.  This suggests that, in order to address the blow-up problem for the mass-dominated semilinear EPDT equation (\ref{eq}), it is necessary to improve upon both of the above methods.

\paragraph{Acknowledgments}
F. Guo is supported by the Priority Academic Program Development of Jiangsu Higher Education Institutions.

\section*{Appendix}
\addcontentsline{toc}{section}{Appendix}
%\addcontentsline{toc}{section}{Appendix}
\setcounter{theorem}{0}
\renewcommand{\thetheorem}{A.\arabic{theorem}}
\setcounter{lemma}{0}
\renewcommand{\thelemma}{A.\arabic{lemma}}
\setcounter{equation}{0}
\renewcommand{\theequation}{A.\arabic{equation}}
\renewcommand{\theremark}{A.\arabic{equation}}
The proof framework and methods for the local solutions, namely, Proposition \ref{propositionlocalexistence},  are comparable
to those for the global solutions, so we give a brief outline.  The analysis is carried out in the same space as
\begin{align*}
X(T)=C\big([1,T];H^1\cap L^{p+1}\big)\cap C^1\big([1,T]; L^2\big)
\end{align*}
with the norm $\Vert u\Vert_{X(T)}\doteq\max\limits_{t\in[1,T]}M[u](t)$, where
\begin{equation}\label{defintionmut}
M[u](t)=\Vert\nabla u(t,\cdot)\Vert_{L^2}+\Vert\partial_tu(t,\cdot)\Vert_{L^2}+\Vert u(t,\cdot)\Vert_{L^{p+1}}+\Vert u(t,\cdot)\Vert_{L^2}.
\end{equation}
For $T>1$,  introduce $X(T,M)=\{u\in X(T):\Vert u\Vert_{X(T)}\leq M\}$ and the operator $\mathcal{N}$ as in (\ref{mathcalmudef}), where $M>0$ is determined later. Following the proof of Proposition \ref{keypropositionmass},  we can prove
\begin{lemma}
For any $u,\tilde{u}\in X(T,M)$, the following estimates hold
\begin{align}
&\Vert\mathcal{N}u\Vert_{X(T)}\leq C_{1T}\Vert(u_0,u_1)\Vert_{\mathcal{A}_p}+C_{2T}\Vert u\Vert_{X(T)}^p,\label{appendixkey1}\\
&\Vert\mathcal{N}u-\mathcal{N}\tilde{u}\Vert_{X(T)}\leq C_{3T}\Vert u-\tilde{u}\Vert_{X(T)}\big(\Vert u\Vert_{X(T)}^{p-1}+\Vert\tilde{u}\Vert_{X(T)}^{p-1}\big),\label{appendixkey2}
\end{align}
where $C_{1T}$ is bounded and $C_{2T},C_{3T}\rightarrow0$ as $T\rightarrow1^{+}$.
\end{lemma}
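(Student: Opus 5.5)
We establish the two estimates by repeating the scheme of Proposition \ref{keypropositionmass} on the finite interval $[1,T]$, now with the unweighted norm $M[u](t)$. Throughout, $t,\tau\in[1,T]$ with $T\le 2$, say, so every power $t^{a}$, $\tau^{a}$ and every factor $(1+\log t)^{\chi_\delta}$ lies between two constants depending only on $m,\mu,n,p$. Consequently the weights $A_p$, $B_0$, $B_1$ of Theorems \ref{propuwithtau=1}, \ref{estimateu0=0} and Corollaries \ref{energyspacethm1}, \ref{energyspacethm2} are harmless.

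For the linear part $u^l$, Corollary \ref{energyspacethm1} (with $\kappa=p$, using that (\ref{assumptionp}) gives $r_p\le 1$, so $H^{r_p}_{1+1/p}\hookleftarrow W^{1,1+1/p}$ and $H^{[r_p-1]_+}_{1+1/p}=L^{1+1/p}$) bounds $\Vert\nabla u^l\Vert_{L^2}$, $\Vert\partial_tu^l\Vert_{L^2}$ and $\Vert u^l\Vert_{L^{p+1}}$ by a constant times $\Vert(u_0,u_1)\Vert_{\mathcal{A}_p}$. The $L^2$ norm of $u^l$ follows from the $q=2$ case of Theorem \ref{propuwithtau=1}. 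This yields $\Vert u^l\Vert_{X(T)}\le C_{1T}\Vert(u_0,u_1)\Vert_{\mathcal{A}_p}$ with $C_{1T}$ bounded as $T\to1^+$.

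For the Duhamel part $u^n$, we apply Corollary \ref{energyspacethm2} inside the integral (\ref{untxdef}). The source terms are controlled by the same Gagliardo--Nirenberg estimates (\ref{gnineq})--(\ref{gnineq1}), now without time weights:
\begin{equation*}
\Vert |u(\tau)|^p\Vert_{L^2}\lesssim \Vert\nabla u(\tau)\Vert_{L^2}^{\theta p}\Vert u(\tau)\Vert_{L^{p+1}}^{(1-\theta)p}\le \Vert u\Vert_{X(T)}^p,\qquad \Vert |u(\tau)|^p\Vert_{L^{1+1/p}}=\Vert u(\tau)\Vert_{L^{p+1}}^p\le\Vert u\Vert_{X(T)}^p,
\end{equation*}
with $\theta\in[0,1]$ given by (\ref{thetaeq}); its existence is guaranteed by (\ref{assumptionp}). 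Since all kernel factors are bounded on $[1,T]$, each of the four components of $M[u^n](t)$ is at most $C\int_1^t d\tau\,\Vert u\Vert_{X(T)}^p\le C(T-1)\Vert u\Vert_{X(T)}^p$. Hence we may take $C_{2T}=C(T-1)\to0$.

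For the contraction estimate (\ref{appendixkey2}), we use $\big||u|^p-|\tilde u|^p\big|\lesssim|u-\tilde u|(|u|^{p-1}+|\tilde u|^{p-1})$ together with Hölder and Gagliardo--Nirenberg exactly as in (\ref{gninequality221})--(\ref{gninequality222}). This bounds the sources by $\Vert u-\tilde u\Vert_{X(T)}(\Vert u\Vert_{X(T)}^{p-1}+\Vert\tilde u\Vert_{X(T)}^{p-1})$, and the same time integration produces $C_{3T}=C(T-1)$.

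The main point needing care is the continuity in time, i.e.\ that $\mathcal{N}u$ actually belongs to $C([1,T];H^1\cap L^{p+1})\cap C^1([1,T];L^2)$, rather than only being bounded. We handle it by density: for smooth compactly supported data and sources, continuity is clear from the Fourier representation (\ref{masseqsolution}). The uniform bounds above then pass continuity to the limit, since $|u|^p\in C([1,T];L^2\cap L^{1+1/p})$ by the same difference estimate.
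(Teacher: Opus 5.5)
Your proposal is correct and is essentially the paper's own argument, which the appendix only sketches as ``follow the proof of Proposition \ref{keypropositionmass}''. You rerun that proof on $[1,T]$ with the unweighted norm $M[u](t)$, so all time weights are bounded and the Duhamel integral contributes the factor $T-1$ that makes $C_{2T},C_{3T}\to0$. The only small adjustment is for $0<\delta<\min\{(m+1)^2,1\}$, which Proposition \ref{propositionlocalexistence} allows: there you should cite Theorems \ref{propuwithtau=1} and \ref{estimateu0=0} directly, since Corollaries \ref{energyspacethm1} and \ref{energyspacethm2} are stated only for $\delta\le0$, and on $[1,2]$ this changes nothing.
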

\begin{proof}[Proof of Proposition \ref{propositionlocalexistence}]
by (\ref{appendixkey1}) and (\ref{appendixkey2}), one may first fix a sufficiently large $M$ and then take $T$ sufficiently close to $1$, thereby guaranteeing that the operator $\mathcal{N}$  maps  $X(T,M)$ into itself as a contraction map. According to the contraction mapping principle, there exists a $u \in X(T,M)$ satisfying $\mathcal{N}u = u$, i.e., $u$ is the local solution to (\ref{eq}).
\end{proof}


\vskip 0.2cm







\begin{thebibliography}{100}

\bibitem{BeLo1976} { {J. Bergh, J. L\"ofstr\"om}, Interpolation Spaces:
An Introduction}, \textit{Speringer-Verlag Berlin Heidelberg New York}, (1976).

\bibitem{BoEr1953}{{H. Bateman,  A. Erdelyi}, Higher transcendental functions, Vol.1}. McGraw-Hill, New York, (1953).

\bibitem{BoPHD2011}{{C. B$\ddot{\text{o}}$hme,} Decay rates and scattering states for wave models with time-dependent potential}, PhD thesis, TU Bergakademie Feriberg, Gemany,  (2011).

\bibitem{BoRe2012}{{C. B$\ddot{\text{o}}$hme,  M. Reissig,} A scale-invariant Klein-Gordon model with time-dependent potential}, \textit{Ann. Univ. Ferrara Sez. VII Sci. Mat.}, \textbf{58} (2012), 229-250.

\bibitem{Bren1975}{{P. Brenner}, On $L^p-L^{p'}$ estimates for the wave-equation}, \textit{Math. Z.}, \textbf{145} (1975), 251-254.

\bibitem{Da2021} {{M. D$^\prime$Abbicco}, Small data solutions for the Euler-Poisson-Darboux equation with a power nonlinearity}, \textit{J. Differential Equations},  \textbf{286} (2021), 531-556.

\bibitem{DaLuRe2015} {M. D$^\prime$Abbicco, S. Lucente, M. Reissig,} A shift in the critical exponent for semilinear wave equations with a not effective damping, \textit{J. Differential Equations}, \textbf{259} (2015), 5040-5073.

\bibitem{DaLuRe2013} {M. D$^\prime$Abbicco, S. Lucente, M. Reissig,} Semi-linear wave equations with effective damping, \textit{Chin. Ann. Math. Ser.B}, \textbf{259} (2015), 5040-5073.

\bibitem{DaPa2021} {M. D$^\prime$Abbicco, A. Palmieri,} A Note on $L^p$-$L^q$ estimates for semilinear critical dissipative
Klein-Gordon Equations, \textit{J. Dynam. Differential Equations}, \textbf{33} (2021), 63-74.

\bibitem{Fujita1969} {H. Fujita,}On the blowing up of solutions of the Cauchy problem for $u_t=\Delta u+u^{1+\alpha}$, \textit{J. Fac. Sci. Univ.}, \textbf{13} (1969), 109-124.

\bibitem{GeLiSo1997} {V. Georgiev,  H.  Lindblad, C. Sogge,} Weighted Strichartz estimates and global existence for semilinear wave equations, \textit{Amer. J. Math.}, \textbf{119} (1997), 1291-1319.

\bibitem{Glassey1981} {R.T. Glassey,} Existence in the large for $\Box u= F(u)$  in two space dimensions, \textit{Math. Z.}, \textbf{178}  (1981), 233-261.

\bibitem{HeLiYin2025} {D. He, Q. Li, H. Yin,} Global small data weak solutions of 2-D semilinear wave equations with scale-invariant damping, II, \textit{Discrete Contin. Dyn. Syst.}, \textbf{56},  315-336, 2026.

\bibitem{He2017II} {D. He, I. Witt, H. Yin,} On semilinear Tricomi equations with critical
exponents or in two space dimensions, \textit{J. Differential Equations}, \textbf{263} (2017), 8102-8137.

\bibitem{He2017I} {D. He, I. Witt, H. Yin,} On the global solution problem for semilinear generalized Tricomi equations, I, \textit{Calc. Var. Partial Differential Equations,} \textbf{56} (2017), 1-24.

\bibitem{He2020} {D. He, I. Witt, H. Yin,} On the global solution problem for semilinear
generalized Tricomi equations, II, \textit{Pacific J. Math.,} \textbf{314} (2021), 29-80.

\bibitem{IkSo2018} {M. Ikeda, M. Sobajima,} Life-span of solutions to semilinear wave equation with time-dependent
critical damping for specially localized initial data, \textit{Math. Ann.,}  \textbf{372} (2018), 1017-1040.

\bibitem{IkWa2020} {M. Ikeda, Y. Wakasugi,} Global well-posedness for the semilinear wave equation with
              time dependent damping in the overdamping case, \textit{Proc. Amer. Math. Soc.,}  \textbf{148} (2020), 157-172.

\bibitem{John1979} {F. John,} Blow-up of solutions of nonlinear wave equations in three space dimensions, \textit{Proc. Nat. Acad. Sci. U.S.A.,}  \textbf{28} (1979), 1559-1560.

\bibitem{KeTa1999} {M. Keel, T. Tao,} Small data blow-up for semilinear {K}lein-{G}ordon equations, \textit{Amer. J. Math.,}  \textbf{121} (1999), 629-669.

\bibitem{LaiTaWa2017} {N. Lai, H. Takamura, K. Wakasa,} Blow-up for semilinear wave equations with the scale
invariant damping and super-Fujita exponent, \textit{J. Differential Equations,}  \textbf{263} (2017), 5377-5394.

\bibitem{LaiPa2024} {N. Lai, A. Palmieri, H. Takamura,} A blow-up result for the semilinear Euler-Poisson-Darboux-Tricomi
equation with critical power nonlinearity, \textit{J. Math.  Anal. Appl.},  \textbf{553} (2026), Paper No. 129835, 33.

\bibitem{LaiTa2018} {N. Lai, H. Takamura},  Blow-up for semilinear damped wave equations with subcritical exponent
in the scattering case, \textit{Nonlinear Anal.}, \textbf{168} (2018), 222-237.

\bibitem{LiGuo2025ii} {Y. Li, F. Guo},  Critical curve for weakly coupled system of semilinear  Euler-Poisson-Darboux-Tricomi equations, preprint,  Arxiv: 2511.08084.

\bibitem{LiGuo2025} {Y. Li, F. Guo},  Global existence for the semilinear regular Euler-Poisson-Darboux-Tricomi equation, \textit{J. Differential Equations}, \textbf{553} (2026), 129835.

\bibitem{LiGuo20252} {Y. Li, F. Guo},  Lifespan estimate for the semilinear regular Euler-Poisson-Darboux-Tricomi equation, \textit{Appl. Anal.},   1-22, https://doi.org/10.1080/00036811.2026.2673374.


\bibitem{LiWangYin2025} {Q. Li, D. Wang, H. Yin},  Global small data weak solutions of 2-D semilinear wave equations with scale-invariant damping, I, preprint, 	arXiv:2503.18677.

\bibitem{LiYin2025} {Q. Li, H. Yin},  Global small data weak solutions of 2-D semilinear wave equations with scale-invariant damping, \textit{J. Differential Equations}, \textbf{455} (2026),
113972.

\bibitem{LiSo1996} {H. Lindblad, D. Sogge},  Restriction theorems and semilinear Klein-Gordon equations in $(1+3)$-dimensions, \textit{Duke Math. J.}, \textbf{85} (1996), 227-252.

\bibitem{LiuWang2018} {M. Liu, C. Wang},  Global existence for semilinear damped wave equations in relation with the Strauss conjecture, \textit{Discrete Contin. Dyn. Syst.}, \textbf{40} (2020), 709-724.

\bibitem{LinNiZhai2012} {J. Lin, K. Nishihara, J. Zhai},  Critical exponent for the semilinear wave equation with time-dependent damping, \textit{Discrete Contin. Dyn. Syst.}, \textbf{32} (2012), 4307-4320.

\bibitem{Mat1977} {A. Matsumura},  On the asymptotic behavior of solutions of semi-linear wave
equations, \textit{Publ. Res. Inst. Math. Sci.}, \textbf{12} (1976), 169-189.

\bibitem{PaRe2017} {W. N. Nascimento, A. Palmieri, M. Reissig},  Semi-linear wave models with power non-linearity and scale-invariant time-dependent mass and dissipation, \textit{Math. Nachr.}, \textbf{290} (2017), 1779-1805.

\bibitem{OlLo2010} {F.W.J. Olver, D.W. Lozier, R.F. Boisvert, C.W. Clark},  NIST Handbook of Mathematical Functions, \textit{U.S. Department of Commerce.}, Washington, (2010).

\bibitem{PaPhd2018} {A. Palmieri},  Global in time existence and blow-up results
     for a semilinear wave equation with
    scale-invariant damping and mass, PhD Thesis, TU Bergakademie Freiberg, 2018, 279 pp.

\bibitem{Pa2025} {A. Palmieri},  On the critical exponent for the semilinear Euler-Poisson-Darboux-Tricomi equation with power nonlinearity, \textit{J. Differential Equations}, \textbf{437} (2025), 113309.

\bibitem{PaRe2019} {A. Palmieri, M. Reissig},  A competition between Fujita and Strauss type exponents for blow-up of semi-linear wave equations with scale-invariant damping and mass, \textit{J. Differential Equations}, \textbf{266} (2019), 1176-1220.

\bibitem{PaRe2018} {A. Palmieri, M. Reissig}, Semi-linear wave models with power non-linearity and scale-invariant time-dependent mass and dissipation, II, \textit{Math. Nachr.}, \textbf{291} (2018), 1859-1892.

\bibitem{Str70a} {R. Strichartz}, Convolutions with kernels having singularities of a sphere.
Trans, \textit{Trans. Am. Math. Soc.},  \textbf{148} (1970), 461-471.

\bibitem{Str70b} {R. Strichartz}, A priori estimates for the wave equation and some applications,   \textit{J. Funct. Anal.}, \textbf{5} (1970), 218-235.

\bibitem{Tu2019} {A. Palmieri, Z. Tu}, Lifespan of semilinear wave equation with scale invariant dissipation and mass and sub-Strauss power nonlinearity,   \textit{J. Math. Anal. Appl.}, \textbf{470} (2019), 447-469.

\bibitem{Rac92} {R. Racke}, Lectures on nonlinear evolution equations,   \textit{Aspects of Mathematics,
E19. Friedr. Vieweg $\&$ Sohn, Braunschweig},  1992.

\bibitem{ReYa2000} {M. Reissig, K. Yagdjian}, $L^p-L^q$ decay estimates for the solutions of strictly hyperbolic equations
of second order with increasing in time coefficients,   \textit{Math. Nachr.}, \textbf{214} (2000), 71-104.

\bibitem{ToYo2001} {G. Todorova, B. Yordanov}, Critical exponent for a nonlinear wave equation with damping,    \textit{J. Differential Equations},  \textbf{174} (2001), 464-489.

\bibitem{TuLin2019} {Z. Tu, J. Lin}, Life-span of semilinear wave equations with scale-invariant damping:
critical Strauss exponent case,    \textit{Differential Integral Equations},  \textbf{32} (2019),  249-264.

\bibitem{WaYo2019} {K. Wakasa, B. Yordanov},  On the nonexistence of global solutions for critical semilinear wave
equations with damping in the scattering case,    \textit{Nonlinear Anal.},  \textbf{180} (2019), 67-74.

\bibitem{Wa2014} {Y. Wakasugi},  Critical exponent for the semilinear wave equation with scale invariant damping,    \textit{Fourier Analysis, Birkh$\ddot{\text{a}}$user, Cham}, (2014), 375-390.

\bibitem{Wirth2004} {J. Wirth},  Solution representations for a wave equation with weak dissipation,    \textit{Math. Methods
Appl. Sci.}, \textbf{27} (2004), 101-124.

\bibitem{Wirth2006} {J. Wirth},  Wave equations with time-dependent dissipation I. Non-effective dissipation,    \textit{J. Differential
Equations},   \textbf{222} (2006), 487-514.


\bibitem{Wirth2007} {J. Wirth},  Wave equations with time-dependent dissipation II. Effective dissipation,    \textit{J. Differential
Equations},   \textbf{232} (2007), 74-103.


\bibitem{Yagdjian2021} {K. Yagdjian, A. Galstian},  Fundamental solutions for the Dirac equation in curved spacetime and
generalized Euler-Poisson-Darboux equation,    \textit{J. Differential
Equations},   \textbf{300} (2021), 80-117.

\bibitem{Zh2006} {B. Yordanov,  Q. Zhang},  Finite time blow up for critical wave equations in high dimensions,    \textit{J. Funct. Anal.},  \textbf{231} (2006), 361-374.

\bibitem{Zh2001} {Q. Zhang}, A blow-up result for a nonlinear wave equation with damping: the critical case,    \textit{C. R. Acad. Sci. Paris S\'{e}r. I Math}., \textbf{333} (2001), 109-114.

\bibitem{Zhou2007} {Y. Zhou}, Blow up of solutions to semilinear wave equations with critical exponent in high dimensions,    \textit{Chin. Ann. Math.}, \textbf{28B} (2007), 205-212.


\bibitem{Zhou2014} {Y. Zhou, W. Han}, Life-span of solutions to critical semilinear wave equations,    \textit{Comm. Partial Differential Equations}, \textbf{39} (2014), 439-451.

\end{thebibliography}
\end{document}